\numberwithin{equation}{section}
\newtheorem{theorem}{Theorem}[section]
\newtheorem{lemma}[theorem]{Lemma}
\newtheorem{proposition}[theorem]{Proposition}
\newtheorem{definition}[theorem]{Definition}
\newtheorem{remark}[theorem]{Remark}
\newtheorem{prob}[theorem]{\bf Problem}
\renewcommand{\P}{\mathbb{P}}
\newcommand{\response}[1]{\color{black}{#1}}
\newcommand{\TT}{\mathbb{T}}
\title{Dynamical random field Ising model at zero temperature}
\author{Jian Ding\thanks{School of Mathematical Sciences, Peking University}  \qquad \qquad Peng Yang$^*$ \qquad \qquad Zijie Zhuang\thanks{Department of Statistics and Data Science, The Wharton School of Business, University of Pennsylvania}}
\begin{document}

\maketitle







\abstract{In this paper, we study the evolution of the zero-temperature random field Ising model as the mean of the external field $M$ increases from $-\infty$ to $\infty$. We focus on two types of evolutions: the ground state evolution and the Glauber evolution. For the ground state evolution, we investigate the occurrence of global avalanche, a moment where a large fraction of spins flip simultaneously from minus to plus. In two dimensions, no global avalanche occurs, while in three or higher dimensions, there is a phase transition: a global avalanche happens when the noise intensity is small, but not when it is large. Additionally, we study the zero-temperature Glauber evolution, where spins are updated locally to minimize the Hamiltonian. Our results show that for small noise intensity, in dimensions $d =2$ or $3$, most spins flip around a critical time $c_d = \frac{2 \sqrt{d}}{1 + \sqrt{d}}$ (but we cannot decide whether such flipping occurs simultaneously or not). We also connect this process to polluted bootstrap percolation and solve an open problem on it.}




\section{Introduction}
\label{sec:intro}

For a finite graph $G = (V,E)$, let $(h_v)_{v\in V}$ be i.i.d.\ Gaussian random variables with mean 0 and variance 1. Let $\mathbb{P}$ and $\mathbb{E}$ be the probability measure and expectation with respect to $(h_v)$. We define the Hamiltonian of the random field Ising model (RFIM) for a spin configuration $\sigma \in \{-1,1\}^V$ as follows:
\begin{equation}
\label{eq:def-hamiltonian}
    H_{G, M, \epsilon h} (\sigma) = -\sum_{u \sim v} \sigma_u\sigma_v - \sum_{v \in V} (M+\epsilon h_v ) \sigma_v,
\end{equation}
where $M \in \mathbb{R}$ is the mean of the external field, $\epsilon > 0$ is the noise intensity, and $u \sim v$ means $(u,v) \in E$. For a temperature $T \geq 0$, the random field Ising model is a probability measure on $\{ -1, 1\}^V$ whose density is proportional to $\exp( - \frac{1}{T} H_{G, M, \epsilon h}(\sigma))$. In particular, when $T = 0$, it is supported on the spin configurations minimizing the Hamiltonian, and these minimizers are known as ground states.

An interesting object of study is how the RFIM measure evolves as $M$ increases from $-\infty$ to $\infty$, where the mean of the external field $ M $ is also often referred to as the time parameter (in correspondence with ``evolution"). In this paper, we focus on the zero-temperature case. Then the ground state configurations naturally yield an increasing c\`adl\`ag process\footnote{\response{The increasing property follows from the monotonicity of the ground state configuration with respect to $M$. Furthermore, when the ground state configuration is not unique for some $M$, we select the maximum so that the resulting process is c\`adl\`ag. Note that we focus on a finite graph so there are only finitely many configurations.}} indexed by $M$ on $\{-1, 1\}^V$ and it grows from the all-minus to the all-plus configuration as $M$ increases from $-\infty$ to $\infty$. The first question we study in this paper is whether a global avalanche occurs in this evolution process, i.e., whether at some (random) time a positive fraction of vertices flip from minus to plus simultaneously. When there is no noise (namely, $\epsilon = 0$), a global avalanche occurs at $M=0$ where the spin configuration changes from all-minus to all-plus. For RFIM (namely, $\epsilon>0$), it turns out that in two dimensions, there will be no global avalanche. On the contrary, in dimensions at least three, the following phase transition occurs: when $\epsilon$ is sufficiently small, there will be a global avalanche; when $\epsilon$ is sufficiently large, there will be no global avalanche. See Section~\ref{subsec:gs-evolution} for the precise statements.

There has been extensive study in physics on dynamical RFIM, see e.g.~\cite{initiate1_PhysRevLett.70.3347,initiate2_DahmenKKRSS94,PhysRevLett.106.175701,PhysRevResearch.1.033060}. Partly due to the capability of simulation, most of these studies were not on the aforementioned ground state evolution, but on an evolution based on the Glauber dynamics (which we refer to as the Glauber evolution). In the Glauber evolution, we start from the all-minus configuration at $M = -\infty$ and increase the value of $M$. While we still focus on the zero-temperature case, our state at time $ M $ is no longer the ground state but a local minimizer obtained in the following way (via the Glauber dynamics): each vertex is randomly selected and updated from minus to plus if and only if such an update (weakly) decreases the Hamiltonian given in~\eqref{eq:def-hamiltonian}, and the final state (when no update is possible) gives the configuration at time $ M $. This also yields an increasing process indexed by $M$ and grows from the all-minus to the all-plus configuration. Note that this process has a nesting property, as we explain in Lemma~\ref{lem:Glauber-nesting}. Our second result is that for the zero-temperature Glauber evolution, when $\epsilon$ is small and the dimension is two or three, most of the spins flip around a critical time $c_d = \frac{2 \sqrt{d}}{1 + \sqrt{d}}$, which solves the equation $x^2 = d(2-x)^2$. See Section~\ref{subsec:glauber-evolution} for precise statements. The proof is based on connecting the mechanism of this evolution to another model called polluted bootstrap percolation introduced in~\cite{PollutedBP_original}. Along the way, we also solve an open problem in~\cite{Gravner-Holroyd-3D} about polluted bootstrap percolation, as incorporated in Theorem~\ref{thm:1.4}. It is natural to ask whether there will be a global avalanche in the Glauber evolution at zero temperature. We do not know the answer yet, except when $\epsilon$ is sufficiently large, it is easy to show that there will be no global avalanche.

We can also define the Glauber evolution at positive temperatures, although we do not study its properties in this paper. It starts with the all-minus configuration and evolves according to the Glauber dynamics. There are two parameters involved: the inverse temperature and the update rate. The two evolutions discussed above can both be incorporated into this single model by appropriately sending these two parameters to infinity. If we first send the update rate to infinity and then the inverse temperature, we obtain the ground state evolution. Conversely, if we first send the inverse temperature to infinity and then the update rate, the resulting evolution is the zero-temperature Glauber evolution. See Section~\ref{subsec:positive-glauber} for the precise statements.

\subsection{Ground state evolution}
\label{subsec:gs-evolution}

We first study the ground state evolution. To avoid issues about boundary condition and infinite graph, we focus on the toroidal graph $G = \mathbb{T}_N^d$ obtained by identifying vertices on the opposite sides of $[0, N]^d \cap \mathbb{Z}^d$ for an integer $N \geq 1$. Then, $|V| = N^d$. For $M \in \mathbb{R}$, $\epsilon>0$ and a sample of $(h_v)$, let $\tau_{G, M, \epsilon h}$ be the minimizer of $H_{G, M, \epsilon h}$ among $\{-1,1\}^V$. If there are multiple minimizers, we choose the one with the maximal number of plus spins. Then $(\tau_{G, M, \epsilon h} )_{M \in \mathbb{R}}$ is an increasing c\`adl\`ag process on $\{-1, 1\}^V$. Let $\mathcal{M}_G$ be the maximal number of spins flipping at a single time, namely,
$$
\mathcal{M}_G = \max_{M \in \mathbb{R}} |\{v \in V: \tau_{G, M^-, \epsilon h}(v) = -1,  \tau_{G, M, \epsilon h}(v) = 1\}|,
$$
where $\tau_{G, M^-, \epsilon h} := \lim_{t \nearrow M}\tau_{G, t, \epsilon h}$. Note that $\mathcal{M}_G$ is a random variable depending on the sample of $(h_v)$ as well as on $\epsilon$ and $G$.

\begin{theorem}
    \label{thm:2d-ground-state}
    For $d = 2$ and any $\epsilon >0$, there exists a constant $ C=C(\epsilon)>0 $ such that
    \begin{equation}
    \label{eq:thm-2d-ground-state}
        \mathbb{P}[ \mathcal{M}_{\TT_N^2} \leq C \log N ] \geq 1 -  N^{-100} \quad \mbox{for all  $N\geq 1$}.
    \end{equation}
\end{theorem}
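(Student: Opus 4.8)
The plan is to show that in two dimensions, any connected region of spins that flip simultaneously at a single time $M$ must have logarithmic size, using the variational characterization of the ground state together with an isoperimetric/contour argument. Fix $M$ and suppose a set $S \subseteq V$ of vertices flips from $-1$ to $+1$ as the time crosses $M$; write $A = \tau_{\TT_N^2, M^-, \epsilon h}$ for the configuration just before and $B = \tau_{\TT_N^2, M, \epsilon h} = A^S$ (the configuration with $S$ flipped to $+1$) just after. The first step is a standard "sandwiching'' observation: because both $A$ and $B$ are ground states (at times $M^-$ and $M$ respectively, and the Hamiltonian is monotone in $M$), for \emph{every} subset $T$ of $S$ one has that flipping $T$ inside $A$ does not strictly decrease $H_{\cdot, M, \epsilon h}$ — otherwise $A$ (or a configuration between $A$ and $B$) would not be optimal at time $M$ — and symmetrically flipping any $T \subseteq S$ down inside $B$ does not strictly decrease $H_{\cdot, M^-, \epsilon h}$. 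Concretely, for any $T \subseteq S$,
\begin{equation*}
2\,|\partial_E(T, V \setminus T)| \ \geq\ \Big| \sum_{v \in T} (M + \epsilon h_v) + (\text{boundary interaction terms from } S\setminus T \text{ and from } A)\Big|,
\end{equation*}
and one extracts from the pair of inequalities (at $M^-$ and at $M$) that $\big|\sum_{v\in T}(M+\epsilon h_v) + (\text{fixed boundary terms depending only on }A,\,S)\big| \le 2|\partial_E T|$ for \emph{all} $T\subseteq S$ simultaneously, where $\partial_E T$ is the edge boundary of $T$ within the induced subgraph on $S$ (the interaction with $V\setminus S$ is the same for all such $T$ and gets absorbed). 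This is the mechanism that forces $S$ to be, in a quantitative sense, a near-minimizer of a random functional on all of its subsets.

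The second step is to turn this into a probabilistic tail bound. The key point is that $\epsilon \sum_{v \in T} h_v$ is a mean-zero Gaussian with variance $\epsilon^2 |T|$, so it has fluctuations of order $\epsilon\sqrt{|T|}$, whereas the constraint only allows a linear-in-$M$ drift to be cancelled by a boundary term of size $2|\partial_E T| = O(|\partial T|)$. In two dimensions the isoperimetric inequality gives $|\partial T| \le c\,|T|$ only with a constant, but crucially for \emph{most} subsets $T$ of a given set $S$ with $|S| = k$, the induced edge boundary $|\partial_E T|$ is of order $k$ (linear), while we need to control $\sup_{T \subseteq S} \big( \epsilon|\sum_{v\in T} h_v| - 2|\partial_E T|\big)$ from below by something growing in $k$ to get a contradiction. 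The cleanest route: condition on $S$ (there are at most $N^2 \cdot 4^k$ possible connected $S$ with $|S|=k$ containing a fixed vertex, by the standard Peierls count of animals), and for a \emph{fixed} $S$ choose a particular sub-collection of test sets $T$ — e.g. the "$j$-th layer'' sets or a dyadic family, or simply use that for the single choice $T = S$ together with a linear-algebra/averaging argument over the $2^k$ subsets — to show that the event
\begin{equation*}
\Big\{ \exists\, M,\ \exists\, T \subseteq S:\ \Big|\, \sum_{v \in T}(M + \epsilon h_v) + b_{A,S}\,\Big| \le 2|\partial_E T| \ \text{ for all } T \subseteq S \Big\}
\end{equation*}
has probability at most $e^{-c\epsilon^2 k}$ for a dimensional constant $c$ (the Gaussian field restricted to $S$ is very unlikely to lie within an $O(k)$-tube of the one-parameter family $\{-M\mathbf{1}_S - b\}_M$ simultaneously for all coordinate-subset sums — this is a small-ball estimate for $k$ independent Gaussians that must all be near a $1$-dimensional affine set). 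A union bound over the $N^2 \cdot 4^k$ choices of $S$ and over the (discrete set of relevant) values of $M$ — noting there are only polynomially-in-$N$ many times at which the ground state changes, since each is a threshold of the form $M = $ rational function of the $h_v$ over a finite configuration space — then gives $\mathbb{P}[\mathcal{M}_{\TT_N^2} \ge C\log N] \le N^2 \cdot \sum_{k \ge C\log N} 4^k e^{-c\epsilon^2 k} \le N^{-100}$ once $C = C(\epsilon)$ is large enough.

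The main obstacle I anticipate is making the small-ball step genuinely quantitative and \emph{uniform over all $T \subseteq S$ at once}: the naive bound using only $T = S$ controls only a one-dimensional Gaussian and is far too weak (it gives a polynomial, not exponential, small-ball probability), so one really must exploit that the constraint holds for an exponentially rich family of subsets. The right formulation is probably: given that $\epsilon h|_S$ lies within Euclidean-ish distance $O(k)$ of the line $\{t \mathbf 1_S : t \in \mathbb{R}\}$ is already an $e^{-c k}$ event when the ambient dimension is $k$ and we measure in a norm (like $\sum_T$-type norms, equivalently a fixed well-conditioned linear image) under which $\mathbf 1_S$ is not special — and the collection of partial sums $\{\sum_{v \in T} h_v\}_{T}$ is exactly such a rich linear image. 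I would set this up by picking $k$ disjoint pairs (or a spanning structure) inside $S$ and noting each contributes an independent order-$1$ obstruction, so the bad event factorizes into $\Omega(k)$ independent order-$\epsilon^2$-probability events. A secondary technical point is bounding the number of "active'' times $M$ and handling the left-limit $\tau_{\cdot, M^-, \epsilon h}$ carefully, but since the ground state is piecewise constant in $M$ with breakpoints determined by finitely many linear inequalities in $(h_v)$, this is routine and contributes only a polynomial factor to the union bound.
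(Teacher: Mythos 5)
Your first step survives a small repair (the outside interaction is not a single constant $b_{A,S}$: flipping $T\subseteq S$ upward at $M^-$ and downward at $M$ gives, for every $T\subseteq S$, $\bigl|\sum_{v\in T}(M+\epsilon h_v)+c_T\bigr|\le|\partial_e(T,S\setminus T)|$ with a $T$-dependent term $|c_T|\le |\partial_e(T,V\setminus S)|$, hence in the worst case only $\bigl|\sum_{v\in T}(M+\epsilon h_v)\bigr|\le|\partial_e T|$). The fatal gap is the second step: in $d=2$ these necessary conditions are simply not unlikely when $\epsilon$ is small, and no small-ball or factorization trick can make them so. For a single vertex the constraint forces $\epsilon h_v+M\in[-4,4]$, an event of probability $1-e^{-c/\epsilon^2}$, not of order $\epsilon^2$; for a general $T$ the fluctuation of $\epsilon\sum_{v\in T}h_v$ is $\epsilon\sqrt{|T|}$ while isoperimetry gives $|\partial_e T|\ge c\sqrt{|T|}$, so for $\epsilon$ below a fixed constant the allowed tube is wider than the Gaussian fluctuations \emph{at every scale} — this is exactly the critical ($d=2$) Imry--Ma balance. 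Consequently the probability that all your constraints hold is at least roughly $(1-e^{-c/\epsilon^2})^{k}$, i.e.\ close to $1$ for $k\asymp\log N$ and small $\epsilon$, and the union bound cannot close: to beat the lattice-animal entropy $4^{k}$ you need a per-site cost below $1/4$, whereas your claimed rate $e^{-c\epsilon^2 k}$ (or anything derivable from these local stability inequalities) does so only when $\epsilon$ exceeds a constant, contradicting the ``any $\epsilon>0$'' in the statement. A sanity check: at $\epsilon=0$ all your constraints hold trivially and there \emph{is} a global avalanche, and for small $\epsilon>0$ droplets up to the (huge) correlation length behave almost like $\epsilon=0$, so any correct proof must exploit information beyond such finite-window Gaussian estimates on $S$ alone.

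This is in fact how the paper proceeds: your Peierls counting appears only in the easy regime $|M|\ge 4+100\epsilon$ (Proposition~\ref{prop:2d-large-m}), where a single-site comparison with a subcritical Bernoulli percolation suffices. For $|M|\le 4+100\epsilon$ the paper makes no direct Gaussian estimate on the flipping set; instead it imports the exponential decay of correlations for the two-dimensional RFIM (Ding--Xia, Aizenman--Harel--Peled) to show that a box of side $L(\epsilon,\rho)$ is, with probability $1-\rho$, ``good'': its local ground state is insensitive to the boundary condition and does not change over a time window of length $N^{-1}$ (Lemma~\ref{lem:2d-m-large}; the time stability uses a Radon--Nikodym shift plus the fact that each spin flips only once). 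Then ${\rm Flip}(M)$ must be covered by a connected set of bad $L$-boxes, and a coarse-grained counting argument with Liggett--Schonmann--Stacey domination, together with a union bound over the $O(N)$ time windows, yields the $C(\epsilon)\log N$ bound (Proposition~\ref{prop:2d-small-m}). The deep input of exponential decay is what plays the role you assigned to the small-ball estimate, and it cannot be recovered from the ground-state stability inequalities on $S$ alone.
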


\begin{theorem}
    \label{thm:3d-ground-state}
    The following hold for any $ d \geq 3 $.
    \begin{enumerate}[(1)]
        \item \label{claim-3d-ground-state-1} There exists a constant $C > 0$ depending on $d$ such that for all $\epsilon \geq C$ and $N \geq 1$,
        \begin{equation}
        \label{eq:thm-3d-ground-state}
        \mathbb{P}[ \mathcal{M}_{\TT_N^d} \leq C \log N ] \geq 1 -  N^{-100}.
        \end{equation}
        \item For each $\delta > 0$, there exists a constant $c > 0$ depending on $\delta$ and $d$ such that for all $0 < \epsilon < c$,
        \begin{equation}
        \label{eq:thm-3d-ground-state-2}
        \lim_{N\rightarrow \infty} \mathbb{P}[ \mathcal{M}_{\TT_N^d} \geq (1-\delta) N^d] = 1.
        \end{equation}
        Moreover, with probability $1-o_N(1)$, there exists a unique (random) time $ M_{\star} $ at which the maximal number of spins flip, and we have $|M_{\star}|\leq(\log N) N^{-d/2}$.\label{claim-3d-ground-state}
    \end{enumerate}
\end{theorem}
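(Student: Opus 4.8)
\emph{Reformulation.} Identify $\sigma\in\{-1,1\}^V$ with $A=\{v:\sigma_v=1\}$ and write $\partial A$ for the edge boundary of $A$ in $G=\TT_N^d$. Expanding \eqref{eq:def-hamiltonian} gives $H_{G,M,\epsilon h}(\sigma)=2F_M(A)-\sum_v(M+\epsilon h_v)$ with $F_M(A):=|\partial A|-M|A|-\epsilon\sum_{v\in A}h_v$, so $\tau_{G,M,\epsilon h}$ is the largest minimiser $A_M$ of $F_M$, and $g(M):=\min_A F_M(A)$ is concave, piecewise linear, with right derivative $-|A_M|$; hence $M\mapsto|A_M|$ is nondecreasing and $\mathcal M_G$ is its largest jump. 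The plan uses monotonicity ($A_M$ nondecreasing) and the symmetry $(\sigma,M,h)\mapsto(-\sigma,-M,-h)$ of $H$, which gives $|A_0|\overset{d}{=}N^d-|A_0|$ and $\min_{|A|\le k}\big(|\partial A|-\epsilon\sum_{v\in A}h_v\big)\overset{d}{=}\min_{|A|\le k}\big(|\partial A|+\epsilon\sum_{v\in A}h_v\big)$. Part~\eqref{claim-3d-ground-state-1} is the easy direction: for $\epsilon$ a large constant a Peierls/union bound over the connected components $C$ of avalanches, using that a simultaneous flip of $C$ with pre-state $B$ forces $|\partial(B\cup C)|-|\partial B|-M|C|-\epsilon\sum_{v\in C}h_v=0$ and confines each $\epsilon h_v$ ($v\in C$) to an interval of length $2\,|\{u\sim v:u\in C\}|$, gives $\mathbb{P}[\exists\text{ avalanche component of size}\ge C\log N]\le N^{-100}$ once $C=C(d)$ is large.

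\emph{The key lemma.} For every $\delta>0$ there is $c=c(\delta,d)>0$ such that for $0<\epsilon<c$, with probability $1-o_N(1)$, every minimiser of $F_M$, simultaneously over all $M\in\mathbb R$, has at most $\delta N^d$ plus spins or at most $\delta N^d$ minus spins.

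\emph{From the lemma to Part~\eqref{claim-3d-ground-state}.} By the lemma at $M=0$ and $|A_0|\overset{d}{=}N^d-|A_0|$ we get $\mathbb{P}[|A_0|\le\delta N^d]=\mathbb{P}[|A_0^{\mathrm c}|\le\delta N^d]=\tfrac12-o_N(1)$. On $\{|A_0^{\mathrm c}|\le\delta N^d\}$: since $|A_M|$ is nondecreasing, avoids $(\delta N^d,(1-\delta)N^d)$, vanishes at $-\infty$ and is $\ge(1-\delta)N^d$ at $0$, there is a unique $M_\star\le0$ with $|A_M|\le\delta N^d$ for $M<M_\star$ and $|A_M|\ge(1-\delta)N^d$ for $M\ge M_\star$; on $\{|A_0|\le\delta N^d\}$ the mirror holds with $M_\star\ge0$. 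Either way the jump at $M_\star$ has size $\ge(1-2\delta)N^d$ while every other jump is $\le\delta N^d$, so $\mathcal M_G\ge(1-2\delta)N^d$ with $M_\star$ its unique realiser; renaming $\delta$ yields \eqref{eq:thm-3d-ground-state-2} and uniqueness. To locate $M_\star$, put $M_0:=-\epsilon N^{-d}\sum_vh_v\sim\mathcal N(0,\epsilon^2N^{-d})$, so $|M_0|\le2\epsilon\sqrt{\log N}\,N^{-d/2}$ with probability $1-o_N(1)$, and note that $M\mapsto F_M(\varnothing)\equiv0$ and $M\mapsto F_M(V)=-MN^d-\epsilon\sum_vh_v$ meet at $M_0$. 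By the lemma, near $M=0$ one has $g=\min(G_{\mathrm s},G_{\mathrm l})$, where $G_{\mathrm s}(M):=\min_{|A|\le\delta N^d}F_M(A)$ and $G_{\mathrm l}(M):=\min_{|A^{\mathrm c}|\le\delta N^d}F_M(A)$ are concave with slopes in $[-\delta N^d,0]$ resp.\ $[-N^d,-(1-\delta)N^d]$; $M_\star$ is the point where $G_{\mathrm s}=G_{\mathrm l}$, and integrating the slope bounds from $M_\star$ to $M_0$ gives $(1-2\delta)N^d\,|M_\star-M_0|\le|G_{\mathrm s}(M_0)-G_{\mathrm l}(M_0)|$. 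Finally, setting $B=A^{\mathrm c}$ gives $G_{\mathrm l}(0)=\min_{|B|\le\delta N^d}\big(|\partial B|+\epsilon\sum_{v\in B}h_v\big)-\epsilon\sum_vh_v$, whose first term has the law of $G_{\mathrm s}(0)=\min_{|A|\le\delta N^d}\big(|\partial A|-\epsilon\sum_{v\in A}h_v\big)$; both are $\epsilon\sqrt{\delta N^d}$-Lipschitz functions of $h$, hence within $O(N^{d/2}\sqrt{\log N})$ of their common mean with probability $1-o_N(1)$ by Gaussian concentration. Combined with $|\epsilon\sum_vh_v|=O(N^{d/2}\sqrt{\log N})$, $|G_{\mathrm s}(M_0)-G_{\mathrm s}(0)|\le\delta N^d|M_0|$ and $|G_{\mathrm l}(M_0)-G_{\mathrm l}(0)|\le N^d|M_0|$, this yields $|G_{\mathrm s}(M_0)-G_{\mathrm l}(M_0)|=O(N^{d/2}\sqrt{\log N})$, whence $|M_\star-M_0|=O(N^{-d/2}\sqrt{\log N})$ and $|M_\star|\le(\log N)N^{-d/2}$ for $N$ large.

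\emph{Proving the lemma (the main obstacle).} Let $W(k):=\max_{|A|=k}\big(\epsilon\sum_{v\in A}h_v-|\partial A|\big)$; the lemma is equivalent to the statement that no vertex of the upper concave envelope of $k\mapsto W(k)$ lies in $(\delta N^d,(1-\delta)N^d)$, i.e.\ that for such $k$, $W(k)$ falls strictly below the chord joining the ``fuzz'' value at $k_{\mathrm f}\approx N^d\,\mathbb{P}[h_1>2d/\epsilon]$ (attained near $\{v:h_v>2d/\epsilon\}$, a set of density exponentially small in $1/\epsilon^2$) and its mirror ``co-fuzz'' value. The ingredients are: (i) torus edge-isoperimetry, $\delta N^d\le|A|\le(1-\delta)N^d\Rightarrow|\partial A|\ge c(\delta)N^{d-1}\gg N^{d/2}\log N$ (this is where $d\ge3$ enters); and (ii) a sharp bound on how much field an intermediate set harvests, obtained by coarse-graining $G$ into blocks of large constant side $L=L(\epsilon)$: $\epsilon\sum_{v\in A}h_v-|\partial A|\le\sum_Qh(Q)$ with $h(Q):=\max_{S\subseteq Q}\big(\epsilon\sum_{v\in S}h_v-|\partial_QS|\big)$, the blocks being independent once inter-block edges are discarded, so $\sum_Qh(Q)=(N/L)^d\,\mathbb E[h(Q)]+O(\epsilon N^{d/2}\sqrt{\log N})$ with probability $1-o_N(1)$, and $\mathbb E[h(Q)]/L^d\to c_\ast(\epsilon)$, the $M=0$ ground-state energy density, matched up to $o_L(1)$ by the fuzz construction. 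The delicate step — and the main obstacle — is to combine (i)--(ii) with constants sharp enough to force the dichotomy: one must show that any $A$ with $\epsilon\sum_{v\in A}h_v-|\partial A|$ within $o(N^d)$ of the maximum coincides, off $o(N^d)$ vertices, with a fuzz set $\{h_v>t\}$ or a co-fuzz set $\{h_v>-t\}$, hence has $|A|\le\delta N^d$ or $|A^{\mathrm c}|\le\delta N^d$. This is a first-order-transition/energy-entropy statement; I expect the proof to pair the block concentration above with a Peierls surgery that strips from $A$ any connected piece of small field content adjacent to a large interface, using smallness of $\epsilon$ both to push the fuzz density below $\delta$ and to make $c_\ast(\epsilon)$ so small that for intermediate $A$ of small boundary the interface term in (i) already beats the harvest, while for intermediate $A$ of large boundary the boundary directly beats the (at most $\epsilon N^d/\sqrt{2\pi}$) field gain.
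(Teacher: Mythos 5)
Your Part~\eqref{claim-3d-ground-state-1} sketch is in the same spirit as the paper's percolation/counting argument and is essentially fine, except for one decoupling point: the interval to which each $\epsilon h_v$, $v\in C$, is confined has a center depending on the (continuum-valued, random) flip time $M$ and on the pre-state outside $C$, so you cannot union-bound over these events directly; you should instead work with $M$-free constraints such as the differences $\epsilon h_u-\epsilon h_v$ along a spanning tree of $C$ (this is effectively what the paper's ``bad vertex'' condition $|\epsilon h_u-\epsilon h_v|>4d$ does). For Part~\eqref{claim-3d-ground-state}, your reduction machinery is attractive and genuinely different from the paper: the concave-envelope reformulation handles all $M$ simultaneously in one stroke (avoiding the paper's time-grid union bound, good-vertex construction and global-spin-cluster geometry), and the symmetric-coupling/slope argument locating $M_\star$ within $O(N^{-d/2}\sqrt{\log N})$ of $M_0=-\epsilon N^{-d}\sum_v h_v$ is clean and correct \emph{conditionally on your key lemma}.

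The genuine gap is that the key lemma is exactly where the whole difficulty of Claim~\eqref{claim-3d-ground-state} sits, and you do not prove it; you explicitly defer it (``I expect the proof to pair\ldots''). The two ingredients you list do not combine to give it. The block bound $\epsilon\sum_{v\in A}h_v-|\partial A|\le\sum_Q h(Q)$ holds for \emph{every} $A$, including the fuzz and co-fuzz endpoints of your chord, which are themselves within $O(N^{d/2}\sqrt{\log N})$ of $\sum_Q h(Q)$; so this bound alone cannot place middle-volume sets strictly below the chord. What you need is a surface-order deficit: every $A$ with $\delta N^d\le|A|\le(1-\delta)N^d$ must lose at least $c(\delta)N^{d-1}$ relative to the block optima, uniformly over the exponentially many such $A$ (so with a margin beating entropy, or via a multiscale argument). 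Isoperimetry does not deliver this, because ``large boundary'' is not per se a deficit --- the near-optimal fuzz sets already have boundary of order $\rho N^d\gg N^{d-1}$ --- the deficit must be measured net of the extra field that the extra interface allows the set to harvest. That net statement is a positivity-of-surface-tension type estimate, and it is precisely what the paper imports from the Fisher--Fr\"ohlich--Spencer multiscale analysis (the uniform control of $\mathcal H_M^A-\mathcal H_M$ in Lemma~\ref{lem:control-change}, combined with Lemma~\ref{lem:ground-state-flip}); your proposed ``Peierls surgery that strips pieces of small field content'' is a plausible plan for it but is not carried out, and nothing in the sketch quantifies the field gain attached to an added interface. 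Until that estimate is supplied, the proof of Claim~\eqref{claim-3d-ground-state} is incomplete.
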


Theorem~\ref{thm:2d-ground-state} states that there is no global avalanche in two dimensions for any $ \epsilon>0 $. In three dimensions and higher, however, Theorem~\ref{thm:3d-ground-state} states that there is a phase transition for global avalanche as $ \epsilon $ changes: when $\epsilon$ is sufficiently large, there is no global avalanche; when $\epsilon$ is sufficiently small, a global avalanche occurs where $ \frac{1}{|V|} \mathcal{M}_{\mathbb{T}^d_N} $ converges to 1 in probability as $ N\to\infty $ and $ \epsilon\to0 $. We also show that the flipping time is around $N^{-d/2+o(1)}$. This power is sharp, and moreover, it is plausible that the sequence of random variables $(M_{\star} N^{d/2})_{N \geq 1}$ converges to a non-degenerate random variable; see Remark~\ref{rmk:tight-threshold}.

Now we review related works and give some high-level ideas of our proofs (with an emphasis on drawing connections to previous works). The distinction between $ d=2 $ and $ d\geq3 $ in Theorems~\ref{thm:2d-ground-state} and \ref{thm:3d-ground-state} is not surprising in light of the Imry-Ma prediction~\cite{ImryMa-75} for spin correlations in the RFIM. For $ d=2 $, this prediction was confirmed by~\cite{AizenmanWehr-89} where the authors showed that the spin correlation decays to 0 with the distance as long as disorder is present (namely $\epsilon>0$). This decay was later proved to be exponential with respect to the distance in~\cite{DingXia, AizenmanHarelPeled}, and the correlation length was characterized in~\cite{DingWirth, DingHuangXia}. Our proof of Theorem~\ref{thm:2d-ground-state} is essentially based on the exponential decay of spin correlations in the RFIM. The high-level idea is that due to the exponential decay of spin correlations, the flip time of a spin depends only on local environments, and thus it is unlikely for two neighboring spins to flip at the same time. Using a percolation argument, we prove Theorem~\ref{thm:2d-ground-state} in Section~\ref{sec:2d}.

For $ d\geq3 $, the behavior of spin correlations in the RFIM depends on both the temperature and the noise intensity. Let $T_c$ be the critical temperature for the Ising model. When there is no disorder ($\epsilon = 0$), the phase transition for the Ising model is sharp~\cite{ABF87} (which holds for all $ d\geq2 $): in the supercritical region ($T > T_c$) the spin correlation decays exponentially to 0 with the distance, while in the subcritical region ($0 \leq T < T_c$) the spin correlation is bounded from below by a positive constant (a phenomenon which we will refer to as long-range order). When $T>T_c$, by~\cite{DingSongSun}, the spin correlation in the RFIM is smaller than that of the Ising model, and thus, also decays exponentially to 0 with the distance. In the low temperature region, the work of~\cite{Imbrie85, BricmontKupiainen88} showed that the RFIM model has long-range order when $T$ and $\epsilon$ are both sufficiently small. A simpler proof was given in~\cite{DingZhuang} based on~\cite{FisherFrolichSpencer}. Recently, \cite{DingLiuXia2022} extended the result to the entire subcritical region: for any given $T< T_c$ and sufficiently small $\epsilon$, the model has long-range order. Claim~\eqref{claim-3d-ground-state} in Theorem~\ref{thm:3d-ground-state} is based on the approach in~\cite{DingZhuang, FisherFrolichSpencer} which controls the change of the ground state Hamiltonian after flipping the disorder. Roughly speaking, the long-range order is ensured by the concentration of the change of the ground-state Hamiltonian and the same effect also implies that most of the spins will flip at the same time. A new difficulty in our case is that we need to provide a uniform bound for the change of the ground-state Hamiltonian over $M$.
To address this, we first control the change of the Hamiltonian when flipping a large set (so that the tail bound allows to afford a union bound), and then we control the change of the Hamiltonian when flipping a small set whereas the Hamiltonian is with respect to a local box (and thus the effect from a slight change in $M$ is small). As for Claim~\eqref{claim-3d-ground-state-1} in Theorem~\ref{thm:3d-ground-state}, it follows from a percolation argument, since when the disorder is large enough, it is unlikely for neighboring spins to flip at the same time.

\subsection{Glauber evolution and polluted bootstrap percolation}
\label{subsec:glauber-evolution}
In this subsection, we first state the result on the Glauber evolution at zero temperature. This model was initiated in physics works \cite{initiate1_PhysRevLett.70.3347,initiate2_DahmenKKRSS94} which used the zero-temperature Glauber evolution to study the Barkhausen effect (e.g., a sudden and discontinuous change in the magnetization of a ferromagnetic material when altering the external field), and we are not aware of any mathematical work on this model. We first define the model. Consider the graph $\mathbb{Z}^d$ for $d \geq 2$. When $M = -\infty$, we set $\sigma_v = -1$ for all $v \in \mathbb{Z}^d$, and then we increase $M$ from $-\infty$ to $\infty$ and perform the following (zero-temperature) Glauber evolution. The rule is that as $M$ increases, we maintain the state until there exists at least one minus spin $\sigma_v$ at $v \in \mathbb{Z}^d$ such that flipping it from minus to plus will (weakly) decrease the Hamiltonian as in~\eqref{eq:def-hamiltonian}; at this moment we define our new state by first flipping these aforementioned spins and then repeating the operation of ``flipping a minus spin as long as it further (weakly) decreases the Hamiltonian" until no more minus spins can be flipped (so there is a chain effect on the flipping, which potentially might lead to a global avalanche). After a spin is flipped to plus, it will remain plus forever. This yields a (random) increasing process on $\{-1, +1\}^{\mathbb{Z}^d}$ indexed by $M \in (-\infty, \infty)$. We call this the (zero-temperature) Glauber evolution, since it is consistent with the evolution process described earlier by taking limits from the Glauber dynamics (see Lemma~\ref{lem:positive-glauber}). Theorem~\ref{thm:1.3} gives the time around which most of the spins flip as the noise intensity $\epsilon\to0$.

\begin{theorem}\label{thm:1.3}
    For dimensions $d \geq 2$, let $c_d = \frac{2 \sqrt{d}}{1 + \sqrt{d}}$.
    \begin{enumerate}[(i)]
        \item For any $d \geq 2$ and fixed $M < c_d$, we have $\mathbb{P}[\sigma_0 = -1 \mbox{ at $M$}] = 1 - o_\epsilon(1)$ as $\epsilon$ tends to 0.\label{thm1.3-claim1}
        \item For $d \in \{2, 3 \}$ and any fixed $M > c_d$, we have $\mathbb{P}[\sigma_0 = 1 \mbox{ at $M$}] = 1 - o_\epsilon(1)$ as $\epsilon$ tends to 0.\label{thm1.3-claim2}
    \end{enumerate}
\end{theorem}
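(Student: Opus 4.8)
The plan is to reformulate the zero-temperature Glauber evolution as bootstrap percolation with i.i.d.\ random thresholds, and then to compare it, through monotone couplings, with polluted bootstrap percolation, whose phase diagram is known in low dimensions (and, for $d=3$, is completed by Theorem~\ref{thm:1.4}). Flipping a minus spin $v$ with $k$ plus neighbors changes the Hamiltonian~\eqref{eq:def-hamiltonian} by $-2(2k-2d+M+\epsilon h_v)$, so the flip weakly decreases $H$ iff $k\ge r_v:=\lceil d-(M+\epsilon h_v)/2\rceil$. Since plus spins never revert, the set of plus spins at parameter $M$ is exactly the monotone closure of the spontaneous set $\{v:r_v\le 0\}$ under the rule ``flip $v$ once at least $r_v$ of its neighbors are plus'' (this reformulation is immediate from the update rule, and monotonicity in $M$ is Lemma~\ref{lem:Glauber-nesting}). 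The thresholds $r_v$ are i.i.d., with $\mathbb{P}[r_v\le j]=\mathbb{P}[h_v\ge(2(d-j)-M)/\epsilon]=\exp(-(2(d-j)-M)^2/(2\epsilon^2)+o(\epsilon^{-2}))$ whenever $2(d-j)>M$. In the relevant window $M\in(0,2)$ (outside which — $M\le 0$ or $M\ge 2$ — one is deeply subcritical or deeply supercritical and the conclusions are elementary), set $p:=\mathbb{P}[r_v\le d-1]=\exp(-(2-M)^2/(2\epsilon^2)+o(\epsilon^{-2}))$, the density of ``helper'' sites that flip with only $d-1$ plus neighbors, and $q:=\mathbb{P}[r_v\ge d+1]=\exp(-M^2/(2\epsilon^2)+o(\epsilon^{-2}))$, the density of ``pollution'' sites that need more than $d$ plus neighbors. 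Then
\[
q/p^{d}=\exp\big((d(2-M)^2-M^2)/(2\epsilon^2)+o(\epsilon^{-2})\big),
\]
which tends to $\infty$ when $M<c_d$ and to $0$ when $M>c_d$, because $c_d$ is exactly the positive root of $x^2=d(2-x)^2$. Thus $c_d$ is the value at which the helper and pollution scales balance in the way that governs the phase transition of $d$-neighbor polluted bootstrap percolation on $\mathbb{Z}^d$, whose critical curve is $q\asymp p^d$ up to subexponential corrections that are irrelevant here.

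For part~(i) (subcritical, all $d\ge 2$) I would bound the plus region from above by a monotone coupling: treat every site with $r_v\le d-1$ as spontaneously plus, every site with $r_v=d$ as ordinary, and every site with $r_v\ge d+1$ as having threshold exactly $d+1$; this only enlarges the closure. It then suffices to show that when $q\gg p^d$ the origin stays outside this closure with probability $1-o_\epsilon(1)$, which is the ``no-percolation'' side of the polluted bootstrap percolation transition — with the wrinkle that the threshold-$(d+1)$ sites are ``soft'' blockers, which one argues still behave as blockers because a growing droplet cannot cross a connected closed surface without already occupying both of its sides. This reduces to an Aizenman--Lebowitz-type union bound over internally spanned boxes of the critical length $L_c(p)$ for unpolluted $d$-neighbor growth, and goes through by established polluted bootstrap percolation methods once the threshold hierarchy is accounted for.

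For part~(ii) (supercritical, $d\in\{2,3\}$) I would instead bound the plus region from below: keep the sites with $r_v\le d-1$ (reduced to threshold $d-1$, density $p$) and the ordinary sites ($r_v=d$), and treat every site with $r_v\ge d+1$ as permanently closed (density $q$); this only shrinks the closure, so it suffices to prove that this polluted $d$-neighbor model on $\mathbb{Z}^d$ percolates when $q\ll p^d$, i.e.\ that a mesoscopic nucleus forms within distance $R(\epsilon)$ of the origin and grows, routing around the sparse closed sites, until it reaches the origin. For $d=2$ this is the supercritical regime of the classical two-neighbor polluted model (Gravner--McDonald, Gravner--Holroyd) and follows by adapting the droplet-growth argument there. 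For $d=3$ this is exactly the hard direction of the three-dimensional problem left open in~\cite{Gravner-Holroyd-3D}, which is supplied by Theorem~\ref{thm:1.4}.

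The main obstacle is this $d=3$ supercritical statement. Three-dimensional threshold-$3$ bootstrap percolation grows through an iterated, doubly exponential hierarchy of length scales, with a droplet advancing each of its two-dimensional faces only by locating rare threshold-$2$ sites; one must show, uniformly across all scales of this hierarchy, that the droplet is never permanently stalled by a closed cluster when $q\ll p^3$, which requires a delicate multiscale renormalization. A secondary but genuine difficulty, already present in part~(i), is that the threshold-$(d+1)$ sites are soft rather than hard pollution, so making the comparisons with genuine polluted bootstrap percolation rigorous in both directions requires controlling the extent to which soft blockers fail to block at small scales.
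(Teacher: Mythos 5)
Your reformulation (random thresholds $r_v$, $p$ and $q$ as the densities of threshold-$(d-1)$ helpers and threshold-$(d+1)$ blockers, and $c_d$ as the root of $x^2=d(2-x)^2$ balancing $q$ against $p^d$) and your treatment of part~(i) coincide with the paper's: one dominates the Glauber evolution by the variant of polluted bootstrap percolation in which helpers are declared initially open and blockers have threshold exactly $d+1$, and the ``soft blocker'' issue is resolved exactly as the paper does in Remark~\ref{rmk:polluted-bootstrap} --- the closed vertices at the corners of the blocking contour constructed in the proof of Theorem~\ref{thm:1.4} have at most $d$ open neighbors, so Theorem~\ref{thm:1.4} applies verbatim to this variant. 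One caveat: for $d\geq 3$ at the scaling $q\asymp p^d$ this subcritical input is not ``established polluted bootstrap percolation methods'' plus an Aizenman--Lebowitz union bound; it is precisely Theorem~\ref{thm:1.4} (for all $d\geq 2$, not only $d=3$), whose proof is a nontrivial multiscale contour construction, so you should simply invoke it.

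Part~(ii) has a genuine gap. You claim the needed $d=3$ growth statement ``is supplied by Theorem~\ref{thm:1.4}'', but Theorem~\ref{thm:1.4} is the \emph{no-percolation} direction, valid when $q\geq Cp^d$; what was left open in \cite{Gravner-Holroyd-3D} and is settled by Theorem~\ref{thm:1.4} is the subcritical side, whereas the supercritical side of the standard polluted model is Theorem~\ref{thm:1.5}, already proved in \cite{PollutedBP_original,Gravner-Holroyd-3D}. Moreover, even Theorem~\ref{thm:1.5} does not apply to your lower-bound model: there the density-$p$ sites are only threshold-$(d-1)$ helpers, not initially plus, while genuinely spontaneous plus sites require $\epsilon h_v+M\geq 2d$ and have density roughly $e^{-(2d-M)^2/(2\epsilon^2)}$, which is far smaller than $q\approx e^{-M^2/(2\epsilon^2)}$; a direct comparison with polluted bootstrap percolation at parameters $(p,q)$ therefore lands on the wrong side of the transition, and in your reduced model no droplet even nucleates at scale $p^{-1}$ without further input. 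The missing idea (the paper's Section~\ref{subsec:claim2}) is a coarse-graining: tile $\mathbb{Z}^d$ by boxes of side $L\approx p^{-1}\log(1/p)$, call a box \emph{empty} if it contains no bad vertex and a good vertex on every axis-parallel line (probability $\to 1$, using $d(2-M)^2<M^2$), and \emph{open} if all its vertices are spontaneously plus (tiny but positive probability); an empty box with $d-1$ plus neighboring boxes in distinct directions fills up completely, so the box process dominates \emph{modified} polluted bootstrap percolation with threshold $d-1$. One then concludes with the result of \cite{GH-polluted-all} (threshold $2$ in $d=3$; $d=2$ is trivial), which, unlike Theorem~\ref{thm:1.5}, gives percolation for an arbitrarily small \emph{fixed} open density as the pollution density tends to $0$ --- exactly what is needed given the tiny nucleation density. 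Without this renormalization (or an equivalent argument handling thresholds rather than initial infection), your step ``it suffices to prove that this polluted $d$-neighbor model percolates when $q\ll p^d$'' does not connect to any available theorem.
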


As a consequence, in two or three dimensions, when $\epsilon$ is small, most of the spins will flip around time $\frac{2 \sqrt{d}}{1 + \sqrt{d}}$ (we emphasize that we do not know whether such flipping occurs simultaneously or not). In comparison with Claim~\eqref{claim-3d-ground-state} of Theorem~\ref{thm:3d-ground-state}, we see that the flipping times for the ground state evolution and the zero-temperature Glauber evolution are drastically different. In dimensions $d \geq 4$, we only prove~\eqref{thm1.3-claim1}, but it is plausible that the analog of~\eqref{thm1.3-claim2} holds for all $d \geq 4$.

Next, we discuss a closely related model: polluted bootstrap percolation introduced by Gravner and McDonald~\cite{PollutedBP_original}. In fact, Theorem~\ref{thm:1.3} is derived by connecting the mechanism of the Glauber evolution to this model, \response{and the threshold $ \frac{2\sqrt{d}}{1+\sqrt{d}} $ is determined by the asymptotic threshold of the polluted bootstrap percolation (see the end of this subsection for a more detailed discussion).} We first define the model for any dimension $d \geq 2$. Let $p, q \in [0,1]$ be two parameters with $p + q \leq 1$. In the initial configuration (at time $t = 0$), each vertex on $\mathbb{Z}^d$ is closed with probability $q$, open with probability $p$, and empty with probability $1 - p - q$. The configuration evolves via the following rule: all vertices that are closed or open always remain in the same state, and an empty vertex becomes open at time $t+1$ if and only if it has at least $d$ open neighbors at time $t$. One interpretation of this model is that closed vertices represent pollution or obstacles, while open vertices represent active agents who wish to grow collaboratively. There is a natural partial order on the configurations: a configuration $A$ is said to be larger than (or to dominate) another configuration $B$ if all open vertices in $B$ are also open in $A$ and all closed vertices in $A$ are also closed in $B$. Since the evolution is increasing, as $t$ tends to $\infty$, it converges to a final configuration in the sense that the state of any finite domain will not change after a sufficiently long time. Let $\mathbb{P}_{p, q, \infty}$ denote the law of the final configuration. The law $\mathbb{P}_{p, q, \infty}$ is increasing in $p$ and decreasing in $q$. A natural question is for which values of $p$ and $q$, there will be an infinite open cluster under the law $\mathbb{P}_{p, q, \infty}$. Theorem~\ref{thm:1.4} concerns the correct scaling of $p$ and $q$ to ensure that there is no infinite open cluster as they tend to 0.

\begin{theorem}\label{thm:1.4}
    For any $d \geq 2$, there exists a constant $C$ depending on $d$ such that for all sufficiently small $p$ and $q \geq C p^d$, we have
    $$
    \mathbb{P}_{p, q, \infty}[\mbox{there exists an infinite open cluster}] = 0.
    $$
    Moreover, we have $\lim_{ p \rightarrow 0} \sup_{q \geq Cp^d} \mathbb{P}_{p, q, \infty}[\mbox{$0$ is open}] = 0$.
\end{theorem}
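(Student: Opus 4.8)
\section*{Proof proposal for Theorem~\ref{thm:1.4}}

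The plan is to prove Theorem~\ref{thm:1.4} by a multiscale renormalization organized around the \emph{critical scale} $\ell_c\asymp p^{-1}$. This is the scale at which a box of side $\ell_c$ typically contains $\Theta(q\,\ell_c^{\,d})=\Theta(q\,p^{-d})$ closed vertices, so that the hypothesis $q\ge Cp^d$ is exactly what makes a large but bounded number of closed vertices relevant there; below $\ell_c$ closed vertices are essentially invisible and a purely ``too-few-seeds'' obstruction must be used instead. Since $\mathbb{P}_{p,q,\infty}$ is decreasing in $q$ it suffices to treat $q=Cp^d$, with $p$ small. The renormalization base scale will be a large multiple $L_0$ of $p^{-1}$ (allowed to diverge slowly with $1/p$ for the quantitative assertion), a box of side $L_0$ will be declared \emph{good} when the bootstrap dynamics restricted to a fixed dilate of it produces no open cluster crossing the box, and all larger scales will be handled abstractly once the base estimate $\mathbb{P}[\mathrm{good}]\ge 1-\varepsilon_0$ with $\varepsilon_0$ small is in place.

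The heart of the argument, and the step I expect to be the main obstacle, is a \emph{blocking lemma}: conditionally on the configuration inside, a droplet trying to grow from scale $\ell$ to scale $2\ell$ (for $\ell\gtrsim\ell_c$) is permanently halted by the closed configuration in the surrounding annulus with probability at least a fixed $\rho=\rho(C)>0$, where $\rho\to 1$ as $C\to\infty$. Iterating across the $\Theta(\log K)$ doublings needed to cross a box of side $L_0=K p^{-1}$, using disjoint annuli and the tower property to decouple stages, then gives $\varepsilon_0\le(1-\rho)^{\Theta(\log K)}=K^{-\Theta(1)}$, which is as small as desired. The subtlety is that a single isolated closed vertex does \emph{not} stop a well-seeded front: the occupied region heals around it one layer further out. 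One must therefore pin down the genuine obstructions---a closed vertex together with a local scarcity of open seeds on one side, or a short aligned segment of closed vertices (rare, but surviving the relevant union bound)---and show that among the order-$q\,\ell^{\,d}\gtrsim C$ closed vertices available in the growth annulus at least one such obstruction occurs with probability $\ge\rho$. Arranging this at scale $\ell_c\asymp p^{-1}$, with only $O(1)$ closed vertices per droplet-sized region, is precisely what produces the sharp exponent $d$; a cruder blocking argument only delivers $q\gg p^d$, which is the gap left open for $d\ge 3$ in~\cite{Gravner-Holroyd-3D}. It may be convenient to carry out the bookkeeping in the dual picture: the final open set is stable modulo the closed vertices (every vertex empty in the final configuration has at most $d-1$ open neighbors), so each obstruction corresponds to a ``thick wall'' of never-open vertices anchored at a closed vertex and locally sustained by the absence of seeds.

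A second, more routine ingredient handles the scales below $\ell_c$ and the sharp form of the second assertion. If the origin lies in a final open cluster of diameter $\ge\ell$, then by an Aizenman--Lebowitz-type decomposition---valid under pollution, since closed vertices only obstruct the dynamics---there is at every intermediate scale $\ell'\le\ell$ a connected, internally generated open set $D$ with $\mathrm{diam}(D)\in[\ell',C\ell']$; and generating such a $D$ in the $d$-neighbor model requires at least $c\,\ell'$ open seeds inside it, because the surface area of the occupied set does not increase under the dynamics while a connected set of diameter $\ell'$ has surface area $\gtrsim\ell'$. Taking $\ell'\asymp p^{-1}$ and summing over the $\Theta(\ell^d)$ possible positions of $D$ near the origin gives a bound of the form $\ell^d\,(C'\ell' p)^{c\ell'}\le\ell^d\cdot 2^{-\Theta(p^{-1})}$, which is summable and, for the origin itself, tends to $0$ with $p$; the contribution of bounded-diameter clusters through the origin is a small power of $p$ by the same count.

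Finally the renormalization. With $L_k=L_0 b^k$ for a fixed $b>1$, goodness at scale $k$ is defined as above with the dynamics restricted to a fixed dilate of the scale-$k$ box. The base estimate $\mathbb{P}[\mathrm{good}]\ge 1-\varepsilon_0$ follows by combining the blocking lemma across the box with the seed count. The inductive step is a deterministic geometric fact---if a scale-$(k{+}1)$ box is bad then a definite number $m\ge 2$ of disjoint scale-$k$ sub-boxes are bad---which, with a union bound, yields a recursion $\varepsilon_{k+1}\le(\text{poly in }b^k)\,\varepsilon_k^{\,m}$ and hence $\varepsilon_k\to0$ doubly exponentially once $\varepsilon_0$ is below a threshold. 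A Borel--Cantelli/contour argument then shows that almost surely every vertex lies in only finitely many bad boxes, ruling out an infinite open cluster. For the quantitative claim, $0$ open in the final configuration forces either $0$ open initially (probability $p$), or $0$ empty initially but filled by the dynamics; the latter entails a spanned droplet near $0$ at scale $\ge\ell_c$ (controlled by $\varepsilon_0$) or a small internally generated cluster through $0$ (controlled by the seed count). Letting $p\to 0$ with $L_0\to\infty$ makes all contributions vanish, uniformly over $q\ge Cp^d$, which gives $\lim_{p\to0}\sup_{q\ge Cp^d}\mathbb{P}_{p,q,\infty}[0\text{ open}]=0$.
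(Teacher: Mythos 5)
Your overall architecture is close in spirit to the paper's (base scale a large multiple of $p^{-1}$, a perimeter/seed-count argument of Aizenman--Lebowitz type below that scale and for the density statement, a coarse-grained induction above it, and the correct identification that $q\ge Cp^d$ enters by supplying $\gtrsim C$ closed vertices per critical box and that a closed vertex only helps when accompanied by a local absence of seeds). But the step you yourself flag as the heart --- the blocking lemma --- is stated in a form that cannot work in $d\ge 2$: ``among the order $q\ell^d\gtrsim C$ closed vertices in the growth annulus at least one such obstruction occurs with probability $\ge\rho$'' is not a blocking criterion, because a single obstruction, or any bounded number of scattered ones, never halts a $d$-dimensional front; the front simply heals around each of them, as you note. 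What is actually needed is a complete codimension-one barrier enclosing the droplet. In the paper this is a contour $\mathbf{S}_n$ assembled from ``shields'', each anchored at a closed vertex found in a mesoscopic box of side $\approx 1/(pK)$ (this is exactly where $q\ge Cp^d$ is spent: such a box contains a closed vertex with probability $1-e^{-cC/K^d}$), together with seed-free tube neighborhoods along the shield's $d$ faces and a local sparsity condition so that the dynamics cannot turn the corner. Establishing that such a barrier exists with probability $1-e^{-K\cdot 2^n}$ requires a genuinely nontrivial geometric construction (shields must interlock, which forces the tilted sublattices $\Pi_{\boldsymbol{k}}$ and oriented surfaces in all $2^d$ orientations) and a Peierls/duality argument over all deformations of the surface, since demanding that every box on a fixed surface be nice does not survive a union bound. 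None of this is supplied by ``one obstruction occurs with probability $\rho$'', and your plan to multiply failure probabilities $(1-\rho)$ over $\Theta(\log K)$ doubling annuli has an additional conditioning problem: the event that the front reaches a given annulus is increasing while blocking is decreasing, so the stages do not decouple the way you assert; one needs a barrier that works simultaneously for every possible arriving front (the paper achieves this by surrounding the seeded cube once and for all, and by carefully tracking decreasing events in its stochastic-domination step, Lemma~\ref{lem:domination}).

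The second gap is in your renormalization. The inductive step ``if a scale-$(k+1)$ box is bad then $m\ge2$ disjoint scale-$k$ sub-boxes are bad'' is presented as a deterministic geometric fact, but it is false for your notion of goodness: a single bad scale-$k$ sub-box can contain a fully occupied blob which, aided by the sparse seeds present in the surrounding \emph{good} sub-boxes, crosses the entire scale-$(k+1)$ box --- goodness of a sub-box only constrains its internal, unaided dynamics, not growth assisted from outside. This is precisely why the paper's Definition~\ref{def:good-box} has two clauses (either no bad sub-box, or a single localized bad cluster whose growth is confined to $B(y,\sqrt{K}L_{n-1})$), and why the one-bad-cluster case is handled by the probabilistic blocking estimate (Lemma~\ref{lem:control-growth}) rather than by a deterministic implication; the resulting recursion is of the form $p_n\lesssim K^{2d}p_{n-1}^2+K^d\,p_{n-1}e^{-K\cdot2^n}$, not $\varepsilon_{k+1}\le\mathrm{poly}(b^k)\,\varepsilon_k^m$. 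So both of your central steps reduce to the same missing ingredient --- the construction and probabilistic control of a blocking surface built from closed-anchored, seed-free walls --- and that ingredient is the main content of the paper's proof; without it the proposal is a plausible outline rather than a proof.
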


\begin{remark}
    \label{rem:Ivailo-comment}
    After the posting of this work, Ivailo Hartarsky brought to our attention that he, Janko Gravner, Alexander Holroyd, David Sivakoff, and R\'eka Szab\'o are completing a paper that also proves Theorem~\ref{thm:1.4} using the result in~\cite{BBMS24}. 
\end{remark}

The counterpart of Theorem~\ref{thm:1.4} was proved in~\cite{PollutedBP_original, Gravner-Holroyd-3D} in dimensions two or three. We collect their results here.

\begin{theorem}[\cite{PollutedBP_original, Gravner-Holroyd-3D}]\label{thm:1.5}
    When $d = 2$, $p \to 0$, and $q = o(p^2)$, or when $d = 3$, $p \to 0$, and $q = o(p^3(\log p^{-1})^{-3})$, we have
    $$
    \mathbb{P}_{p, q, \infty}[\mbox{there exists an infinite open cluster}] = 1.
    $$
    Moreover, under the same conditions, we have $\lim_{ p \rightarrow 0}\mathbb{P}_{p, q, \infty}[\mbox{$0$ is open}] = 1$.
\end{theorem}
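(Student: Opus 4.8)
\textbf{Proof proposal for Theorem~\ref{thm:1.5}.}

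The plan is to show that under the stated scalings, with probability bounded away from $0$ (and in fact tending to $1$), the origin is eventually engulfed by a growing open region, and moreover that open region percolates. The strategy is the classical ``self-sustaining square/cube growth'' mechanism from~\cite{PollutedBP_original, Gravner-Holroyd-3D}: one identifies a hierarchy of boxes $B_0 \subset B_1 \subset \cdots$ with side lengths $L_k$ growing geometrically (say $L_{k+1} = 2 L_k$ or $L_{k+1} = L_k^{1+\alpha}$ for a suitable $\alpha$), shows that if the box $B_k$ is internally filled (all empty sites in it become open) and its immediate ``shell'' contains enough open seeds and no closed site in the wrong place, then $B_{k+1}$ becomes internally filled as well; then one sums the failure probabilities at each scale and shows the total is small when $q$ is small enough relative to $p$. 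Since the final configuration law $\mathbb{P}_{p,q,\infty}$ is increasing in $p$ and decreasing in $q$, it suffices to prove the statement at the threshold $q \asymp p^2$ in $d=2$ (resp.\ $q \asymp p^3(\log p^{-1})^{-3}$ in $d=3$); and because these are the results of~\cite{PollutedBP_original, Gravner-Holroyd-3D}, the proof of Theorem~\ref{thm:1.5} is simply a citation — I would state that it is quoted verbatim from those papers and recall only the heuristic.

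If one instead wants to reconstruct the argument: for $d=2$, the key event is that a square of side $L$ which is already full can grow to side $L+1$ as long as there is at least one open site on each of the two new ``L-shaped'' edges being added (so the $d=2$ rule, needing two open neighbors, is satisfied at the corner and propagates along the edge) \emph{and} no closed site blocks the new edges. The probability that a given new edge of length $\sim L$ has an open seed is $\approx 1 - (1-p)^L \approx pL$ for $L \ll p^{-1}$, and the probability it is free of closed sites is $\approx (1-q)^L \approx 1 - qL$. Growing from side $\sim p^{-1}$ (where open seeds become abundant) up to side $\sim q^{-1/2}$, the probability of never being blocked by a closed site stays bounded below precisely when $q^{-1/2} \cdot q = q^{1/2} = o(1)$ relative to the number of growth steps weighted by edge lengths — a careful bookkeeping shows this works when $q = o(p^2)$. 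Once a full square of side $\gg q^{-1/2}$ is produced, a renormalization/coupling to ordinary supercritical site percolation on a coarsened lattice yields an infinite open cluster, and a union bound over the origin's neighborhood gives $\mathbb{P}_{p,q,\infty}[0 \text{ open}] \to 1$. The $d=3$ case is analogous but the growth of a cube of side $L$ to side $L+1$ requires first filling each of the three new faces, which themselves grow by a two-dimensional bootstrap-like process; this is why the critical scaling picks up the extra $(\log p^{-1})^{-3}$ factor (it is the cost of the two-dimensional nucleation on each face), and it is exactly the content of~\cite{Gravner-Holroyd-3D}.

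The main obstacle, were one proving this from scratch rather than citing, is the two-level hierarchy in $d=3$: controlling the probability that a face of the growing cube gets internally filled requires its own multiscale analysis nested inside the cube's growth, and getting the logarithmic corrections right is delicate. Since Theorem~\ref{thm:1.5} is attributed to prior work, however, no obstacle remains here — the statement is invoked as input, and its only role in the present paper is to serve (together with Theorem~\ref{thm:1.4}) as the percolation-theoretic backbone for the proof of Theorem~\ref{thm:1.3} via the coupling between the zero-temperature Glauber evolution and polluted bootstrap percolation.
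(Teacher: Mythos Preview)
Your assessment is correct: Theorem~\ref{thm:1.5} is stated in the paper purely as a citation of prior results from~\cite{PollutedBP_original, Gravner-Holroyd-3D}, with no proof or even a sketch provided. Your heuristic outline of the self-sustaining growth mechanism is accurate and goes beyond what the paper itself offers, but is not needed here.
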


We see that the power $d$ in Theorem~\ref{thm:1.4} is optimal in two or three dimensions. The analog of Theorem~\ref{thm:1.5} is expected to hold for all $d \geq 4$, and thus, the power $d$ in Theorem~\ref{thm:1.4} should be optimal.
Our Theorem~\ref{thm:1.4} solves the first open problem mentioned in~\cite[~Section 11]{Gravner-Holroyd-3D}. The case of $d = 2$ for Theorem~\ref{thm:1.4} was proved in~\cite{PollutedBP_original} based on planarity. In~\cite{Gravner-Holroyd-3D}, the authors considered the case of $ d=3 $ and proved the result in Theorem~\ref{thm:1.4} only when $q \geq C p ^2$ for some large constant $C$. Moreover, the methods in~\cite{PollutedBP_original, Gravner-Holroyd-3D} seem difficult to extend to $d \geq 4$ (and the resulting bound would be far from optimal). The proof of Theorem~\ref{thm:1.4} will be given in Section~\ref{sec:polluted} based on a coarse-graining argument. We refer to that section for a detailed proof outline.

Bootstrap percolation without pollution (i.e., $q = 0$) has a long history and there is extensive research on it. We refer to the review~\cite{Morris-bootstrap} and the references therein for more discussions on this model. The behavior of bootstrap percolation depends on the rule of evolution. For instance, we can consider the evolution where an empty vertex becomes open if and only if it has at least $r$ open neighbors (where $r$ is often referred to as the threshold, and in the above case we have $r = d$). In polluted bootstrap percolation, $r = d$ is in some sense the critical case. For any fixed $q>0$, let $p_c(q)$ be the smallest $p$ such that, with positive probability, there is an infinite open cluster (not necessarily unique) in the final configuration. Different values of $r$ will impact the asymptotic behavior of $p_c(q)$ as $q \to 0$. The conjectured behavior is as follows and we will mention below what has been proved:\footnote{We learned from Ivailo Hartarsky that he, Janko Gravner, Alexander Holroyd, David Sivakoff, and R\'eka Szab\'o are writing a paper proving these conjectures.\label{foot:Ivailo-ongoing}}
\begin{enumerate}[(A)]
    \item For $1 \leq r \leq d-1$, $p_c(q)$ equals 0 for all sufficiently small $q$.\label{conj:pollute-1}
    \item For $r = d$, we have $p_c(q) = q^{1/d+o(1)}$ as $q \to 0$.\label{conj:pollute-2}
    \item For $r > d$, $p_c(q)$ is lower-bounded by a positive constant for all $q > 0$.\label{conj:pollute-3}
\end{enumerate}
Conjecture~\eqref{conj:pollute-1} was proposed in~\cite{Morris-bootstrap}, and the case of $r = 1$ follows from the fact that there is almost surely an infinite non-closed cluster when $q$ is sufficiently small. The case of $r = 2$ and $d \geq 3$ was proved in~\cite{GH-polluted-all} which is highly non-trivial. The other cases are still open. For Conjecture~\eqref{conj:pollute-2}, the cases of $d = 2,3$ follow from combining Theorems~\ref{thm:1.4} and \ref{thm:1.5}. The cases for $d \geq 4$ are still open and the missing part is the analog of Theorem~\ref{thm:1.5} for $d \geq 4$. It is also plausible that for $r = d$, $p_c(q)/q^{1/d}$ will converge to a constant as $q \to 0$. Claim~\eqref{conj:pollute-3} follows from the fact that in bootstrap percolation ($q = 0$) with threshold $r = d + 1$, there is almost surely no infinite open cluster when $p$ is sufficiently small. This result can be proved using the approach in~\cite[Theorem (ii)]{schonmann} (the result there was stated for $d=2$, but the same proof applies to any $d \geq 3$). 

There are other variants of bootstrap percolation that one can consider. A notable variant is the so-called modified bootstrap percolation: in this evolution, an empty vertex becomes open if and only if it has open neighbors in at least $r$ directions, where $1 \leq r \leq d$. 
Conjectures~\eqref{conj:pollute-1} and \eqref{conj:pollute-2} are also conjectured to hold for this model. Since modified bootstrap percolation is stochastically dominated by bootstrap percolation with the same threshold, Theorem~\ref{thm:1.4}, which is proved for the non-modified version, readily applies to the modified one. For $d = 3$, this result was first established in~\cite{Gravner-Holroyd-3D}, where the authors proved the analog of Theorem~\ref{thm:1.4} for modified polluted bootstrap percolation and polluted bootstrap percolation with large obstacles. Another remark is that Conjecture~\eqref{conj:pollute-1} for modified polluted bootstrap percolation (in particular, the case $r = d - 1$) would allow us to derive the analog of Theorem~\ref{thm:1.5} for all $d \geq 4$, thereby establishing Conjecture~\eqref{conj:pollute-2} for polluted bootstrap percolation for all $d \geq 4$ (see discussions below Problem~\ref{prob:threshold}).

To connect the mechanism of the zero-temperature Glauber evolution to polluted bootstrap percolation, we will call a vertex $v$ in the Glauber evolution open if $\epsilon h_v + M -2 \geq 0$, and call it closed if $\epsilon h_v + M < 0$. Let $p$ and $q$ be the probabilities of a vertex being open and closed, respectively. An open vertex needs only $ d-1 $ plus neighbors to flip from minus to plus which serves an active agent, while a closed vertex requires at least $ d+1 $ plus neighbors to flip which serves as an obstacle.
The threshold $M = c_d$ corresponds exactly to the case $q \approx p^d$ as $ \epsilon $ tends to 0\response{, in the sense that for any fixed $ M<c_d $ (resp., $ M>c_d $), we have $ q\gg p^d $ (resp., $ q\ll p^d $) for sufficiently small $ \epsilon $}. By comparing the Glauber evolution with polluted bootstrap percolation (they are very similar but still slightly different), we will show in Section~\ref{sec:glauber} that Claim~\eqref{thm1.3-claim1} in Theorem~\ref{thm:1.3} follows from proof ideas similar to Theorem~\ref{thm:1.4}, and that Claim~\eqref{thm1.3-claim2} in Theorem~\ref{thm:1.3} follows from proof ideas similar to Theorem~\ref{thm:1.5} (which is based on results from~\cite{GH-polluted-all}).

\response{
\begin{remark}[Short-range interactions]
    One may wonder what happens if we add short-range interactions to the model, say adding edges between points with bounded Euclidean lengths. Let us first consider the ground state evolution. Indeed, Theorem~\ref{thm:3d-ground-state} can be proved using the same argument. However, there are technical difficulties in proving Theorem~\ref{thm:2d-ground-state}, since we rely on the result from~\cite{DingXia, AizenmanHarelPeled} which in turn relies on planar duality and does not apply to short-range models although we believe the same result should hold. The Glauber evolution is even more sensitive to the lattice. In fact, bootstrap percolation is very sensitive to the lattice structure (see~\cite{Morris-bootstrap} for a discussion) and the conjectured behavior below Theorem~\ref{thm:1.5} is specific to the integer lattice and may be quite different if additional edges are added (for instance, the critical threshold may change). So, even if Theorem~\ref{thm:1.3} holds for other lattices, the threshold will generally be different.
\end{remark}
}

\subsection{Basic notations}
\label{subsec:notations}

For a set $A$, we use $|A|$ to denote its cardinality.
For a graph $G = (V,E)$ and a subset $A$ of $V$, let $\partial_i A := \{ x \in A: \exists y \in V \setminus A$ such that $x\sim y\}$, $\partial_o A := \partial_i (V \setminus A)$, and $\partial_e A := \{ (x,y): x \in \partial_i A, y \in \partial_o A, x\sim y\}$.
For a spin configuration $\sigma \in \{-1,1\}^V$ and a vertex $v \in V$, we use $\sigma_v$ or $\sigma(v)$ to denote the spin at vertex $v$. There is a natural partial order for two spin configurations $\sigma, \sigma^{\prime} \in \{-1,1\}^V$: we write $\sigma \geq \sigma^{\prime}$ if $\sigma(v) \geq \sigma^{\prime}(v)$ for all $v \in V$. In this paper, we will consider the graph $G$ to be torus $\TT_N^d$ or $\mathbb{Z}^d$ for $d \geq 2$. We can define the $|\cdot|_\infty$-norm on $G$ and let $B_r(u) =B(u,r) := \{v \in G : |v-u|_\infty \leq r\}$ be the $|\cdot|_\infty$-box centered at $u$ with radius $r$.

Constants like $c,c',C,C'$ may change from place to place, while constants with subscripts like $c_1,C_1$ remain fixed throughout the article. All constants may implicitly rely on the dimension $d$. The dependence on additional variables will be indicated at the first occurrence of each constant.

\medskip

\noindent\textbf{Organization of the paper.} 
In Section~\ref{sec:2d}, we study the ground state evolution for $ d=2 $ and prove Theorem~\ref{thm:2d-ground-state}. In Section~\ref{sec:3d}, we study the ground state evolution for $d \geq 3$ and prove Theorem~\ref{thm:3d-ground-state}. In Section~\ref{sec:polluted}, we study polluted bootstrap percolation and prove Theorem~\ref{thm:1.4}. Finally, in Section~\ref{sec:glauber} we study the Glauber evolution and prove Theorem~\ref{thm:1.3}.

\medskip
\noindent\textbf{Acknowledgements.} 
We thank Rongfeng Sun for suggesting the problem to us and thank Ron Peled for helpful discussions. We are grateful to Ivailo Hartarsky for bringing~\cite{schonmann} to our attention and for informing us of his ongoing work with Janko Gravner, Alexander Holroyd, David Sivakoff, and R\'eka Szab\'o (see Remark~\ref{rem:Ivailo-comment} and Footnote~\ref{foot:Ivailo-ongoing}).
We warmly thank two anonymous reviewers for their careful and insightful comments on an earlier version of the manuscript.
J.D. is partially supported by NSFC Tianyuan Key Program Project No.12226001, NSFC Key Program Project No.12231002 and an Explorer Prize. Z.Z.\ is partially supported by NSF grant DMS-1953848.


\section{{Ground state evolution in two dimensions}}
\label{sec:2d}
	
In this section, we consider the ground state evolution of the RFIM on $\TT_N^2$ with the mean of the external field $ M $ increasing from $ -\infty $ to $ \infty$ and prove Theorem~\ref{thm:2d-ground-state}. Recall from Section~\ref{subsec:gs-evolution} the definition of $\tau_{\TT_N^2, M ,\epsilon h}$.
In this section, we may consider the RFIM on subdomains of $\TT_N^2$ whereas the disorder field will always be induced by $\{h_v:v\in\TT^2_N\}$. For simplicity, we will omit $\epsilon h$ in the subscripts. For an integer $N \geq 1$ and $M \in \mathbb{R}$, let the flipping set at time $M$ be   
$$ {\rm Flip}(M) := \{ v \in \TT_N^2: \tau_{\TT_N^2, M}(v) \neq \tau_{\TT_N^2, M^-}(v) \} \quad \mbox{with}\quad  \tau_{\TT_N^2, M^-}(v) := \lim_{t \nearrow M}\tau_{\TT_N^2,t}(v).$$
Then we have $\mathcal{M}_{\TT_N^2} = \max_{M \in \mathbb{R}} |{\rm Flip}(M)|$. For $\epsilon>0$, let $$M_1(\epsilon) = 4 + 100 \epsilon.$$ The proof of Theorem~\ref{thm:2d-ground-state} will be divided into two cases. In the first case $|M| \geq M_1$, we will compare the plus or minus spin clusters in $\tau_{\TT_N^2,M}$ with a subcritical Bernoulli percolation (Proposition~\ref{prop:2d-large-m}). In the second case $|M| < M_1$, we will use the exponential decay of spin correlations in the two-dimensional RFIM~\cite{DingXia, AizenmanHarelPeled}; see Proposition~\ref{prop:2d-small-m}. We first deal with the case of $|M| \geq M_1$.

Before the proposition, we state the following lemma which occurs many times in the following sections.
\begin{lemma}
    \label{lem:count_surrounding_contour}
    Consider the torus $ \TT^d_N $ and let $ k\geq 1 $ be an integer. Then the number of connected subsets in $ \TT^d_N $ with size $ k $ is at most \response{$ |\TT^d_N|\times(2d)^{2k-2} $.}
\end{lemma}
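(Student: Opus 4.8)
The plan is to use the standard ``depth-first search walk'' encoding of connected subsets. First I would observe that any connected subset $S\subseteq\TT^d_N$ with $|S|=k$ contains a spanning tree $T_S$ on its $k$ vertices (for definiteness one may take, say, the breadth-first search tree of $S$ rooted at the vertex of $S$ that is smallest in some fixed a priori ordering of $\TT^d_N$; this makes the choice of $(T_S,\text{root})$ a deterministic function of $S$). A spanning tree on $k$ vertices has $k-1$ edges, and performing a depth-first traversal of $T_S$ starting from its root produces a closed walk that crosses each edge of $T_S$ exactly twice, hence a walk of length exactly $2(k-1)$ in $\TT^d_N$ whose set of visited vertices is precisely $S$.

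Next I would bound the number of such walks. A walk of length $2(k-1)$ is determined by its starting vertex together with the sequence of $2(k-1)$ steps taken; since every vertex of $\TT^d_N$ has exactly $2d$ neighbours, each step offers at most $2d$ choices, so the number of walks of length $2(k-1)$ starting from a fixed vertex is at most $(2d)^{2(k-1)}$. Summing over the $|\TT^d_N|$ possible starting vertices, the total number of walks of length $2(k-1)$ in $\TT^d_N$ is at most $|\TT^d_N|\,(2d)^{2(k-1)}$.

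Finally I would combine the two steps: the assignment $S\mapsto(\text{DFS walk of }T_S\text{ from its root})$ is an injection from the family of connected subsets of size $k$ into the set of walks of length $2(k-1)$, because the walk recovers $S$ as its vertex set. Therefore the number of connected subsets of size $k$ is at most $|\TT^d_N|\,(2d)^{2(k-1)}=|\TT^d_N|\,(2d)^{2k-2}\le|\TT^d_N|\,(2d)^{2k-1}$, which is the claimed bound (with one factor of $2d$ to spare). There is no real obstacle here; the only point requiring a line of care is to make the encoding genuinely injective, which is why I fix a canonical spanning tree and root for each $S$ rather than allowing arbitrary ones.
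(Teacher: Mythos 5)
Your proposal is correct and follows essentially the same argument as the paper: encode each connected set by a depth-first traversal of a spanning tree, which yields a walk of length about $2k$ covering the set, and then count walks by starting vertex and at most $2d$ choices per step. The only (harmless) difference is that you fix a canonical tree and track the slightly sharper exponent $2k-2$, whereas the paper uses an arbitrary tree and the bound $(2d)^{2k-1}$; injectivity holds in both cases since the walk's vertex set recovers the subset.
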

\begin{proof}
    Let $ A\subset\TT^d_N $ be a connected subset with size $ k $. Then we can arbitrarily choose a spanning tree of $ A $ with an arbitrarily chosen root. Consider a depth-first search of this spanning tree from the root, we know that this search process yields a nearest-neighbor path in $ A $ that \response{contains $(2k-2)$ edges} and traverses each edge of the aforementioned spanning tree twice. Therefore, we can map $ A $ injectively to this path, whose enumeration is at most \response{$ |\TT^d_N|\times(2d)^{2k-2} $}.
\end{proof}

\response{

We will use the connectivity of ${\rm Flip}(M)$ in the following proof.

\begin{lemma}\label{lem:connect-flip}
    Almost surely, the flipping set ${\rm Flip}(M)$ is connected for all $M \in \mathbb{R}$.
\end{lemma}

\begin{proof}
    The underlying intuition of the lemma is the following: if ${\rm Flip}(M)$ had multiple connected components, it would imply linear equations of independent Gaussian variables, which can only hold with probability 0. We now carry out the proof carefully. Let us suppose ${\rm Flip}(M)$ has (without loss of generality) two connected components $ A_1 $ and $A_2$, then $\tau_{\TT^d_N, M}(v) = - \tau_{\TT^d_N, M^-}(v)\cdot\mathbf{1}_{v\in A_1\cup A_2} + \tau_{\TT^d_N, M^-}(v)\cdot\mathbf{1}_{v\notin A_1\cup A_2}$.
    Let
    \begin{equation*}
        \tau^{A_i}(v):= - \tau_{\TT^d_N, M^-}(v)\cdot\mathbf{1}_{v\in A_i} + \tau_{\TT^d_N, M^-}(v)\cdot\mathbf{1}_{v\notin A_i}
    \end{equation*}
    for $i\in\{1,2\}$.
    Since our Hamiltonian is defined on a finite graph, there exists $M_1<M$ such that $\tau_{\TT^d_N, M^{\prime}}(v)=\tau_{\TT^d_N, M^-}(v)$ for all $ M^{\prime}\in[M_1, M) $ and all $v\in\TT^d_N$. Then by definition we have
    \begin{equation*}
        \begin{aligned}
            &H_{\TT^d_N, M, \epsilon h}(\tau_{\TT^d_N, M}) \leq H_{\TT^d_N, M, \epsilon h}(\tau^{A_i}), \quad H_{\TT^d_N, M, \epsilon h}(\tau_{\TT^d_N, M})\leq H_{\TT^d_N, M, \epsilon h}(\tau_{\TT^d_N, M^-}),\\
            &H_{\TT^d_N, M^{\prime}, \epsilon h}(\tau_{\TT^d_N, M^-}) \leq H_{\TT^d_N, M^{\prime}, \epsilon h}(\tau^{A_i}),\quad H_{\TT^d_N, M^{\prime}, \epsilon h}(\tau_{\TT^d_N, M^-}) \leq H_{\TT^d_N, M^{\prime}, \epsilon h}(\tau_{\TT^d_N, M})
        \end{aligned}
    \end{equation*}
    for $i\in\{1,2\}$.
    Combining the preceding displays yields
    \begin{equation*}
        \sum_{u\sim v,u\in A_i,v\notin A_i}\tau_{\TT^d_N, M}(u)\tau_{\TT^d_N, M}(v) + \sum_{u\in A_i}(M+\epsilon h_u)\cdot\tau_{\TT^d_N, M}(u)=0
    \end{equation*}
    for $i\in\{1,2\} $, which further implies that
    \begin{equation}
        \label{eq:flipping_set_connected}
        \begin{aligned}
            &\frac{1}{\sum_{u\in A_1}\tau_{\TT^d_N, M}(u)}\cdot\Big(\sum_{u\sim v,u\in A_1,v\notin A_1}\tau_{\TT^d_N, M}(u)\tau_{\TT^d_N, M}(v) + \sum_{u\in A_1}\epsilon h_u\cdot\tau_{\TT^d_N, M}(u)\Big)\\
            =&\frac{1}{\sum_{u\in A_2}\tau_{\TT^d_N, M}(u)}\cdot\Big(\sum_{u\sim v,u\in A_2,v\notin A_2}\tau_{\TT^d_N, M}(u)\tau_{\TT^d_N, M}(v) + \sum_{u\in A_2}\epsilon h_u\cdot\tau_{\TT^d_N, M}(u)\Big).
        \end{aligned}
    \end{equation}
    (Note that by monotonicity, $\tau_{\TT^d_N, M}(u) = 1$ for $u \in A_i$ and thus $\sum_{u\in A_i}\tau_{\TT^d_N, M}(u) > 0$). Since the choices for $ A_1 $, $ A_2 $, and $ \{\tau_{\TT^d_N, M}(v)\}_{v\in\TT^d_N} $ are finite, the probability that \eqref{eq:flipping_set_connected} holds is zero. Therefore, $ {\rm Flip}(M) $ is almost surely connected simultaneously for all $M$. \qedhere
    
\end{proof}
}

\begin{proposition}
    \label{prop:2d-large-m}
    For $d = 2$ and any $\epsilon > 0$, there exists a constant $C = C(\epsilon)>0$ such that
    $$
    \mathbb{P}\big[|\mathrm{Flip}(M)| \leq C \log N \mbox{ for all }|M| > M_1\big] \geq 1 - N^{-200} \quad \forall N \geq 1.
    $$
\end{proposition}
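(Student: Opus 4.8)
The plan is to combine a ground‑state optimality inequality, which makes the minus (resp.\ plus) clusters of the equilibrium configuration subcritical once $M > M_1$ (resp.\ $M < -M_1$), with a union bound over connected sets, and with a measure‑zero argument showing that the avalanche set $\mathrm{Flip}(M)$ is almost surely connected at every $M$; together these confine $\mathrm{Flip}(M)$ to a single cluster of size $O(\log N)$ whenever $|M| > M_1$, where $M_1 = 4 + 100\epsilon$.

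\emph{Subcriticality of clusters.} Fix $M \ge M_1$ and any ground state $\sigma$ of $H_{\TT_N^2, M}$, and let $C$ be a connected component of $\{v : \sigma(v) = -1\}$. All outer neighbours of $C$ are plus, so flipping $C$ to plus changes the Hamiltonian by $-2\sum_{v \in C}(M + \epsilon h_v) - 2|\partial_e C|$, which is nonnegative because $\sigma$ is a ground state; hence $\sum_{v \in C} h_v \le -\epsilon^{-1}(M|C| + |\partial_e C|) \le -(M_1/\epsilon)|C| \le -100|C|$. By the symmetry $(\sigma, M, (h_v)) \mapsto (-\sigma, -M, (-h_v))$ of the model, for $M \le -M_1$ every plus component $C$ of any ground state satisfies $\sum_{v \in C} h_v \ge 100|C|$. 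Since for fixed connected $C$ the sum $\sum_{v\in C}h_v$ is $\mathcal N(0,|C|)$, so that $\mathbb{P}[|\sum_{v \in C}h_v| \ge 100|C|] \le 2e^{-5000|C|}$, Lemma~\ref{lem:count_surrounding_contour} (with $d=2$) gives, for $C_0$ a suitable absolute constant,
\[
\mathbb{P}\Big[\exists\ \text{connected } C,\ |C| > C_0 \log N,\ \big|\textstyle\sum_{v\in C} h_v\big| \ge 100|C|\Big] \;\le\; \sum_{j > C_0\log N} N^2\, 16^{\,j}\cdot 2e^{-5000 j} \;\le\; N^{-200}.
\]
On the complementary event $\mathcal G_1$, every minus component of every ground state at every $M \ge M_1$, and every plus component of every ground state at every $M \le -M_1$, has at most $C_0 \log N$ vertices.

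\emph{The avalanche set is connected.} Fix $M$ and write $\sigma_- = \tau_{\TT_N^2, M^-}$, $\sigma_+ = \tau_{\TT_N^2, M}$; both are ground states at $M$ (for $\sigma_-$ this follows since $M' \mapsto H_{\TT_N^2, M'}(\cdot)$ is affine and the state space is finite, so $\tau_{\TT_N^2, M'}$ is constant on some interval $(M-\delta, M)$), $\sigma_- \le \sigma_+$, and $S := \mathrm{Flip}(M) = \{v : \sigma_-(v) = -1,\ \sigma_+(v) = 1\}$. For $T \subseteq S$ one checks that flipping $T$ to plus in $\sigma_-$ changes the energy by $\Delta(T) = -2\sum_{v\in T}(M + \epsilon h_v) - 2|\partial^-(T)| + 2|\partial^+(T)|$, where $\partial^-(T)$ and $\partial^+(T)$ are the edges from $T$ to the plus, resp.\ minus, spins of $\sigma_-$ outside $T$. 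If $D$ is a connected component of $S$ then no edge joins $D$ to $S\setminus D$, so all these terms split additively: $\Delta(D) + \Delta(S\setminus D) = \Delta(S)$; since $\sigma_\pm$ are both ground states, $\Delta(S) = 0$ and $\Delta(D), \Delta(S\setminus D) \ge 0$, forcing $\Delta(D) = 0$. Now if $S$ had two distinct components $D, D'$ — necessarily disjoint and non‑adjacent — then eliminating $M$ from the two break‑even equations $\Delta(D) = \Delta(D') = 0$ yields
\[
\epsilon\Big(|D|\sum_{v \in D'} h_v - |D'|\sum_{v\in D}h_v\Big) \;=\; |D|\,a' - |D'|\,a,
\]
where $a, a'$ are integers depending only on $(D,\sigma_-|_{\partial_o D})$ and $(D',\sigma_-|_{\partial_o D'})$. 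For each fixed choice of $D,D'$ and of the two boundary spin patterns the left‑hand side is a non‑degenerate Gaussian (as $D\cap D' = \emptyset$), so the event has probability $0$; since $\TT_N^2$ is finite, the union over all such choices also has probability $0$. Hence on an event $\mathcal G_2$ of full probability, $\mathrm{Flip}(M)$ is connected for every $M$.

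\emph{Conclusion and main difficulty.} On $\mathcal G_1 \cap \mathcal G_2$, take $|M| > M_1$. If $M > M_1$ then $\mathrm{Flip}(M)$ is a connected subset of the minus spins of the ground state $\tau_{\TT_N^2, M^-}$, hence lies in a single minus component, of size $\le C_0\log N$; if $M < -M_1$ then $\mathrm{Flip}(M)$ is a connected subset of the plus spins of the ground state $\tau_{\TT_N^2, M}$, hence lies in a single plus component, of size $\le C_0\log N$. Either way $|\mathrm{Flip}(M)| \le C_0\log N$, and $\mathbb{P}[\mathcal G_1 \cap \mathcal G_2] \ge 1 - N^{-200}$; taking $C = C_0$ (which, somewhat pleasantly, does not actually depend on $\epsilon$) proves the proposition. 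I expect the crux to be the input to this last step: observing that \emph{each component} of the avalanche set is individually at break‑even energy, and then converting the simultaneous break‑even conditions at two far‑apart components into a genuinely non‑degenerate linear constraint on the independent Gaussians $(h_v)$. By contrast, the tail/union‑bound estimate and the identification of $\tau_{\TT_N^2, M^-}$ with an honest ground state at $M$ are routine.
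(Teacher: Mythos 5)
Your proposal is correct, but your key estimate takes a different route from the paper's. The paper's proof is a one-line stochastic domination: since $M_1 = 4 + 100\epsilon$, any vertex with $h_u \ge -100$ has $-4 + M_1 + \epsilon h_u \ge 0$ and is therefore plus in $\tau_{\TT_N^2,M}$ for every $M > M_1$, so $\bigcup_{M' > M_1}\mathrm{Flip}(M')$ is contained in $\{u: h_u < -100\}$, a Bernoulli site percolation of density at most $1/100$; together with the connectivity of $\mathrm{Flip}(M)$ and Lemma~\ref{lem:count_surrounding_contour} this gives the bound, and $M < -M_1$ is symmetric. You instead run a Peierls-type stability estimate at the flip time itself: any minus component $C$ of any minimizer at $M \ge M_1$ must satisfy $\sum_{v\in C} h_v \le -100|C|$ (your energy computation and the observation that $\tau_{\TT_N^2,M^-}$ is an honest minimizer at $M$ are both correct), and you then use the Gaussian tail $2e^{-5000|C|}$ with the same counting lemma. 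Both arguments are sound; the paper's domination step is more elementary (no energy bookkeeping), while your version makes the $\epsilon$-independence of the constant explicit and, more substantively, supplies a proof of the almost-sure connectivity of $\mathrm{Flip}(M)$ simultaneously in $M$ — via the additive splitting $\Delta(S) = \Delta(D) + \Delta(S\setminus D)$ across components, the break-even identity $\Delta(D)=0$ for each component, and elimination of $M$ to obtain a non-degenerate Gaussian constraint — a fact the paper only asserts with ``by definition, almost surely.'' The sole presentational point left implicit is uniformity over small $N$: since $16e^{-5000}<1$ the union bound already gives $\P[\mathcal{G}_1^c]\le N^{-200}$ for all $N\ge 2$ (and $N=1$ is trivial), or one can simply enlarge $C$, but this deserves a sentence.
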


\begin{proof}
    Let us first consider the case of $M > M_1$. \response{Note that any vertex $u \in \TT_N^2$ has at most $4$ minus neighbors. Therefore by flipping the spin at $u$ and the definition of the ground state, we know that if $-4 + M_1 + \epsilon h_u \geq 0$, then $\tau_{\TT_N^2,M}(u)=1$ for all $M > M_1$.} So, $\cup_{M^{\prime} > M_1} {\rm Flip}(M^{\prime})$ is stochastically dominated by a Bernoulli site percolation on $\TT_N^2$ with open probability
    \begin{equation*}
        \P\left[ -4 + M_1 + \epsilon h_u \leq 0 \right] = \P\left[ h_u\leq -100 \right] \leq 100^{-1}.
    \end{equation*}
    \response{By Lemma~\ref{lem:connect-flip}}, almost surely, ${\rm Flip}(M)$ is connected for all $M \in \mathbb{R}$. Therefore, for all $M > M_1$, Flip$(M)$ must be contained in a connected subset of $\cup_{M^{\prime} > M_1} {\rm Flip}(M^{\prime})$. This together with an elementary counting argument implies the lemma, as we now elaborate. Fix a constant $A$ to be chosen.
    For each fixed cluster with size $k$, the probability that it is open in a Bernoulli site percolation with open probability $100^{-1}$ is $100^{-k}$. Combining these arguments with Lemma~\ref{lem:count_surrounding_contour}, we obtain
    $$
    \mathbb{P}\big[ \max_{M > M_1} \left| \mathrm{Flip}(M) \right| \geq A \log N \big] \leq \sum_{k \geq A \log N}|\TT_N^2| \times 4^{2k - 2} \times 100^{-k}. 
    $$ 
    Taking a large constant $A$ yields the lemma in the case of $M > M_1$. The case of $M < -M_1$ can be treated similarly by observing that if $4 - M_1 + \epsilon h_u \leq 0$, then $\tau_{\TT_N^2,M}(u)=-1$ for all $M < -M_1$. \qedhere
\end{proof}

Now we deal with the case of $-M_1 \leq M \leq M_1 $. We begin with a lemma which states that with probability close to 1, the ground state configuration will not change in a fixed small box in a small time interval of length $N^{-1}$. For an integer $L \geq 1$, $u \in \TT_N^2$, and any spin configuration $\xi \in \{-1,1\}^{\partial_o B_L(u)}$, let $\tau_{B_L(u), M}^\xi$ be the ground state with respect to the RFIM Hamiltonian on $B_L(u)$ with boundary condition $\xi$ and disorder $\{h_v\}$. More precisely, the Hamiltonian is given by
    $$
    H_{B_L(u), M}^\xi(\sigma) := - \sum_{\substack{v \sim w\\ v,w \in B_L(u)}} \sigma_v \sigma_w - \sum_{v \in B_L(u)} (M + \epsilon h_v) \sigma_v - \sum_{\substack{v \sim w\\ v \in \partial_i B_L(u), w \in \partial_o B_L(u)}} \sigma_v \xi_w\,,
    $$
where $\sigma \in \{-1,1\}^{B_L(u)}$. Note that $\{h_v\}$ is the same as in~\eqref{eq:def-hamiltonian}.

\begin{lemma}
\label{lem:2d-m-large}
    Let $d = 2$ and fix any $\epsilon, \rho > 0$. There exists an integer $L>0$ that depends on $\rho$ and $\epsilon$ such that for all sufficiently large $N$, any vertex $u \in \TT_N^2$, and $|t| \leq M_1$:
    \begin{equation}
        \label{eq:lem-2d-m-large}
        \mathbb{P}\Big[ \tau_{B_{2L}(u), M}^{\xi} = \tau_{B_{2L}(u), M'}^{\xi'} \mbox{ in }B_L(u) \quad \forall \xi,\xi' \in \{-1,1\}^{\partial_o B_{2L}(u)}, t \leq M,M' \leq t + N^{-1} \Big] \geq 1- \rho.
    \end{equation}
\end{lemma}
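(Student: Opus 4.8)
The plan is to exploit the exponential decay of spin correlations in the two-dimensional RFIM, following \cite{DingXia, AizenmanHarelPeled}, to show that the ground state in a box of fixed radius $L$ is (with high probability) insensitive both to the boundary condition on a larger box $B_{2L}(u)$ and to perturbing $M$ within a window of length $N^{-1}$. First I would recall the quantitative statement of the correlation-decay results: for the RFIM on $\mathbb{Z}^2$ (or a large box) at noise intensity $\epsilon$, and for a fixed $M$, the influence of the boundary condition at distance $r$ on the spin at a given vertex decays like $e^{-c(\epsilon) r}$. Since $M_1 = 4 + 100\epsilon$ is a fixed constant, the decay rate $c(\epsilon)$ can be taken uniform over $|M| \le M_1$ (or one simply absorbs the bounded shift $M$ into the field and notes the bound depends only on $\epsilon$ and the bounded range of $M$). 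Concretely, there is a constant $c = c(\epsilon) > 0$ such that for any $L$, any vertex $v \in B_L(u)$, and any two boundary conditions $\xi, \xi'$ on $\partial_o B_{2L}(u)$,
$$
\mathbb{P}\big[\tau^{\xi}_{B_{2L}(u), M}(v) \neq \tau^{\xi'}_{B_{2L}(u), M}(v)\big] \le e^{-cL}
$$
uniformly in $|M| \le M_1$; summing over the at most $(2L+1)^2$ vertices $v \in B_L(u)$ gives that the event ``$\tau^{\xi}_{B_{2L}(u),M} = \tau^{\xi'}_{B_{2L}(u),M}$ in $B_L(u)$ for all $\xi, \xi'$'' fails with probability at most $(2L+1)^2 e^{-cL}$, which is $\le \rho/2$ for $L$ large depending on $\rho, \epsilon$.

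Next I would handle the perturbation in $M$. Fix the boundary condition; the point is that on the finite box $B_{2L}(u)$, as $M$ ranges over $[t, t+N^{-1}]$, the ground state $\tau^{\xi}_{B_{2L}(u), M}$ changes at only finitely many values of $M$ (the ground state is a monotone càdlàg process in $M$ on a finite graph, with at most $|B_{2L}(u)|$ jump times). At each such jump time at least one spin flips, and a spin at vertex $v$ can flip only when the ``local field'' it feels crosses an integer-like threshold, i.e. when $M + \epsilon h_v$ lies within $O(1)$ of a specific value determined by the (finitely many possible) neighbouring-spin patterns. Equivalently, the set of jump times of $M \mapsto \tau^{\xi}_{B_{2L}(u),M}$ is contained in a union of $O_L(1)$ points of the form $(\text{integer}) - \epsilon h_v$, $v \in B_{2L}(u)$ — more carefully, one observes that $\tau^\xi_{B_{2L}(u),M}$ can be read off from comparing finitely many linear-in-$M$ energies, so the jump times are among the $O_L(1)$ solutions of equalities between these affine functions of $M$, each of which is a single point whose location has a density bounded by a constant (coming from the Gaussian density of the $h_v$'s involved, noting $\epsilon$ is fixed). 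Hence the probability that any jump time falls in a given interval of length $N^{-1}$ is at most $O_L(1)\cdot N^{-1} \le \rho/2$ once $N$ is large enough (depending on $L$, hence on $\rho, \epsilon$). A union bound over these two events — discrepancy between boundary conditions, and presence of a jump time in $[t, t+N^{-1}]$ — gives \eqref{eq:lem-2d-m-large}. Note one subtlety: the statement also allows comparing $\tau^{\xi}_{\cdot,M}$ with $\tau^{\xi'}_{\cdot,M'}$ for \emph{different} boundary conditions \emph{and} different times; this follows by combining the two estimates through the chain $\tau^\xi_{B_{2L}(u),M} = \tau^\xi_{B_{2L}(u),t} = \tau^{\xi'}_{B_{2L}(u),t} = \tau^{\xi'}_{B_{2L}(u),M'}$ in $B_L(u)$, each equality holding on the respective high-probability event.

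The main obstacle I anticipate is making the "no jump in a window of length $N^{-1}$'' step fully rigorous with a \emph{uniform} constant: one must verify that the number of candidate jump times is bounded by a function of $L$ only (not of $N$ or the disorder), and that each candidate's law has a density bounded uniformly — both of which hold because the box $B_{2L}(u)$ is finite and the relevant thresholds are affine functions of $M$ with $\pm 1$ or $\pm\epsilon$ coefficients, but the bookkeeping of which affine comparisons govern the ground state (and hence which events to union-bound over) needs to be set up carefully, perhaps by directly arguing that $\tau^\xi_{B_{2L}(u),M}(v)$ is constant in $M$ on the complement of a random finite set of points whose counting function is deterministically bounded by $|B_{2L}(u)| \cdot 2^{|B_{2L}(u)|}$ or similar. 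The correlation-decay input, by contrast, is essentially a black box once we cite \cite{DingXia, AizenmanHarelPeled} and check the uniformity over $|M| \le M_1$, which is immediate since $M_1$ is a fixed constant and the bound there depends only on $\epsilon$.
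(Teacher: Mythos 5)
Your proposal is correct, and it shares the paper's overall decomposition (insensitivity to the boundary condition via exponential decay of correlations, plus stability of the finite-box ground state over a time window of length $N^{-1}$), but the second step is handled by a genuinely different argument. The paper first uses monotonicity to sandwich every $\tau^{\xi}_{B_{2L}(u),M}$, $M\in[t,t+N^{-1}]$, between $\tau^{-}_{B_{2L}(u),t}$ and $\tau^{+}_{B_{2L}(u),t+N^{-1}}$, which removes the quantifier over boundary conditions, and then shows no spin in $B_L(u)$ flips in $[t,t+N^{-1}]$ under the plus boundary condition by a soft pigeonhole argument: each spin flips at most once (monotonicity in $M$), so among $\sim K$ disjoint windows at most one contains the flip, and a Radon--Nikodym (quasi-invariance under shifting $h$ by $s$, $|s|\le KN^{-1}$) bound makes all windows comparable, giving probability $O(1/K)$ per spin without any explicit density computation. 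You instead bound directly the probability that any jump time of $M\mapsto\tau^{\xi}_{B_{2L}(u),M}$ lands in the window: the jump times lie among the finitely many crossing points of the affine-in-$M$ energies of configurations on $B_{2L}(u)$, each crossing point is a nondegenerate Gaussian (variance bounded below in terms of $\epsilon$ and $L$) divided by a nonzero integer magnetization difference, hence has density $O_{L,\epsilon}(1)$, and a union bound over the $O_L(1)$ pairs and boundary conditions gives failure probability $O_{L,\epsilon}(N^{-1})$. This anti-concentration route is more explicit and yields a quantitative rate $N^{-1}$ (versus the paper's $o(1)$ as $K\to\infty$), at the cost of enumerating configurations and boundary conditions and of a union bound over $\xi,\xi'$ in place of the paper's monotone sandwich; both are valid, and your chaining $\tau^{\xi}_{M}=\tau^{\xi}_{t}=\tau^{\xi'}_{t}=\tau^{\xi'}_{M'}$ on $B_L(u)$ correctly assembles the two estimates (the comparison of arbitrary $\xi,\xi'$ at fixed $t$ reduces to the plus/minus case of \cite{AizenmanHarelPeled} by monotonicity, as you implicitly use).
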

\begin{proof}
    When $\xi$ is all plus (resp. minus), we write $\tau_{B_L(u), M}^\xi$ as $\tau_{B_L(u), M}^+$ (resp. $\tau_{B_L(u), M}^-$). By monotonicity, for all $\xi \in \{-1,1\}^{\partial_o B_{2L}(u)}$ and $t \leq M \leq t + N^{-1}$, we have
    $$
    \tau_{B_{2L}(u), t}^- \leq \tau_{B_{2L}(u), M}^\xi \leq \tau_{B_{2L}(u), t + N^{-1}}^+.
    $$
    Therefore, it suffices to show that $\mathbb{P}[\tau_{B_{2L}(u), t}^-(v) = \tau_{B_{2L}(u), t + N^{-1}}^+(v) \quad \forall v \in B_L(u)] \geq 1 - \rho$ which will follow from the following two claims:
    \begin{equation}
        \label{eq:lem2.3-1}
        \mathbb{P}\Big[ \tau_{B_{2L}(u),t}^-(v) =  \tau_{B_{2L}(u),t}^+(v) \quad \forall v \in B_L(u) \Big] \geq 1 - \frac{\rho}{2};
    \end{equation}
    \begin{equation}
        \label{eq:lem2.3-2}
        \mathbb{P}\Big[ \tau_{B_{2L}(u),t}^+(v) =  \tau_{B_{2L}(u), t + N^{-1}}^+(v)\quad \forall v \in B_L(u) \Big] \geq 1 - \frac{\rho}{2}.
    \end{equation}
    Next, we prove~\eqref{eq:lem2.3-1} and \eqref{eq:lem2.3-2} separately. Claim~\eqref{eq:lem2.3-1} follows from the exponential decay of spin correlations in the RFIM~\cite{DingXia, AizenmanHarelPeled}. Using \cite[Theorem 1.1]{AizenmanHarelPeled}, there exists a constant $C>0$ that depends only on $\epsilon$ such that
    $$
    \mathbb{P}\Big[ \tau_{B_{2L}(u),t}^-(v) =  \tau_{B_{2L}(u),t}^+(v)\Big] \geq 1 - C e^{-L/C} \quad \mbox{for all } v \in B_L(u), t \in \mathbb{R}\,.
    $$
    Taking a union bound over $v \in B_L(u)$ and then selecting a large constant $L$ yields~\eqref{eq:lem2.3-1}.

    Now we prove~\eqref{eq:lem2.3-2} for any fixed $L$ and sufficiently large $N$. The idea is that each spin can flip at most once, however, the spin has almost equal probability to flip at many $N^{-1}$ time intervals near $t$. Therefore, the probability of any spin in $B_L(u)$ flipping in the time interval $[t, t + N^{-1}]$ will be arbitrarily close to 0 when $N$ is sufficiently large. 
    
    We next implement the proof idea above. Fix $L \geq 1$ and let $K =  \max \{100L^2/\rho, 100\}$. Then there exists an integer $N_1$ that depends on $L, \rho$ such that for all $N \geq N_1$, with probability at least $1 - K^{-1}$, \response{the Radon-Nikodym derivative of $\{\epsilon h_v\}_{v \in B_{2L}(u)}$ and $\{\epsilon h_v + s\}_{v \in B_{2L}(u)}$ is between $[1/2,2]$ for all $- KN^{-1} \leq s \leq KN^{-1}$.} Denote this event by $\mathcal{E}$. Fix $-M_1 \leq t \leq M_1$. Then for all $v \in B_L(u)$ and $t - K N^{-1} \leq s \leq t + K N^{-1}$, \response{by regarding $ \{t+\epsilon h_v\}_{v\in B_{2L}(u)} $ as $ \{s + \epsilon h_v\}_{v\in B_{2L}(u)} $,} we have:
    $$
    \mathbb{P}\Big[ \{ \tau_{B_{2L}(u),t}^+(v) \neq  \tau_{B_{2L}(u), t + N^{-1}}^+(v)\} \cap \mathcal{E} \Big] \leq 2 \mathbb{P}\Big[ \{  \tau_{B_{2L}(u), s}^+(v) \neq  \tau_{B_{2L}(u), s + N^{-1}}^+(v) \} \cap \mathcal{E} \Big].
    $$
    Since each spin can flip at most once, we have
    $$
    \sum_{s \in t + N^{-1}(\mathbb{Z} \cap [-K,K])} \mathbb{P}\Big[ \{  \tau_{B_{2L}(u), s}^+(v) \neq  \tau_{B_{2L}(u), s + N^{-1}}^+(v) \} \cap \mathcal{E} \Big] \leq 1\,.
    $$
    Therefore, 
    \begin{equation*}
        \begin{aligned}
            & \quad\ \mathbb{P}\Big[  \tau_{B_{2L}(u),t}^+(v) \neq  \tau_{B_{2L}(u), t + N^{-1}}^+(v)\ \Big] \\&\leq \mathbb{P}\Big[ \{ \tau_{B_{2L}(u),t}^+(v) \neq  \tau_{B_{2L}(u), t + N^{-1}}^+(v)\} \cap \mathcal{E} \Big] + \mathbb{P}[\mathcal{E}^{c}] \\& \leq \frac{2}{K} + \frac{1}{K}.
        \end{aligned}
    \end{equation*}
    Taking a union bound over $v \in B_L(u)$ yields~\eqref{eq:lem2.3-2}. This concludes the desired lemma.
\end{proof}

We are now ready to control $ {\rm Flip}(M) $ for $ |M|\leq M_1 $, as incorporated in the next proposition. 

\begin{proposition}
    \label{prop:2d-small-m}
    For $d = 2$ and any $\epsilon > 0$, there exists a constant $C = C(\epsilon)>0$ such that
    $$
    \mathbb{P}\left[ \max_{\left|M\right| \leq M_1}\left| \mathrm{Flip}(M) \right| \leq C \log N \right] \geq 1 - N^{-200} \quad \forall N \geq 1.
    $$
\end{proposition}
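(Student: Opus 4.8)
The plan is to combine the local stability estimate of Lemma~\ref{lem:2d-m-large} with a discretization of the time interval $[-M_1,M_1]$ and a coarse counting argument. Partition $[-M_1,M_1]$ into at most $2M_1N+1$ intervals $I_j=[t_j,t_j+N^{-1}]$ of length $N^{-1}$ with $|t_j|\le M_1$; every flipping time $M$ with $|M|\le M_1$ lies in some $I_j$. Fix a small constant $\rho=\rho(\epsilon)>0$ to be chosen, let $L=L(\epsilon)$ be the integer supplied by Lemma~\ref{lem:2d-m-large}, and tile $\TT_N^2$ by the boxes $B_L(u)$ with $u$ ranging over the sublattice $(2L+1)\mathbb{Z}^2\cap\TT_N^2$ (these boxes partition $\TT_N^2$ up to a negligible boundary effect when $2L+1\nmid N$, affecting only constants). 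For a tile center $u$ and an interval $I_j$, let $\mathcal{B}_{u,j}$ be the event that $\tau^{\xi}_{B_{2L}(u),M}\neq\tau^{\xi'}_{B_{2L}(u),M'}$ somewhere on $B_L(u)$ for some $\xi,\xi'\in\{-1,1\}^{\partial_o B_{2L}(u)}$ and some $M,M'\in I_j$. By Lemma~\ref{lem:2d-m-large}, $\mathbb{P}[\mathcal{B}_{u,j}]\le\rho$ for all large $N$, uniformly in $u$ and $j$.

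The key localization step is the claim that if $v\in\mathrm{Flip}(M)$ with $M\in I_j$ and $v\in B_L(u)$, then $\mathcal{B}_{u,j}$ occurs. Indeed, by the standard monotonicity of RFIM ground states (a restriction of a torus ground state is a ground state for the induced boundary condition, which lies between all-minus and all-plus), one has $\tau^-_{B_{2L}(u),s}\le\tau_{\TT_N^2,s}\le\tau^+_{B_{2L}(u),s}$ on $B_{2L}(u)$ for every time $s$. Taking $s\nearrow M$ inside $I_j$ gives $\tau^-_{B_{2L}(u),s}(v)\le\tau_{\TT_N^2,M^-}(v)=-1$, while $\tau^+_{B_{2L}(u),M}(v)\ge\tau_{\TT_N^2,M}(v)=1$; since $s,M\in I_j$, this witnesses $\mathcal{B}_{u,j}$ (with $\xi$ all-minus at time $s$ and $\xi'$ all-plus at time $M$). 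Consequently, if $|\mathrm{Flip}(M)|\ge k$ for some $M\in I_j$, the set $T$ of tiles meeting $\mathrm{Flip}(M)$ is connected in the king graph on tiles (adjacent vertices of $\TT_N^2$ lie in king-adjacent tiles), has $|T|\ge m:=\lceil k/(2L+1)^2\rceil$, and every tile in $T$ carries the event $\mathcal{B}_{\cdot,j}$; in particular some connected tile set of size exactly $m$ does.

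To turn this into a probability bound I would color the tiles with $4$ colors by the parity of each coordinate of $u/(2L+1)$, so that two same-colored centers are at $\ell^\infty$-distance $\ge 2(2L+1)>4L$; then $B_{2L}(u)$ and $B_{2L}(u')$ are disjoint, hence $\mathcal{B}_{u,j},\mathcal{B}_{u',j}$ are independent. A connected tile set of size $m$ all of whose events occur must contain $\ge m/4$ tiles of a single color, an independent family, so that event has probability $\le\rho^{m/4}$. Counting (via the spanning-tree argument of Lemma~\ref{lem:count_surrounding_contour} applied to the tile graph, of degree $\le 8$) at most $|\TT_N^2|\,8^{2m-1}$ connected tile sets of size $m$, and summing over $\le 2M_1N+1$ intervals, a union bound gives
\[ \mathbb{P}\big[\exists\, M\ \text{with}\ |M|\le M_1,\ |\mathrm{Flip}(M)|\ge k\big]\ \le\ (2M_1N+1)\,|\TT_N^2|\,8^{2m-1}\,\rho^{m/4}. \]
Now fix $\rho=\rho(\epsilon)$ with $64\,\rho^{1/4}\le\tfrac12$; the right-hand side is then at most $3M_1N^3\,2^{-m}\le 3M_1N^3\,N^{-C(\log 2)/(2L+1)^2}$ for $k=C\log N$. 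Choosing $C=C(\epsilon)$ large enough that $C(\log 2)/(2L+1)^2\ge 204$ makes this $\le N^{-200}$ for all large $N$; for the finitely many remaining $N$ one enlarges $C$ so that $C\log N\ge N^2\ge\mathcal{M}_{\TT_N^2}$, which holds trivially. This proves the proposition.

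I expect the main obstacle — beyond the hard analytic input (exponential decay of spin correlations) already packaged into Lemma~\ref{lem:2d-m-large} — to be the localization step: one must make sure that a flip on the torus genuinely forces a change of the \emph{local} ground state over the short interval $I_j$ for some boundary condition, which rests on the restriction/monotonicity property of RFIM ground states. The secondary nuisance is the dependence among the overlapping observation boxes $B_{2L}(u)$, which is what forces the detour through a bounded coloring into independent sub-families before the union bound can be applied.
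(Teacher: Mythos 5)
Your proposal is correct and follows essentially the same route as the paper's proof: discretize $[-M_1,M_1]$ into intervals of length $N^{-1}$, use Lemma~\ref{lem:2d-m-large} to make each scale-$L$ box ``stable'' on such an interval with probability $1-\rho$, localize any flip to an unstable box via the monotone sandwich $\tau^-_{B_{2L}(u),\cdot}\le\tau_{\TT_N^2,\cdot}\le\tau^+_{B_{2L}(u),\cdot}$, and then exploit connectivity of $\mathrm{Flip}(M)$ together with a counting of connected coarse-grained sets and a union bound over the time grid. The only real difference is how you handle dependence among the boxes: the paper covers $\TT_N^2$ by boxes centered on $L\mathbb{Z}^2$ and invokes the Liggett--Schonmann--Stacey domination by Bernoulli site percolation, while you use a disjoint tiling plus a $4$-coloring to extract an independent subfamily; note that the parity coloring can fail across the torus seam when $2(2L+1)\nmid N$ (same-colored tiles may be close there), but this is easily repaired, e.g.\ by greedily extracting an independent set of size $\geq m/D$ from the bounded-degree dependency graph of the events $\mathcal{B}_{u,j}$, so it does not affect the validity of the argument.
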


\begin{proof}
    Fix $\epsilon>0$. It suffices to show that there exists a constant $C' > 0$ that depends on $\epsilon$ such that for all sufficiently large $N$
    \begin{equation}
        \label{eq:prop-2d-small-1}
        \mathbb{P}\Big[ \max_{t \leq M \leq t + N^{-1}} |{\rm Flip}(M)| \geq C' \log N \Big] \leq N^{-300} \quad \forall -M_1 \leq t \leq M_1\,.
    \end{equation}
    Indeed, provided with~\eqref{eq:prop-2d-small-1}, we can take a union bound over $t \in (N^{-1} \mathbb{Z} \cap [-M_1, M_1]) \cup \{-M_1\}$, and derive Proposition~\ref{prop:2d-small-m} for large $N$. We can then enlarge the constant $C$ in Proposition~\ref{prop:2d-small-m} such that it holds for all small $N$.
    
    We next prove~\eqref{eq:prop-2d-small-1}. Let $\rho>0$ be a small constant to be chosen and let $L$ be the integer from Lemma~\ref{lem:2d-m-large} with the same $\epsilon$ and $\rho$. Fix $t \in [-M_1, M_1]$. Consider the set $\mathcal{L} := L\mathbb{Z}^2 \cap \TT_N^2$, where $L\mathbb{Z}^2 \cap \TT_N^2$ denotes $0 + (L\mathbb{Z} \cap [0,N-1])^2$, and here $0$ is a fixed vertex on $\TT_N^2$.
    The $B_L$-boxes centered at the vertices in $\mathcal{L}$ can cover $\TT_N^2$, and each vertex in $\mathcal{L}$ has at most 8 neighbors, where we consider two vertices in $\mathcal{L}$ connected if their $|\cdot|_\infty$-distance on $\TT_N^2$ is at most $L$.
    For $u \in \mathcal{L}$, we call the box $B_L(u)$ \textbf{$L$-good} if it satisfies the condition in~\eqref{eq:lem-2d-m-large} with the same $L, u, t$. Therefore, every spin in an $L$-good box $B_L(u)$ will not flip in the ground state evolution during the time interval $[t, t + N^{-1}]$, and thus, $\cup_{t \leq M \leq t+N^{-1}} {\rm Flip}(M)$ cannot intersect any $L$-good box. \response{See Figure~\ref{fig:2.4} for an illustration of the preceding argument.}

    \begin{figure}[h]
        \centering
        \includegraphics[width=0.5\textwidth]{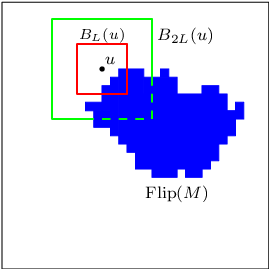}
        \caption{\response{An illustration that $ {\rm Flip}(M) $ does not intersect any $L$-good box. The subset $ {\rm Flip}(M) $ is colored in blue, the boundary of $ B_{2L}(u) $ is colored in green and dashed green, and the boundary of $B_{L}(u)$ is colored in red. The change of boundary condition of $ B_{2L}(u) $ (the dashed green) affects the configuration in $ B_L(u) $, and thus $B_L(u)$ is not $L$-good.}}
        \label{fig:2.4}
    \end{figure}

    By Lemma~\ref{lem:2d-m-large}, the probability for a $B_L(u)$ box being $L$-good is at least $1- \rho $. Moreover, this event only depends on the disorder $\{h_v\}$ restricted to $B_{2L}(u)$. Using \cite[Theorem 0.0]{Liggett1997DominationBP}, we can choose a sufficiently small constant $\rho > 0$ such that the measure of being $L$-good boxes on $\mathcal{L}$ stochastically dominates a Bernoulli site percolation with open probability $1 - 100^{-1}$. Next, we will show that \eqref{eq:prop-2d-small-1} follows from this fact and a counting argument. 
    
    We claim that if $|{\rm Flip}(M)| \geq A \log N$ for some $M \in [t, t + n^{-1}]$ and $A>0$, then there exists a connected subset of $ \mathcal{L} $ with size at least $ A\log N/(2L+1)^2 $ such that none of the $B_L$-boxes centered at the vertices in this subset is $L$-good. Suppose that $|{\rm Flip}(M)| \geq A \log N$ for some $M \in [t, t + N^{-1}]$ and a constant $A>0$. Consider all the vertices in $\mathcal{L}$ such that the $B_L$-boxes centered at these vertices intersect ${\rm Flip}(M)$. Then, none of these $B_L$-boxes is $L$-good. Since ${\rm Flip}(M)$ is connected (Lemma~\ref{lem:connect-flip}), we see that these vertices form a connected subset of $\mathcal{L}$ (recall that two vertices in $\mathcal{L}$ are connected if their $|\cdot|_\infty$-distance on $\TT_N^2$ is at most $L$). Furthermore, the size of this subset is at least $A \log N / (2L+1)^2$ because the union of the $B_L$-boxes centered at these vertices must contain ${\rm Flip}(M)$. This proves the claim. Using a similar counting argument as in the proof of Proposition~\ref{prop:2d-large-m}, we can show that the probability of this event is smaller than $N^{-300}$ for a sufficiently large constant $A$. This concludes \eqref{eq:prop-2d-small-1}. \qedhere

\end{proof}

\begin{proof}[Proof of Theorem~\ref{thm:2d-ground-state}]
Combine Propositions~\ref{prop:2d-large-m} and \ref{prop:2d-small-m}.
\end{proof}


\section{Ground state evolution in {higher dimensions}}
\label{sec:3d}

In this section, we study the ground state evolution on $ \mathbb{T}^d_N $ for dimensions $d \geq 3$. In contrast to the two-dimensional case, there is a phase transition as the disorder strength $ \epsilon $ increases, as incorporated in Theorem \ref{thm:3d-ground-state}: on the one hand, for large $ \epsilon $ the maximal size of the flipping set is at most logarithmic with high probability; on the other hand, for small $ \epsilon $ the maximal size of the flipping set is proportional to the size of the torus with high probability. Thus, a global avalanche occurs for small disorder strength. In Section \ref{subsec:3d-large}, we will prove the case of strong disorder for Theorem \ref{thm:3d-ground-state} (Claim~\eqref{claim-3d-ground-state-1}), and in Sections \ref{subsec:3d-proof-strategy}--\ref{subsec:proof-avalanche} we will prove the case of weak disorder for Theorem \ref{thm:3d-ground-state} (Claim~\eqref{claim-3d-ground-state}).

Throughout Section~\ref{sec:3d}, we will use with probability close to 1 to express with probability at least $ 1-\delta $ for some prefixed $ \delta $, and use with high probability to express with probability at least $ 1-o(1) $ as the torus size $ N $ tends to infinity.

\subsection{{No avalanche with strong disorder}}
\label{subsec:3d-large}

The proof follows from a comparison to a Bernoulli site percolation similar to that of Proposition~\ref{prop:2d-large-m}. \response{Recall from the beginning of Section~\ref{subsec:gs-evolution} that $\tau_{\TT_N^d, M, \epsilon h}$ is the ground state with external field $M$ and disorder $\{ h_v \}$, i.e., the minimizer of the Hamiltonian~\eqref{eq:def-hamiltonian} defined on $\TT_N^d$.}

\begin{proof}[Proof of \eqref{eq:thm-3d-ground-state}]
    Observe that for any $M \in \mathbb{R}$, the spin at a vertex $v$ must have flipped to plus if $-2d + M + \epsilon h_v > 0$ and must stay minus if $2d + M + \epsilon h_v < 0$. Here $2d$ is the number of neighbors of a vertex in $\TT_N^d$. So, the spin at each vertex $v$ flips in the time interval $[-2d-\epsilon h_v, 2d-\epsilon h_v]$. In particular two neighboring vertices $u, v$ cannot flip at the same time if $|\epsilon h_u - \epsilon h_v| > 4d$.
    
    We call a vertex $v \in \TT_N^d$ \textbf{bad} if $|\epsilon h_u - \epsilon h_v| > 4d$ for all its neighboring vertices $u$. It is easy to see that the probability of a vertex being bad is close to 1 when $\epsilon$ is large. Since the event of a vertex $v$ being bad only depends on {$ \{ h_u:|u-v|_1\leq 1 \} $}, using~\cite[Theorem 0.0]{Liggett1997DominationBP} we can select a sufficiently large constant $C$ such that for all $\epsilon \geq C$, the measure of bad vertices stochastically dominates a Bernoulli site percolation with open probability at least $1 - (100d)^{-1}$. By the property of bad vertices and the connectivity of flipping sets, a flipping set with more than one vertex cannot contain any bad vertex.
    Moreover, the number of connected subsets on $\TT_N^d$ with size $k$ is at most $|\TT_N^d| \times (2d)^{2k-2}$ by Lemma~\ref{lem:count_surrounding_contour}.
    Therefore, from the aforementioned stochastic domination by Bernoulli site percolation, we have for all $\epsilon \geq C$
    $$
    \mathbb{P}\big[ \mathcal{M}_{\TT_N^d} \geq A \log N \big] \leq \sum_{k \geq A \log N} |\TT_N^d| \times (2d)^{2k-2} \times (100 d)^{-k}
    $$
    for any $A>0$. Taking $A = 100d$, we see that the right-hand side above can be upper-bounded by $N^{-100}$, thereby completing the proof.
\end{proof}

\subsection{{Proof strategy for avalanche with weak disorder}}
\label{subsec:3d-proof-strategy}

We now introduce some notation and sketch the proof strategy for avalanche with weak disorder as in \eqref{eq:thm-3d-ground-state-2} of Theorem \ref{thm:3d-ground-state}. We refer to Section~\ref{subsec:proof-avalanche} for details. We assume that $N \geq 100$ is sufficiently large which may depend on the disorder strength $\epsilon$. A set $A \subset \TT_N^d$ is called simply connected if both $A$ and $\TT_N^d \setminus A$ are connected. Let \begin{equation}
\mathcal{A}_N:= \Big{\{} \mbox{all simply connected sets in } \TT_N^d \mbox{ with cardinality at most } {\frac{1}{2}}|\TT_N^d|\Big{\}}.\label{eq:def-A-N}\end{equation}
We will frequently use the following isoperimetric inequality taken from~\cite{Iso-torus}: there exists a constant $C_2>0$ such that for all $A \in \mathcal{A}_N$, 
\begin{equation}
    \label{eq:isoperimetric}
    |\partial_e A| \geq C_2 |A|^{\frac{d-1}{d}}.
\end{equation}
For a set $A \in \mathcal{A}_N$, we will call its edge boundary $\partial_e A$ a (spin) interface for a spin configuration $\sigma \in \{-1,1\}^{\TT_N^d}$ if $\sigma|_{\partial_i A} = -1$ and $\sigma|_{\partial_o A} = 1$, or $\sigma|_{\partial_i A} = 1$ and $\sigma|_{\partial_o A} = -1$. A vertex $v$ is said to be enclosed by an interface if $v$ belongs to such {an} $A$. In this section, we will write $\tau_M$ and $H_M$ short for $\tau_{\TT_N^d, M, \epsilon h}$ and $H_{\TT_N^d, M, \epsilon h}$, respectively.

To prove that a global avalanche occurs in the time interval $[- (\log N) N^{-d/2}, (\log N) N^{-d/2}]$, it suffices to show that most of the vertices are not enclosed by any spin interface in this time interval. By~\cite{DingZhuang}, a sufficient condition for the vertex 0 not to be enclosed by any spin interface is (see Lemma~\ref{lem:ground-state-flip} and see Step 1 in the proof of Theorem~\ref{thm:3d-ground-state} Claim~\eqref{claim-3d-ground-state} for more explanation on this):
\begin{equation}
\label{eq:strategy}
\sup_{0 \in A, A \in \mathcal{A}_N} \frac{|\mathcal{H}_M^A - \mathcal{H}_M|}{|\partial_e A|} \leq \frac{3}{2} \quad \mbox{for all }|M| \leq (\log N) N^{-d/2},
\end{equation}
where $\tau_M^A$ and $\mathcal{H}_M^A$ are the ground state and the corresponding Hamiltonian with respect to the disorder field $h^A$ obtained from flipping $\{h_v\}$ on the set $A$ (see \eqref{eq:def-flip-disorder} and~\eqref{eq:def-hA} below). 

As shown in~\cite{DingZhuang}, it follows from~\cite{FisherFrolichSpencer} that when $\epsilon$ is small, \eqref{eq:strategy} holds for a fixed $M$ with probability close to 1. However, it cannot be straightforwardly proved that such an inequality holds simultaneously for all $|M| \leq (\log N) N^{-d/2}$ since it seems to be difficult to apply a union bound over $ M $. A natural attempt is to decompose the interval $ \left[ -(\log N) N^{-d/2}, (\log N)N^{-d/2} \right] $ into smaller intervals and aim to prove that $ \frac{\mathcal{H}^A_M-\mathcal{H}_M}{|\partial_e  A|} $ is essentially the same as $ M $ varies within each small interval. Indeed, this can be implemented when $|\partial_e A| \geq (\log N)^{C_1}$ (where $C_1$ is a large constant defined in Lemma~\ref{lem:control-change} below): we can divide $[-(\log N) N^{-d/2}, (\log N) N^{-d/2}]$ into time intervals of length $N^{-d}$, and prove \eqref{eq:strategy} by applying tail estimates in \cite{FisherFrolichSpencer} with a union bound; see Lemma~\ref{lem:A-large} below. However, when $ |\partial_e A| $ is small, the tail bound from \cite{FisherFrolichSpencer} is only moderate, and thus we cannot afford the cost of a union bound. Instead of proving \eqref{eq:strategy} simultaneously for all $M$ (which is possible but would require considerable effort), we will use another strategy.

As mentioned above, we can prove that when $\epsilon$ is sufficiently small, with high probability,
\begin{equation}
\label{eq:strategy-2}
\sup_{A \in \mathcal{A}_N, |\partial_e A| \geq (\log N)^{C_1}} \frac{|\mathcal{H}_M^A - \mathcal{H}_M|}{|\partial_e A|} \leq \frac{3}{2} \quad \mbox{for all } |M| \leq (\log N) N^{-d/2}.
\end{equation}
From this, we can deduce that there exists a strongly percolated spin cluster in $\tau_M$ which we call a global spin cluster (see Definition~\ref{def:global-spin-config}). However, it is not necessarily the case that this spin cluster contains most of the vertices. To overcome this, we introduce the notion of good vertices (see~\eqref{eq:strategy-3} below) which will imply that a good vertex belongs to the global spin cluster in $\tau_M$ for all $|M| \leq (\log N) N^{-d/2}$. Moreover, we will show that with high probability, most of the vertices are good. 

To be precise, we call a vertex $u \in \TT_N^d$ \textbf{good} if for all $A \in \mathcal{A}_N$ with $u \in A$ and $A \subset B(u, 2 (\log N)^{C_1})$,
\begin{equation}\label{eq:strategy-3}
|\mathcal{H}_{B(u,2 (\log N)^{C_1}), M, \epsilon h}^\pm - \mathcal{H}_{B(u,2 (\log N)^{C_1}), M, \epsilon h^A}^{\pm} | \leq |\partial_e A| \quad \mbox{for all $|M| \leq (\log N) N^{-d/2}$},
\end{equation}
where $\mathcal{H}_{B(u,2 (\log N)^{C_1}), M, \epsilon h}^\pm$ (resp.\ $\mathcal{H}_{B(u,2 (\log N)^{C_1}), M, \epsilon h^A}^\pm$) denotes the ground state Hamiltonian of the RFIM on $B(u, 2(\log N)^{C_1})$ with plus or minus boundary condition and disorder field $h$ (resp.\ $h^A$). Once again, we can derive from~\cite{FisherFrolichSpencer} that when $\epsilon$ is small, a fixed vertex is good with probability close to 1. In this case, we can apply a union bound over $M$ as the Hamiltonian in the small box $B(u, 2 (\log N)^{C_1})$ is essentially the same in the time interval $[-(\log N) N^{-d/2}, (\log N) N^{-d/2}]$. Using the first and second moment estimates and the independence of good vertices, we will show in Lemma~\ref{lem:good-vertex} that with high probability, most of the vertices are good. Furthermore, we will use monotonicity to show that if $\tau_M$ has a global spin cluster and $u$ is a good vertex, then $u$ belongs to the global spin cluster (see Step 1 in the proof of Theorem~\ref{thm:3d-ground-state} Claim~\eqref{claim-3d-ground-state}). Combining these arguments, we obtain that with high probability, in the time interval $[- (\log N) N^{-d/2}, (\log N) N^{-d/2}]$, not only $\tau_M$ has a global spin cluster, but also most of the vertices are always in this global spin cluster. In addition, it is easy to see that with high probability, at least one quarter of the spins are minus (resp.\ plus) at $M = - (\log N) N^{-d/2}$ (resp.\ $M = (\log N) N^{-d/2}$). This implies that a global avalanche occurs at some random time $M_{\star} \in [- (\log N) N^{-d/2}, (\log N) N^{-d/2}]$, where most of the spins flip.

In Section~\ref{subsec:prelim-3d}, we present some preliminary results. In Section~\ref{subsec:proof-avalanche}, we prove the global avalanche with weak disorder.

\subsection{Preliminary results}\label{subsec:prelim-3d}

We first prove several basic lemmas about the RFIM that will be used in Section~\ref{subsec:proof-avalanche}. For $M \in \mathbb{R}$, let the Hamiltonian of the ground state be
\begin{equation}
\label{eq:def-flip-disorder}
\mathcal{H}_M = \mathcal{H}_M(\epsilon, \{ h_v \}, N):= H_M (\tau_M)\,.
\end{equation}
For any set $A \subset \TT_N^d$, let $ h^A $ be obtained from $ h $ by flipping the sign of disorder on $ A $. That is,
\begin{equation}\label{eq:def-hA}
h^A(v) := h(v) \mathbbm{1}_{v \notin A} - h(v) \mathbbm{1}_{v \in A} \quad \mbox{for all } v \in \TT_N^d.
\end{equation}Let $\tau^A_M$ and $\mathcal{H}^A_M$ be the ground state and the associated Hamiltonian with respect to the disorder $ h^A $.
\begin{lemma}
    \label{lem:ground-state-flip}
    The following holds for any $ A\subset\mathbb{T}^d_N $, $ M\in\mathbb{R} $, $ \epsilon>0 $ and any external field $ \{h_v\} $. If $\tau_M|_{\partial_i A} = -1$ and $\tau_M|_{\partial_o A} = 1$, or $\tau_M|_{\partial_i A} = 1$ and $\tau_M|_{\partial_o A} = -1$, then we have
    $$
    \mathcal{H}^A_M \leq \mathcal{H}_M - 2|\partial_e A| + 2|M| |A|\,.
    $$
\end{lemma}
\begin{proof}
    Consider $\tau' (v):= \tau_M(v) \mathbbm{1}_{v \notin A} - \tau_M(v) \mathbbm{1}_{v \in A}$ for $v \in \TT_N^d$. By definition, we have
    \begin{align*}
        \mathcal{H}^A_M &\leq H_{\TT_N^d, M ,\epsilon h^A}(\tau') = -\sum_{u \sim v} \tau' (u) \tau' (v) - \sum_{v \in \TT_N^d} (M+\epsilon h_v^A ) \tau' (v) \\& \leq \mathcal{H}_M - 2|\partial_e A| + 2|M| |A|\,.\qedhere
    \end{align*}
\end{proof}

The following lemma is essentially taken from \cite{FisherFrolichSpencer}.

\begin{lemma}
\label{lem:control-change}
    Let $d \geq 3$. Recall from \eqref{eq:def-A-N} the definition of $\mathcal{A}_N$. We have the following.
    \begin{enumerate}
    \item For any $\alpha > 0$, there exists a constant $c > 0$ that depends on $\alpha$ such that for all $0 < \epsilon < c$:
    $$
    \mathbb{P}\Big[\sup_{0 \in A, A \in \mathcal{A}_N} \frac{|\mathcal{H}_M^A - \mathcal{H}_M|}{|\partial_e A|} \leq 1 \Big] \geq 1 - \alpha \quad \forall N \geq 1, M \in \mathbb{R}\,.
    $$\label{lem3.3-claim1}
    \item There exists a constant $C_1 > 0$ such that for all $0 < \epsilon < C_1^{-1}$
    $$
    \mathbb{P} \Big[\sup_{A \in \mathcal{A}_N, |\partial_e A| \geq (\log N)^{C_1}} \frac{|\mathcal{H}_M^A - \mathcal{H}_M|}{|\partial_e A|} \leq 1 \Big] \geq 1 - N^{-2d} \quad \forall N \geq 1, M \in \mathbb{R}\,.
    $$\label{lem3.3-claim2}
    \end{enumerate}
\end{lemma}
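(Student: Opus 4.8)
The statement says two things: (1) for small $\epsilon$, the ratio $|\mathcal{H}_M^A - \mathcal{H}_M|/|\partial_e A|$ is at most $1$ simultaneously over all simply connected $A$ containing $0$, with probability $\geq 1-\alpha$; and (2) with a uniform small $\epsilon < C_1^{-1}$, the same holds with probability $\geq 1 - N^{-2d}$ when we restrict to $A$ with $|\partial_e A| \geq (\log N)^{C_1}$. Both are claimed for a fixed $M$ (the $M$-uniformity is handled elsewhere). The key input is the Fisher–Fröhlich–Spencer-type tail bound on $\mathcal{H}_M^A - \mathcal{H}_M$. Let me reconstruct what that bound gives and how to use it.

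The plan. First I would note that $\mathcal{H}_M^A - \mathcal{H}_M$ is, up to the contour/boundary energy terms, governed by how flipping the disorder on $A$ changes the ground-state energy; the classical estimate from \cite{FisherFrolichSpencer} (see also \cite{DingZhuang}) gives a stretched-exponential tail of the form $\mathbb{P}[|\mathcal{H}_M^A - \mathcal{H}_M| \geq t |\partial_e A|] \leq C \exp(-c t^2 |\partial_e A|/\epsilon^2)$ for $t$ not too small, valid uniformly in $M$ because the relevant comparison (Lemma~\ref{lem:ground-state-flip} together with its mirror using $h^A$) is an energy difference that is insensitive to the common external field $M$ up to a term of order $|M||A|$, which is negligible here. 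Granting such a tail bound for each fixed $A$, both claims become union-bound computations over $A$, using the enumeration from Lemma~\ref{lem:count_surrounding_contour} (or the analogous count of simply connected sets with a prescribed edge-boundary size), combined with the isoperimetric inequality \eqref{eq:isoperimetric}, which guarantees $|\partial_e A| \geq C_2 |A|^{(d-1)/d}$ and hence $|\partial_e A| \to \infty$ as $|A| \to \infty$.

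Key steps, in order. (i) State precisely the single-set tail estimate extracted from \cite{FisherFrolichSpencer}: there are constants $C_3, c_3$ such that for every $A \in \mathcal{A}_N$ and every $\lambda \geq 1$, $\mathbb{P}[|\mathcal{H}_M^A - \mathcal{H}_M| \geq \lambda |\partial_e A|] \leq C_3 \exp(-c_3 \lambda^2 |\partial_e A| / \epsilon^2)$, uniformly in $M$ and $N$ — this is the heart of the matter and what I would cite rather than reprove. (ii) For claim (1): the number of simply connected sets $A \ni 0$ with $|\partial_e A| = m$ is at most $C^m$ for some $C = C(d)$ (bound by enumerating connected subsets containing $0$ via Lemma~\ref{lem:count_surrounding_contour}, since $|A| \leq m$ by isoperimetry one has only exponentially many such $A$). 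Then $\mathbb{P}[\exists A \ni 0 : |\mathcal{H}_M^A - \mathcal{H}_M| > |\partial_e A|] \leq \sum_{m \geq 2d} C^m \cdot C_3 e^{-c_3 m/\epsilon^2}$, which is $\leq \alpha$ once $\epsilon$ is small enough that $c_3/\epsilon^2 > 2\log C$ and the geometric tail starting at $m = 2d$ (the minimal boundary size) is below $\alpha$. (iii) For claim (2): we drop the constraint $0 \in A$, so the number of simply connected $A$ with $|\partial_e A| = m$ is at most $|\TT_N^d| \cdot C^m \leq N^d C^m$. Taking $\lambda = 1$ but now summing only over $m \geq (\log N)^{C_1}$: $\mathbb{P}[\exists A, |\partial_e A| \geq (\log N)^{C_1} : |\mathcal{H}_M^A - \mathcal{H}_M| > |\partial_e A|] \leq N^d \sum_{m \geq (\log N)^{C_1}} C^m C_3 e^{-c_3 m /\epsilon^2}$. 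Choosing $C_1$ large (and $\epsilon < C_1^{-1}$ so that $c_3/\epsilon^2 \geq c_3 C_1^2$ dominates $2\log C$ with huge room to spare), the sum is bounded by $N^d \cdot C' \exp(-c'' (\log N)^{C_1})$, which is $\leq N^{-2d}$ for all $N \geq 1$ provided $C_1 \geq 2$ and $N$ large, then absorb small $N$ into the constant. This pins down the role of the otherwise mysterious threshold $(\log N)^{C_1}$: it is exactly the boundary size at which the single-set stretched-exponential tail, raised to the union-bound power $N^d$, becomes summable.

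The main obstacle. The genuine content is step (i) — establishing (or correctly citing and adapting) the Fisher–Fröhlich–Spencer tail bound for the change of the ground-state Hamiltonian under flipping the disorder on $A$, with the right dependence $\exp(-c \lambda^2 |\partial_e A|/\epsilon^2)$ and, crucially, with constants independent of $M$ and $N$. The $M$-independence is where one must be a little careful: a priori $\tau_M$ and $\tau_M^A$ both depend on $M$, but the comparison argument (build a test configuration by flipping $\tau_M$ on $A$, as in Lemma~\ref{lem:ground-state-flip}, and symmetrically) reduces the energy difference to a sum of disorder contributions over a set whose boundary is controlled, and the $M$-terms cancel except for an $O(|M||A|)$ error that is dominated by $|\partial_e A|$ given the isoperimetric inequality and $|M| \leq (\log N) N^{-d/2}$. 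Everything after step (i) is a routine union bound; I would present it compactly and spend the proof's length making step (i) precise by reference to \cite{FisherFrolichSpencer, DingZhuang}.
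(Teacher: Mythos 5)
There is a genuine gap, and it sits exactly at your step (i). The single-set tail bound you posit, $\mathbb{P}\big[|\mathcal{H}_M^A-\mathcal{H}_M|\geq \lambda|\partial_e A|\big]\leq C_3\exp(-c_3\lambda^2|\partial_e A|/\epsilon^2)$ uniformly in $M$, is not what \cite{FisherFrolichSpencer} provides, and it is in fact false. What Gaussian concentration gives (and what the paper records as its starting point, via the $2\epsilon$-Lipschitz dependence of $\Delta_A=\mathcal{H}_M^A-\mathcal{H}_M$ on $(h_v)_{v\in A}$) is $\mathbb{P}[|\Delta_A|\geq t]\leq 2\exp\big(-t^2/(8\epsilon^2|A|)\big)$, with $|A|$, not $|\partial_e A|$, in the denominator. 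These two are genuinely different: take $M$ very large, so that with probability close to $1$ both ground states are all-plus and $\Delta_A=2\epsilon\sum_{v\in A}h_v$ is exactly Gaussian with variance $4\epsilon^2|A|$; for $A$ a box of side $\ell$ the tail at $\lambda|\partial_e A|$ is then of order $\exp(-c\lambda^2\ell^{d-2}/\epsilon^2)$, vastly larger than your claimed $\exp(-c\lambda^2\ell^{d-1}/\epsilon^2)$. Since the lemma is asserted for all $M\in\mathbb{R}$, your cited estimate cannot hold. (Your aside that the $O(|M||A|)$ term is negligible because $|M|\leq(\log N)N^{-d/2}$ also does not apply here: this lemma makes no such restriction on $M$; the correct point is that the concentration argument is insensitive to $M$ because $M$ enters both Hamiltonians identically.)

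With the correct single-set bound, your "routine union bound" collapses: for $|\partial_e A|=m$ the isoperimetric inequality only gives $|A|\leq Cm^{d/(d-1)}$ (not $|A|\leq m$ as you wrote), so the best per-set decay is $\exp\big(-cm^{(d-2)/(d-1)}/\epsilon^2\big)$, which cannot beat the entropy $e^{Cm}$ of simply connected sets with boundary size $m$, no matter how small $\epsilon$ is. This is precisely why the paper does not run a union bound over sets. Instead it supplements the single-set concentration with an increment bound, $\mathbb{P}[|\Delta_A-\Delta_B|\geq t]\leq 2\exp\big(-t^2/(8\epsilon^2|A\oplus B|)\big)$ (obtained by observing $\Delta_A-\Delta_B=\widetilde\Delta_{A\oplus B}$ with respect to the field $h^B$), and then adapts the multi-scale/chaining analysis of \cite{FisherFrolichSpencer} — this exploits the strong correlations between $\Delta_A$ for nearby sets, which is exactly what a crude union bound throws away. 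The only additional work in the paper is the torus adaptation (here $\partial_e A$ may have up to two non-contractible components, each of length at least $N$, handled by a separate union bound), and Claim~(2) follows by running the same FFS scheme restricted to $|\partial_e A|\geq(\log N)^{C_1}$ and a union bound over the $N^d$ possible base points. So the heart of the lemma is the multi-scale argument itself; it cannot be outsourced to a single-set tail estimate of the strength you assume.
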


\begin{proof}
    Fix $ N\geq 1 $ and $ M\in\mathbb{R} $.
    For $A \subset \TT_N^d$, let $\Delta_A := \mathcal{H}_M^A - \mathcal{H}_M$.
    Note that given $h|_{A^c}$ (i.e., the restriction of $ h $ to $ A^c $), we have that $\Delta_A$ is a $2 \epsilon$-Lipschitz function with respect to $h_v$ for each $v \in A$. Applying \cite[Theorem~3.25]{vanHandel} (see also \cite{Gaussian_concerntration_Sudakov} and \cite{Gaussian_concerntration_Borell}), we obtain that for all $A \subset \TT_N^d$: 
    \begin{equation}
    \label{eq:lem-gaussian-est}
    \mathbb{P}\big[ \left|\Delta_A \right| \geq t \big] \leq 2 \exp \big( -\frac{t^2}{8 \epsilon^2 |A|} \big) \quad \forall t>0\,.
    \end{equation}
    For all $A, B \subset \TT_N^d$, let $ A\oplus B:=(A\cup B)\setminus (A\cap B) $ be the symmetric difference between $ A $ and $ B $, and let $ \widetilde{\Delta}_{A\oplus B} $ be defined as $ \Delta_{A\oplus B} $ but with respect to the external field $ h^{B} $ (so here we used the tilde symbol to emphasize that the underlying field is $ h^B $).
    Then by
    \begin{equation*}
        h^A(v) = h(v)\mathbbm{1}_{v\notin A}-h(v)\mathbbm{1}_{v\in A} = h^B(v)\mathbbm{1}_{v\notin A\oplus B}-h^B(v)\mathbbm{1}_{v\in A\oplus B},
    \end{equation*}
    we see that
    \begin{equation*}
        \Delta_{A}-\Delta_{B}=\widetilde{\Delta}_{A\oplus B}.
    \end{equation*}
    Therefore, we have
    \begin{equation}\label{eq:lem-gaussian-est-10}
    \mathbb{P}\big[ \left|\Delta_{A} - \Delta_{B} \right| \geq t \big] \leq 2 \exp \big( -\frac{t^2}{8 \epsilon^2 |A \oplus B|} \big) \quad \forall t>0\,.
    \end{equation}
    These two inequalities allow us to adapt the multi-scale analysis in \cite{FisherFrolichSpencer} \response{(which is based on a chaining method for simply connected sets, approximating them by unions of dyadic boxes of varying sizes)} and show that for some constant $C>0$,
    \begin{equation}\label{eq:lem-gaussian-est-result}
    \mathbb{P}\Big[ \sup_{0 \in A, A \in \mathcal{A}_N} \frac{|\Delta_A|}{|\partial_e A|} \leq 1 \Big] \geq 1- C \exp \big( -C/ \epsilon^2 \big) \quad \forall N \geq 1, M \in \mathbb{R}\,.
    \end{equation}
    \response{Note that although the definition of $\Delta_A$ is different from $F_A(h)$ in \cite{FisherFrolichSpencer}, inequalities~\eqref{eq:lem-gaussian-est} and~\eqref{eq:lem-gaussian-est-10} show that $\Delta_A$ satisfies the condition \cite[Eq.(10)]{FisherFrolichSpencer} which is the only property of $F_A(h)$ needed to prove \eqref{eq:lem-gaussian-est-result} for small $\epsilon$.} Despite the fact that \cite{FisherFrolichSpencer} addresses simply connected sets in $\mathbb{Z}^d$, their argument can be adapted to the torus case.
    The slight subtlety arises from the possibility that $\partial_e A$ may have multiple connected components.
    As such, a straightforward entropy bound might not be enough. However, in this scenario, these components (in fact at most two) must be non-contractable and thus have a length of at least $N$ (see more details on such topological result in \cite[Section~3]{Topology_of_torus_1} and \cite[Section~3]{Topology_of_torus_2}).
    Therefore by applying the same method as in \cite{FisherFrolichSpencer} to those $ \partial_e A $'s with only one connected component and applying a union bound to those $ \partial_e A $'s with multiple connected components, the torus case can be proved as well.
    Taking $\epsilon$ small in~\eqref{eq:lem-gaussian-est-result} yields the first claim. Following the method in \cite{FisherFrolichSpencer}, we can also show that there exists a constant $C_1 > 0$ such that 
    $$
    \mathbb{P}\Big[ \sup_{0 \in A, A \in \mathcal{A}_N, |\partial_e A| \geq (\log N)^{C_1}} \frac{|\Delta_A|}{|\partial_e A|} \leq 1 \Big] \geq 1- N^{-4d} \quad \forall 0 < \epsilon <C_1^{-1}, N \geq 1, M \in \mathbb{R}\,.
    $$
    \response{(As mentioned before, the argument in~\cite{FisherFrolichSpencer} applies verbatim to $\Delta_A$. To obtain the tail estimate, we sum~\cite[Eq.(15)]{FisherFrolichSpencer} over surface area at least $(\log N)^{C_1}$ and $k$. The relevant tail estimate appears in the second indented inequality after~\cite[Eq.(16)]{FisherFrolichSpencer}. By choosing $C_1$ sufficiently large, we obtain the desired bound.)} This implies the second claim by taking a union bound.
\end{proof}

The following two lemmas are derived from geometrical arguments. They will be used to prove the existence of a global spin cluster.

\begin{lemma}
\label{lem:geometric-connect}
    For any spin configuration $\sigma \in \{-1,1\}^{\TT_N^d}$ and two vertices $u,v$, if neither $u$ nor $v$ is enclosed by an interface, then we have $\sigma(u) = \sigma(v)$.
\end{lemma}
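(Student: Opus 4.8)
The plan is to argue by contradiction: suppose $\sigma(u)\neq\sigma(v)$, say $\sigma(u)=+1$ and $\sigma(v)=-1$, and exhibit a spin interface enclosing one of the two vertices, contradicting the hypothesis. The natural object to look at is the monochromatic connected component of $v$. Accordingly, I would let $C$ be the connected component of $v$ in $\{w\in\TT_N^d:\sigma(w)=-1\}$, and let $\mathcal D$ be the collection of connected components of $\TT_N^d\setminus C$; since $\sigma(u)=+1$ we have $u\notin C$, so $u\in C^{\ast}$ for some $C^{\ast}\in\mathcal D$.

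Next I would record a handful of elementary structural facts. (a) Every vertex in $\partial_o C$ is plus, since a minus neighbor of $C$ would itself lie in the minus component $C$. (b) Distinct components of $\TT_N^d\setminus C$ are pairwise non-adjacent, so for each $D\in\mathcal D$ the outer boundary $\partial_o D$ lies in $C$ (hence is all minus) and the inner boundary $\partial_i D$ lies in $\partial_o C$ (hence, by (a), is all plus). (c) Because $C\neq\emptyset$, each $D\in\mathcal D$ is a proper nonempty subset of $\TT_N^d$, so $\partial_o D\neq\emptyset$, and by (b) this forces $D$ to be adjacent to $C$; consequently $\TT_N^d\setminus D$ — namely $C$ together with the remaining components, each glued onto the connected set $C$ — is connected. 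Combining these, for any $D\in\mathcal D$ with $|D|\le\tfrac12|\TT_N^d|$ the set $D$ is simply connected with $\partial_i D,\partial_o D$ monochromatic of opposite signs, i.e.\ $\partial_e D$ is a spin interface enclosing every vertex of $D$.

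Now split into two cases. If $|C^{\ast}|\le\tfrac12|\TT_N^d|$, then by the last sentence $\partial_e C^{\ast}$ is an interface enclosing $u$, contradicting the hypothesis on $u$. Otherwise $|C^{\ast}|>\tfrac12|\TT_N^d|$; then $|\TT_N^d\setminus C^{\ast}|<\tfrac12|\TT_N^d|$ and in particular $|C|<\tfrac12|\TT_N^d|$. Set $A:=\TT_N^d\setminus C^{\ast}$, which contains $v$ since $C\subseteq A$. By (c), $A$ is connected, and $\TT_N^d\setminus A=C^{\ast}$ is connected, so $A\in\mathcal A_N$; moreover $\partial_i A=\partial_o C^{\ast}\subseteq C$ is all minus, while $\partial_o A=\partial_i C^{\ast}\subseteq\partial_o C$ is all plus (using non-adjacency of distinct components once more). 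Hence $\partial_e A$ is a spin interface enclosing $v$, contradicting the hypothesis on $v$. In either case we obtain a contradiction, so $\sigma(u)=\sigma(v)$.

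The boundary bookkeeping is routine; the one genuinely delicate point — and the only place where the "at most half" size restriction in the definition of $\mathcal A_N$ and the topology of the torus really matter — is the second case. The component $C$ of $v$ need not itself be simply connected: its complement may split into an "outer" piece and "inner" pieces (e.g.\ when $C$ is an annular shell), so one cannot simply take $A=C$. The resolution is that when the component $C^{\ast}$ containing $u$ is the large "outer" piece, filling $C$ in with all the small complementary components produces a simply connected set of size $<\tfrac12|\TT_N^d|$ whose edge boundary is still an interface, now wrapped around $v$. I expect that observation to be the crux of the argument.
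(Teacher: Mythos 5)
Your proof is correct and follows essentially the same filling-plus-pigeonhole argument as the paper: your set $A=\TT_N^d\setminus C^{\ast}$ in the second case is exactly the paper's filled set $\mathbf{C}_v$ (the minus component of $v$ together with the complementary components not containing $u$), while your first case uses its complement $C^{\ast}$ in place of the paper's second filled set $\mathbf{C}_u$. The only cosmetic difference is that the paper concludes via disjointness of the two filled sets $\mathbf{C}_u,\mathbf{C}_v$ that one of them lies in $\mathcal{A}_N$, whereas you split on whether $|C^{\ast}|$ exceeds half the torus --- both reduce to the same dichotomy.
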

\begin{proof}
    Suppose that $\sigma(u) \neq \sigma(v)$. Let $ \mathsf C_u $ be the set of vertices that can be connected to $u$ with a nearest-neighbor path of the same sign as $\sigma(u)$, and let $ \mathbf C_u $ be the union of $ \mathsf C_u $ and all the connected components of $\mathbb{T}_N^d \setminus\mathsf{C}_u $ which do not contain $ v $.
    Define $\mathsf C_v $ and $ \mathbf C_v $ similarly. Then $ \mathbf C_u $ and $ \mathbf C_v $ are two simply connected sets, and we will now show that they are disjoint.
    By $ \sigma(u)\neq\sigma(v) $, we have $ \mathbf{C}_u\cap\mathsf{C}_v=\varnothing $. If a vertex $ w\in\mathbf{C}_v $, then any nearest neighbor path that connects $ w $ and $ u $ will contain at least one edge in $ \partial_e \mathsf{C}_v $. This fact combined with $ \mathbf{C}_u\cap\mathsf{C}_v=\varnothing $ yields $ w\notin\mathbf{C}_u $.
    Then $ \mathbf C_u $ and $ \mathbf C_v $ are disjoint simply connected sets, and thus at least one of them belongs to $\mathcal{A}_N$. This contradicts with the assumption that neither $u$ nor $v$ is enclosed by an interface. Therefore, we have $\sigma(u) = \sigma(v)$.
\end{proof}

\begin{lemma}
    \label{lem:geometric-2}
    For any two integers $N, F \geq 1$ with $F \leq \frac{N}{8}$\response{, let} $\sigma \in \{-1,1\}^{\TT_N^d}$ be a spin configuration such that any vertex in $\TT_N^d$ is not enclosed by any interface with an edge boundary of size at least $F$. Then there exists a spin cluster $A \subset \TT_N^d$ such that each connected component of $\TT_N^d \setminus A$ has $|\cdot|_\infty$-diameter at most $F$.
\end{lemma}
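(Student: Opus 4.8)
The plan is to exhibit a single spin cluster with the required property, namely a ``giant'' cluster. Two preliminaries. First, an elementary counting/projection argument shows that every connected $D\subseteq\TT^d_N$ with $\mathrm{diam}_\infty(D)\le N/2$ satisfies $|\partial_e D|\ge\mathrm{diam}_\infty(D)$: if the diameter is attained in coordinate direction $i$, project out some coordinate $k\ne i$; the image of $D$ is connected in $\TT^{d-1}_N$ with diameter still $\ge\mathrm{diam}_\infty(D)$, hence meets at least $\mathrm{diam}_\infty(D)+1$ of the direction-$k$ lines, and each such line that $D$ meets without entirely filling contributes at least two edges to $\partial_e D$ (the degenerate case where $D$ fills whole lines already forces $|\partial_e D|$ of order $N$). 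Combined with the hypothesis this gives: whenever $A\in\mathcal A_N$ and $\partial_e A$ is a spin interface, then $\mathrm{diam}_\infty(A)\le|\partial_e A|-1<F$, so $|A|\le(2F+1)^d<N^d/2$ using $F\le N/8$. Second, a structural fact: for a spin cluster $\mathsf C$ and a connected component $D$ of $\TT^d_N\setminus\mathsf C$, the set $\TT^d_N\setminus D$ is simply connected (it is $\mathsf C$ together with the remaining components of $\TT^d_N\setminus\mathsf C$, each adjacent to the connected set $\mathsf C$), $\partial_o D\subseteq\mathsf C$ carries the spin of $\mathsf C$, and $\partial_i D$ carries the opposite spin (a vertex of $D$ adjacent to $\mathsf C$ with the spin of $\mathsf C$ would lie in $\mathsf C$). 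Hence $\partial_e D=\partial_e(\TT^d_N\setminus D)$ is a spin interface, and if $|D|\le N^d/2$ then $D\in\mathcal A_N$, so $\mathrm{diam}_\infty(D)<F$ by the first fact. Consequently, \emph{the lemma holds with $A=\mathsf C$ for any spin cluster $\mathsf C$ none of whose complement components has more than $N^d/2$ vertices.}

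It thus remains to produce such a cluster. Let $U$ be the set of vertices enclosed by no interface. If $\sigma$ is constant take $A=\TT^d_N$; otherwise at least one spin interface exists, and I claim (a) $U\ne\varnothing$ and (b) all of $U$ lies in a single spin cluster $\mathsf C_\infty$. Granting these: if $\TT^d_N\setminus\mathsf C_\infty$ had a component $D$ with $|D|>N^d/2$, then by the structural fact $\TT^d_N\setminus D\in\mathcal A_N$ and $\partial_e(\TT^d_N\setminus D)$ is a spin interface enclosing all of $\TT^d_N\setminus D\supseteq\mathsf C_\infty\supseteq U$, contradicting the definition of $U$. So $\mathsf C_\infty$ has no such large complement component and we are done by the previous paragraph.

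Claim (b) is obtained by adapting the proof of Lemma~\ref{lem:geometric-connect}: if $u,v\in U$ lay in distinct spin clusters $\mathsf C_u,\mathsf C_v$, set $\mathbf C_u:=\mathsf C_u\cup\{\text{components of }\TT^d_N\setminus\mathsf C_u\text{ not containing }v\}$ and let $D_v$ be the component containing $v$; then $\partial_i\mathbf C_u\subseteq\mathsf C_u$ while $\partial_o\mathbf C_u=\partial_i D_v$ has the opposite spin, so $\partial_e\mathbf C_u$ is a spin interface, and whichever of $\mathbf C_u$, $D_v$ has at most $N^d/2$ vertices lies in $\mathcal A_N$ and yields an interface enclosing $u$ (resp.\ $v$), a contradiction. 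For claim (a): if $U=\varnothing$, every vertex lies in some $A\in\mathcal A_N$ with $\partial_e A$ a spin interface, all such sets having diameter $<F$ and fewer than $N^d/2$ vertices; take $A^\star$ of maximal cardinality among them, with $\partial_i A^\star$ of spin $-s$ and $\partial_o A^\star$ of spin $s$. Picking $a\in\partial_i A^\star$ and an interface-bounded $B\ni a$ with $\partial_i B$ of spin $-s$, one checks $\partial_i(A^\star\cup B)$ is entirely of spin $-s$ and $\partial_o(A^\star\cup B)$ of spin $s$, so, after adjoining the bounded components of its complement (which preserves the spin-interface property of the edge boundary and, using $F\le N/8$, keeps the cardinality below $N^d/2$), one gets an interface-bounded set strictly larger than $A^\star$ — contradicting maximality. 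The remaining possibility, that every interface-bounded set around every such $a$ has inner boundary of spin $s$, forces $a$ to be interior to $B$ and is excluded by iterating the construction from a suitable neighbour; making this iteration terminate, and making the ``filling-in'' rigorous so that one always stays inside $\mathcal A_N$ with a genuine spin interface as edge boundary, is the main obstacle of the whole proof. The geometric and structural facts of the first paragraph are routine.
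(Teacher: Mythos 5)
Your reduction is organized differently from the paper's proof and parts of it are correct: claim (b) is a sound adaptation of Lemma~\ref{lem:geometric-connect}, and the observation that every complement component $D$ of the spin cluster containing $U$ has $\partial_e D$ a spin interface (hence $|\partial_e D|<F$ and, by your projection fact, diameter $<F$) is fine. The genuine gap is claim (a), that $U\neq\varnothing$, and this is not a deferrable technicality --- it is the crux of the lemma. Indeed, under the hypothesis every interface-bounded set has diameter $<F$, so a vertex can fail to lie in $U$ only if its spin cluster has diameter $<F$ as well (an enclosing set cannot contain a cluster of larger diameter, and it cannot be crossed by an edge of the cluster since the two sides of an interface carry opposite spins). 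Thus proving $U\neq\varnothing$ is essentially equivalent to producing a spin cluster of diameter larger than $F$, i.e.\ to the substance of the lemma itself; your argument funnels all the difficulty into (a) and then leaves (a) unproved. The maximality sketch does not close it: if $a\in\partial_i A^\star$ and $B\ni a$ is interface-bounded with $\partial_i B$ of spin $-s$, nothing prevents $B\subseteq A^\star$, in which case $A^\star\cup B=A^\star$ and maximality is not contradicted; and the case in which every interface-bounded set containing $a$ has inner boundary of spin $s$ (so $a$ is interior to it) is exactly what you cannot exclude --- the proposed ``iteration from a suitable neighbour'' is unspecified and there is no argument that it terminates. You flag this yourself as the main obstacle; it is indeed where the proof is missing.

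For contrast, the paper never needs a statement like (a). It fixes $u,v$ with $|u-v|_\infty\ge N/2$ and builds nested sets $\mathbf C^1\subset\mathbf C^2\subset\cdots$ by repeatedly adjoining the spin cluster touching $\partial_o\mathbf C^j$ and filling in the complement components not containing $v$. Each $\partial_e\mathbf C^j$ is an interface, so the hypothesis forces $|\partial_e\mathbf C^j|\le F$ for every $j$, whence either $\mathbf C^j$ or its complement has diameter at most $F$; at the first index $i_0$ with $\mathrm{Diam}(\mathbf C^{i_0})>F$, the newly added layer $\mathsf C^{i_0}$ must contain a spin cluster of diameter at least $|u-v|_\infty-2F\ge N/4$, and that cluster is the desired $A$. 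If you want to keep your framework, the cleanest repair is to run such a layered construction first to obtain the large cluster, after which its vertices automatically lie in $U$ --- but at that point claims (a) and (b) are no longer needed, and your argument collapses into the paper's.
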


\begin{proof}
    
    Under the assumption of the lemma, we first find a spin cluster $ A \subset\TT^d_N $ with $ |\cdot|_{\infty} $-diameter at least $ \frac{N}{4} $, by investigating a series of interfaces that separate two distant vertices.
    To this end, consider two fixed vertices $ u, v \in\TT^d_N $ with $ |u-v|_{\infty}\geq \frac{N}{2} $.
    Recall from Lemma~\ref{lem:geometric-connect} that $\mathsf C_u $ is the set of vertices that can be connected to $u$ with a nearest-neighbor path of the same sign as $\sigma(u)$, and that $ \mathbf C_u $ is the union of $ \mathsf C_u $ and all the connected components of $ \mathbb{T}_N^d \setminus\mathsf{C}_u $ that do not contain $v$.
    Let $ \mathbf{C}^1 = \mathbf{C}_u $.
    For an integer $j \geq 1$, given $\mathbf{C}^j$ with $ v \not \in \mathbf{C}^j $, let $\mathsf{C}^{j+1}$ be the set of vertices that can be connected to $\partial_o \mathbf{C}^j$ with a path of the same sign as the spins on $\partial_o \mathbf{C}^j$ (this is valid since spins on $ \partial_o \mathbf{C}^j $ are indeed the same by induction).
    Then we let $ \mathbf{C}^{j+1} $ be the union of $ \mathbf{C}^{j} $ and $ \mathsf{C}^{j+1} $ as well as all the connected components of $\TT_N^d \setminus \mathsf{C}^{j+1}$ that do no contain $v$.
    This iteration process stops until $\mathbf{C}^m = \TT_N^d$ for some integer $m \geq 1$.
    See Figure~\ref{fig:3.11} for an illustration.
    \begin{figure}[h]
        \centering
        \includegraphics[width=0.5\textwidth]{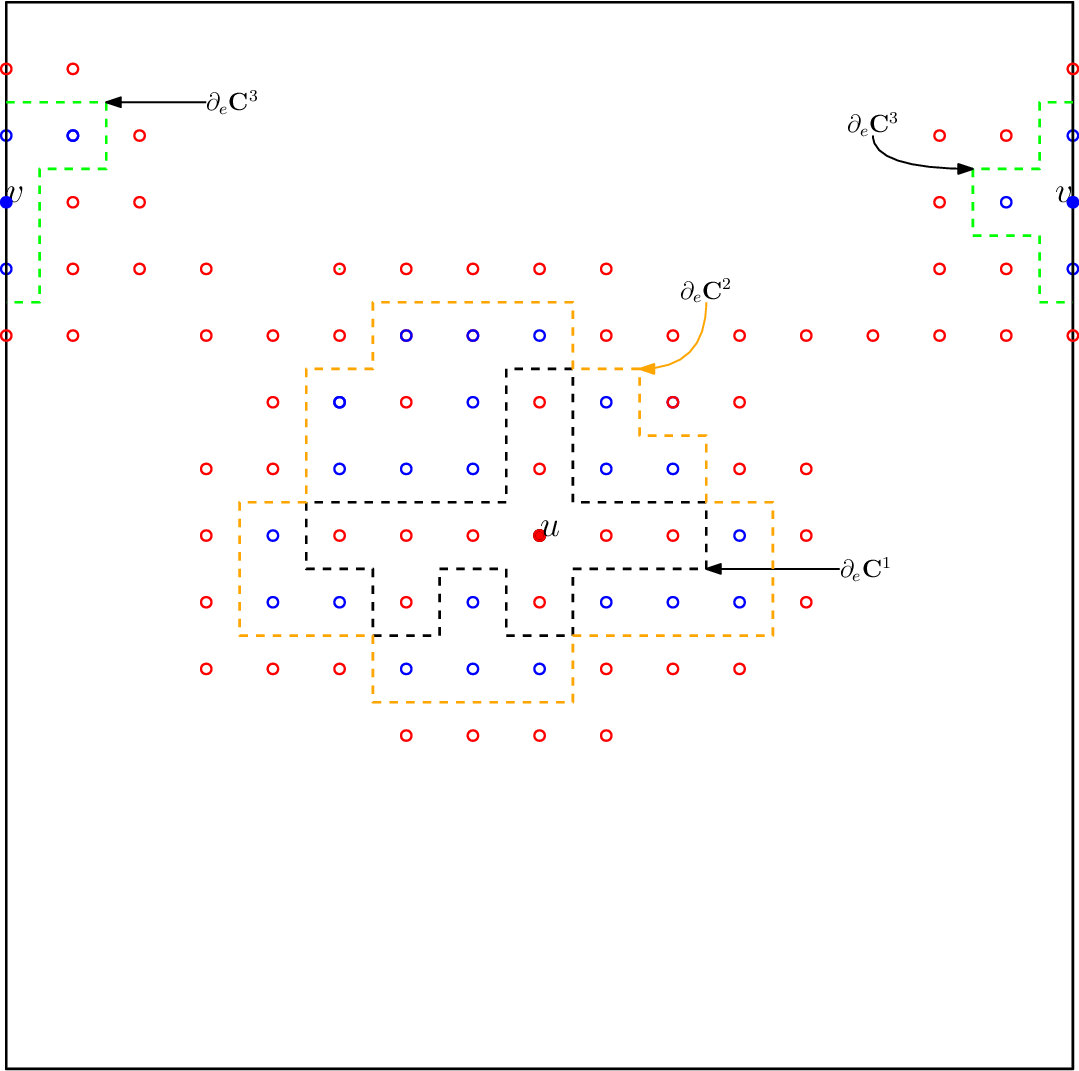}
        \caption{An illustration for $ \mathbf{C}^{j} $. Plus spins are colored in red and minus spins are colored in blue. \response{The set $\mathbf{C}^1$ is enclosed by the dashed black contour, the set $\mathbf{C}^2$ is enclosed by the dashed orange contour, and the sets $\mathbf{C}^3$ is the part outside the dashed green contour in the figure. In this figure, we have $\mathbf{C}^1 = \mathsf{C}^1$.}}
        \label{fig:3.11}
    \end{figure}
    
    Since $\mathbf{C}^j$ is a simply connected subset of $\TT_N^d$ and its edge boundary has plus and minus spins on the two sides respectively, by the assumption in the lemma-statement we have either $|\mathbf{C}^j| > \frac{N^d}{2}$ or $|\partial_e \mathbf{C}^j| \leq F$. Similarly, since $\TT_N^d \setminus \mathbf{C}^j$ is also a simply connected set, we have either $ |\TT_N^d \setminus \mathbf{C}^j| > \frac{N^d}{2} $ or $|\partial_e(\TT_N^d \setminus \mathbf{C}^j)| = |\partial_e \mathbf{C}^j| \leq F$.
    Therefore, we always have $|\partial_e \mathbf{C}^j| \leq F $ for any $1 \leq j \leq m$.
    This, together with the isoperimetric inequality~\eqref{eq:isoperimetric} and $ F\leq \frac{N}{8} $, implies that either $ {\rm Diam}(\mathbf{C}^j)\leq F $ (if $ \mathbf{C}^j\in\mathcal{A}_N $) or $ {\rm Diam}(\TT^d_N\setminus\mathbf{C}^j)\leq F $ (if $ \mathbf{C}^j\notin\mathcal{A}_N $), where ${\rm Diam}(\cdot)$ is the diameter defined with respect to the $|\cdot|_\infty$-distance.
    Let $ i_0\in [1,m] \cap \mathbb{Z}$ be the minimal $ i\geq 1 $ such that $ {\rm Diam}(\mathbf{C}^i)> F $.
    Then we have ${\rm Diam}(\mathbf{C}^{i_0-1}) \leq F$ and ${\rm Diam}(\TT_N^d \setminus \mathbf{C}^{i_0}) \leq F$, and thus $ \mathsf{C}^{i_0} $ contains a spin cluster with $ |\cdot|_{\infty} $-diameter at least $ |u - v|_{\infty}-2F\geq \frac{N}{4} $. Denote this spin cluster by $A$.

    Next, we prove that $A$ satisfies the assertion in the lemma (which together with $ F\leq\frac{N}{8} $ implies the uniqueness of $ A $).
    For any $ w \notin A $, let $ D_w $ be the connected component of $ \TT^d_N\setminus A $ containing $ w $.
    Then $\TT^d_N\setminus D_w = A\cup (\bigcup_{x\notin D_w} D_x)$ is also connected and thus $D_w$ is a simply connected set.
    Moreover, since $A $ is a spin cluster, edges in $ \partial_e D_w $ have endpoints with opposite spins.
    So $ \partial_e D_w $ is an interface (note the simple fact that either $ D_w $ or $ \TT^d_N\setminus D_w $ has at most half of the vertices).
    By our assumption on $ \sigma $ in the lemma-statement we have $ |\partial_e D_w|\leq F $ for any $ w\notin A $, thereby proving the lemma.
    
\end{proof}

\subsection{Proof of avalanche with weak disorder}\label{subsec:proof-avalanche}

In this subsection, we follow the strategy sketched in Section~\ref{subsec:3d-proof-strategy} to prove the avalanche with weak disorder. We first control the change of the RFIM Hamiltonian for large sets. Recall the constant $C_1$ from Lemma~\ref{lem:control-change}.

\begin{lemma}
    \label{lem:A-large}
    Let $\mathcal{G}_1$ be the event that $|\mathcal{H}_M^A - \mathcal{H}_M| \leq \frac{3}{2} |\partial_e A|$ for all $A \in \mathcal{A}_N$ with $|\partial_e A| \geq (\log N)^{C_1}$ and all $|M| \leq (\log N ) N^{-d/2}$. For all $0 < \epsilon < C_1^{-1}$ and all sufficiently large $N$, we have
    \begin{equation}
    \label{eq:lem3.6}
    \mathbb{P}[\mathcal{G}_1] \geq 1 - o_N(1).
    \end{equation}
\end{lemma}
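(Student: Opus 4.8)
The goal is to upgrade Claim~\eqref{lem3.3-claim2} of Lemma~\ref{lem:control-change}, which controls $|\mathcal{H}_M^A - \mathcal{H}_M|$ for a \emph{fixed} $M$, into a statement that holds simultaneously for all $|M| \leq (\log N) N^{-d/2}$, at the (slightly weakened) constant $\tfrac32$ instead of $1$. The key observation is that the set of ``large-boundary'' interfaces being controlled is a discrete family, so the only obstruction to a union bound is the continuum of values of $M$; this I will discretize at scale $N^{-d}$. First, partition the interval $[-(\log N) N^{-d/2}, (\log N) N^{-d/2}]$ into subintervals of length $N^{-d}$; there are $O((\log N) N^{d/2})$ of them. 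By Claim~\eqref{lem3.3-claim2} of Lemma~\ref{lem:control-change}, for each endpoint $M_j$ of these subintervals we have
$$
\mathbb{P}\Big[\sup_{A \in \mathcal{A}_N,\, |\partial_e A| \geq (\log N)^{C_1}} \frac{|\mathcal{H}_{M_j}^A - \mathcal{H}_{M_j}|}{|\partial_e A|} \leq 1\Big] \geq 1 - N^{-2d},
$$
so a union bound over the $O((\log N)N^{d/2})$ endpoints gives that, with probability $1 - o_N(1)$, this bound holds at \emph{all} grid points $M_j$ simultaneously.

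\textbf{Interpolation between grid points.} The remaining step is to pass from the grid points to all $M$ in a subinterval $[M_j, M_{j+1}]$ of length $N^{-d}$. Here I will use the crude but sufficient deterministic bound of Lemma~\ref{lem:ground-state-flip}: for any $A \in \mathcal{A}_N$ whose edge boundary is a spin interface for $\tau_M$, we have $\mathcal{H}_M^A \leq \mathcal{H}_M - 2|\partial_e A| + 2|M||A|$, and by symmetry (applying the same bound with $h$ and $h^A$ interchanged, noting $(h^A)^A = h$) also $\mathcal{H}_M \leq \mathcal{H}_M^A - 2|\partial_e A| + 2|M||A|$; this forces $|\partial_e A| \leq |M||A|$ whenever $\partial_e A$ is a spin interface of $\tau_M$. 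More to the point, what I actually need is a Lipschitz-in-$M$ control of $\mathcal{H}_M$ and $\mathcal{H}_M^A$: since $\mathcal{H}_M = H_M(\tau_M) = \min_\sigma H_{\TT_N^d, M, \epsilon h}(\sigma)$ is a minimum of affine functions of $M$ with slopes $-\sum_v \sigma_v \in [-N^d, N^d]$, the map $M \mapsto \mathcal{H}_M$ is $N^d$-Lipschitz, and likewise for $M \mapsto \mathcal{H}_M^A$. Hence for $M \in [M_j, M_{j+1}]$,
$$
|\mathcal{H}_M^A - \mathcal{H}_M| \leq |\mathcal{H}_{M_j}^A - \mathcal{H}_{M_j}| + 2 N^d \cdot N^{-d} = |\mathcal{H}_{M_j}^A - \mathcal{H}_{M_j}| + 2.
$$
On the event above, $|\mathcal{H}_{M_j}^A - \mathcal{H}_{M_j}| \leq |\partial_e A| $ for every relevant $A$, so $|\mathcal{H}_M^A - \mathcal{H}_M| \leq |\partial_e A| + 2 \leq \tfrac32 |\partial_e A|$ as soon as $|\partial_e A| \geq (\log N)^{C_1} \geq 4$, which holds for all large $N$. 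This establishes $\mathcal{G}_1$ with probability $1 - o_N(1)$.

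\textbf{Main obstacle.} The one point requiring a little care is the crude Lipschitz constant $N^d$: it is wasteful, but it is affordable precisely because the grid spacing $N^{-d}$ makes the product $N^d \cdot N^{-d} = 1$ a genuine constant, and the union-bound cost at spacing $N^{-d}$ is only polynomial in $N$ (namely $O((\log N)N^{d/2}) \cdot N^{-2d} = o_N(1)$), so nothing is lost. An alternative would be to control $\partial_M \mathcal{H}_M = -\sum_v \tau_M(v)$ more sharply (e.g.\ by noting it changes by $O(N^{d/2})$ over the whole interval with high probability), but this is unnecessary. Thus the argument is essentially a discretization plus the elementary Lipschitz-in-$M$ property of ground-state energies, and the genuinely substantive input — the multiscale/Fröhlich--Spencer-type tail bound — is entirely imported from Lemma~\ref{lem:control-change}.
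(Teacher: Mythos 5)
Your argument is correct and follows essentially the same route as the paper: discretize $M$ at scale $N^{-d}$, apply Claim~(2) of Lemma~\ref{lem:control-change} with a union bound over the $O((\log N)N^{d/2})$ grid points, and interpolate via the $N^{d}$-Lipschitz dependence of $\mathcal{H}_M$ and $\mathcal{H}_M^A$ on $M$, which is exactly the paper's bound $\mathcal{H}_M-\mathcal{H}_{M'} \leq H_M(\tau_{M'})-H_{M'}(\tau_{M'}) \leq |M-M'|\,N^d$, absorbing the additive $2$ into $\tfrac12|\partial_e A|$ since $|\partial_e A|\geq(\log N)^{C_1}$. The only blemish is the unused aside invoking Lemma~\ref{lem:ground-state-flip} ``by symmetry'' (the reversed inequality would require $\partial_e A$ to be a spin interface of $\tau_M^A$, not of $\tau_M$), but since you abandon that route in favor of the Lipschitz bound, it does not affect the proof.
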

\begin{proof}
    Let $ \mathfrak{M}= N^{-d} \mathbb{Z} \cap [-(\log N) N^{-d/2}, (\log N) N^{-d/2}]$. By Claim~\ref{lem3.3-claim2} in Lemma~\ref{lem:control-change}, for all $0 < \epsilon < C_1^{-1}$
    \begin{equation}
    \label{eq:lem3.6-1}
    \mathbb{P} \Big[\sup_{A \in \mathcal{A}_N, |\partial_e A| \geq (\log N)^{C_1}} \frac{|\mathcal{H}_M^A - \mathcal{H}_M|}{|\partial_e A|} \leq 1 \Big] \geq 1 - N^{-2d} \quad \forall N \geq 1, M \in \mathfrak{M}\,.
    \end{equation}
    For any $M \in [-(\log N) N^{-d/2}, (\log N) N^{-d/2}]$, there exists an $ M' \in \mathfrak{M} $ such that $|M - M'| \leq N^{-d}$. Moreover, by the definition of $\mathcal{H}_M$ in \eqref{eq:def-flip-disorder} and~\eqref{eq:def-hamiltonian}, we have 
    \begin{equation}
    \label{eq:lem3.6-2}
    \mathcal{H}_M -\mathcal{H}_{M'} \leq H_M(\tau_{M'}) - H_{M'}(\tau_{M'}) \leq |M - M'| \cdot N^d \leq 1.
    \end{equation}
    Similarly, we also have $\mathcal{H}_{M'} -\mathcal{H}_M \leq 1$ and thus $|\mathcal{H}_{M'} -\mathcal{H}_M| \leq 1$. This, together with~\eqref{eq:lem3.6-1} and a union bound over $M \in \mathfrak{M} $, implies that for all $0 < \epsilon < C_1^{-1}$ and all sufficiently large $N$:
    \begin{align*}
        &\quad \mathbb{P} \Big[\sup_{M \in [-(\log N) N^{-d/2}, (\log N) N^{-d/2}]} \sup_{A \in \mathcal{A}_N, |\partial_e A| \geq (\log N)^{C_1}} \frac{|\mathcal{H}_M^A - \mathcal{H}_M|}{|\partial_e A|} \geq \frac{3}{2} \Big] \\
        &\leq \sum_{M \in \mathfrak{M}} \mathbb{P} \Big[\sup_{A \in \mathcal{A}_N, |\partial_e A| \geq (\log N)^{C_1}} \frac{|\mathcal{H}_M^A - \mathcal{H}_M|}{|\partial_e A|} \geq 1 \Big] = o_N(1)\,.\qedhere
    \end{align*}
\end{proof}

In light of Lemma~\ref{lem:A-large}, we introduce the definition of global spin cluster (as announced in Section~\ref{subsec:3d-proof-strategy}). Let
\begin{equation}
\label{eq:def-tilde-N}
    \widetilde N = (\log N)^{C_1}.
\end{equation}

\begin{definition}
    \label{def:global-spin-config}
    For a spin configuration $\sigma \in \{-1,1\}^{\TT_N^d}$, we define a spin cluster of $\sigma$ as a global spin cluster if after removing this cluster, all remaining connected components of $\TT_N^d$ have $|\cdot|_\infty$-diameters at most $\widetilde N$; see Figure~\ref{fig:global_spin}. It is easy to see that the global spin cluster (if it exists) is unique for all sufficiently large $N$.
\end{definition}

\begin{figure}[h]
    \centering
    \includegraphics[width=0.5\textwidth]{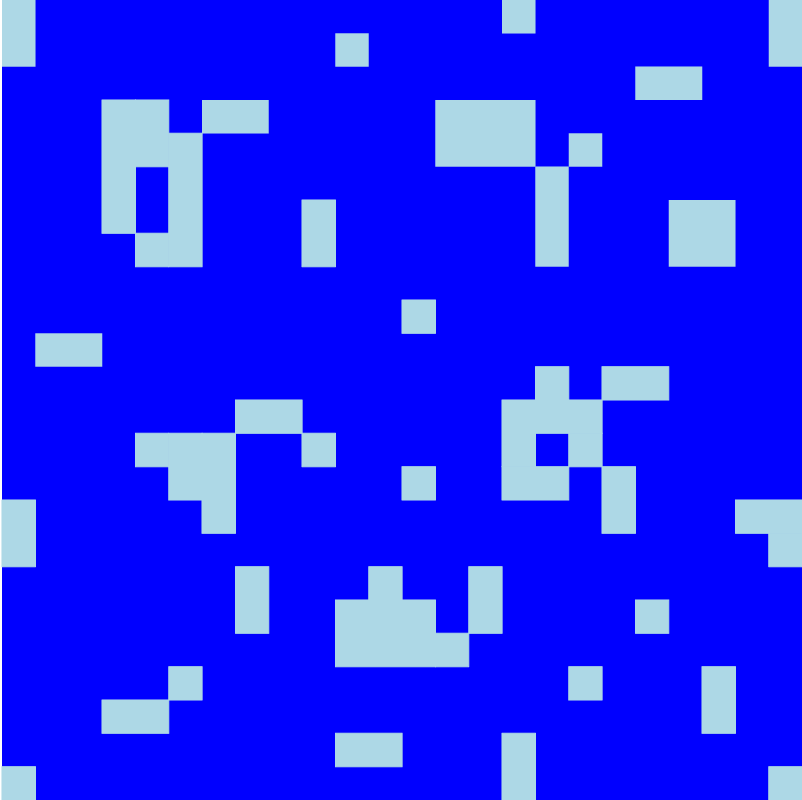}
    \caption{A spin configuration with a plus global spin cluster. Here we color the plus spins in blue and minus spins in lightblue. Note that there exists a blue cluster such that all the other connected components after removing this cluster have $ |\cdot|_{\infty} $-diameters at most $ \widetilde{N} $.}
    \label{fig:global_spin}
\end{figure}

We can derive the following lemma from Lemmas~\ref{lem:ground-state-flip} and~\ref{lem:geometric-2}.
\begin{lemma}\label{lem:global-spin}
    On the event $\mathcal{G}_1$ defined in Lemma~\ref{lem:A-large}, $\tau_M$ has a global spin cluster for all $|M| \leq (\log N) N^{-d/2}$.
\end{lemma}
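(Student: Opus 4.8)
The plan is to combine the two geometric lemmas from Section~\ref{subsec:prelim-3d} with the tail estimate packaged in $\mathcal{G}_1$. Fix $M$ with $|M| \leq (\log N) N^{-d/2}$ and work on the event $\mathcal{G}_1$. The heart of the argument is to show that, on $\mathcal{G}_1$, no vertex of $\TT_N^d$ is enclosed by any spin interface $\partial_e A$ of $\tau_M$ with $|\partial_e A| \geq \widetilde N = (\log N)^{C_1}$; once this is established, Lemma~\ref{lem:geometric-2} (applied with $F = \widetilde N$, which satisfies $F \leq N/8$ for $N$ large) immediately produces a spin cluster $A$ such that every connected component of $\TT_N^d \setminus A$ has $|\cdot|_\infty$-diameter at most $\widetilde N$, i.e.\ a global spin cluster in the sense of Definition~\ref{def:global-spin-config}.

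To rule out large interfaces, I would argue by contradiction: suppose $A \in \mathcal{A}_N$ with $0 \in A$ (or, after relabeling, with any enclosed vertex in $A$), $|\partial_e A| \geq \widetilde N$, and $\partial_e A$ is an interface for $\tau_M$, meaning $\tau_M|_{\partial_i A} \equiv -1, \tau_M|_{\partial_o A} \equiv 1$ or the reverse. Lemma~\ref{lem:ground-state-flip} then gives $\mathcal{H}_M^A \leq \mathcal{H}_M - 2|\partial_e A| + 2|M|\,|A|$. Since $A \in \mathcal{A}_N$ we have $|A| \leq \frac12 N^d$, so $2|M|\,|A| \leq (\log N) N^{-d/2} \cdot N^d = (\log N) N^{d/2}$. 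By the isoperimetric inequality~\eqref{eq:isoperimetric}, $|\partial_e A| \geq C_2 |A|^{(d-1)/d}$; I need to check that the combinatorial gain $2|\partial_e A|$ dominates the additive error $2|M|\,|A|$. When $|A|$ is comparable to $N^d$ the error is $O((\log N) N^{d/2})$ while $|\partial_e A| = \Omega(N^{d-1}) \gg (\log N) N^{d/2}$ for $d \geq 3$; when $|A|$ is smaller the error $2|M|\,|A| \leq (\log N) N^{-d/2}|A|$ is much smaller than $2|\partial_e A| \geq 2 C_2 |A|^{(d-1)/d}$ as soon as $|A|^{1/d} \lesssim N^{d/2}/\log N$, which holds since $|A| \leq \frac12 N^d$. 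Thus in all cases $2|M|\,|A| \leq |\partial_e A|/2$ for $N$ large, giving $\mathcal{H}_M^A - \mathcal{H}_M \leq -\frac32 |\partial_e A|$, hence $|\mathcal{H}_M^A - \mathcal{H}_M| \geq \frac32|\partial_e A|$, which contradicts the definition of $\mathcal{G}_1$ (recall $\mathcal{G}_1$ asserts $|\mathcal{H}_M^A - \mathcal{H}_M| \leq \frac32|\partial_e A|$ for all such $A$; the strict inequality from the isoperimetric slack makes this genuinely contradictory).

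Putting the pieces together: on $\mathcal{G}_1$, for every $M$ with $|M| \leq (\log N) N^{-d/2}$, no vertex is enclosed by an interface of size $\geq \widetilde N$, so Lemma~\ref{lem:geometric-2} with $F = \widetilde N$ yields a spin cluster $A$ all of whose complementary components have $|\cdot|_\infty$-diameter $\leq \widetilde N$; this is precisely a global spin cluster, and its uniqueness for large $N$ is noted in Definition~\ref{def:global-spin-config}. The main obstacle I anticipate is purely the bookkeeping in the isoperimetric comparison — making sure the error term $2|M|\,|A|$ is beaten by $2|\partial_e A|$ uniformly over the whole range $1 \leq |A| \leq \frac12 N^d$ and not merely in the two extreme regimes — together with the minor technical point that Lemma~\ref{lem:ground-state-flip} is stated for a fixed enclosed set $A$ whereas Lemma~\ref{lem:geometric-2} needs the no-large-interface conclusion for \emph{all} vertices simultaneously; but since the contradiction argument above is uniform in the choice of interface, this causes no real difficulty.
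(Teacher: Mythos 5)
Your proposal is correct and follows essentially the same route as the paper's proof: on $\mathcal{G}_1$ you rule out interfaces with $|\partial_e A| \geq (\log N)^{C_1}$ by combining Lemma~\ref{lem:ground-state-flip} with the isoperimetric inequality~\eqref{eq:isoperimetric} (the comparison is indeed uniform over $1 \leq |A| \leq \tfrac12 N^d$, since for $d \geq 3$ one has $|A|^{1/d} \leq N \ll N^{d/2}/\log N$), and then conclude via Lemma~\ref{lem:geometric-2} with $F = \widetilde N$. The only cosmetic difference is that you phrase the interface exclusion as a contradiction while the paper states the inequality chain directly; the substance is identical.
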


\begin{proof}
    On the event $\mathcal{G}_1$, by the isoperimetric inequality~\eqref{eq:isoperimetric}, we have that for all $A \in \mathcal{A}_N$ with $|\partial_e A| \geq (\log N)^{C_1}$ and $|M| \leq (\log N) N^{-d/2}$,
    $$
    \mathcal{H}_M^A \geq \mathcal{H}_M - \frac{3}{2} |\partial_e A| > \mathcal{H}_M - 2|\partial_e A| + 2 (\log N) N^{-d/2} |A|.
    $$
    Thus, we can deduce from Lemma~\ref{lem:ground-state-flip} that there is no spin interface in $\tau_M$ with an edge boundary of size at least $(\log N)^{C_1}$. Combining this with Lemma~\ref{lem:geometric-2} yields the lemma.
\end{proof}

Recall the definition of good vertices from~\eqref{eq:strategy-3}. Next, we show that when $\epsilon$ is small, with high probability, most of the vertices in $\TT_N^d$ are good.
\begin{lemma}\label{lem:good-vertex}
    Fix $\delta>0$. Let $\mathcal{G}_2$ be the event that at least $(1 - \frac{\delta}{2})N^d$ vertices in $\TT_N^d$ are good. The following inequality holds for all sufficiently small $\epsilon$:
    \begin{equation}
    \label{eq:vertex-being-good}
    \mathbb{P}[\mathcal{G}_2] \geq 1-o_N(1) \quad \mbox{as }N \rightarrow \infty.
\end{equation}
\end{lemma}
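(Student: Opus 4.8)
\textbf{Proof plan for Lemma~\ref{lem:good-vertex}.}
The plan is to combine a first‑moment bound for the expected number of good vertices with a second‑moment argument exploiting the local (hence almost independent) nature of the event ``$u$ is good''.

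\emph{Step 1: a single vertex is good with probability close to $1$ when $\epsilon$ is small.} Fix a vertex $u$. The event in~\eqref{eq:strategy-3} concerns only the disorder field $\{h_v: v\in B(u,2\widetilde N)\}$ and the Hamiltonians $\mathcal{H}^\pm_{B(u,2\widetilde N),M,\epsilon h}$, $\mathcal{H}^\pm_{B(u,2\widetilde N),M,\epsilon h^A}$ for $A\in\mathcal{A}_N$ with $u\in A\subset B(u,2\widetilde N)$. I would first fix $M=0$: by the same multi‑scale analysis of~\cite{FisherFrolichSpencer} used to prove Lemma~\ref{lem:control-change} (applied now to the finite box $B(u,2\widetilde N)$ with plus and with minus boundary conditions, noting $|\partial_e A|\ge C_2|A|^{(d-1)/d}\gtrsim 1$ by the isoperimetric inequality~\eqref{eq:isoperimetric}), the supremum over such $A$ of $|\mathcal{H}^\pm_{B(u,2\widetilde N),0,\epsilon h}-\mathcal{H}^\pm_{B(u,2\widetilde N),0,\epsilon h^A}|/|\partial_e A|$ is $\le 1/2$, say, with probability $\ge 1-C\exp(-C/\epsilon^2)$. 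Then I would remove the restriction $M=0$: for $|M|\le(\log N)N^{-d/2}$ and any $\sigma$ supported on $B(u,2\widetilde N)$, changing $M$ from $0$ to $M$ changes $H_{B(u,2\widetilde N),M,\epsilon h}(\sigma)$ by at most $|M|\cdot|B(u,2\widetilde N)|\le (\log N)N^{-d/2}\cdot(4\widetilde N+1)^d=o(1)$ as $N\to\infty$ (here $\widetilde N=(\log N)^{C_1}$ is polylogarithmic), and this is uniform over boundary conditions and over the two disorder fields $h,h^A$; hence the ground‑state Hamiltonians move by $o(1)$ as well, so on the same event the bound in~\eqref{eq:strategy-3} holds with the right‑hand side $|\partial_e A|$ (using $|\partial_e A|\ge C_2>0$) simultaneously for all $|M|\le(\log N)N^{-d/2}$, for all large $N$. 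Thus $\mathbb{P}[u\text{ is good}]\ge 1-C\exp(-C/\epsilon^2)$, which we can make $\ge 1-\delta/4$ by taking $\epsilon$ small.

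\emph{Step 2: concentration via the second moment.} Let $X=\sum_{u\in\TT_N^d}\mathbbm{1}_{\{u\text{ good}\}}$, so $\E X\ge(1-\delta/4)N^d$ by Step~1. The crucial observation is that the event ``$u$ is good'' is measurable with respect to $\{h_v:v\in B(u,2\widetilde N)\}$, so for two vertices $u,u'$ with $|u-u'|_\infty>4\widetilde N$ these events are independent. Splitting the double sum $\E X^2=\sum_{u,u'}\mathbb{P}[u,u'\text{ both good}]$ into the pairs with $|u-u'|_\infty\le 4\widetilde N$ (of which there are at most $N^d(8\widetilde N+1)^d$, contributing at most $N^d(8\widetilde N+1)^d$ to the sum) and the independent pairs (contributing at most $(\E X)^2$), I get $\mathrm{Var}(X)\le N^d(8\widetilde N+1)^d=N^d(\log N)^{O(1)}=o(N^{2d})$. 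Chebyshev then gives $\mathbb{P}[X<(1-\delta/2)N^d]\le\mathbb{P}[\,|X-\E X|>\tfrac{\delta}{4}N^d\,]\le \mathrm{Var}(X)/(\tfrac{\delta}{4}N^d)^2=o_N(1)$, which is precisely~\eqref{eq:vertex-being-good}.

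\emph{Main obstacle.} The routine part is the second‑moment computation; the delicate point is Step~1, specifically transferring the multi‑scale estimate of~\cite{FisherFrolichSpencer} from the infinite‑volume / torus setting of Lemma~\ref{lem:control-change} to finite boxes $B(u,2\widetilde N)$ with \emph{prescribed} boundary conditions, and then making the estimate genuinely uniform in $M$ over the window $[-(\log N)N^{-d/2},(\log N)N^{-d/2}]$ \emph{without} a union bound over $M$ — this works only because the box has polylogarithmic volume, so the total Hamiltonian perturbation $|M|\cdot|B(u,2\widetilde N)|$ is $o(1)$; one has to check that the argument of~\cite{FisherFrolichSpencer} still produces a bound of the form $1-C\exp(-C/\epsilon^2)$ with constants independent of $\widetilde N$ (equivalently of $N$), which it does since that bound comes from summing geometric‑type series over scales that converge regardless of the box size.
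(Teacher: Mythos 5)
Your proposal is correct and follows essentially the same route as the paper: a first-moment estimate showing a fixed vertex is good with probability close to $1$ (via the \cite{FisherFrolichSpencer}-type bound at one reference value of $M$ with slack $\tfrac12|\partial_e A|$, then transferring to all $|M|\le(\log N)N^{-d/2}$ using the Lipschitz-in-$M$ bound $|M-M'|\cdot|B(u,2\widetilde N)|=o(1)$), combined with finite-range independence of goodness for vertices at $|\cdot|_\infty$-distance greater than $4\widetilde N$ and a second-moment/Chebyshev argument. The only cosmetic difference is your choice of reference point $M=0$ instead of $M=-(\log N)N^{-d/2}$, which changes nothing.
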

\begin{proof}
    First, we show that the probability of a vertex being good can be arbitrarily close to 1 by choosing $ \epsilon $ sufficiently small. Similar to Claim~\ref{lem3.3-claim1} in Lemma~\ref{lem:control-change}, we have that the following event happens with probability arbitrarily close to 1 by choosing $ \epsilon $ sufficiently small: For all $A \in \mathcal{A}_N$ with $u \in A$ and $A \subset B(u, 2 (\log N)^{C_1})$, we have
    $$
    |\mathcal{H}_{B(u,2 (\log N)^{C_1}), M, \epsilon h}^\pm - \mathcal{H}_{B(u,2 (\log N)^{C_1}), M, \epsilon h^A}^{\pm} | \leq \frac{1}{2} |\partial_e A| \quad \mbox{for } M = -(\log N) N^{-d/2}.
    $$
    This condition already implies that the vertex $u$ is good since 
    $$
    |\mathcal{H}_{B(u,2 (\log N)^{C_1}), M', \epsilon h}^\pm - \mathcal{H}_{B(u,2 (\log N)^{C_1}), M, \epsilon h}^{\pm} | \leq |M - M'| \times |B(u,2 (\log N)^{C_1})|,
    $$
    which can be proved similarly to~\eqref{eq:lem3.6-2}.

    \response{Now we have shown that there exists $ c >0 $ such that for all $ 0< \epsilon < c $ and every vertex $ u\in\TT^d_N $, $\mathbb{P}[u{\rm\ is\ not\ good}]\leq \delta/4$.
    In addition, note that for any two vertices $u,v$ with $|u - v|_\infty > 4 \widetilde N$, the events ``$u$ is good'' and ``$v$ is good'' are independent.
    Therefore, by applying Chebyshev's inequality, we have
    \begin{equation*}
        \begin{aligned}
            \mathbb{P}[\mathcal{G}^c_2]
            &\leq \mathbb{P}\Bigg[\sum_{u\in\TT^d_N}\mathbf{1}_{u{\rm\ is\ not\ good}} - \mathbb{E}[\mathbf{1}_{u{\rm\ is\ not\ good}}] \geq\frac{\delta}{4}N^d\Bigg]\\
            &\leq \frac{1}{(\delta/4)^2N^{2d}}{\rm Var}\Bigg(\sum_{u\in\TT^d_N}\mathbf{1}_{u{\rm\ is\ not\ good}}\Bigg)\\
            &= \frac{1}{(\delta/4)^2N^{2d}} \sum_{u, v\in\TT^d_N, |u-v|_{\infty}\leq4\widetilde{N}} {\rm Cov}\left(\mathbf{1}_{u{\rm\ is\ not\ good}}, \mathbf{1}_{v{\rm\ is\ not\ good}}\right) = o_N(1). \qedhere
        \end{aligned}
    \end{equation*}}
\end{proof}

Now we complete the proof of Claim~\eqref{claim-3d-ground-state} in Theorem~\ref{thm:3d-ground-state}.

\begin{proof}[Proof of Theorem~\ref{thm:3d-ground-state} Claim~\eqref{claim-3d-ground-state}]
    Fix $\delta >0$. We assume that $\delta < \frac{1}{10}$. The proof consists of three steps, following the strategy in the second last paragraph of Section~\ref{subsec:3d-proof-strategy}.

    \textbf{Step 1.} Recall from Lemma~\ref{lem:global-spin} that on the event $\mathcal{G}_1$, $\tau_M$ has a global spin cluster for all $|M| \leq (\log N) N^{-d/2}$. In this step, we show that on the event $\mathcal{G}_1$, a good vertex belongs to the global spin cluster in $\tau_M$ for all $|M| \leq (\log N) N^{-d/2}$. 

    Let $u$ be a good vertex, and let $|M| \leq (\log N) N^{-d/2}$. We first consider the case that the global spin cluster in $\tau_M$ is minus. Let $\widetilde \tau$ be the RFIM ground state configuration on $B(u, 2 \widetilde N)$ with respect to the minus boundary condition. Using~\eqref{eq:strategy-3}, together with \eqref{eq:isoperimetric} and Lemma~\ref{lem:ground-state-flip}, we have that 
    \begin{equation}\label{eq:thm1.2-claim2-0}
        \mbox{$u$ is not enclosed by any spin interface in $\widetilde \tau$}.
    \end{equation}\response{In fact, if $u$ is enclosed by a spin interface, then by a similar argument to Lemma~\ref{lem:ground-state-flip}, there exists $A \in \mathcal{A}_N$ with $u \in A$ and $A \subset B(u, 2 (\log N)^{C_1})$ such that
    \begin{equation}\label{eq:thm1.2-claim2-1}
    |\mathcal{H}_{B(u,2 (\log N)^{C_1}), M, \epsilon h}^- - \mathcal{H}_{B(u,2 (\log N)^{C_1}), M, \epsilon h^A}^- | \geq 2|\partial_e A| - 2|M| \cdot |A|.
    \end{equation}
    However, \eqref{eq:strategy-3} implies that the left-hand side of~\eqref{eq:thm1.2-claim2-1} is at most $|\partial_e A|$. Moreover, by~\eqref{eq:isoperimetric}, we have $|\partial_e A| \geq C_2 |A|^{1-1/d} \geq 2|M| \cdot |A|$, which leads to a contradiction. This proves~\eqref{eq:thm1.2-claim2-0}.}
    
    Then by duality (or by a similar argument in Lemma~\ref{lem:geometric-connect}), we deduce that $u$ is connected to $ \partial_o B(u, 2 \widetilde N) $ by a minus path in $\widetilde \tau$.
    By Definition~\ref{def:global-spin-config} and the fact that the global spin cluster in $\tau_M$ is minus, there exists a minus contour \response{(i.e., a $*$-connected path with the same start and end vertex)} $\mathcal{C}$ in $\tau_M$ that lies in $B(u, 2 \widetilde N)$ and surrounds $u$, such that any path from $u$ to $ \partial_o B(u, 2\tilde{N}) $ intersects $ \mathcal{C}$. 
    Since all the spins in $\tau_M$ are minus on this contour, by monotonicity, we deduce that the spin configuration of $\tau_M$ inside $\mathcal{C}$ is dominated by that of $\widetilde \tau$ (in fact, they are the same). Therefore, $u$ is connected to $\mathcal{C}$ by a minus path in $\tau_M$, which implies that $u$ belongs to the minus global spin cluster. The case that the global spin cluster is plus can be treated similarly using the plus boundary condition case in~\eqref{eq:strategy-3}. This proves the claim.

    \textbf{Step 2.} In this step, we show that with high probability, at least one quarter of the spins are minus (resp.\ plus) at $M = - (\log N) N^{-d/2}$ (resp.\ $M = (\log N) N^{-d/2}$). This is equivalent to show that
    \begin{equation}
    \label{eq:prop3.4-2}
    \mathbb{P}\Big[\sum_{v \in \TT_N^d} \tau_M(v) \geq \frac12 N^d \Big] \leq o_N(1) \quad \mbox{for } M = - (\log N) N^{-d/2}.
    \end{equation}
    Let $\mathcal{H}_{M, -\epsilon h}$ be the Hamiltonian of the ground state with the external field $\{-h_v\}_{v \in \TT_N^d}$. On the event that $\sum_{v \in \TT_N^d} \tau_M(v) \geq \frac12 N^d$, we have
    \begin{align*}
    \mathcal{H}_{M, -\epsilon h} &\leq H_{\TT_N^d, M, -\epsilon h} ( - \tau_M)=  H_{\TT_N^d, M, \epsilon h} ( \tau_M) + 2 M \sum_{v \in \TT_N^d} \tau_M(v) \leq \mathcal{H}_M - (\log N) N^{d/2}.
    \end{align*}
    However, by~\eqref{eq:lem-gaussian-est}, we have 
    $$
    \mathbb{P}\Big[|\mathcal{H}_{M, -\epsilon h} - \mathcal{H}_M| \geq t\Big] \leq 2 \exp \big(-\frac{t^2}{8\epsilon^2 | \TT_N^d|} \big) \quad \forall t>0\,.
    $$
    Combining the preceding two inequalities yields~\eqref{eq:prop3.4-2}.

    \textbf{Step 3.} By Step 1, we see that on the event $\mathcal{G}_1 \cap \mathcal{G}_2$, $\tau_M$ has a global spin cluster which contains at least $(1 - \frac{\delta}{2}) N^d$ vertices for all $|M| \leq (\log N) N^{-d/2}$. Combining this with Step 2 and the fact that $\delta < \frac{1}{10}$, we obtain that, with high probability, this global spin cluster is minus at $M = -(\log N) N^{-d/2}$ and plus at $M = (\log N) N^{-d/2}$. Therefore, the global spin cluster changes from minus to plus at some (random) time $M_{\star} \in [-(\log N) N^{-d/2}, (\log N) N^{-d/2}]$, where the flipping size is at least $(1 - \delta) N^d$. By Lemmas~\ref{lem:A-large} and~\ref{lem:good-vertex}, both the events $\mathcal{G}_1$ and $\mathcal{G}_2$ happen with high probability when $\epsilon$ is sufficiently small. This concludes Claim~\eqref{claim-3d-ground-state} in Theorem~\ref{thm:3d-ground-state}.\qedhere
\end{proof}

\begin{remark}\label{rmk:tight-threshold}
    The coefficient $\log N$ in $(\log N) N^{-d/2}$ was chosen to be an arbitrarily large number that tends to infinity as $N$ tends to infinity, and it was only used in the proof of~\eqref{eq:prop3.4-2}. By slightly modifying the proof, our method can show that for all sufficiently small $\epsilon$, the sequence of random variables $(N^{d/2} M_\star(N))_{N \geq 1}$ is tight. Moreover, any subsequential limit of these random variables is non-degenerate. This follows from the fact that for any fixed $\lambda>0$, the Radon-Nikodym derivative between $(h_v)$ and $(h_v + \lambda N^{-d/2})$ is bounded with probability close to 1, and if we change $(h_v)$ to $(h_v + \lambda N^{-d/2})$, $M_\star$ would become $M_\star + \epsilon \lambda N^{-d/2}$ (if they exist).
\end{remark}


\section{Polluted bootstrap percolation}
\label{sec:polluted}

In this section, we study polluted bootstrap percolation in dimensions $d \geq 2$. The main goal is to prove Theorem~\ref{thm:1.4}. 

Let $p, q \in [0,1]$ be two parameters with $p + q \leq 1$. In the initial configuration, each vertex on $\mathbb{Z}^d$ is independently chosen to be closed with probability $q$, open with probability $p$, and empty with probability $1 - p - q$. The configuration evolves according to the (polluted) bootstrap percolation with threshold $r = d$, and let $\mathbb{P}_{p,q,\infty}$ denote the law of the final configuration. Recall that a configuration $A$ is said to be larger than (or to dominate) another configuration $B$ if all open vertices in $B$ are also open in $A$, and all closed vertices in $A$ are also closed in $B$.

Our proof of Theorem~\ref{thm:1.4} is based on a coarse-graining argument. Assume that $p < 10^{-10^{10d}}$. Let $K>10^{10^{10d}}$ be a large integer to be chosen. We also require that $ K $ is the square of an integer and is divisible by $(1000d)!$, so that $ \sqrt{K}/t\in\mathbb{Z} $ for all $ 1\leq t\leq 100d $. Let the coarse-graining scale
\begin{equation}\label{eq:def-Ln}
    L_n = K^n \lfloor p^{-1} \rfloor \quad \mbox{for integers }n \geq 0.
\end{equation}
Recall that $B(x,r) = \{ y \in \mathbb{Z}^d: |x-y|_\infty \leq r \}$ for $x \in \mathbb{Z}^d$ and an integer $r \geq 0$. We call a box $B(x,r)$ an $L_n$-box if $x \in L_n \mathbb{Z}^d$ and $r = L_n$. 

In what follows, we often consider the evolution of vertices restricted to a finite domain $A$, while the rule of the evolution remains the same. Namely, closed and open vertices remain in the same state, and whenever an empty vertex in $A$ has at least $d$ neighbors in $A$ that are open, this vertex becomes open. Throughout Section~\ref{sec:polluted}, we will abbreviately use the term ``final configuration of $A$'' to refer to the final configuration in the evolution of vertices restricted to $A$, which is different from the final configuration in $A$ in the evolution of $\mathbb{Z}^d$. Since the evolution is increasing, we have two other equivalent ways to determine the final configuration in the evolution of $\mathbb{Z}^d$, as described below.

\begin{enumerate}[(A)]
    \item\label{evolve-1} We arrange the vertices in an arbitrary order. At each time $t \geq 1$, we check if the status of the $t$-th vertex will change from empty to open. If it changes, we continue to check its neighboring vertices, until no neighboring vertices can change the status.

    \item\label{evolve-2} We can also check the evolution subsequentially among boxes of different scales. As before, the closed or open vertices always remain in the same state. At time $t = 0$, we consider the evolution of vertices restricted to the $L_0$-boxes, and let an empty vertex be open if it is open in the final configuration of some $L_0$-box containing it. Note that the resulting configuration may continue to evolve, since for instance the update in one $ L_0 $-box may change the evolution for an intersecting $ L_0 $-box. However, we do not perform such evolution that is triggered by intersecting boxes, and instead we proceed to time $t = 1$. At time $t = 1$, we consider the evolution of vertices restricted to $L_1$-boxes, and let an empty vertex be open if it is open in the final configuration of some $L_1$-box containing it. We see that the open vertices at time $t = 0$ will still be open at time $t = 1$ since any $L_0$-box is contained in some $L_1$-box. Similarly, for time $t \geq 2$, we perform the evolution in $L_t$-boxes.

\end{enumerate}

\begin{lemma}\label{lem:infinite-evolve}
    For any initial configuration, the two evolutions described above and the one described before Theorem~\ref{thm:1.4} yield the same final configuration.
\end{lemma}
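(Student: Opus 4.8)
The plan is to show that all three evolutions produce the same set of open vertices, by proving that each one has the same final configuration as the ``monotone closure'' characterization: a vertex $v$ is open in the final configuration if and only if there is a finite sequence $v_1, v_2, \ldots, v_m = v$ of distinct vertices such that $v_1$ is open in the initial configuration (or, more generally, each $v_i$ is either initially open or has at least $d$ of its neighbors among $\{v_1, \ldots, v_{i-1}\}$), and no vertex that is initially closed ever appears in such a sequence. Since the update rule is monotone (adding open vertices can only create more open vertices, and closed vertices are inert), the set of eventually-open vertices is the smallest set containing the initially-open vertices and closed under the rule ``if $v$ is empty and has $\geq d$ open neighbors, add $v$''; call this set $\mathcal{O}_\infty$. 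I would first record this well-definedness: the intersection of all rule-closed sets containing the open vertices is itself rule-closed, because the rule is local and monotone, so $\mathcal{O}_\infty$ exists and is unique, independent of any ordering.

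Next I would check that the original evolution (the synchronous one described before Theorem~\ref{thm:1.4}) converges to $\mathcal{O}_\infty$. One inclusion is immediate by induction on $t$: every vertex open at time $t$ lies in $\mathcal{O}_\infty$, since $\mathcal{O}_\infty$ is rule-closed. For the reverse inclusion, I would argue that if $v \in \mathcal{O}_\infty$, then $v$ appears in some finite witnessing sequence as above; by induction on the length of the shortest such sequence, each $v_i$ becomes open within finitely many steps of the synchronous dynamics, hence so does $v$. This shows the synchronous final configuration (which exists because the dynamics is increasing and eventually stabilizes on any finite window) equals $\mathcal{O}_\infty$ on every finite domain, hence everywhere.

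For evolution \eqref{evolve-1}: running through the vertices in an arbitrary order and greedily propagating changes never opens a vertex outside $\mathcal{O}_\infty$ (each opened vertex had $\geq d$ open neighbors at the moment it was opened, and by induction those were in $\mathcal{O}_\infty$), so its output is contained in $\mathcal{O}_\infty$; conversely, since every vertex of $\mathcal{O}_\infty$ is visited infinitely often (or, for a finite witnessing sequence, we may pass over the finitely many relevant vertices repeatedly), an easy induction on witnessing-sequence length shows every $v\in\mathcal{O}_\infty$ is eventually opened. The key point is that the greedy ``chain'' propagation after visiting the $t$-th vertex, together with revisiting, is exhaustive. For evolution \eqref{evolve-2}: at each scale $t$, opening a vertex that is open in the final configuration of some $L_t$-box containing it only opens vertices in $\mathcal{O}_\infty$ (the rule used inside a box is a restriction of the global rule, so each box-final-configuration is contained in $\mathcal{O}_\infty$); thus the \eqref{evolve-2}-limit is contained in $\mathcal{O}_\infty$. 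For the reverse inclusion, given $v\in\mathcal{O}_\infty$ with a finite witnessing sequence contained in some ball $B(v,R)$, one chooses $t$ large enough that $L_t > 2R$ and that some $L_t$-box contains $B(v,R)$; then all of the finitely many updates in the witnessing sequence are carried out inside that single $L_t$-box by time $t$, since the restricted evolution of that box contains the restricted evolution of $B(v,R)$, which already produces $v$. Hence $v$ is open by stage $t$, proving equality.

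The main obstacle is the reverse inclusion for evolution \eqref{evolve-2}: one must make sure that a finite witnessing sequence for $v$, which a priori might wander over an unbounded region, can be taken to lie in a bounded region (so that it fits inside one $L_t$-box for large $t$). This is true because $\mathcal{O}_\infty$ restricted to any finite box $B$ is already rule-closed within $B$ up to boundary effects — more precisely, the shortest witnessing sequence for $v$ uses only vertices within graph-distance $|\text{seq}|$ of $v$, so it is automatically confined to a ball whose radius depends only on the length of that shortest sequence; one then takes $t$ large relative to that radius. Making this localization precise — i.e., that the restricted evolution in a sufficiently large box already opens $v$, with no help from outside the box — is the one place where a short argument (rather than a one-line remark) is needed, and everything else is routine monotonicity bookkeeping.
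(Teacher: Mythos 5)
Your proposal is correct and rests on essentially the same idea as the paper's own (very short) proof: all three dynamics are monotone in the initial data and in time, and each final configuration dominates every finite-time configuration of the others, which the paper compresses into ``we can verify'' and you spell out by introducing the least rule-closed set $\mathcal{O}_\infty$ and checking both inclusions for each evolution, including the localization of finite witnessing sequences needed for the multiscale evolution~\eqref{evolve-2}. There is no gap; your write-up simply supplies the routine monotonicity bookkeeping the paper leaves implicit.
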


\begin{proof}
    We see that all three evolutions are increasing, as the closed or open vertices remain in the same state, and an empty vertex remains open once it becomes open. Therefore, all three evolutions converge to a final configuration in any finite domain in a finite time. \response{Furthermore, we can verify that all these final configuration do not evolve at any vertex. Using this property and induction, we can show that the final configuration of any of these evolutions dominates the finite time configuration of any other evolution. Specifically, if the final configuration of one evolution dominates the configuration of another at time $k$, then by monotonicity, it also dominates at time $k+1$, which implies that it dominates any finite time configuration.} Therefore, their final configurations are the same. \qedhere
\end{proof}

We will use the evolution as in~\eqref{evolve-2} to prove Theorem~\ref{thm:1.4}. To this end, we consider the final configuration of an $L_n$-box and define good $L_n$-boxes. We will see from the definition below whether an $L_n$-box is good or not is measurable with respect to the initial configuration restricted to this box. Hence, the statuses of different $L_n$-boxes are independent if the pairwise $|\cdot|_\infty$-distances between their centers are at least $3L_n$. 

\begin{definition}\label{def:good-box}
    \begin{enumerate}[(1)]
        \item For an $L_0$-box $B(x,L_0)$, we will call it good if in the final configuration of $B(x,L_0)$, all open clusters have $|\cdot|_\infty$-diameters at most $100^d$.

        \item Given the definition of good $L_{n-1}$-boxes, we call an $L_n$-box $B(x, L_n)$ good if one of the following two conditions hold:
        \begin{enumerate}[(i)]
            \item There is no bad $L_{n-1}$-box contained in $B(x, L_n)$ (a box that is not good is called bad).
            \item \label{def:good-box-case-b} There are bad $L_{n-1}$-boxes contained in $B(x,L_n)$, but all of them are contained in $B(y, 3L_{n-1})$ for some $y \in L_{n-1}\mathbb{Z}^d$. Moreover, all the open clusters in the final configuration of $B(x,L_n)$ that intersect $B(y, 3 L_{n-1})$ are contained in $B(y, \sqrt{K} L_{n-1})$; \response{see Figure~\ref{fig:def-good-L_n-boxes} for an illustration.}
        \end{enumerate}
    \end{enumerate}
\end{definition}

\begin{figure}[h]
    \centering
    \includegraphics[width=0.5\textwidth]{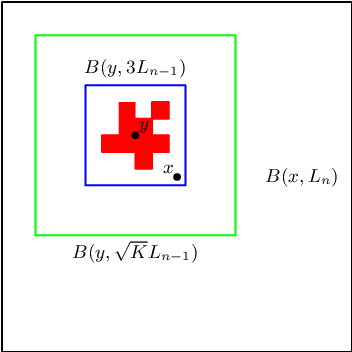}
    \caption{\response{An illustration of case~\eqref{def:good-box-case-b} for a good $L_n$-box $B(x, L_n) $. The boundary of $ B(y, \sqrt{K}L_{n-1}) $ is colored in green, and the boundary of $ B(y, 3L_{n-1}) $ is colored in blue. The bad $ L_{n-1} $-boxes, colored in red, are all contained in $ B(y, 3L_{n-1})$. Note that we allow bad $ L_{n-2} $-boxes to exist in $ B(x, L_n)\setminus B(y, 3L_{n-1}) $, but they are not illustrated. In the final configuration of $B(x,L_n)$, all the open clusters that intersect $ B(y, 3L_{n-1}) $ are contained in $ B(y, \sqrt{K}L_{n-1}) $.}}
    \label{fig:def-good-L_n-boxes}
\end{figure}

Theorem~\ref{thm:1.4} essentially follows from the following lemma.

\begin{lemma}\label{lem:bad-probability}
    For an integer $n \geq 0$, let $\mathbb{P}_{p,q, L_n}$ denote the law of the initial configuration restricted to $B(0, L_n)$. There exist a constant $C>0$ and a function $\mathscr{L}: \mathbb{N} \rightarrow (0,\infty)$ such that for all $K \geq C$, $p < \mathscr{L}(K)^{-1}$, and $q = \mathscr{L}(K) p^d$, we have
    \begin{equation}\label{eq:bound-bad-probability}
    p_n = p_n (p, q, K) := \mathbb{P}_{p,q, L_n}[\mbox{$B(0,L_n)$ is bad}] \leq p^{2d}e^{-K\cdot2^{n}} \quad \mbox{for all integers }n \geq 0.
    \end{equation}
\end{lemma}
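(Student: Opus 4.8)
The plan is to prove Lemma~\ref{lem:bad-probability} by induction on $n$, using the evolution scheme~\eqref{evolve-2} so that the status of an $L_n$-box depends only on the initial configuration inside it.

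\textbf{Base case.} First I would establish~\eqref{eq:bound-bad-probability} for $n=0$. An $L_0$-box is bad precisely when its final configuration contains an open cluster of $|\cdot|_\infty$-diameter exceeding $100^d$. Such a cluster, running within a box of side $\sim p^{-1}$, requires a chain of many initially open vertices that successively help each other fill empty sites: since each fill needs $d$ open neighbors, one can trace back an ``open-assisted path'' and bound its probability by a power of $p$ that grows with the diameter. The density of initially open vertices is $p$, so a cluster of diameter $\ell$ costs roughly $p^{c\ell}$ (with $c$ depending on $d$, via a spanning-tree / witness-set argument as in Lemma~\ref{lem:count_surrounding_contour}); summing over the $O(L_0^d)=O(p^{-d})$ possible locations and $\ell>100^d$, I get $p_0 \le p^{2d}e^{-K}$ once $p$ is small enough relative to $K$. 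This is where the normalization $q=\mathscr L(K)p^d$ and $p<\mathscr L(K)^{-1}$ gets used to absorb constants.

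\textbf{Inductive step.} Assume $p_{n-1}\le p^{2d}e^{-K2^{n-1}}$. An $L_n$-box $B(0,L_n)$ fails to be good only if neither condition (i) nor (ii) in Definition~\ref{def:good-box} holds. There are two ways this happens: either (a) the bad $L_{n-1}$-subboxes are \emph{not} all confined to a single $3L_{n-1}$-window, i.e.\ there exist two bad $L_{n-1}$-boxes at mutual distance $\ge 3L_{n-1}$ — hence \emph{independent} — and the probability of this is at most $(\#\text{ of }L_{n-1}\text{-boxes in }B(0,L_n))^2 p_{n-1}^2 \le (CK^d)^2 (p^{2d}e^{-K2^{n-1}})^2$; or (b) the bad boxes \emph{are} confined to some $B(y,3L_{n-1})$, but some open cluster of the final configuration of $B(0,L_n)$ meeting $B(y,3L_{n-1})$ escapes $B(y,\sqrt K L_{n-1})$. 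For case (b), the key point is a deterministic containment statement: away from the bad window, all $L_{n-1}$-boxes are good, so by the inductive hypothesis on their structure the ``external'' dynamics can only produce open clusters of diameter $\le 100^d L_{n-1}$ (a renormalized version of the $L_0$ bound), and for such a cluster to bridge from the $3L_{n-1}$-window out past radius $\sqrt K L_{n-1}$ it must cross an annulus of $\sim \sqrt K$ good $L_{n-1}$-boxes, forcing a long open-assisted path through good boxes; each good box contributes a uniformly bounded ``conductance'', so this has probability at most $e^{-c\sqrt K}$ times a location factor $CK^d$. In both cases, after choosing $K$ large, $p_n \le (CK^d)^2 p^{4d} e^{-K2^n}\cdot e^{-K2^{n-1}} + CK^d e^{-c\sqrt K} \cdot(\cdots) \le p^{2d}e^{-K2^n}$, where the doubling $2^{n-1}\to 2^n$ in the exponent comes precisely from squaring $p_{n-1}$ in case (a), and case (b) is made negligible by taking $K$ large (this is why $e^{-K2^n}$ rather than a fixed rate survives).

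\textbf{Main obstacle.} I expect the hard part to be case (b): making rigorous the deterministic claim that, on the event that all bad $L_{n-1}$-boxes lie in one small window, the open clusters produced \emph{outside} that window by the scale-$L_n$ evolution have controlled size, and then converting ``a cluster crosses the $\sqrt K$-annulus of good boxes'' into an honest probabilistic bound of the form $CK^d e^{-c\sqrt K}$. This requires (i) a careful analysis of how open regions propagate across a good $L_{n-1}$-box — essentially a monotone ``block lemma'' saying a good box cannot transmit an open path between far-apart faces unless fed a large open input, which must itself be justified by the scale-$(n-1)$ diameter bound — and (ii) an entropy/counting bound over the possible crossing paths and the window location $y$, analogous to but more delicate than Lemma~\ref{lem:count_surrounding_contour}. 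The choices of $K$ being a perfect square divisible by $(1000d)!$ and the precise radii $3L_{n-1}$, $\sqrt K L_{n-1}$ in Definition~\ref{def:good-box} are calibrated exactly so that the annulus width $\sqrt K$ beats the location entropy $K^d$ at every scale; I would carry the constants through this step with care, and relegate the routine Gaussian-free resummations to the end. The detailed proof outline for this coarse-graining is deferred to the body of Section~\ref{sec:polluted}.
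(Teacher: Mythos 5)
Your skeleton matches the paper's: induction on $n$, a base case bounding the number of initially open witnesses in a cluster (this is essentially the paper's Lemma~\ref{lem:initial-open-number-lower-bound} plus a Chernoff bound, as in Lemma~\ref{lem:n=0}), and a split of the inductive step into scenario (a) (two far-apart bad $L_{n-1}$-boxes, probability $\lesssim K^{2d}p_{n-1}^2$, which supplies the doubling of the exponent) and scenario (b) (bad boxes confined to one window $B(y,3L_{n-1})$ but the seeded cluster escapes $B(y,\sqrt K L_{n-1})$). Scenario (a) and the base case are fine (one small misattribution: $q$ plays no role in the base case; the relation $q\geq Cp^d$ is needed only later).

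The genuine gap is scenario (b), which is the heart of the theorem, and your proposed mechanism cannot work as stated. You argue that since all $L_{n-1}$-boxes outside the window are good, they have "uniformly bounded conductance," so a cluster seeded in the window cannot cross the annulus of width $\sqrt K L_{n-1}$, giving $e^{-c\sqrt K}$. But goodness of a box only controls the clusters generated by the evolution \emph{restricted to that box}; it says nothing about whether the box can transmit growth when fed a large open input from the window side. Indeed, for threshold-$d$ bootstrap percolation a large open seed plus ambient density $p$ of helpers grows indefinitely when $q=0$, and Theorem~\ref{thm:1.5} shows growth does percolate when $q\ll p^d$; so any argument that stops the seeded growth without quantitatively using the closed vertices (i.e.\ $q\geq Cp^d$) proves too much. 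This is exactly why the paper needs the stochastic domination by the environment $\mathbb{Q}_{n-1}$ (Lemma~\ref{lem:domination}) and the entire shield/oriented-surface construction of Section~\ref{subsec:lem4.10} (Definition~\ref{def:nice-box}, Lemmas~\ref{lem:shield-local}--\ref{lem:overwhelming_dominance}), in which the blocking surfaces are anchored at \emph{closed} vertices living at the microscopic scale $L=\lfloor 1/(pK)\rfloor$. A second, quantitative problem: your escape bound $e^{-c\sqrt K}$ is independent of $n$, while scenario (b) enters the recursion linearly in $p_{n-1}$; then $p_n\gtrsim K^d e^{-c\sqrt K}p_{n-1}\approx K^d e^{-c\sqrt K}p^{2d}e^{-K2^{n-1}}$, which exceeds the target $p^{2d}e^{-K2^n}$ for large $n$ no matter how large $K$ is, so the induction does not close. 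The paper closes it because Lemma~\ref{lem:control-growth} gives an escape probability at most $e^{-K\cdot 2^n}/(100K)^d$ — itself doubly exponentially small in $n$ — obtained from a dual path of length at least $K^{n+0.5}/(100d)$ through scale-$L$ nice boxes, not from an $n$-independent $\sqrt K$-box crossing at scale $L_{n-1}$.
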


In fact, by combining Lemmas~\ref{lem:bad-probability} and~\ref{lem:domination} below, we can show that open clusters in the final configuration of a good $L_n$-box are very sparse. More precisely, they can be stochastically dominated by a random environment containing independently sampled open vertices and open cubes at each vertex. The density of open vertices is proportional to $p$, and the diameters of the open cubes have a decaying right tail given by $p_k$ (see \response{Definition~\ref{def:Qn} and} Lemma~\ref{lem:domination} for the precise statement). It is easy to see that in this environment, the origin is contained in a bounded open cluster with high probability, and the probability of the origin being open tends to 0 as $p$ tends to 0, thus proving Theorem~\ref{thm:1.4} (see the proof of Theorem~\ref{thm:1.4}).

Next, we sketch the proof strategy of Lemma~\ref{lem:bad-probability} based on an induction argument. The case $n = 0$ will be proved in Lemma~\ref{lem:n=0} by noting that with high probability, there will not be too many open vertices getting close to each other in an $L_0$-box. Suppose that \eqref{eq:bound-bad-probability} is proved for $n \leq k$. For $n = k+1$, by independence, the probability that there are bad $L_{n-1}$-boxes contained in $B(0, L_n)$ but cannot be covered by $B(y, 3L_{n-1})$ for any $y \in L_{n-1} \mathbb{Z}^d$ is at most $(2K - 1)^{2d} \times p_{n-1}^2$, which is negligible. Thus, we may assume that all the bad $L_{n-1}$-boxes can be covered by $B(y, 3L_{n-1})$ for some $y \in L_{n-1} \mathbb{Z}^d$. We will first show that the final configuration of $B(0, L_n) \setminus B(y, 3L_{n-1})$ can be stochastically dominated by a random environment containing open cubes and closed vertices independently sampled at each vertex (see Lemma~\ref{lem:domination} for the precise statement). In this environment, each vertex is open (resp.\ closed) with probability up-to-constants equivalent to $p$ (resp.\ $q$), and in addition the diameters of the open cubes at each vertex have right tails that decay very fast. Then we will use the assumption $q \geq Cp^d$ to show that the initially open vertices in $B(y, 3L_{n-1})$ cannot grow to distance $\sqrt{K} L_{n-1}$ in this random environment with very high probability (Lemma~\ref{lem:control-growth}). This proves the claim for $n = k + 1$, and by induction, proves Lemma~\ref{lem:bad-probability}. 

In Section~\ref{subsec:n=0}, we prove the case $n = 0$ in Lemma~\ref{lem:bad-probability}. In Section~\ref{subsec:4.2}, we complete the proof of Lemma~\ref{lem:bad-probability} for all $n \geq 1$ and Theorem~\ref{thm:1.4}, assuming Lemma~\ref{lem:control-growth}. In Section~\ref{subsec:lem4.10}, we prove Lemma~\ref{lem:control-growth} using delicate geometrical arguments.

\subsection{The base case}\label{subsec:n=0}

The following lemma provides a lower bound on the number of initially open vertices in any open cluster during the evolution of bootstrap percolation. Recall that the $|\cdot|_\infty$-diameter of a set is the maximum $|\cdot|_\infty$-distance between any pair of points in the set. In particular, the $|\cdot|_\infty$-diameter of a single-point set is 0.

\begin{lemma}
    \label{lem:initial-open-number-lower-bound}
    Let $ r\geq2 $. During the evolution of (polluted) bootstrap percolation with threshold $r$ (recall its definition from Section~\ref{subsec:glauber-evolution}) on vertices restricted to any finite domain $A$, if open vertices form a connect component with $|\cdot|_\infty$-diameter $N$, then this component must contain at least \response{$\frac{N+2}{2}$} initially open vertices.
\end{lemma}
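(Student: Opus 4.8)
The plan is to induct on $N$, the $|\cdot|_\infty$-diameter of the open cluster, and more importantly to phase-order the opening events so that we can track how the diameter grows one ``generation'' at a time. First I would set up the base cases: if $N=0$ the cluster is a single open vertex, which must itself be initially open, so it contains $1 \geq \frac{0+1}{2}$ initially open vertices; if $N=1$ the cluster has at least two vertices and any cluster with at least two vertices must contain at least one initially open vertex (since the very first vertex in the cluster to become open — if any do — had $r \geq 2$ open neighbors already, so some open vertex precedes it; iterating, there is an initially open one), giving $1 \geq \frac{1+1}{2} = 1$.

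For the inductive step, suppose the claim holds for all diameters less than $N$, and let $C$ be an open cluster (connected component of open vertices) in the final configuration on $A$ with $|\cdot|_\infty$-diameter exactly $N \geq 2$. Pick two vertices $u,v \in C$ realizing the diameter, so $|u-v|_\infty = N$; without loss of generality the first coordinate satisfies $|u_1 - v_1| = N$. I would slice $C$ by the value of the first coordinate: let $S_j = \{x \in C : x_1 = j\}$ for the relevant range of $j$, and let $a = \min\{x_1 : x \in C\}$, $b = \max\{x_1 : x \in C\}$, so $b - a \geq N$. The key observation is that each nonempty slice $S_j$ must contain at least one initially open vertex. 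Indeed, consider the restriction of the evolution and look at the first time any vertex of $S_j$ becomes open by the bootstrap rule (as opposed to being initially open); such a vertex $w$ has at least $r \geq 2$ open neighbors, hence at least one open neighbor in the same slice $S_j$ (a vertex with first coordinate $j$ has only two neighbors outside the hyperplane $\{x_1 = j\}$, so among $\geq r \geq 2$ open neighbors at least $r - 2 \geq 0$... ) — wait, this needs $r$ strictly larger, so I must argue differently.

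The cleaner route: project $C$ onto the first coordinate axis and use the diameter directly via the induction hypothesis on a subcluster. Fix $u,v \in C$ with $|u_1 - v_1| = N$, say $u_1 = 0$ and $v_1 = N$. Consider the set $C' = \{x \in C : x_1 \geq 1\}$ together with the ordering in which vertices of $C$ opened. I claim $C'$ contains a connected (in $C$) subcomponent of diameter at least $N-1$ whose initially open vertices are disjoint from those in the slice $\{x_1 = 0\}$, and that the slice $\{x_1 = 0\}$ contributes at least one initially open vertex. Establishing that last point — that any nonempty coordinate-slice of an open cluster in bootstrap percolation with threshold $r \geq 2$ carries an initially open vertex — is the crux and the main obstacle. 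The argument should be: within the slice $S_0$, order its vertices by opening time; the first one to open via the rule has $\geq r$ open neighbors but at most two of its neighbors lie outside $S_0$, so if it were opened by the rule before any vertex of $S_0$ is open it would need $\geq r \geq 2$ open neighbors all outside $S_0$, i.e. exactly its two axis-neighbors, forcing $r = 2$ and both axis-neighbors open. To rule this out we must instead argue at the level of the whole cluster: take a shortest path in $C$ from $u$ to $v$ and track coordinate-$1$ crossings, or better, induct by \textbf{peeling the extreme slice}. Concretely, let $C_{\geq 1} = \{x \in C: x_1 \geq 1\}$; I would show $C_{\geq 1}$ (with the induced evolution where $S_0$ is treated as permanent boundary, which only makes opening harder, hence preserves the structure) still has a component of diameter $\geq N-1$ and that passing from $C$ to $C_{\geq 1}$ destroys at most the initially open vertices in $S_0$, of which there is at least one because $S_0 \neq \varnothing$ and $S_0$ cannot consist entirely of bootstrap-opened vertices — a vertex in $S_0$ opened by the rule needs $\geq 2$ open neighbors, and one can choose the \emph{last} vertex of $C$ overall to open; chasing predecessors of opening events stays within $C$ and must terminate at an initially open vertex, and a counting/Hall-type argument distributes at least one initially open vertex to each extreme slice. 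I expect the honest write-up to replace this slice-by-slice peeling with the following slick induction: among all vertices of $C$, the one that opened \emph{last} had $\geq 2$ open neighbors at that moment; remove it; the remaining open set on $A$ (re-run the evolution) is still valid and its cluster through $u$ has diameter $\geq N-1$ unless the removed vertex was an endpoint realizing the diameter — handle that case by noting its neighbor in $C$ is within distance $1$, so diameter drops by at most $1$ — then apply the induction hypothesis and observe the removed vertex was \emph{not} initially open, so the count $\frac{(N-1)+1}{2} = \frac N2$ is preserved, and we need $\frac{N+1}{2}$, i.e. we must find one \emph{extra} initially open vertex somewhere. That discrepancy is exactly why the diameter must drop by a full $1$ at a genuinely initially-open endpoint in the worst case, and reconciling the arithmetic (each unit of diameter ``costs'' half an initially open vertex, with endpoints being initially open) is the delicate bookkeeping I would need to get right.
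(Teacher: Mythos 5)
Your write-up does not reach a proof, and the two ideas it leans on both have genuine problems. First, the ``key observation'' that every nonempty coordinate-slice of an open cluster must contain an initially open vertex is false: in $\mathbb{Z}^2$ with $r=2$, take $(0,0)$ and $(2,0)$ initially open; then $(1,0)$ opens (its two open neighbors are $(0,0)$ and $(2,0)$), and the slice $\{x_1=1\}$ of the resulting cluster consists solely of a bootstrap-opened vertex. You notice this obstruction yourself (``forcing $r=2$ and both axis-neighbors open'') but never resolve it, so the slice-peeling route collapses. Second, the fallback induction (``remove the last-opened vertex and re-run'') is only sketched, and as you concede, the arithmetic does not close: the induction hypothesis yields $\frac{N}{2}$ initially open vertices while the removed vertex is \emph{not} initially open, so you are short exactly the extra $\frac12$ you need, and the case analysis about whether the removed vertex realizes the diameter (and whether the cluster splits or the diameter drops by more than one when that vertex is forbidden from opening) is never carried out. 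So the proposal is a collection of attempted strategies with the crucial steps either false or missing.

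For contrast, the paper's proof avoids induction entirely via a monovariant: let $U$ be the sum over open components of their $|\cdot|_\infty$-diameters plus twice the number of open components. When a vertex opens with all of its ($\geq r \geq 2$) open neighbors in one component, that component's diameter does not increase (two distinct lattice neighbors in the component control every coordinate of the new vertex), so $U$ does not increase; when the new vertex merges several components, the component count drops by at least one while the total diameter rises by at most two, so again $U$ does not increase. Since initially $U$ is at most twice the number of initially open vertices, a component of diameter $N$ (contributing $N+2$ to $U$) forces at least $\frac{N+1}{2}$ initially open vertices in it. If you want to salvage your approach, this kind of global potential-function bookkeeping is the idea your slice-by-slice and last-vertex arguments were groping toward; without it, the proof as proposed does not go through.
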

\begin{proof}
    Consider the evolution restricted to $A$. Define $U$ to be the sum of the $|\cdot|_\infty$-diameters of open components plus two times the number of open components. We claim that $U$ is non-increasing during the evolution. Consider each time an open vertex $v$ is produced. If all of its open neighbors come from the same open component, then the number of open components will not change and in addition the diameter of this open component will not increase, implying that $U$ will not increase. If the open neighbors of $v$ come from different open components, then the number of open components decreases by at least 1 and the sum of diameters increases by at most 2. (Note that the sum of diameters can increase by 2, for instance, when two isolated open vertices with $|\cdot|_\infty$-distance 2 grow into an open component with three vertices.) Hence, $U$ will still not increase. In addition, in the initial configuration, $U$ is at most two times the number of open vertices. Therefore, an open component with $|\cdot|_\infty$-diameter $N$ includes at least \response{$\frac{N + 2}{2}$} initially open vertices. 
\end{proof}

Now we prove the case $n=0$ in Lemma~\ref{lem:bad-probability}. We also control the open clusters in the final configuration of a good $L_0$-box using a stochastic dominance argument.

\begin{lemma}\label{lem:n=0}
    Fix $K \geq 1$. For all sufficiently small $p$ (depending on $d, K$) and $q>0$, we have $ p_0 \leq p^{2d}e^{-K} $. Moreover, conditioned on $B(0,L_0)$ being good, the open clusters in the final configuration of $B(0, L_0)$ are stochastically dominated by a set $S$ sampled as follows. For each $x \in B(0, L_0)$ and $0 \leq j \leq 100^d$, we independently sample an open cube $B(x, j) \cap B(0, L_0)$ with probability $p^{1 + j/3}$, and let $S$ be the union of these open cubes.
\end{lemma}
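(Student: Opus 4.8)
The plan is to prove both assertions via the deterministic lower bound from Lemma~\ref{lem:initial-open-number-lower-bound}: any open cluster of $|\cdot|_\infty$-diameter $N$ in the final configuration of $B(0,L_0)$ must contain at least $\frac{N+1}{2}$ initially open vertices. The first step is to bound $p_0$. If $B(0,L_0)$ is bad, then by definition some open cluster in its final configuration has $|\cdot|_\infty$-diameter exceeding $100^d$, and hence contains at least $\frac{100^d+1}{2} \geq 50^d$ initially open vertices, all of which lie within $|\cdot|_\infty$-distance $L_0$ of one another (in fact, they lie in a box of diameter at most $L_0$). Thus $\{B(0,L_0)\text{ bad}\}$ implies the existence of a subset of $m := \lceil 50^d \rceil$ initially open vertices that is ``clustered'' in the sense that they can be linearly ordered $x_1,\dots,x_m$ with each $x_{i+1}$ within distance (say) $2\cdot100^d$ of the cluster formed by $x_1,\dots,x_i$ — this follows because the initially open vertices in a connected open component cannot be too spread out from each other (each newly opened vertex is adjacent to already-open vertices, so the ``openness front'' advances by bounded steps). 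More carefully: since the whole open component is connected with bounded diameter at intermediate stages is not guaranteed, but one can instead argue that among the $\geq 50^d$ initially open vertices in the final cluster, one can extract a chain where consecutive ones are within distance $O(100^d)$ using that the final open cluster is connected and each vertex of it is within distance $O(100^d \cdot \text{something})$ — the cleanest route is: the final open cluster $\mathcal{C}$ is connected, so cover it by a path; the initially-open vertices along this path, taken in order, have the property that between consecutive initially-open ones the path consists of later-opened vertices, and a later-opened vertex is always adjacent to the current open set, so consecutive initially-open vertices on the path are within bounded distance of the running union. I would then bound the probability of such a clustered configuration of $m$ initially-open points by a union bound: choose the location of $x_1$ (at most $|B(0,L_0)| \leq (2L_0+1)^d \leq (3\lfloor p^{-1}\rfloor)^d$ choices), then each subsequent $x_i$ has at most $O((100^d)^d) = O(1)$ choices given the previous ones, and each of the $m$ vertices is open with probability $p$. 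This gives $p_0 \leq (3p^{-1})^d \cdot C(d)^{m} \cdot p^m$ for some constant $C(d)$. Since $m \geq 50^d \geq 2d + K/\log(1/p) \cdot(\text{anything})$ once $p$ is small enough (here $K$ is fixed, so $p^{m-2d} = p^{m-2d}$ with $m - 2d \geq 1$ huge, which beats both the $p^{-d}$ prefactor and any fixed $e^{-K}$), we get $p_0 \leq p^{2d} e^{-K}$ for all sufficiently small $p$. The key point is that $m$ is an absolute constant depending only on $d$, far larger than $2d$, so the surplus power of $p$ swamps every fixed quantity.

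For the stochastic domination statement, condition on $B(0,L_0)$ being good, so every open cluster in its final configuration has $|\cdot|_\infty$-diameter at most $100^d$. I would expose the initially open vertices and for each maximal open cluster $\mathcal{C}$ in the final configuration, let $x$ be (say) its lexicographically smallest initially-open vertex and let $j := \mathrm{Diam}_\infty(\mathcal{C}) \leq 100^d$; then $\mathcal{C} \subseteq B(x,j) \cap B(0,L_0)$. So the union of all open clusters is contained in $\bigcup_x (B(x, j_x)\cap B(0,L_0))$ over initially-open anchor vertices $x$, with $j_x \leq 100^d$. It remains to show this random collection of cubes is stochastically dominated by the independent environment $S$ described, where at each $x\in B(0,L_0)$ and each $0\leq j\leq 100^d$ an open cube $B(x,j)\cap B(0,L_0)$ is placed independently with probability $p^{1+j/3}$. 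The point is that for a cube of radius $j$ to be ``used'' (i.e., for some final open cluster anchored at $x$ to have diameter $j$), by Lemma~\ref{lem:initial-open-number-lower-bound} the cluster must contain at least $\frac{j+1}{2}$ initially open vertices, all lying in $B(x,j)$, and by the same clustering/chaining argument as above one gets: the event ``there is a final open cluster of diameter $\geq j$ anchored near $x$'' is contained in the event ``there are $\geq \frac{j+1}{2}$ initially open vertices in a clustered configuration rooted within bounded distance of $x$,'' which by a union bound over the $O(1)^{j/2}$ shapes has probability at most $C(d)^{j/2} p^{(j+1)/2} \leq p^{1+j/3}$ for $p$ small (since $(j+1)/2 - j/3 = j/6 + 1/2$ grows and absorbs the constant). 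One then invokes a domination criterion: the family of cube-placement events has this bounded ``local'' probability and a finite-range dependency structure (the event depends only on the initial configuration in $B(x, O(100^d))$), so by the Liggett--Schonmann--Stacey / Liggett~\cite[Theorem 0.0]{Liggett1997DominationBP} type result — or more simply, by directly constructing the coupling since here we can just take $S$ to literally contain $B(x,j_x)$ for each anchor $x$, recording $j_x$, and noting the marginal on each $(x,j)$ pair is bounded by $p^{1+j/3}$ while independence of the dominating field makes it only larger — the union of actual open clusters is dominated by $S$.

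The main obstacle I anticipate is making the ``clustering/chaining'' argument fully rigorous: one needs to show cleanly that whenever a final open cluster has diameter $N$, its $\geq \frac{N+1}{2}$ initially open vertices can be enumerated so that consecutive ones are within a bounded ($d$-dependent) distance, so that the entropy of possible configurations is only $C(d)^{N/2}$ rather than $(\text{volume})^{N/2}$. This is where I would spend the most care — likely by tracking, along a spanning path of the final cluster, the running union of already-opened vertices and using that each newly opened vertex has $r \geq 2$ open neighbors hence is adjacent to the running union, so the next initially-open vertex encountered is within distance $1$ of a vertex opened from earlier initially-open vertices; iterating bounds its distance to the previous initially-open vertex by the number of intermediate opened vertices times $1$, and that count is controlled because the whole cluster has only $O(N^d)$... hmm, this needs the diameter bound $100^d$ to cap things, which is exactly the ``good'' hypothesis (for the domination part) or is what we're contradicting (for the $p_0$ part, where we instead only need SOME large clustered set, not a bound on all of them). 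For the $p_0$ bound the argument is actually easier: badness gives one cluster of diameter $> 100^d$, hence $\geq 50^d$ initially open vertices within a single box of side $\leq 2L_0$, and we don't even need the chaining — a crude union bound $\binom{|B(0,L_0)|}{50^d} p^{50^d} \leq (3p^{-1})^{d\cdot 50^d} p^{50^d}$ is NOT small enough, so the chaining IS needed there too. I will therefore prove the chaining lemma once, in the form: if open vertices form a connected component of $|\cdot|_\infty$-diameter $N$ in the evolution restricted to any domain, its initially open vertices admit an ordering with consecutive gaps at most $2$ in $|\cdot|_\infty$-distance after possibly passing through the component — equivalently, the initially open vertices together with the connectivity of the final cluster force a spanning structure of low entropy. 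Everything else is a routine union bound and an application of the domination result already cited in the paper.
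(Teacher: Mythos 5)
Your proposal assembles the right ingredients (Lemma~\ref{lem:initial-open-number-lower-bound}, binomial tail bounds, a union bound over anchor locations, and a domination statement), but it rests on the ``chaining lemma'' that you defer to the end and never prove, and as you formulate it, it is false. In bootstrap percolation with threshold $d$, consecutive (in absorption order) initially open vertices of a final cluster need \emph{not} be within bounded $|\cdot|_\infty$-distance of each other: already for $d=2$, once a cluster has grown into a long bar, a single initially open vertex placed anywhere along the adjacent row is absorbed and then fills in that whole row, so its distance to the previously absorbed initially open vertex can be comparable to the current diameter of the cluster. What is true is only that each newly absorbed initially open vertex lies within distance $1$ of the cluster generated by the earlier ones, so the per-step entropy is the size of that cluster's boundary, not $O(1)$; hence your bound $C(d)^m$ for the number of ``clustered'' configurations does not follow as claimed, and you explicitly acknowledge this obstacle without resolving it. The gap is avoidable: one repair is to note (again via Lemma~\ref{lem:initial-open-number-lower-bound}) that the cluster generated by at most $50^d$ initially open vertices has $O(1)$ diameter, so the per-step entropy is constant; the paper's route is cleaner still --- look at the \emph{first} time during the evolution at which some open component reaches diameter $100^d$. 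At that moment the component has diameter at most $2\cdot 100^d$, so at least $(100^d+1)/2$ initially open vertices lie in a single box $B(x,k)$ with $100^d\le k\le 2\cdot 100^d$; a Chernoff bound for a Binomial$((2k+1)^d,p)$ variable plus a union bound over the $O(p^{-d})$ choices of $x$ and $O(1)$ choices of $k$ gives $p_0\le p^{2d}e^{-K}$, with no chaining anywhere. The same localization gives $\mathbb{P}[D_v\ge j]\le p^{1+j/3}$ directly for the cluster through a fixed vertex $v$, since on $\{D_v=j\}$ all of its $\ge\frac{j+1}{2}$ initially open vertices already lie in the constant-size box $B(v,j)$.

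The second, smaller gap is in the domination step. The assertion that ``the marginal on each $(x,j)$ pair is bounded by $p^{1+j/3}$ while independence of the dominating field makes it only larger'' is not a proof: marginal bounds on a dependent family of cube-placement indicators do not by themselves yield stochastic domination by the independent field, and an appeal to~\cite{Liggett1997DominationBP} would degrade the exponents, so you would not obtain domination with the probabilities $p^{1+j/3}$ stated in Lemma~\ref{lem:n=0}; you also need to handle the conditioning on the good event (which is decreasing) rather than work with unconditional probabilities. The paper closes this by a sequential argument: order the vertices of $B(0,L_0)$, define the cluster ``generated from $x_i$'' (containing $x_i$ but none of $x_1,\dots,x_{i-1}$), and show that conditioned on the previously revealed clusters and on the decreasing good event, the conditional law of the next cluster is dominated by the unconditional law of $\widetilde O_v$ (the cluster truncated at diameter $100^d$), for which the single-vertex tail bound suffices. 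Some argument of this conditional, one-cluster-at-a-time type is needed to turn your per-anchor estimates into the claimed domination of the whole final configuration.
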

\begin{proof}
    First we prove that $p_0 \leq p^{2d}e^{-K}$ for all sufficiently small $p$. If there exists an open cluster with $|\cdot|_\infty$-diameter at least $100^d$ in the final configuration of $B(0,L_0)$, consider the first time during the evolution that an open component with $|\cdot|_\infty$-diameter at least $100^d$ is formed. At this time, the $|\cdot|_\infty$-diameter of this open component is at most $2 \cdot 100^d$. Therefore, by Lemma~\ref{lem:initial-open-number-lower-bound}, there exists $x \in B(0,L_0)$ and $100^d \leq k \leq 2 \cdot 100^d$ such that there are at least \response{$(k+2)/2$} initially open vertices in $B(x, k)$. For a fixed $x$, this probability is upper-bounded by $\mathbb{P}[X_1 + \ldots + X_{(2k+1)^d} \geq (k+2)/2]$ where $(X_i)_{i \geq 1}$ are i.i.d.\ Bernoulli random variables with $\mathbb{P}[X_i = 1] = p$. By Markov's inequality, for any $t>0$, we have
    \begin{equation}\label{eq:n=0-markov}
        \begin{aligned}
            \mathbb{P}[X_1 + \ldots + X_{(2k+1)^d} \geq (k+2)/2]
            &\leq e^{-(k+2) t/2} \mathbb{E}[\exp(t (X_1 + \ldots + X_{(2k+1)^d}))] \\
            &= e^{-(k+2) t/2} (1 + p(e^t-1))^{(2k+1)^d}.
        \end{aligned}
    \end{equation}
    By taking $e^t = p^{-1}$, we obtain that the above probability is upper-bounded by $C p^{(100^d+1)/2}$ for some constant $C$ depending only on $d$ and for all $100^d \leq k \leq 2 \cdot 100^d$. Since $|B(0,L_0)| = (2 \lfloor p^{-1} \rfloor +1)^d$ and the number of choices for $k$ is at most $100^d + 1$, we obtain that $p_0 \leq |B(0,L_0)| \times (100^d + 1) \times C p^{(100^d+1)/2} \leq p^{2d}e^{-K}$ for all sufficiently small $p$.
    
    Next, we prove the stochastic dominance argument. Fix any vertex $v \in B(0,L_0)$. Consider the open cluster $O_v$ in the final configuration of $B(0,L_0)$ containing $v$ and denote by $D_v$ the $|\cdot|_\infty$-diameter of $O_v$. We define $\widetilde O_v = O_v$ if $D_v\leq100^d$, and define $\widetilde O_v = B(v, 100^d) \cap B(0,L_0)$ if $D_v>100^d$. We first show that the law of $\widetilde O_v$ is stochastically dominated by the set $S_v$, where $S_v$ is obtained by independently sampling an open cube $B(v,j) \cap B(0,L_0)$ with probability $p^{1 + j/3}$ for each $0 \leq j \leq 100^d$ and then taking the union of these open cubes.
    It suffices to show that $\mathbb{P}[D_v \geq j] \leq p^{1+j/3}$ for all $1 \leq j \leq 100^d$. By Lemma~\ref{lem:initial-open-number-lower-bound}, on $\{D_v = j\}$, there would be at least \response{$\frac{j+2}{2}$} initially open vertices in $B(v,j)$. 
    Applying similar arguments to~\eqref{eq:n=0-markov}, we obtain that $\mathbb{P}[D_v = j]\leq \frac12 p^{1+j/3}$ for all sufficiently small $p$. In addition, we have $\mathbb{P}[D_v > 100^d] \leq p_0 \leq |B(0,L_0)| \times (100^d + 1) \times C p^{(100^d+1)/2}$. Therefore, for all sufficiently small $p$ and $1 \leq j \leq 100^d$, we have
    \response{
    \begin{equation*}
        \mathbb{P}[D_v \geq j] \leq \sum_{k=j}^{100^d} \mathbb{P}[D_v = k] + \mathbb{P}[D_v > 100^d] \leq p^{1+j/3}.
    \end{equation*}
    }
    This proves the claim.
    
    Let $(x_1,\ldots, x_{(2L_0+1)^d})$ be an arbitrary ordering of the vertices in $B(0,L_0)$. We say that an open cluster in the final configuration of $B(0,L_0)$ is generated from $x_i$ if it contains $x_i$ but contains none of the vertices in $x_1,\ldots, x_{i-1}$. We observe that these open clusters are negatively correlated. In particular, conditioned on the open clusters generated from $x_1, \ldots, x_{i-1}$ for any $i \geq 1$, and on the event that $B(0,L_0)$ is good, we claim that the open cluster generated from $x_i$ (denoting by $ O' $ a sample from this conditional law) is stochastically dominated by the law of $\widetilde O_v$. By definition, the open cluster generated from $x_i$ is not allowed to use the open vertices in the open clusters generated from $x_1, \ldots, x_{i-1}$. In addition, the event that $B(0,L_0)$ is good is decreasing with respect to the initial configuration. Therefore, the law of $O'$ is stochastically dominated by the law of $O_v$. Since $O'$ has $|\cdot|_\infty$-diameter at most $100^d$, we have $ O'=O'\cap B(x_i, 100^d)\cap B(0,L_0) $, which is dominated by $ O_v\cap B(x_i, 100^d)\cap B(0,L_0) \subset \widetilde O_v$. Thus, the law of $O'$ can be further dominated by the law of $\widetilde O_v$. This proves the claim. Combining this with the above stochastic dominance argument, we conclude the proof of the lemma. \qedhere
\end{proof}
    
\subsection{Inductive proof of Lemma~\ref{lem:bad-probability}}
\label{subsec:4.2}

In this subsection, we prove Lemma~\ref{lem:bad-probability} assuming Lemma~\ref{lem:control-growth} whose proof is postponed to Section~\ref{subsec:lem4.10}. We also complete the proof of Theorem~\ref{thm:1.4}.

For any integer $n \geq 0$, we call a finite subset of $\mathbb{Z}^d$ an $L_n$-domain if it can be written as the union of $L_n$-boxes. Next, we present the following deterministic lemma: for an $L_n$-domain $A$, if for each $L_n$-box $ B $ in this $L_n$-domain, none of the open clusters in the final configuration of $B$ has large size \response{(say, $|\cdot|_{\infty}$-diameter at most $\frac{1}{10}L_n$)}, then the final configuration of this $L_n$-domain will not have large open clusters either. Moreover, under this condition, the final configuration of this $L_n$-domain can be obtained by taking a ``careful concatenation'' of the final configuration of each $L_n$-box contained in it (here a careful concatenation means the following: for each vertex $ x $ in the domain, its state in the final configuration of $A$ is the same as its state in the final configuration of any $L_n$-box $B$ as long as $x$ has $|\cdot|_{\infty}$-distance at least $ L_n/4 $ from $ A\setminus B $).
Note that the aforementioned condition is essential since otherwise the open clusters obtained as the final configuration of an $ L_n $-box may continue to evolve when considering the evolution in the $L_n$-domain because of new open vertices produced by the evolution in the intersecting $L_n$-boxes.

\begin{lemma}\label{lem:geometric-growth}
    Let $n \geq 0$ be an integer and let $A$ be an $L_n$-domain. Suppose that the open clusters in the final configurations of all the $L_n$-boxes contained in $A$ have $|\cdot|_\infty$-diameters at most $\frac{1}{10} L_n$. Then, we have
    \begin{enumerate}[(i)]
        \item all open clusters in the final configuration of $A$ have $|\cdot|_\infty$-diameters at most $\frac{1}{10} L_n$;\label{lem4.7-claim-1}
        \item for all $x \in A$ and each $L_n$-box $B(z, L_n)$ contained in $A$ such that $x$ has $|\cdot|_\infty$-distance at least $\frac{1}{4} L_n$ from $A \setminus B(z, L_n)$, the open cluster in the final configuration of $A$ containing $x$ is the same as the open cluster in the final configuration of $B(z,L_n)$ containing $x$. \label{lem4.7-claim-2} 
    \end{enumerate}
\end{lemma}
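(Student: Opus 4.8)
The plan is to prove both claims simultaneously by a monotone-coupling argument that compares the evolution on $A$ with the ``careful concatenation'' of the evolutions on the individual $L_n$-boxes, tracking open clusters as they appear. Throughout, the key structural fact to exploit is that every point of $A$ lies within $|\cdot|_\infty$-distance at most $L_n/4$ of some $L_n$-box $B \subset A$ with the property that it is $\geq L_n/4$ from $A \setminus B$ — indeed, because $A$ is a union of $L_n$-boxes and these boxes have side length $2L_n$, each such box contains a central sub-box of radius $\tfrac34 L_n$ whose points are far from the complement, and the radius-$\tfrac34 L_n$ sub-boxes centered at the $L_n$-lattice points already cover all of $A$. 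So it suffices to understand open clusters of diameter $\le \tfrac1{10}L_n$, since once we know (i) holds, no open cluster in the final configuration of $A$ can ``reach across'' from the interior of one $L_n$-box to another: any such cluster has diameter $\le \tfrac1{10}L_n < \tfrac14 L_n$, hence is contained in a single $L_n$-box together with a collar of width $\le \tfrac14 L_n$, and in particular is determined by the evolution inside that one box.

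The heart of the argument is claim (i), which I would prove by contradiction together with an extremality/first-time argument, mirroring the mechanism used in Lemma~\ref{lem:n=0}. Suppose some open cluster in the final configuration of $A$ has $|\cdot|_\infty$-diameter exceeding $\tfrac1{10}L_n$. Run the evolution on $A$ via presentation~\eqref{evolve-1} (vertices revealed one at a time with chain reactions), and consider the first moment at which an open component of diameter $> \tfrac1{10}L_n$ is created; just before that moment all open components have diameter $\le \tfrac1{10}L_n$, and the newly created one has diameter at most $\tfrac1{10}L_n + 2 < \tfrac14 L_n$ (two small components merging via one new vertex increases the diameter by at most $2$, as in the proof of Lemma~\ref{lem:initial-open-number-lower-bound}). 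Such a component therefore fits inside a box of radius $< \tfrac14 L_n$ around any of its vertices, and hence inside a single $L_n$-box $B = B(z, L_n) \subset A$ whose central region it lies in — concretely, pick the $L_n$-lattice point $z$ nearest to the component. Now I claim that every open vertex of this component was already open in the final configuration of $B$ alone: this is where I would run a second induction, on the order in which the vertices of the offending component were turned open during the run on $A$, showing that each such vertex, at the time it flipped, had its $\ge d$ open neighbors all lying in $B$ (they lie in the component, which lies in the interior of $B$) and those neighbors were — by the inner induction hypothesis — already open in the final configuration of $B$; hence the flip is legal for the evolution restricted to $B$ as well. (For the base of this inner induction one uses that the initially-open vertices of the component are trivially open in every box containing them.) Consequently the whole component is an open cluster of $B$'s final configuration with diameter $> \tfrac1{10}L_n$, contradicting the hypothesis. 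This proves (i).

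For claim (ii), fix $x \in A$ and an $L_n$-box $B = B(z,L_n) \subset A$ with $x$ at distance $\ge \tfrac14 L_n$ from $A \setminus B$. Let $O_A$ be the open cluster of $x$ in the final configuration of $A$ and $O_B$ that of $x$ in the final configuration of $B$. By (i), $O_A$ has diameter $\le \tfrac1{10}L_n$, so $O_A \subset B(x, \tfrac1{10}L_n) \subset B$ and moreover $O_A$ stays at distance $\ge \tfrac14 L_n - \tfrac1{10}L_n > 0$ from $A \setminus B$; thus $O_A$ lives in the interior of $B$. The argument of the previous paragraph, applied to $O_A$ in place of the offending component, shows $O_A \subseteq O_B$ (every vertex of $O_A$ is open in the final configuration of $B$, and they are connected). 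Conversely, $O_B$ is an open cluster of the final configuration of $B$; since the evolution on $A$ dominates the ``concatenation'' and in particular restricting to $B \subseteq A$ can only produce more open vertices — formally, monotonicity of the evolution under adding vertices to the domain gives that every vertex open in the final configuration of $B$ is open in the final configuration of $A$ — we get $O_B$ entirely open in the final configuration of $A$, and connected, hence $O_B \subseteq O_A$. Therefore $O_A = O_B$.

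The main obstacle is the ``locality'' step inside claim (i): showing that the offending diameter-$>\tfrac1{10}L_n$ open component that first appears during the run on $A$ is already present in the final configuration of the single box $B$ it sits inside. This requires care because a priori the evolution on $A$ might have used, as ``stepping stones'' for chain reactions leading into the component, open vertices lying outside $B$ or in other boxes; the point is that any such stepping stone would itself have to be connected through open vertices to the component at the moment of the relevant flip (a vertex flips because of $d$ \emph{open neighbors}), so it lies in the same open component, which by the first-time choice still has diameter $\le \tfrac1{10}L_n + 2$ and hence cannot leave the interior of $B$. Making this simultaneous induction (on ``first time a large component appears'' outside, and on ``order of flips within that component'' inside) fully rigorous, and bookkeeping the $\tfrac1{10}$, $\tfrac14$, and $+2$ constants so the inequalities close, is the delicate part; everything else is routine monotonicity of increasing cellular automata.
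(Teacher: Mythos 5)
Your reduction of (ii) to (i), the stepping-stone observation (every vertex ever used to justify a flip inside a cluster lies in that cluster, so the whole causal history of a cluster is internal to it), and the monotonicity direction $O_B\subseteq O_A$ are all sound. The genuine gap is the boxing step inside your proof of (i): from ``the first offending component has diameter $<\tfrac14 L_n$'' you conclude that it lies ``inside a single $L_n$-box $B=B(z,L_n)\subset A$ --- concretely, pick the $L_n$-lattice point $z$ nearest to the component.'' This fails for a general $L_n$-domain, and the domains the lemma is actually applied to are exactly of the bad type: in Lemma~\ref{lem:bound-diameter-good} one takes $A=B(0,L_n)\setminus B(y,3L_{n-1})$, which has reflex corners. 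Near a reflex corner a connected subset of $A$ of diameter well below $\tfrac14 L_n$ need not be contained in \emph{any} $L_n$-box contained in $A$: the box centred at the lattice point nearest the component is typically not a subset of $A$, while each box that is a subset of $A$ misses one of the two arms the component straddles. (For instance, with $d=2$ and $A=B(0,L_n)\cup B(L_n\boldsymbol{e}_1,L_n)\cup B(L_n\boldsymbol{e}_2,L_n)$, a connected set hugging the reflex corner at $(L_n,L_n)$ of diameter about $\tfrac15 L_n$ can meet both $\{x_1>L_n\}$ and $\{x_2>L_n\}$, hence lies in none of the three boxes contained in $A$.) For the same reason your preliminary covering claim --- that the radius-$\tfrac34 L_n$ central sub-boxes of boxes contained in $A$ cover $A$, so every point of $A$ is $\tfrac14 L_n$-deep in some box --- is false near reflex corners; a vertex at such a corner can be at distance $1$ from $A\setminus B$ for every admissible box $B$ containing it. (A separate, minor slip: when the new vertex merges components of diameter at most $\tfrac1{10}L_n$, the merged component has diameter at most $2(\tfrac1{10}L_n+1)$, not $\tfrac1{10}L_n+2$; this is harmless since the bound is still below $\tfrac14 L_n$.)

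Note how the paper sidesteps precisely this issue: it runs the first-violation argument for claim (ii) directly, where the violating cluster contains by construction the special vertex $x$ whose $|\cdot|_\infty$-distance to $A\setminus B(z,L_n)$ is at least $\tfrac14 L_n$; once the cluster's diameter is known to be below $\tfrac14 L_n$, its containment in $B(z,L_n)$ comes from this \emph{anchor}, not from choosing a box near an arbitrary small cluster, and claim (i) is then deduced from claim (ii) --- the reverse of your order. So to repair your write-up you must either restrict to domains for which your covering claim is valid (e.g.\ $A$ a single box, which is not enough for the applications) or anchor the offending cluster at a vertex that is $\tfrac14 L_n$-deep in a box, as the paper does; as it stands, ``small diameter, hence inside one box of $A$'' is exactly the step that breaks for the non-convex $L_n$-domains the lemma is used on.
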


\begin{proof}
    We will prove Claim~\eqref{lem4.7-claim-2}, and Claim~\eqref{lem4.7-claim-1} follows as a consequence of Claim~\eqref{lem4.7-claim-2} and the assumption. To this end, we consider the evolution as in~\eqref{evolve-1}. Specifically, we perform the evolution by checking the status of each vertex in an arbitrary but prefixed order. For any pair $(x, B(z,L_n))$ satisfying the condition in Claim~\eqref{lem4.7-claim-2}, by monotonicity, the open cluster $\mathcal{O}_x$ in the final configuration of $A$ containing $x$ must contain the open cluster $\mathcal{O}_x'$ in the final configuration of $B(z,L_n)$ containing $x$. If Claim~\eqref{lem4.7-claim-2} does not hold, let us consider the first time during the evolution of $A$ when the open cluster containing $x$ (which might be only a subset of $\mathcal{O}_x$) is not contained in $\mathcal{O}_x'$ for some pair $(x, B(z,L_n))$.
    (The time here does not refer to the time in the evolution~\eqref{evolve-1} where multiple changes can occur in a single time step. Instead, it refers to a moment when an empty vertex becomes open. In addition, we are not necessarily checking $x$ at this time.)
    By our assumption on ``first'', before this time, all open clusters in the evolution of $A$ have $|\cdot|_\infty$-diameters at most $\frac{1}{10}L_n$. Thus, we see that this open cluster containing $x$ has $|\cdot|_\infty$-diameter at most $2 \times \frac{1}{10} L_n + 2 \leq \frac{1}{4} L_n$, which should be contained in $B(z,L_n)$. Moreover, recalling~\eqref{evolve-1}, this open cluster also appears in the final configuration of $B(z,L_n)$, which leads to a contradiction. Therefore, we obtain Claim~\eqref{lem4.7-claim-2}.
\end{proof}

Using the above lemma, we prove a deterministic bound on the diameters of open clusters in the final configuration of a good $L_n$-box.

\begin{lemma}\label{lem:bound-diameter-good}
    The following holds for all $K > 10^{10^{10d}}$ and $p < 10^{-10^{10d}}$. Suppose that $B(0, L_n)$ is a good $L_n$-box for an integer $n \geq 0$. Then all the open clusters in the final configuration of $B(0, L_n)$ have $|\cdot|_\infty$-diameters at most $2 \sqrt{K} L_{n-1}$ if $n \geq 1$, and at most $100^d$ if $n = 0$.
\end{lemma}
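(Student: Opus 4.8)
The plan is to argue by induction on $n$. The base case $n = 0$ is immediate from the definition of a good $L_0$-box (Definition~\ref{def:good-box}(1)), which directly asserts that all open clusters in the final configuration have $|\cdot|_\infty$-diameter at most $100^d$. For the inductive step, suppose $n \geq 1$ and $B(0, L_n)$ is good; we want to bound the open clusters in its final configuration by $2\sqrt{K} L_{n-1}$. We consider the evolution as in~\eqref{evolve-2}, i.e.\ we first run the evolution inside each $L_{n-1}$-box contained in $B(0,L_n)$, and then continue the evolution in the full $L_n$-box. By the inductive hypothesis, every good $L_{n-1}$-box contained in $B(0,L_n)$ has open clusters of diameter at most $2\sqrt{K} L_{n-2} \ll \frac{1}{10} L_{n-1}$ (here we use that $K$ is large and $L_{n-1}/L_{n-2} = K$). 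So the only $L_{n-1}$-boxes whose final configurations could contain large open clusters are the bad ones.

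Now split into the two cases of Definition~\ref{def:good-box}(2). In case (i), there is no bad $L_{n-1}$-box contained in $B(0,L_n)$, so all $L_{n-1}$-boxes in $B(0, L_n)$ have open clusters of diameter at most $\frac{1}{10} L_{n-1}$, and we may apply Lemma~\ref{lem:geometric-growth}\eqref{lem4.7-claim-1} with $A$ taken to be (an $L_{n-1}$-domain exhausting) $B(0, L_n)$; this gives that all open clusters in the final configuration of $B(0,L_n)$ have diameter at most $\frac{1}{10} L_{n-1} \leq 2\sqrt{K} L_{n-1}$, as desired. In case (ii), all bad $L_{n-1}$-boxes are contained in a single $B(y, 3L_{n-1})$ with $y \in L_{n-1}\mathbb{Z}^d$, and moreover (by the defining condition of case (ii)) every open cluster in the final configuration of $B(0, L_n)$ that meets $B(y, 3L_{n-1})$ is contained in $B(y, \sqrt{K} L_{n-1})$. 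So it remains to bound the open clusters that do \emph{not} meet $B(y, 3L_{n-1})$. For these, I would remove $B(y, 3L_{n-1})$ and work on the $L_{n-1}$-domain $A := B(0,L_n) \setminus B(y, 3L_{n-1})$: every $L_{n-1}$-box contained in $A$ is good (all bad ones were inside $B(y,3L_{n-1})$), hence by the inductive hypothesis has open clusters of diameter at most $\frac{1}{10}L_{n-1}$, so Lemma~\ref{lem:geometric-growth}\eqref{lem4.7-claim-1} applies to $A$ and bounds the open clusters in the final configuration of $A$ by $\frac{1}{10}L_{n-1}$. Finally one should check that an open cluster $\mathcal{O}$ in the final configuration of $B(0,L_n)$ that is disjoint from $B(y, 3L_{n-1})$ already appears as an open cluster in the final configuration of $A$: this is because such $\mathcal{O}$ never recruits vertices of $B(y,3L_{n-1})$, and $3L_{n-1} \gg \frac{1}{10}L_{n-1}$, so the growth of $\mathcal{O}$ is ``shielded'' from the deleted region — this is exactly the kind of locality statement packaged in Lemma~\ref{lem:geometric-growth}\eqref{lem4.7-claim-2} (applied with the role of the box played by $A$), since a cluster of diameter $\leq \frac1{10}L_{n-1}$ sitting in $B(0,L_n)\setminus B(y,3L_{n-1})$ stays at distance $\geq \frac14 L_{n-1}$ from $B(y, 3L_{n-1})$ in the relevant sense. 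Combining the two bounds, every open cluster in the final configuration of $B(0,L_n)$ has diameter at most $\max\{\sqrt{K} L_{n-1}, \frac1{10}L_{n-1}\} \leq 2\sqrt{K} L_{n-1}$.

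The main obstacle here is the bookkeeping in case (ii): one must be careful that removing $B(y,3L_{n-1})$ does not artificially break or alter clusters that are disjoint from it, and that the ``careful concatenation'' / locality of Lemma~\ref{lem:geometric-growth} is being invoked with hypotheses that genuinely hold — in particular that along the evolution (not just in the final configuration) clusters away from $B(y,3L_{n-1})$ never have diameter exceeding $\frac1{10}L_{n-1}$, which is what lets the $\frac14 L_{n-1}$-distance shielding kick in. A small additional point to verify is the numerology $L_{n-1}/L_{n-2} = K$ together with the inductive bound $2\sqrt K L_{n-2} \leq \frac1{10} L_{n-1}$, i.e.\ $20\sqrt K \leq K$, which holds for $K > 10^{10^{10d}}$; this is what guarantees that good $L_{n-1}$-boxes are harmless when we feed them into Lemma~\ref{lem:geometric-growth}.
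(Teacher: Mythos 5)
Your induction scheme, the base case, case (i), the bound for clusters meeting $B(y,3L_{n-1})$ via Definition~\ref{def:good-box}, and the application of Lemma~\ref{lem:geometric-growth}\eqref{lem4.7-claim-1} to $A=B(0,L_n)\setminus B(y,3L_{n-1})$ all match the paper's proof. The one genuine gap is the final step of case (ii): your claim that a cluster $\mathcal O$ in the final configuration of $B(0,L_n)$ that is disjoint from $B(y,3L_{n-1})$ ``already appears as an open cluster in the final configuration of $A$'', which you justify by a distance-$\tfrac14 L_{n-1}$ shielding and by Lemma~\ref{lem:geometric-growth}\eqref{lem4.7-claim-2} ``with the role of the box played by $A$''. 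That invocation does not work: claim~\eqref{lem4.7-claim-2} is about a box inside a domain all of whose same-scale boxes have controlled final clusters, and the only domain whose evolution you need to control here is $B(0,L_n)$ itself, which contains the bad $L_{n-1}$-boxes, so the hypotheses fail; applying the lemma to $A$ alone says nothing about the $B(0,L_n)$-evolution. Moreover the distance assertion is false: a cluster disjoint from $B(y,3L_{n-1})$ may sit at distance $1$ from it, so no $\tfrac14 L_{n-1}$ buffer is available.

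The intermediate claim itself is true, but it needs a different (and in fact elementary) argument: whenever an empty vertex becomes open it has at least $d$ open neighbours, and those neighbours remain open and are adjacent to it, hence belong to the same cluster in the final configuration. Thus every vertex ever used to grow $\mathcal O$ lies in $\mathcal O\subset A$, and by induction on opening times all of $\mathcal O$ also becomes open in the evolution restricted to $A$; combined with monotonicity in the domain (the final configuration of $A$ is dominated by that of $B(0,L_n)$), this shows $\mathcal O$ is precisely an open cluster of the final configuration of $A$, which Lemma~\ref{lem:geometric-growth}\eqref{lem4.7-claim-1} then bounds by $\tfrac1{10}L_{n-1}$. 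The paper sidesteps the claim altogether: it performs the evolution in two stages (first on $A$, then on $B(0,L_n)$ allowing only growth triggered by the initially open vertices in $B(y,3L_{n-1})$, on top of the final configuration of $A$) and uses monotonicity to conclude that every final cluster of $B(0,L_n)$ is contained in the union of the final clusters of $A$ and the final clusters of $B(0,L_n)$ meeting $B(y,3L_{n-1})$; both pieces are then bounded exactly as in your outline. With your shielding step replaced by either of these arguments, your proof is correct and essentially the paper's.
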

\begin{proof}
    We prove this lemma by induction. The case of $n =0$ follows from Definition~\ref{def:good-box}. Suppose that the statement holds for $ n=k $ and now we consider the case $n = k+1$. First, we deal with the case where all the $L_{n-1}$-boxes contained in $B(0, L_n)$ are good. By the induction hypothesis, the final configurations of all these $L_{n-1}$-boxes do not have any open cluster with $|\cdot|_\infty$-diameter larger than $2 \sqrt{K} L_{n-2}$ if $n \geq 2$, or larger than $100^d$ if $n = 1$, both of which are smaller than $\frac{1}{10} L_{n-1}$. Therefore, by \response{applying the first claim of Lemma~\ref{lem:geometric-growth} with $n-1$,  we see that all open clusters in the final configuration of $B(0,L_n)$ have $|\cdot|_\infty$-diameters at most $\frac{1}{10} L_{n-1} \leq 2 \sqrt{K} L_{n-1}$}. This yields the argument for $n = k+1$.
    

    Next, we deal with the case where there are bad $L_{n-1}$-boxes contained in $B(0,L_n)$, but all of them are contained in $B(y, 3L_{n-1})$ for some $y \in L_{n-1} \mathbb{Z}^d$. As in the previous case, by applying Lemma~\ref{lem:geometric-growth} with $A = B(0,L_n) \setminus B(y, 3L_{n-1})$, we derive that all open clusters in the final configuration of $A$ have $|\cdot|_\infty$-diameters at most $2 \sqrt{K} L_{n-2}$ if $n \geq 2$, and at most $100^d$ if $n = 1$. We next need to control the influence from the box $ B(y,3L_{n-1}) $. \response{To this end, we first perform the evolution of $A$ and then check the evolution in $B(0,L_n)$ induced by the initially open vertices in $B(y, 3L_{n-1})$. Then, the final configuration is the same as the evolution of $B(0,L_n)$. This implies that the open clusters in the final configuration of $B(0,L_n)$ are contained in the union of the open clusters in the final configuration of $A$ and those open clusters in the final configuration of $B(0,L_n)$ that intersect $B(y, 3L_{n-1})$.} By Definition~\ref{def:good-box}, we know that the open clusters in the final configuration of $B(0,L_n)$ that intersect $B(y, 3L_{n-1})$ are contained in $B(y, \sqrt{K} L_{n-1} )$, and thus, have $|\cdot|_\infty$-diameters at most $2 \sqrt{K} L_{n-1}$. This concludes the claim for $n = k+1$. By an induction argument, we obtain the lemma. \qedhere
\end{proof}

In the following lemma, we prove an analog of the stochastic dominance argument in Lemma~\ref{lem:n=0} for any $n \geq 1$. Specifically, for an $L_n$-domain $A$, conditioned on all the $L_n$-boxes contained in $A$ being good, we will stochastically dominate the final configuration of $A$ by a random environment \response{defined as follows.
\begin{definition}\label{def:Qn}
    Let $n \geq 0$ be an integer, and let $A$ be an $L_n$-domain. Define the law $\mathbb{Q}_n$ on open cubes and closed vertices in $A$ as follows. For each $0 \leq k \leq n-1$ and $z \in L_k \mathbb{Z}^d \cap A$, we sample an open cube $B(z, 3 \sqrt{K} L_k) \cap A$ independently with probability $p_k$ defined in~\eqref{eq:bound-bad-probability}. For $0 \leq j \leq 100^d$, we also sample an open cube $B(z, j) \cap A$ independently with probability $3^d p^{1 + j/3}$. Moreover, we sample closed vertices independently with probability $q$ for those vertices not covered by any open cube. Let $\mathbb{Q}_n$ denote the law of these open cubes and closed vertices.
\end{definition}
}

\begin{lemma}\label{lem:domination}
    Let $n \geq 0$ be an integer, and let $A$ be an $L_n$-domain. Then, conditioned on all the $L_n$-boxes contained in $A$ being good, the final configuration of $A$ can be stochastically dominated by $\mathbb{Q}_n$. Namely, there exists a coupling such that the open clusters in $A$ are contained in the open clusters in $\mathbb{Q}_n$ and the closed vertices in $A$ contain the closed vertices in $\mathbb{Q}_n$.
\end{lemma}

\begin{proof}
    We will use an induction argument on $n$ to prove that for any $L_n$-domain $A$ and any decreasing event $\mathcal{D}$ with respect to the initial configuration (possibly in a domain larger than $A$), conditioned on all the $L_n$-boxes contained in $A$ being good and the event $\mathcal{D}$,
    \begin{equation}\label{eq:lem4.8-claim1}
        \mbox{the final configuration of $A$ can be stochastically dominated by $\mathbb{Q}_n$.}
    \end{equation}
    Note that this result is stronger than that in Lemma~\ref{lem:domination} because, for two decreasing events $E_1$ and $E_2$, the law $\mathbb{P}[\cdot|E_1]$ does not necessarily dominate $\mathbb{P}[\cdot|E_1, E_2].$

    The case of $n = 0$ follows from Lemma~\ref{lem:n=0}, as we now elaborate. Let $\widehat{\mathbb{P}}_0$ denote the conditional law of the initial configuration in $A$, conditioned on all the $L_0$-boxes contained in $A$ being good and on some arbitrary decreasing event with respect to the initial configuration. By Lemma~\ref{lem:n=0}, under the law $\widehat{\mathbb{P}}_0$, the open clusters in the final configuration of each $L_0$-box can be stochastically dominated by a set of independently sampled open cubes as defined in Lemma~\ref{lem:n=0}.\footnote{In Lemma~\ref{lem:n=0}, we only condition on the event that $B(0,L_0)$ is good, but it is easy to see that the proof of Lemma~\ref{lem:n=0} extends to the case where we condition on $B(0,L_0)$ being good, together with some arbitrary decreasing event with respect to the initial configuration. Note that the event that all the $L_0$-boxes contained in $A$ are good is decreasing with respect to the initial configuration.}
    Therefore, by Claim~\eqref{lem4.7-claim-2} in Lemma~\ref{lem:geometric-growth} (\response{note that all $L_0$-boxes are good, so the open cluster in each of them has $|\cdot|_\infty$-diameter at most $100^d \leq \frac{1}{10} L_0$}),
    the $\widehat{\mathbb{P}}_0$-law of the open clusters in the final configuration of $A$ can be stochastically dominated by a set of independently sampled open cubes as defined in Lemma~\ref{lem:n=0} (this is consistent with our sampling in the lemma-statement, except that we multiplied by a factor of $3^d$ because each vertex is contained in at most $3^d$ $L_0$-boxes). Conditioned on the same event and the open clusters in the final configuration of $A$, the closed vertices in the remaining domain stochastically dominate a Bernoulli percolation with probability $q$. This yields Claim~\eqref{eq:lem4.8-claim1} for $n = 0$. 

    Next, we prove the case of $n \geq 1$ by induction. Suppose that Claim~\eqref{eq:lem4.8-claim1} holds for $n = k$ and for all $L_k$-domains and decreasing events. We next consider the case for $ n=k+1 $. Let $A$ be an $L_n$-domain. Let $\mathcal{A}_n$ be the event that all the $L_n$-boxes contained in $A$ are good, and let $\mathcal{D}_n$ be any fixed decreasing event with respect to the initial configuration. Let $\widehat{\mathbb{P}}_n$ denote the conditional law of the initial configuration in $A$, conditioned on $\mathcal{A}_n \cap \mathcal{D}_n$. We only need to show that the $\widehat{\mathbb{P}}_n$-law of the open clusters in the final configuration of $A$ is stochastically dominated by the open clusters in $\mathbb{Q}_n$, since conditioned on $\mathcal{A}_n \cap \mathcal{D}_n$ and the open clusters in the final configuration of $A$, the closed vertices in the remaining domain always stochastically dominate a Bernoulli percolation with probability $q$. 
    
    Let $U$ be the union of all the bad $L_{n-1}$-boxes contained in $A$. \response{We divide the open clusters in the final configuration of $A$ into two parts. First, consider the evolution of $A$, and let $O_U$ be the union of open clusters that intersect $U$. Then, consider the evolution of $A \setminus U$, and let $O_{A \setminus U}$ be the union of open clusters that do not intersect $O_U$. Then, the open clusters in the final configuration of $A$ are contained in $O_U$ and $O_{A \setminus U}$ (in fact, these open clusters are the same as the union of $ O_U$ and $O_{A\setminus U} $).} Now we dominate these open clusters in two steps.
    
    \textbf{Step 1.} We first stochastically dominate $O_U$ under the law $\widehat{\mathbb{P}}_n$. 
    By the event $\mathcal{A}_n$ and Definition~\ref{def:good-box}, the set $U$ can be covered by a collection of boxes $(B(y_i, 3L_{n-1}))_{i \geq 1}$, where each $ y_i $ corresponds to a different $ L_n $-box, and the centers $y_i$ have pairwise $|\cdot|_\infty$-distances at least $\frac{1}{3}L_n$ (since no two of them can be contained in the same $L_{n}$-box contained in $A$).
    By \response{Definition~\ref{def:good-box} and }Lemma~\ref{lem:geometric-growth}, all of the open clusters in $O_U$ have $|\cdot|_\infty$-diameters at most $2 \sqrt{K} L_{n-1}$. \response{Therefore, open clusters in $O_U$ that intersect different $B(y_i, 3L_{n-1})$ boxes are pairwise disjoint. 
    
    This allows us to consider a different exploration of $O_U$.} We examine each vertex $y \in L_{n-1} \mathbb{Z}^d \cap A$ in a prefixed order, and check whether $B(y, L_{n-1})$ is a bad box. If yes, then we place an open cube $B(y, 3 \sqrt{K} L_{n-1}) \cap A$ which can cover the open clusters in $O_U$ intersecting $B(y, L_{n-1})$ and its neighboring bad $L_{n-1}$-boxes (\response{here we enlarge $2 \sqrt{K} L_{n-1}$ to $3 \sqrt{K} L_{n-1}$ because $y$ may not belong to $\{y_i: i\geq 1\}$ but is only $3L_{n-1}$ close to some of them}). We claim that under the law $\widehat{\mathbb{P}}_n$, conditioned on the previously placed open cubes, 
    \begin{equation}\label{eq:lem4.9-claim-OU}
        \mbox{each step has probability at most $p_{n-1}$ of placing an open cube.}
    \end{equation}Assuming this claim, we can use independently sampled $3 \sqrt{K} L_{n-1} $ cubes centered at $L_{n-1} \mathbb{Z}^d \cap A$ to cover $O_U$, each with probability $p_{n-1}$. 
    
    Next, we prove Claim~\eqref{eq:lem4.9-claim-OU}. It suffices to check that the conditioned events are all decreasing with respect to the initial configuration. Recall that the events $\mathcal{A}_n$ and $\mathcal{D}_n$ in $\widehat{\mathbb{P}}_n$ are both decreasing. The conditioned events arising from the previously placed open cubes contain two parts: (1). If we do not place an open cube at $x_i$, then $B(x_i,L_{n-1})$ is good, which is a decreasing event; (2). If we place an open cube at $x_i$, then $B(x_i,L_{n-1})$ is bad, which is a priori an increasing event. However, in this case, we can further condition on the realization of $O_U$ that intersects $B(x_i, L_{n-1})$ and its neighboring bad boxes, which turns the event into a decreasing event because the open clusters outside are not allowed to use these open vertices during the evolution. Integrating over the realization yields~\eqref{eq:lem4.9-claim-OU}.
    
    \textbf{Step 2.} Having dominated $O_U$, conditioned on $O_U$ and the events $\mathcal{A}_n, \mathcal{D}_n$, we next continue to stochastically dominate \response{$O_{A \setminus U}$}. We can apply the induction hypothesis to $A \setminus U$ by noting that the conditioned event is decreasing and is contained in the event that all the $L_{n-1}$-boxes in $A \setminus U$ are good. This allows us to stochastically dominate \response{$O_{A \setminus U}$} using $\mathbb{Q}_{n-1}$, conditioned on $O_U$ and the events $\mathcal{A}_n, \mathcal{D}_n$. Combining the previous arguments, we obtain the claim for $n = k + 1$. By induction, we conclude~\eqref{eq:lem4.8-claim1}, hence proving the lemma. \qedhere
\end{proof}

Now we control the evolution triggered by a fixed open cube in the random environment introduced in the previous lemma. The proof will be postponed to Section~\ref{subsec:lem4.10}.

\begin{lemma}\label{lem:control-growth}
There exist a constant $C>0$ and a function $\mathscr{L}: \mathbb{N} \rightarrow (0,\infty)$ such that the following holds for all $K \geq C$, $p \leq \mathscr{L}(K)^{-1}$, $q \geq \mathscr{L}(K) p^d$, and all integers $n \geq 1$. Suppose that $p_k \leq p^{2d}e^{-K\cdot2^k}$ for all $k \in [0, n-1] \cap \mathbb{Z}$. Sample open cubes and closed vertices in $\mathbb{Z}^d$ according to the law $\mathbb{Q}_{n-1}$ as defined in \response{Definition~\ref{def:Qn}}. We additionally place an open cube $B(0, 3L_{n-1})$, and consider the evolution starting from it, i.e., at each time, newly produced open vertices must be neighboring to the open cluster containing $B(0, 3L_{n-1})$. Then, with probability at least $1 - e^{-K\cdot2^{n}}/(100K)^d$, the open cluster containing $B(0, 3 L_{n-1})$ in the final configuration is contained in $B(0, \sqrt{K} L_{n-1})$.
\end{lemma}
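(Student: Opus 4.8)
The goal is to show that, in the random environment $\mathbb{Q}_{n-1}$ of Lemma~\ref{lem:domination} together with the planted open cube $B(0,3L_{n-1})$, the open cluster of the seed remains inside $B(0,\sqrt{K}L_{n-1})$ off an event of probability at most $e^{-K\cdot 2^{n}}/(100K)^{d}$. The plan is to produce, with at least this probability, a \emph{blocking shell} of closed vertices that encircles the seed at $|\cdot|_{\infty}$-distance well below $\sqrt{K}L_{n-1}$ and that threshold-$d$ growth cannot cross. Two obstacles must be surmounted: the large open cubes of $\mathbb{Q}_{n-1}$ (those of scales $L_{0},\dots,L_{n-2}$, whose radii $3\sqrt{K}L_{k}$ are at most $3K^{-3/2}L_{n-1}$) could bridge a thin closed shell, and the small open cubes have density of order $p$, so no shell can avoid all of them. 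Accordingly the argument splits into a deterministic confinement lemma for threshold-$d$ bootstrap percolation and a Peierls-type probabilistic estimate exploiting both the sharp inequality $q\ge\mathscr{L}(K)p^{d}$ and the hierarchical sparsity $p_{k}\le p^{2d}e^{-K\cdot2^{k}}$ furnished by the hypotheses.

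The deterministic half rests on the rigidity of threshold-$d$ growth: a solid box never grows by itself, since a vertex just outside a flat face of a filled box has only one open neighbour, and advancing the cluster by one layer amounts to running a threshold-$(d-1)$ cascade inside that layer, recursively down to threshold-$1$ cascades along lines. I would package this as a confinement dichotomy: if the open cluster of a connected seed $S$ escapes a box $B(z,R)$, then inside the annulus $B(z,R)\setminus B(z,R/4)$ one can exhibit either (a) a connected (mutually overlapping) chain of open cubes of $\mathbb{Q}_{n-1}$ joining the two bounding spheres, or (b) a \emph{small-material crossing}: a path in the final open cluster that crosses the annulus, uses only small open cubes and initially open vertices, and is not separated from $S$ by any staircase-type codimension-one surface of closed vertices. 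The content of (b) is that such a closed staircase deterministically halts the recursive layer-filling; I would prove it by induction on the dimension, maintaining inside each corridor a monovariant in the spirit of the proof of Lemma~\ref{lem:initial-open-number-lower-bound}, and using Lemmas~\ref{lem:geometric-growth} and~\ref{lem:bound-diameter-good} to control the portions of the growth lying inside already-good boxes.

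For the probabilistic half I would bound the two alternatives uniformly over $3L_{n-1}\le R\le 2\sqrt{K}L_{n-1}$ and over the centre $z$. For (a): in a chain of open cubes joining the two spheres of an annulus of width of order $\sqrt{K}L_{n-1}$ the diameters sum to at least $\tfrac1{10}\sqrt{K}L_{n-1}$, so the scales $L_{k_{1}},\dots,L_{k_{m}}$ of the cubes satisfy $\sum_{i}K^{k_{i}}\gtrsim K^{n-1}$ and hence $\sum_{i}2^{k_{i}}\gtrsim K\,2^{n}$ (the optimum is to use of order $K$ cubes all of the coarsest admissible scale $L_{n-2}$); by the independence of disjoint $L_{k}$-boxes, any fixed such chain is present with probability $\prod_{i}p_{k_{i}}\le e^{-K\sum_{i}2^{k_{i}}}\le e^{-cK^{2}2^{n}}$, and since the chain starts at one of at most $C^{d}K^{dn}$ cubes while each further cube offers at most $(C\sqrt{K})^{d}$ continuations per scale, an entropy count that also retains the factors $p^{2d}$ per cube shows the sum over all chains is at most $\tfrac12 e^{-K\cdot2^{n}}/(100K)^{d}$ for $K$ large. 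For (b): after discarding the large cubes the effective environment has open density of order $p$ and closed density at least of order $q\ge\mathscr{L}(K)p^{d}$; a contour sum over staircase surfaces of closed vertices that enclose the seed at scale of order $\sqrt{K}L_{n-1}$ shows that the absence of a blocking surface has probability at most $e^{-c\,\mathscr{L}(K)^{\eta}K^{n}}$ for some $\eta>0$ depending only on $d$ --- this is precisely the computation making $q\asymp p^{d}$ critical, carried out at scale $L_{n-1}$ rather than at scale $p^{-1}$ --- which for $\mathscr{L}(K)$ large is again at most $\tfrac12 e^{-K\cdot2^{n}}/(100K)^{d}$. Summing the two bounds and invoking the confinement dichotomy proves the lemma, and $C$ and $\mathscr{L}$ are fixed along the way.

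I expect the deterministic confinement lemma to be the main obstacle. The planar argument of Gravner--McDonald and the three-dimensional argument of Gravner--Holroyd both lean on low-dimensional topology, whereas here threshold-$d$ growth must be controlled in every dimension; in particular one must show that a codimension-one staircase of closed vertices reliably blocks it even though the open cubes, while sparse, have positive density. Turning the recursive layer-filling picture into a clean deterministic dichotomy, and then matching it to a Peierls bound strong enough to yield $e^{-K\cdot2^{n}}$ decay while keeping the closed density at the sharp order $p^{d}$, is where the real work lies. A secondary difficulty is the bookkeeping in alternative (a): one must organise the union bound over chains of mixed scales, ruling out for minimal chains the entropy blow-up that a descent to a very small scale would otherwise create, and one must absorb into an ``effective seed'' each large open cube the cluster happens to touch, while checking that this inflates the seed's radius by only a constant factor before the shell is located.
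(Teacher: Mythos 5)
Your overall architecture (a deterministic blocking structure around the seed, plus a Peierls-type estimate exploiting $q\geq\mathscr{L}(K)p^{d}$ and the hierarchical tails $p_{k}\leq p^{2d}e^{-K\cdot 2^{k}}$) is the same shape as the paper's argument, but the piece you yourself flag as the main obstacle --- the deterministic confinement lemma --- is exactly where the proposal breaks, and in its stated form it cannot be repaired. A ``codimension-one staircase surface of closed vertices'' does not exist in this environment: the closed density is $q=\mathscr{L}(K)p^{d}\to 0$, so any fixed surface of $\sim(\sqrt{K}L_{n-1})^{d-1}$ vertices is entirely closed with probability $q^{(\sqrt{K}L_{n-1})^{d-1}}$, and no Peierls sum can produce one; your claimed bound $e^{-c\,\mathscr{L}(K)^{\eta}K^{n}}$ for the absence of such a surface has the inequality pointing the wrong way. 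The sharp use of $q\gtrsim p^{d}$ is much more delicate: in the paper the blocking object (a ``shield'') is a geometric codimension-one structure that contains only \emph{one} closed vertex, located at its corner inside a box of side $L=\lfloor 1/(pK)\rfloor$ (this is precisely where one closed vertex per $L$-box, i.e.\ $q\,L^{d}\gg 1$, is needed), together with two purely ``open-sparsity'' conditions: no initially open vertex in a long thin neighborhood $\mathsf{R}(y)$ of the corner, and at most $4d$ initially open vertices in every $B(z,10d)$ along the shield. Threshold-$d$ growth is then stopped by a threshold-$2$ reduction plus the diameter-versus-seed-count monovariant (Lemma~\ref{lem:initial-open-number-lower-bound}), not by a wall of closed sites. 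Your proposal contains neither the single-closed-corner mechanism nor the open-sparsity conditions, so the deterministic half is not merely unproved but mis-specified.

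The probabilistic half also has a structural gap: your dichotomy (a connected chain of large open cubes crossing the annulus, versus a crossing using only small material) omits the typical escape route, in which large cubes serve as stepping stones joined by bootstrap-grown material without themselves forming a connected chain. Once you correct for this, the blocking surface must be routed around neighborhoods of \emph{all} large cubes of scales $L_{0},\dots,L_{n-2}$, and the failure probability is then dominated by a single cube of scale $L_{n-2}$ near the annulus, which occurs with probability of order $p_{n-2}\approx e^{-K\cdot 2^{n-2}}$ --- incompatible with any bound of the form $e^{-cK^{n}}$, and only barely compatible with the target $e^{-K\cdot 2^{n}}$. Obtaining the target requires a genuinely multi-scale count: the paper stochastically dominates the obstructions by independent Bernoulli fields at scales $j=-1,0,\dots,n-2$ with ranges $a_{j}=K^{j+1.5}$ and densities $\tilde p_{j}$, and then runs a weighted dual path count (weights $\lambda_{j}$ summing to less than $\tfrac12$) over a skewed sublattice, in each of the $2^{d}$ orientations, to produce a nice oriented surface with the stated probability. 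Your entropy estimate for chains of cubes is fine as far as it goes, but without the mixed-scale bookkeeping and without a correct deterministic blocking statement, the proof does not go through.
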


\response{We remark that the bound $1 - e^{-K\cdot2^{n}}/(100K)^d$ is not optimal but is sufficient for our purpose. Indeed, we expect $K^{2-o(1)} 2^n$ in the exponent.}

Now we complete the proof of Lemma~\ref{lem:bad-probability}.

\begin{proof}[Proof of Lemma~\ref{lem:bad-probability}]
    The case of $n = 0$ was proved in Lemma~\ref{lem:n=0}. Suppose that the result holds for $n \leq k$ and we next consider $n = k+1$.
    Recalling Definition~\ref{def:good-box}, we see that there are two scenarios for $ B(0,L_n) $ to be bad:
    \begin{enumerate}[(a)]
        \item There are bad $ L_{n-1} $-boxes that cannot be covered by $ B(y,3L_{n-1}) $ for any $y\in L_{n-1}\mathbb{Z}^d$. \label{lem4.3-sce-a}
        \item There exist bad boxes which are all covered by $ B(y,3L_{n-1}) $ for some $ y\in L_{n-1}\mathbb{Z}^d$. In addition, there exists an open cluster in the final configuration of $ B(0,L_n) $ intersecting $ B(y,3L_{n-1})$ such that this open cluster is not contained in $ B(y,\sqrt{K} L_{n-1})$.\label{lem4.3-sce-b}
    \end{enumerate}
    \response{If Scenario~\eqref{lem4.3-sce-a} occurs, then there exist two bad $ L_{n-1} $-boxes contained in $B(0,L_n)$ whose $|\cdot|_\infty$-distance is at least $3L_{n-1}$. By taking a union bound over all such pairs (there are at most $(2K - 1)^{2d}$ of them) and using independence, we see that} the probability of Scenario~\eqref{lem4.3-sce-a} is at most
    \begin{equation*}
        (2K - 1)^{2d} \times p_{n-1}^2 \leq (2K - 1)^{2d} \times (p^{2d} e^{-K \cdot 2^{n-1}})^2 \leq \frac12 p^{2d}e^{-K\cdot2^n},
    \end{equation*}
    for all sufficiently small $p$ (which may depend on $K$). Next, we consider Scenario~\eqref{lem4.3-sce-b} where all the bad $L_{n-1}$-boxes can be covered by $B(y, 3L_{n-1})$ for some $y \in L_{n-1} \mathbb{Z}^d$. We can apply Lemma~\ref{lem:domination} with the $ L_{n-1} $-domain $A := B(0, L_n) \setminus B(y, 3L_{n-1})$ and apply Lemma~\ref{lem:control-growth} to show that with probability at least $1 - e^{-K\cdot2^{n}}/(100K)^d$, the open clusters in the final configuration of $B(0, L_n)$ that intersect $B(y, 3L_{n-1})$ are contained in $B(y, \sqrt{K} L_{n-1})$ where we require $K, p, q$ to satisfy the conditions in Lemma~\ref{lem:control-growth} (note that this is consistent with the assumptions on $ K,p,q $ as in the lemma-statement).
    Also note that by the assumption of Lemma~\ref{lem:control-growth}, the preceding event is independent of the initial configuration within $ B(y,3L_{n-1}) $, where there exists a bad $ L_{n-1} $-box.
    Summing over the choices of $y$, we obtain that $$p_n \leq \frac12 p^{2d}e^{-K\cdot2^{n}} + (2K + 7)^{d} \times 5^d p_{n-1} \times e^{-K\cdot2^{n}}/(100K)^d \leq p^{2d}e^{-K\cdot2^{n}},$$
    where $(2K+7)^d$ is the number of $y \in L_{n-1} \mathbb{Z}^d$ such that $B(y, 3L_{n-1}) \cap B(0, L_n) \neq \emptyset$, and the factor $5^d$ is the number of $L_{n-1}$-boxes contained in $B(y, 3L_{n-1})$.
    This yields the claim for $n$. By induction, we conclude the lemma.
\end{proof}

Now we complete the proof of Theorem~\ref{thm:1.4}.

\begin{proof}[Proof of Theorem~\ref{thm:1.4}]
    Choose $K, p, q$ satisfying the conditions in Lemma~\ref{lem:bad-probability}. Then for all integers $n \geq 0$ we have $p_n \leq p^{2d}e^{-K\cdot2^{n}}$. Fix an integer $M \geq 0$, and consider the evolution of $\mathbb{Z}^d$ as in~\eqref{evolve-2}. By Lemma~\ref{lem:infinite-evolve}, we have
    \begin{equation}\label{eq:thm1.4-proof-0}
        \begin{aligned}
            &\quad \mathbb{P}_{p,q,\infty}[{\rm the\ open\ cluster\ containing\ }0{\rm\ has\ diameter\ at\ least\ }M] \\
            &\leq \lim_{n \rightarrow \infty}\mathbb{P}_{p,q,L_n}[\mbox{the open cluster containing 0 has diameter at least } M],
        \end{aligned}
    \end{equation}
    where in the measure $ \mathbb{P}_{p,q,L_n}$ we refer to the open cluster in the final configuration of $B(0,L_n)$.
    Next, we upper-bound the right-hand side of~\eqref{eq:thm1.4-proof-0} for any fixed $n$. The first case is that there are bad $L_{n-1}$-boxes contained in $B(0,L_n)$, and this happens with probability at most $(2K-1)^d \times p_{n-1}$. The second case is that all the $L_{n-1}$-boxes contained in $B(0,L_n)$ are good. Then we can apply Lemma~\ref{lem:domination} to show that the open clusters in the final configuration of $B(0,L_n)$ are stochastically dominated by $\mathbb{Q}_n$. We observe that under $\mathbb{Q}_n$, the cardinality of the open cluster containing 0 can be dominated by the total population of a Galton-Watson tree, whose offspring distribution is given by the total number of vertices in the open cubes sampled from $\mathbb{Q}_n$ that are neighboring to (or contain) a fixed vertex. The expectation of this offspring distribution is at most
    \begin{equation}\label{eq:density-GW}
    \sum_{j=0}^{100^d} (2j+3)^d \times 3^d p^{1+j/3} \times |B(0,j)| + \sum_{k=0}^\infty (6 \sqrt{K} + 3 )^d \times p_k \times |B(0, 3 \sqrt{K} L_k)| \leq \frac{1}{2}
    \end{equation}
    for all sufficiently small $p$. (Here, $(2j+3)^d$ counts the number of $y \in \mathbb{Z}^d$ such that $B(y,j)$ is neighboring to a fixed vertex, and $(6 \sqrt{K} + 3 )^d$ counts the number of $y \in L_k \mathbb{Z}^d$ such that $B(y, 3 \sqrt{K} L_k)$ is neighboring to a fixed vertex.) Therefore, this Galton-Watson tree is subcritical. Combining this with the first case, we obtain
    $$
    \mathbb{P}_{p,q,L_n}[\mbox{the open cluster containing 0 has diameter at least }M] \leq (2K-1)^d \times p_{n-1} + o_M(1).
    $$
    By first taking $n$ to $\infty$ and then $M$ to $\infty$, we obtain that there is no infinite open cluster a.s.-$\mathbb{P}_{p,q,\infty}$. By noting that the left hand side of~\eqref{eq:density-GW} tends to 0 as $p$ tends to 0, we can obtain $\lim_{ p \rightarrow 0}\mathbb{P}_{p, q, \infty}[\mbox{$0$ is open}] = 0$.
\end{proof}

\begin{remark}\label{rmk:polluted-bootstrap}
In Section~\ref{sec:glauber}, we will consider a variant of polluted bootstrap percolation. In this variant, a closed vertex will become open if and only if it has at least $(d+1)$ open neighbors. Theorem~\ref{thm:1.4} also holds for this variant (albeit with a larger constant $C$). In fact, all the arguments in Sections~\ref{subsec:n=0} and \ref{subsec:4.2} apply directly to this variant, as we rely only on the monotonicity properties of polluted bootstrap percolation, rather than its specific setting. In Section~\ref{subsec:lem4.10}, we will prove Lemma~\ref{lem:control-growth} by constructing a contour to block the open cluster triggered by $B(0, 3L_{n-1})$ in polluted bootstrap percolation. At each corner of this surface, there is a closed vertex. As we will see in Section~\ref{subsec:lem4.10}, particularly in Definition~\ref{def:nice-box}, this surface can also block this variant of polluted bootstrap percolation, since all closed vertices at the corners have at most $d$ open neighbors and thus remain closed. Therefore, Lemma~\ref{lem:control-growth} also applies to this variant, and so does Theorem~\ref{thm:1.4}.
\end{remark}

\subsection{Proof of Lemma~\ref{lem:control-growth}}
\label{subsec:lem4.10}
In this subsection, we will write the origin of $ \mathbb{Z}^d $ as $ \mathbf{0} $ for clarity.
We will prove Lemma~\ref{lem:control-growth} by constructing a contour $ \mathbf{S}_n $ (note that for $ d\geq3 $, a contour looks more like a surface) enclosing $B(\mathbf{0}, 3 L_{n-1})$ such that the evolution starting from $B(\mathbf{0}, 3L_{n-1})$ will stop when (if not before) reaching the contour $\mathbf{S}_n$.
We will construct $ \mathbf{S}_n $ by concatenating some specific local structures called oriented shields (as defined below in~\eqref{eq:def_of_shield}). Oriented shields can be found in nice boxes (Definition~\ref{def:nice-box}), illustrated in Figure~\ref{fig:shield}, and a fixed box is nice with probability close to 1. Lemma~\ref{lem:shield-local} then states that the oriented shields can locally prevent the evolution. To ensure that the concatenation of oriented shields stops the evolution, we have to further restrict the positions of the oriented shields (or equivalently, nice boxes) to a sublattice of $ \mathbb{Z}^d $ (see Figure~\ref{fig:oriented_sublattice}) defined by the linear map in~\eqref{eq:def-pi_k}. Definition~\ref{def:oriented_surface} then gives the specific concatenation of nice boxes (also referred to as nice oriented surfaces) that we need in the construction of $ \mathbf{S}_n $. In Lemmas~\ref{lem:oriented_surface_disconnect_origin_from_infinity} and~\ref{lem:oriented_surface_prevents_growth}, we will show that combining the nice oriented surfaces in all $2^d$ orientations can indeed form a contour $\mathbf{S}_n$ to stop the evolution. It remains to prove that a nice oriented surface exists with very high probability. To this end, we apply a coarse-graining argument and investigate the positions of nice boxes. Definition~\ref{def:overwhelming} and Lemma~\ref{lem:overwhelming_dominance} provide a more manageable description of the distribution of nice boxes, and we eventually finish the proof of Lemma~\ref{lem:control-growth} by lower-bounding the probability of the existence of nice oriented surfaces through a duality argument.
\response{Constants $ \{a_j\}_{-1\leq j\leq n-2} $ and $ \{\lambda_{j}\}_{-1\leq j\leq n-2} $ (where $n\geq1$) in Definition~\ref{def:overwhelming} and the proof of Lemma~\ref{lem:control-growth} are carefully chosen to satisfy the stochastic domination in Lemma~\ref{lem:overwhelming_dominance} and to obtain the proper estimates~\eqref{eq:count-path_-2},~\eqref{eq:count-path-0},~\eqref{eq:count_path_j=-1} and~\eqref{eq:count_path_j>=0} for the duality argument.}

We first introduce some notations that will be frequently used in this subsection. Consider a $d$-dimensional discrete hypercube $ \{-1,1\}^d $. In this subsection, we will call its elements as orientations.
For $1 \leq i \leq d$, let $\boldsymbol{e}_i$ be the $i$-th standard basis vector. For $x \in \mathbb{Z}^d$, we write $x_i$ for its $i$-th coordinate for all $1 \leq i \leq d$.
For an index set $ I\subset\{1,\dots,d\} $, we will write $ \{1,\dots,d\}\setminus I $ as $ I^c $ for brevity.
For any $ \boldsymbol{k}\in\{-1,1\}^d $ and any $ x,y\in\mathbb{Z}^d $, we write $ x\preceq_{\boldsymbol{k}}y $ (resp.\ $ x\prec_{\boldsymbol{k}}y $) if and only if
\begin{equation*}
    (x-y)_i\cdot\boldsymbol{k}_i\leq0 \quad\mbox{(resp. } (x-y)_i\cdot\boldsymbol{k}_i<0\mbox{)}\quad\mbox{for all }1\leq i\leq d,
\end{equation*}
where $ (x-y)_i$ and $ \boldsymbol{k}_i$ are the $i$-th coordinates of $(x-y)$ and $\boldsymbol{k}$, respectively. In addition, for $ x\in\mathbb{Z}^d $ and $ A\subset\mathbb{Z}^d $, we write $ x\preceq_{\boldsymbol{k}}A $ (resp.\ $x\prec_{\boldsymbol{k}}A $) if and only if $ x\preceq_{\boldsymbol{k}}y $ (resp.\ $x\prec_{\boldsymbol{k}} y$) for all $ y\in A $. Recall the constants $K$ and $p$ defined before~\eqref{eq:def-Ln}. Let the scale
\begin{equation}\label{eq:def-L}
    L=\Big\lfloor\frac{1}{pK}\Big\rfloor .
\end{equation}
For a vertex $ y\in\mathbb{Z}^d $, define
\begin{equation*}
    \mathsf{R}(y):=y+\bigcup_{1\leq i\leq d}\big\{ z\in\mathbb{Z}^d:|z_i|\leq10^{10d}L \mbox{, and $|z_j| \leq 100d$ for all $j \in \{i \}^c$} \big\}.
\end{equation*}

Recall the setting of Lemma~\ref{lem:control-growth}. Fix an integer $n \geq 1$ and assume that $p_k \leq p^{2d}e^{-K\cdot2^k}$ for all $k \in [0,n-1] \cap \mathbb{Z}$. We sample open cubes and closed vertices on $\mathbb{Z}^d$ according to the law $\mathbb{Q}_{n-1}$, which depends on $K, p, q$.
We refer to this configuration as the initial configuration, and the open vertices contained in the open cubes (sampled from $ \mathbb{Q}_{n-1} $) are called initially open vertices.
Then, we additionally place an open cube $B(\mathbf{0},3L_{n-1})$, and consider the evolution starting from it, namely, at any time the newly produced open vertices must be neighboring to the open cluster containing $B(\mathbf{0},3L_{n-1})$.
Our goal is to control the open cluster containing $B(\mathbf{0},3L_{n-1})$ in the final configuration by constructing a contour $ \mathbf{S}_n $.

We will construct $ \mathbf{S}_n $ by concatenating local structures, as indicated by the following definitions of nice boxes and $ \boldsymbol{k} $-oriented shields for $ \boldsymbol{k}\in\{-1,1\}^d $.
\begin{definition}
    \label{def:nice-box}
    Fix an initial configuration. For $x \in \mathbb{Z}^d$, we say a box $ B(x,L) $ is marked if we mark a point $ y\in B(x,L) $ as a distinct point. In addition, we say $ (B(x,L),y) $ is a nice marked box, if the following hold: \response{\rm (1)} $y$ is closed; {\rm (2)} every $ z\in\mathsf{R}(y) $ is not initially open; {\rm (3)} for any $ z\in B(x,10^{10d}L) $, there are at most $ 4d $ initially open vertices in $ B(z,10d) $.
    We say $ B(x,L) $ is nice if $ (B(x,L),y) $ is a nice marked box for some $ y\in B(x,L) $.
    If $ B(x,L) $ is nice, then we choose an arbitrary $ y\in B(x,L) $ such that $ (B(x,L),y) $ is nice and denote it by $ y(x) $. Note that the niceness of a box $ B(x,L) $ is not measurable with respect to the initial configuration in $ B(x,L) $, but instead with respect to the initial configuration in $ B(x,20^{10d}L) $.
\end{definition}
For an orientation $ \boldsymbol{k}\in\{-1,1\}^d $, the $ \boldsymbol{k} $-oriented shield $ {\rm Sh}(x, \boldsymbol{k}) $ for a marked box $ (B(x,L),y) $ is defined as follows\footnote{\response{Note that the shield $ {\rm Sh}(x,\boldsymbol{k}) $ is unique with respect to the marked point $ y\in B(x,L) $ and the orientation $ \boldsymbol{k}\in\{-1,1\}^d $. However we only care about the location of the shields up to a coarse-grained level, so we label the shield of a marked box $ (B(x, L), y) $ without the marked point $y$ occasionally. In fact, what we are going to deal with in the rest of this subsection are the shields for the nice marked boxes, as defined below.}}:
\begin{equation}
    \label{eq:def_of_shield}
    \begin{aligned}
        &\quad\ {\rm Sh}(x, \boldsymbol{k})={\rm Sh}(x, \boldsymbol{k};y)\\&:=\bigcup_{1 \leq i \leq d}\Big( y+\big\{ z\in\mathbb{Z}^d:z_i=0, \mbox{ and }  0\leq \boldsymbol{k}_j\cdot z_j \leq 10^{10d}L\ \forall j\in \{i\}^c\big\} \Big).
    \end{aligned}
\end{equation}
We will call $ y $ the corner of $ {\rm Sh}(x,\boldsymbol{k}) $.
Note that the above definition naturally applies to a nice box $ B(x,L) $, where $ y=y(x) $ is chosen as in Definition~\ref{def:nice-box}. Namely, for a nice box $B(x,L)$, we can first choose $y(x) \in B(x,L)$ such that $(B(x,L), y(x))$ is a nice marked box as defined in Definition~\ref{def:nice-box}, and then define ${\rm Sh}(x,\boldsymbol{k})$ with $y(x)$ as the corner. See Figure~\ref{fig:shield} for an illustration.
\begin{figure}[h]
    \centering
    \includegraphics[width=0.5\textwidth]{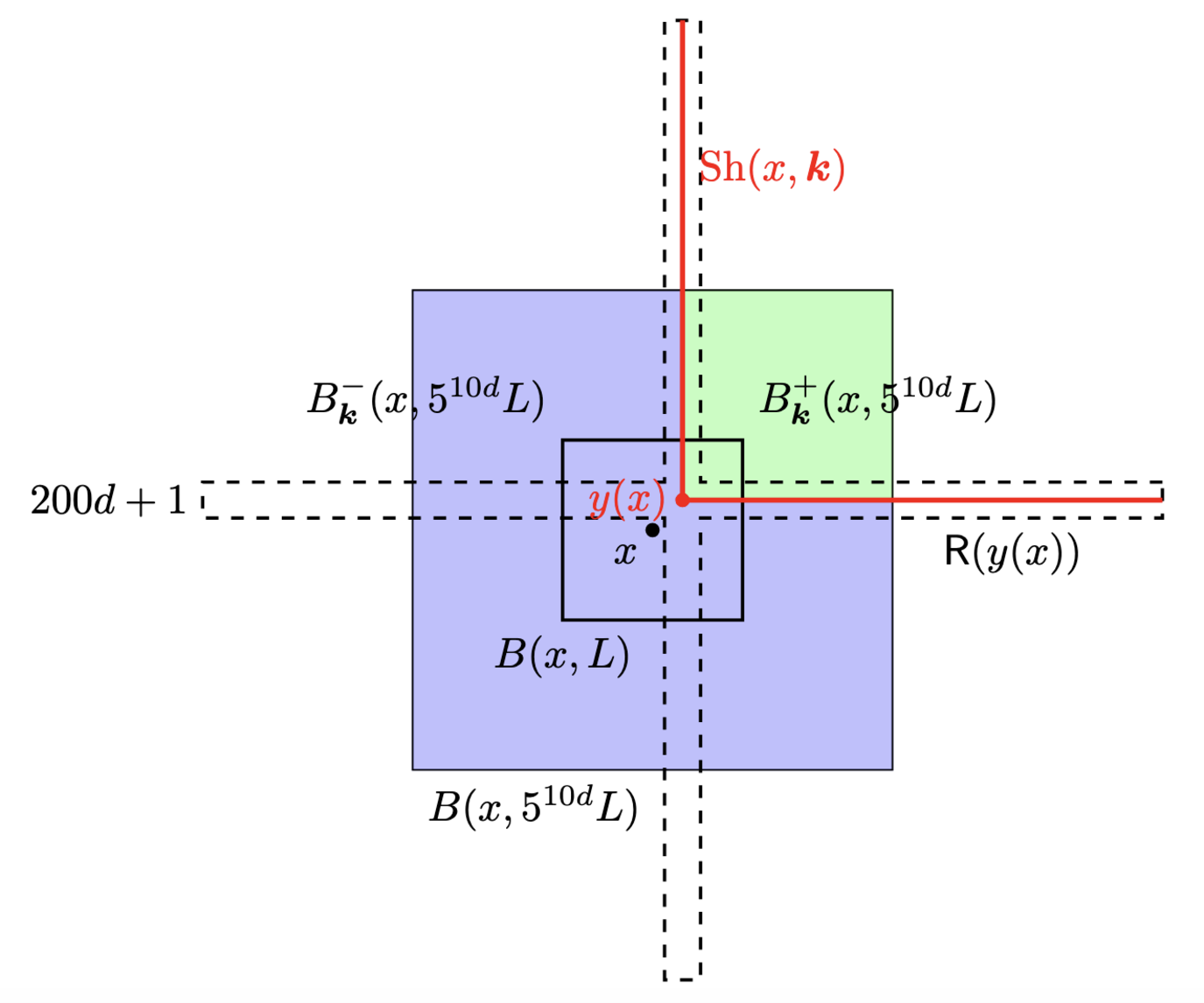}
    \caption{An illustration for $ d=2 $ and $ \boldsymbol{k}=(1,1) $. For the (nice) marked box $(B(x,L),y(x))$, the chosen vertex $y(x)$ is colored in red. The oriented shield $ {\rm Sh}(x,\boldsymbol{k}) $ is also colored in red with $y(x)$ as the corner. The domain $ B^+_{\boldsymbol{k}}(x,5^{10d}L) $ is colored in green, and the domain $ B^-_{\boldsymbol{k}}(x,5^{10d}L) $ is colored in light purple. The boundary of $ \mathsf{R}(y(x)) $ is shown with dashed black lines. Although in the two-dimensional case the set $ \mathsf{R}(y(x)) $ contains $ {\rm Sh}(x,\boldsymbol{k}) $, this is not true in three dimensions or higher.}
    \label{fig:shield}
\end{figure}

The following Lemma shows that the shield of a nice box can locally prevent the evolution. Observe that for any nice box $ B(x,L) $, the shield $ {\rm Sh}(x,\boldsymbol{k}) $ separates $ B(x,5^{10d}L) $ into two disjoint parts:
\begin{align*}
    & B^{+}_{\boldsymbol{k}}(x,5^{10d}L):=\{z\in B(x,5^{10d}L):y(x)\preceq_{\boldsymbol{k}}z\},\quad\mbox{and}\\
    & B^{-}_{\boldsymbol{k}}(x,5^{10d}L):=\{z\in B(x,5^{10d}L):y(x)\not\preceq_{\boldsymbol{k}}z\}.
\end{align*}
\begin{lemma}
    \label{lem:shield-local}
    Suppose that $ B(x,L) $ is nice for an initial configuration.
    We consider the polluted bootstrap percolation restricted to $ B(x,5^{10d}L)$ initiated from the following configuration: each vertex in $ B^{+}_{\boldsymbol{k}}(x,5^{10d}L) $ has the same state as in the initial configuration (as defined before Definition~\ref{def:nice-box}), and each vertex in $ B^{-}_{\boldsymbol{k}}(x,5^{10d}L) $ is open. Then in the final configuration of $ B(x,5^{10d}L) $, the open cluster containing $ B^{-}_{\boldsymbol{k}}(x,5^{10d}L) $ is within $ |\cdot|_{\infty} $-distance $ 10d $ from $ B^{-}_{\boldsymbol{k}}(x,5^{10d}L) $.
\end{lemma}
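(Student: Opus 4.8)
The plan is to show that, in the evolution restricted to $B(x,5^{10d}L)$ started from the configuration in which all of $B^{-}_{\boldsymbol{k}}(x,5^{10d}L)$ is open (and every vertex of $B^{+}_{\boldsymbol{k}}(x,5^{10d}L)$ keeps its state from the genuine initial configuration), the open set can leak from $B^-$ into $B^+$ only across the oriented shield, and that the niceness of $B(x,L)$ confines this leakage to an $\ell_\infty$-neighbourhood of the shield of radius $O(d)$. Write $\mathcal O$ for the open cluster of $B^{-}_{\boldsymbol{k}}(x,5^{10d}L)$ in the final configuration. First, a purely geometric observation: if $v\in B^{+}_{\boldsymbol{k}}(x,5^{10d}L)$ is $\mathbb{Z}^d$-adjacent to a vertex of $B^{-}_{\boldsymbol{k}}(x,5^{10d}L)$, writing that neighbour as $v-\boldsymbol{k}_\ell\boldsymbol{e}_\ell$ forces $(v-y(x))_\ell=0$, while $v\in B(x,5^{10d}L)$ and $y(x)\in B(x,L)$ force $0\le(v-y(x))_j\boldsymbol{k}_j\le 10^{10d}L$ for every $j\ne\ell$; by~\eqref{eq:def_of_shield} this says exactly that $v$ lies on ${\rm Sh}(x,\boldsymbol{k};y(x))$. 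Hence any open path from $B^-$ into $B^+$ passes through a shield vertex, so every connected component of $\mathcal O\cap B^{+}_{\boldsymbol{k}}(x,5^{10d}L)$ contains a shield vertex, and each shield vertex is at $\ell_\infty$-distance $1$ from $B^{-}_{\boldsymbol{k}}(x,5^{10d}L)$. Second, a sparsity estimate from Definition~\ref{def:nice-box}(3): in polluted bootstrap percolation restricted to any box inside $B(x,10^{10d}L)$ and started from the genuine initial configuration, every open cluster has $\ell_\infty$-diameter at most $10d$, since at the first moment a cluster would exceed diameter $10d$ it has diameter at most $10d+1$, hence lies in some $B(z,10d)$ with $z\in B(x,10^{10d}L)$ and so contains at most $4d$ initially open vertices, contradicting the lower bound $5d+1$ given by Lemma~\ref{lem:initial-open-number-lower-bound}. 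With Definition~\ref{def:nice-box}(2) this forces, in the no-boost evolution, every open vertex to lie within $\ell_\infty$-distance $10d$ of the complement of $\mathsf{R}(y(x))$.

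Now return to the boosted evolution and split $B^{+}_{\boldsymbol{k}}(x,5^{10d}L)$ into a corner regime and a bulk regime. In the corner regime, the key inputs are that $\mathsf{R}(y(x))$ carries no seeds (Definition~\ref{def:nice-box}(2)), that $\mathsf{R}(y(x))$ contains $B(y(x),100d)$ together with the box-truncation of every coordinate axis through $y(x)$, and that the corner $y(x)$ is closed (Definition~\ref{def:nice-box}(1)) so it never flips despite having $d$ open neighbours in $B^-$. A careful induction outward from $y(x)$ — at each step the new vertex of the shield (or of the adjacent interior of $B^+$) can pick up further open neighbours only among vertices lying deeper inside the seed-free region $\mathsf{R}(y(x))$, which are inductively closed — then shows that no shield vertex having at least $d-1$ neighbours in $B^-$ (equivalently, lying on an axis through $y(x)$; in particular the corner) ever opens. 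Consequently, in the bulk regime every shield vertex that is still allowed to open has at most $d-2$ neighbours in $B^-$, and every interior-$B^+$ vertex has all $2d$ of its neighbours in $B^+$; therefore every vertex that ever opens in the evolution of the open set inside $B^{+}_{\boldsymbol{k}}(x,5^{10d}L)$ (with the vertices of $B^{-}_{\boldsymbol{k}}(x,5^{10d}L)$ regarded as a permanently open boundary) has at least $2$ open neighbours inside $B^{+}_{\boldsymbol{k}}(x,5^{10d}L)$. The monotonicity argument of Lemma~\ref{lem:initial-open-number-lower-bound} applies verbatim to this evolution, so — by the same argument as in the sparsity estimate above, using Definition~\ref{def:nice-box}(3) — every open cluster inside $B^{+}_{\boldsymbol{k}}(x,5^{10d}L)$ has $\ell_\infty$-diameter at most $10d$. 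Since every component of $\mathcal O\cap B^{+}_{\boldsymbol{k}}(x,5^{10d}L)$ is such a cluster and contains a shield vertex, and shield vertices are at distance $1$ from $B^{-}_{\boldsymbol{k}}(x,5^{10d}L)$, every vertex of $\mathcal O$ lies within $\ell_\infty$-distance $10d$ of $B^{-}_{\boldsymbol{k}}(x,5^{10d}L)$.

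The main obstacle is the corner-regime induction for $d\ge 3$, where ${\rm Sh}(x,\boldsymbol{k})$ is a genuine $(d-1)$-dimensional corner surface. There one must run the induction jointly over all of $\mathsf{R}(y(x))\cap B^{+}_{\boldsymbol{k}}(x,5^{10d}L)$ — the corner, the axes, the various lower-dimensional faces and edges of the slabs, and the interior of $B^+$ near them — classifying for every such vertex exactly how many of its $2d$ neighbours lie in $B^-$, on the shield, and strictly inside $B^+$ (a count that jumps precisely on these lower-dimensional strata), and then check that this closed region meshes cleanly with the Lemma~\ref{lem:initial-open-number-lower-bound}-type bound in the bulk so that the two together give exactly the radius $10d$. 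This is where the hierarchy of constants ($4d$, $100d$, $10d$, $5^{10d}$, $10^{10d}$) built into Definition~\ref{def:nice-box} and into $\mathsf{R}(\cdot)$ is calibrated. (For $d=2$ the shield lies entirely inside $\mathsf{R}(y(x))$, so the corner-regime induction covers the whole shield, yielding the stronger fact that no shield vertex opens and $\mathcal O=B^{-}_{\boldsymbol{k}}(x,5^{10d}L)$.)
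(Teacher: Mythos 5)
Your plan correctly isolates the relevant facts (the vertices of $B^{+}_{\boldsymbol{k}}(x,5^{10d}L)$ adjacent to $B^{-}_{\boldsymbol{k}}(x,5^{10d}L)$ lie on the shield; the axis/corner vertices are the only ones with effective threshold $\leq 1$ inside $B^{+}_{\boldsymbol{k}}$; Lemma~\ref{lem:initial-open-number-lower-bound} plus condition (3) of Definition~\ref{def:nice-box} kill threshold-$2$ clusters of diameter $10d$), but the proof has a genuine gap exactly at its central step. The claim that no shield vertex with at least $d-1$ neighbours in $B^{-}_{\boldsymbol{k}}$ ever opens is only sketched as an ``induction outward from $y(x)$'', and you yourself defer carrying it out (and making it mesh with the bulk estimate) to the last paragraph as ``the main obstacle''; that is precisely the content that needs an argument. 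Moreover, the sketch misidentifies the threat: the danger to an axis vertex is not growth from ``deeper inside the seed-free region'' but a bulk cluster creeping laterally through $\mathsf{R}(y(x))$ towards the axis. Condition (2) only says $\mathsf{R}(y(x))$ contains no seeds; it does not by itself stop a cluster whose body lies outside $\mathsf{R}(y(x))$ from sending a tentacle across the transverse moat of width $100d$, and a diameter count applied to that one large cluster does not contradict condition (3), which only limits seeds inside radius-$10d$ boxes. Ruling this out needs the statement that all clusters in $B^{+}_{\boldsymbol{k}}$ stay of diameter $<10d$ --- but your bulk estimate (threshold-$2$ behaviour plus Lemma~\ref{lem:initial-open-number-lower-bound}) presupposes the corner result, so the two halves of your plan are circular as written.

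The paper breaks this circle with a single first-violation argument, which is the idea missing from your proposal: at the first moment some open cluster $\mathcal{O}$ in $B^{+}_{\boldsymbol{k}}(x,5^{10d}L)$ attains $|\cdot|_\infty$-diameter at least $10d$, every cluster (including $\mathcal{O}$) still has diameter at most $20d$; if $\mathcal{O}$ met the axes it would lie entirely inside $\mathsf{R}(y(x))$ and hence contain no initially open vertex, which is impossible (tracing back to the earliest-opened vertex of a seedless cluster forces it to be the closed corner); therefore every vertex of $\mathcal{O}$ has at most $d-2$ neighbours in $B^{-}_{\boldsymbol{k}}$, so $\mathcal{O}$ is produced by a threshold-$2$ dynamics inside $B^{+}_{\boldsymbol{k}}$, and Lemma~\ref{lem:initial-open-number-lower-bound} yields at least $(10d+1)/2>4d$ seeds inside a radius-$10d$ box, contradicting condition (3). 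Your proposal never produces an argument of this kind, so the conclusion is not established. A further, minor, slip: when a single vertex opens it may merge several clusters, so at the first violation the correct diameter bound is $20d$ (as in the paper), not $10d+1$; this happens to be harmless because a diameter-$20d$ set still fits in some $B(z,10d)$, but the step as you state it is incorrect.
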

\begin{proof}
    We see from Definition~\ref{def:nice-box} that the initial configuration in the lemma-statement satisfies the following: \response{(1)} the corner $y(x)$ is closed; (2) each $ z\in \mathsf{R}(y(x))\cap B^{+}_{\boldsymbol{k}}(x,5^{10d}L) $ is not initially open; (3) for any $ w\in B^{+}_{\boldsymbol{k}}(x,5^{10d}L) $, there are at most $ 4d $ initially open vertices in $ B(w,10d)\cap B^{+}_{\boldsymbol{k}}(x,5^{10d}L)$.

    We prove by contradiction. Consider the open vertices in $ B^{+}_{\boldsymbol{k}}(x,5^{10d}L) $ and the open clusters formed by these vertices. We say that a subset is an open cluster in $ B^{+}_{\boldsymbol{k}}(x,5^{10d}L) $ if it is connected and only consists of the open vertices in $ B^{+}_{\boldsymbol{k}}(x,5^{10d}L) $.
    Suppose that the lemma does not hold, then there exists an open cluster in $B^{+}_{\boldsymbol{k}}(x,5^{10d}L)$ with $|\cdot|_\infty$-diameter at least $10d$. We consider the first time during the evolution that there exists an open cluster $\mathcal{O}$ in $B^{+}_{\boldsymbol{k}}(x,5^{10d}L)$ with $|\cdot|_\infty$-diameter at least $10d$. At this time, all the open clusters in $B^{+}_{\boldsymbol{k}}(x,5^{10d}L)$, including $\mathcal{O}$, have $|\cdot|_\infty$-diameters at most $20d$. We claim that $\mathcal{O}$ will also appear if we consider the polluted bootstrap percolation with threshold 2 restricted to $B^{+}_{\boldsymbol{k}}(x,5^{10d}L)$ with the same initial configuration. In order to prove this claim, we first argue that $\mathcal{O}$ should not intersect the edges of $B^{+}_{\boldsymbol{k}}(x,5^{10d}L)$ in $B_{\boldsymbol{k}}(x,5^{10d}L)$, that is,
    \begin{equation*}
        (y+\cup_{1\leq i\leq d}\{ z\in\mathbb{Z}^d: 0 \leq \boldsymbol{k}_i \cdot z_i \leq 10^{10d}L , z_j = 0 \mbox{ for all } j \in \{i \}^c \}) \cap B_{\boldsymbol{k}}(x,5^{10d}L).
    \end{equation*}
    Because otherwise, we have $\mathcal{O} \subset \mathsf{R}(y(x))$ (since the $|\cdot|_{\infty}$-distance between the edges and $ B^{+}_{\boldsymbol{k}}(x,5^{10d}L)\setminus\mathsf{R}(y(x)) $ is at least $ 100d $), but (2) implies that there is no initially open vertex contained in $\mathcal{O}$ which is impossible. Therefore, each vertex in $\mathcal{O}$ is neighboring to at most $d-2$ vertices in $B^{-}_{\boldsymbol{k}}(x,5^{10d}L)$. This proves the claim. By Lemma~\ref{lem:initial-open-number-lower-bound}, we obtain that there are at least $\frac{10d+2}{2}$ initially open vertices contained in $\mathcal{O}$, and thus there exists $y \in B^{+}_{\boldsymbol{k}}(x,5^{10d}L)$ such that $B(y, 10d) \supset \mathcal{O}$ contains at least $\frac{10d+2}{2}$ initially open vertices, which contradicts (3). This concludes the proof of the lemma. 
    \qedhere
\end{proof}

Despite that a shield can locally prevent the evolution in the sense of Lemma~\ref{lem:shield-local}, there is still some subtlety when concatenating different shields. One concern is that for two nice boxes $ B(x,L) $ and $ B(x^{\prime},L) $ and for an orientation $ \boldsymbol{k} $, shields $ {\rm Sh}(x,\boldsymbol{k}) $ and $ {\rm Sh}(x^{\prime},\boldsymbol{k}) $ do not intersect each other if $ y(x)\prec_{\boldsymbol{k}}y(x^{\prime}) $. See Figure~\ref{fig:concatenating-subtlety} for an illustration with $ d=2 $ and $ \boldsymbol{k}=(1,1) $.
\begin{figure}[h]
    \centering
    \includegraphics[width=0.5\textwidth]{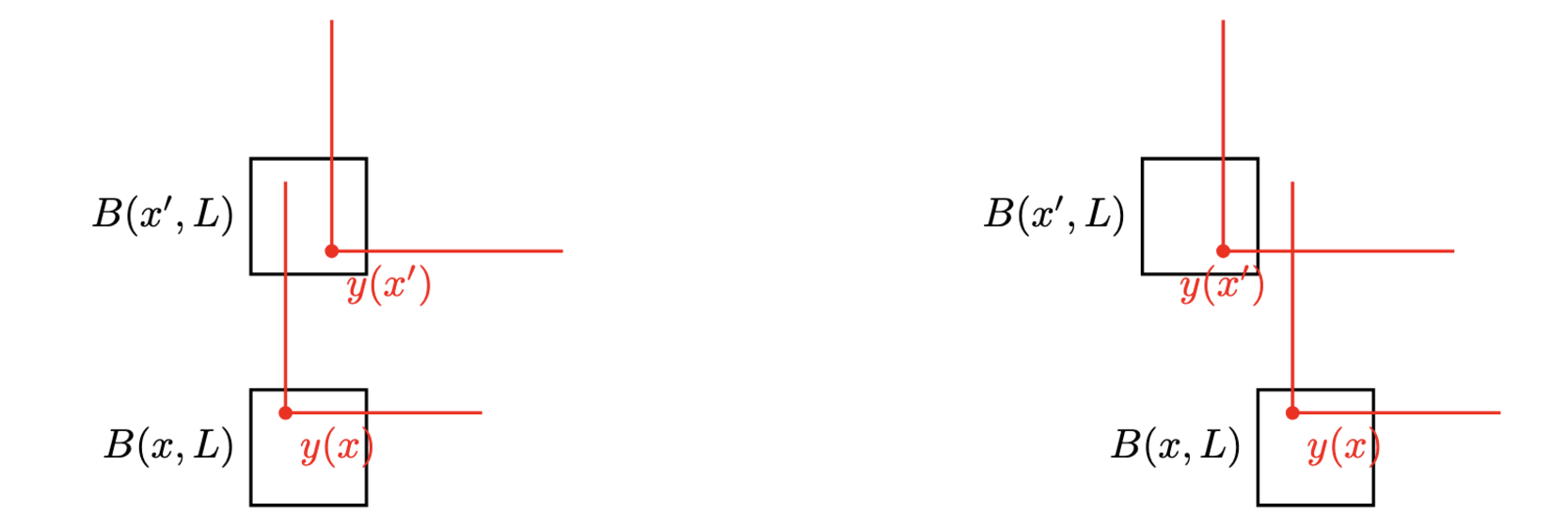}
    \caption{Some subtlety in concatenating the shields. $ B(x,L) $ and $ B(x^{\prime},L) $ are nice boxes, and oriented shields are colored in red. \emph{Left:} For two axis-aligned nice boxes, their shields might not intersect and therefore unable to be concatenated. \emph{Right:} If we pick the positions of nice boxes carefully, their shields will intersect despite of the exact location of $ y(x) $ (resp., $ y(x^{\prime}) $) in $ B(x,L) $ (resp., $ B(x^{\prime},L) $).}
    \label{fig:concatenating-subtlety}
\end{figure}

To this end, we restrict the positions of nice boxes to a carefully selected sublattice. For an orientation $ \boldsymbol{k}\in\{-1,1\}^d $, let $ \boldsymbol{l}_i(\boldsymbol{k}):=9d\boldsymbol{k}_i\boldsymbol{e}_i-3\boldsymbol{k}\in\mathbb{Z}^d $ and define a linear endomorphism $ \pi_{\boldsymbol{k}}:\mathbb{Z}^d\to\mathbb{Z}^d $ by
\begin{equation}
    \label{eq:def-pi_k}
    \pi_{\boldsymbol{k}}:(x_1,\dots,x_d)\mapsto x_1\boldsymbol{l}_1(\boldsymbol{k})+\dots+x_d\boldsymbol{l}_d(\boldsymbol{k})=\sum^d_{i=1}\boldsymbol{e}_i\cdot\boldsymbol{k}_i(9dx_i-3\sum^d_{j=1}x_j)
\end{equation}
with the inverse map
\begin{equation}
    \label{eq:pi_k_inverse}
    \pi^{-1}_{\boldsymbol{k}}:y\mapsto \sum^d_{i=1}\boldsymbol{e}_i\cdot\frac{1}{9d}\boldsymbol{k}_i\big( y_i + \frac{1}{2d}\sum^d_{j=1}y_j \big).
\end{equation}
Note that
\begin{equation}
    \label{eq:L_1-of-pi^-1}
    |\pi^{-1}_{\boldsymbol{k}}(y)|_1=\frac{1}{9d}\sum^d_{i=1}|y_i+\frac{1}{2d}\sum^d_{j=1}y_j|\geq\frac{1}{9d}\sum^d_{i=1}\big(|y_i|-\frac{1}{2d}|y|_1\big)=\frac{1}{18d}|y|_1
\end{equation}
for all $ y\in\pi_{\boldsymbol{k}}(\mathbb{Z}^d) $.
Let $ \Pi_{\boldsymbol{k}}:=\pi_{\boldsymbol{k}}(L\mathbb{Z}^d) $ denote a sublattice of $ \mathbb{Z}^d $ where two vertices $ x,y $ are adjacent if and only if $ |\pi^{-1}_{\boldsymbol{k}}(x)-\pi^{-1}_{\boldsymbol{k}}(y)|_1=L $. Also define the metric box in $ \Pi_{\boldsymbol{k}} $ as
\begin{equation}
    \label{eq:Pi_metric_ball}
    B_{\Pi_{\boldsymbol{k}}}(x,r):= \{ x+\pi_{\boldsymbol{k}}(Ly): y\in B(\mathbf{0},r) \}
\end{equation}
for $ x\in\Pi_{\boldsymbol{k}} $ and $ r\in\mathbb{N} $. In other words, we can regard $ \Pi_{\boldsymbol{k}} $ as a metric space isomorphic to $ \mathbb{Z}^d $.
For $ x,y\in\Pi_{\boldsymbol{k}} $, we write $ x\prec^{\prime}_{\boldsymbol{k}}y $ ($ x\preceq^{\prime}_{\boldsymbol{k}}y $) if there exist positive (non-negative, respectively) integers $ b_1,\dots,b_d $ such that \response{$ y=x+L\sum^{d}_{i=1}b_i\cdot\boldsymbol{l}_i(\boldsymbol{k}) $}. One can check that
\begin{equation*}
    \mbox{$x\prec^{\prime}_{\boldsymbol{k}}y $ (resp.\ $ x\preceq^{\prime}_{\boldsymbol{k}}y $)} \quad \mbox{if and only if} \quad   \pi^{-1}_{\boldsymbol{k}}(x)\prec_{\boldsymbol{k}}\pi^{-1}_{\boldsymbol{k}}(y)\ (\mbox{resp.}\ \pi^{-1}_{\boldsymbol{k}}(x)\preceq_{\boldsymbol{k}}\pi^{-1}_{\boldsymbol{k}}(y) ).
\end{equation*}
For $ x\in\Pi_{\boldsymbol{k}} $ and $ A\subset\Pi_{\boldsymbol{k}} $, we write $ x\prec^{\prime}_{\boldsymbol{k}}A $ ($ x\preceq^{\prime}_{\boldsymbol{k}}A $) if and only if $ x\prec^{\prime}_{\boldsymbol{k}}y $ ($ x\preceq^{\prime}_{\boldsymbol{k}}y $) for all $ y\in A $.
We also treat the vertices in $ \Pi_{\boldsymbol{k}} $ as $ L $-boxes to determine whether they are nice for brevity.
By straightforward calculations, any two adjacent vertices $ x,x^{\prime}\in\Pi_{\boldsymbol{k}} $ satisfy $ x\not\preceq_{\boldsymbol{k}}x^{\prime} $, and
\begin{equation}
    \label{eq:sublattice_prec}
    \min_{1\leq i\leq d} |(x-x^{\prime})_i| \geq L\cdot\min_{1\leq i,j\leq d} |\boldsymbol{l}_i(\boldsymbol{k})\cdot\boldsymbol{e}_j| \geq 3L\quad{\rm and}\quad|x-x^{\prime}|_{\infty}\leq L\cdot\max_{1\leq i\leq d} |\boldsymbol{l}_i(\boldsymbol{k})|_{\infty} \leq12dL.
\end{equation}

We are now ready to define the concatenation of the oriented shields. We will first define the oriented surfaces as specific collections of nice boxes for concatenation in Definition~\ref{def:oriented_surface}, and define $ \mathbf{S}_n $ in~\eqref{eq:def-S_n} to be the union of the oriented shields from the oriented surfaces. Then, in Lemma~\ref{lem:oriented_surface_disconnect_origin_from_infinity} we prove that $ \mathbf{S}_n $ disconnects $ B(\mathbf{0}, 3L_{n-1}) $ from $ \partial_{i} B(\mathbf{0}, \sqrt{K}L_{n-1}) $, and in Lemma~\ref{lem:oriented_surface_prevents_growth} we show that $ \mathbf{S}_n $  prevents the evolution starting from $ B(\mathbf{0}, 3L_{n-1}) $.

In order to describe the aforementioned disconnecting property, we now introduce some auxiliary subsets of $ \Pi_{\boldsymbol{k}} $ for an orientation $ \boldsymbol{k} $, such that we can reduce the disconnecting property in $ \mathbb{Z}^d $ to the disconnecting property in $ \Pi_{\boldsymbol{k}} $. Define
\begin{equation}
    \label{eq:def-A^0_n(k)}
    A^0_n(\boldsymbol{k}):=\big\{x\in\mathbb{Z}^d:\sqrt{K}L_{n-1}-6d^2L\leq|x|_1\leq \sqrt{K}L_{n-1}\mbox{ and } 0\preceq_{\boldsymbol{k}}x \big\}
\end{equation}
and further define
\begin{equation}
    \label{eq:def-A^pm_n(k)}
    \begin{aligned}
        &A_n^+(\boldsymbol{k}):=\Pi_{\boldsymbol{k}}\cap\big(A_n^0(\boldsymbol{k})- \frac{\sqrt{K}}{2d} L_{n-1}\boldsymbol{k}\big),\quad A_n^-(\boldsymbol{k})=\Pi_{\boldsymbol{k}}\cap\big(A_n^0(\boldsymbol{k})-\big( \frac{\sqrt{K}}{d} -10d\big)L_{n-1} \boldsymbol{k}\big),\\
        &A_n(\boldsymbol{k})=\big\{x\in\Pi_{\boldsymbol{k}}:\exists\,a,b\geq0,s.t.\ x-a\boldsymbol{k}\in A_n^-(\boldsymbol{k}){\rm\ and\ }x+b\boldsymbol{k}\in A_n^+(\boldsymbol{k})\big\}.
    \end{aligned}
\end{equation}
See Figure~\ref{fig:oriented_sublattice} for an illustration (in the case of $ d=2 $). A remark is that $ A^0_n(\boldsymbol{k}) $ (also $ A^+_n(\boldsymbol{k}) $ and $ A^-_n(\boldsymbol{k}) $) is not necessarily a hyper-plane (with dimension $d-1$) in $ \mathbb{Z}^d $. However, to make the pictures more visual friendly, we draw them as line segments in Figures~\ref{fig:oriented_sublattice},~\ref{fig:lem4.15-step1} and~\ref{fig:dual_path}.
\begin{figure}[h]
    \centering
    \includegraphics[width=0.5\textwidth]{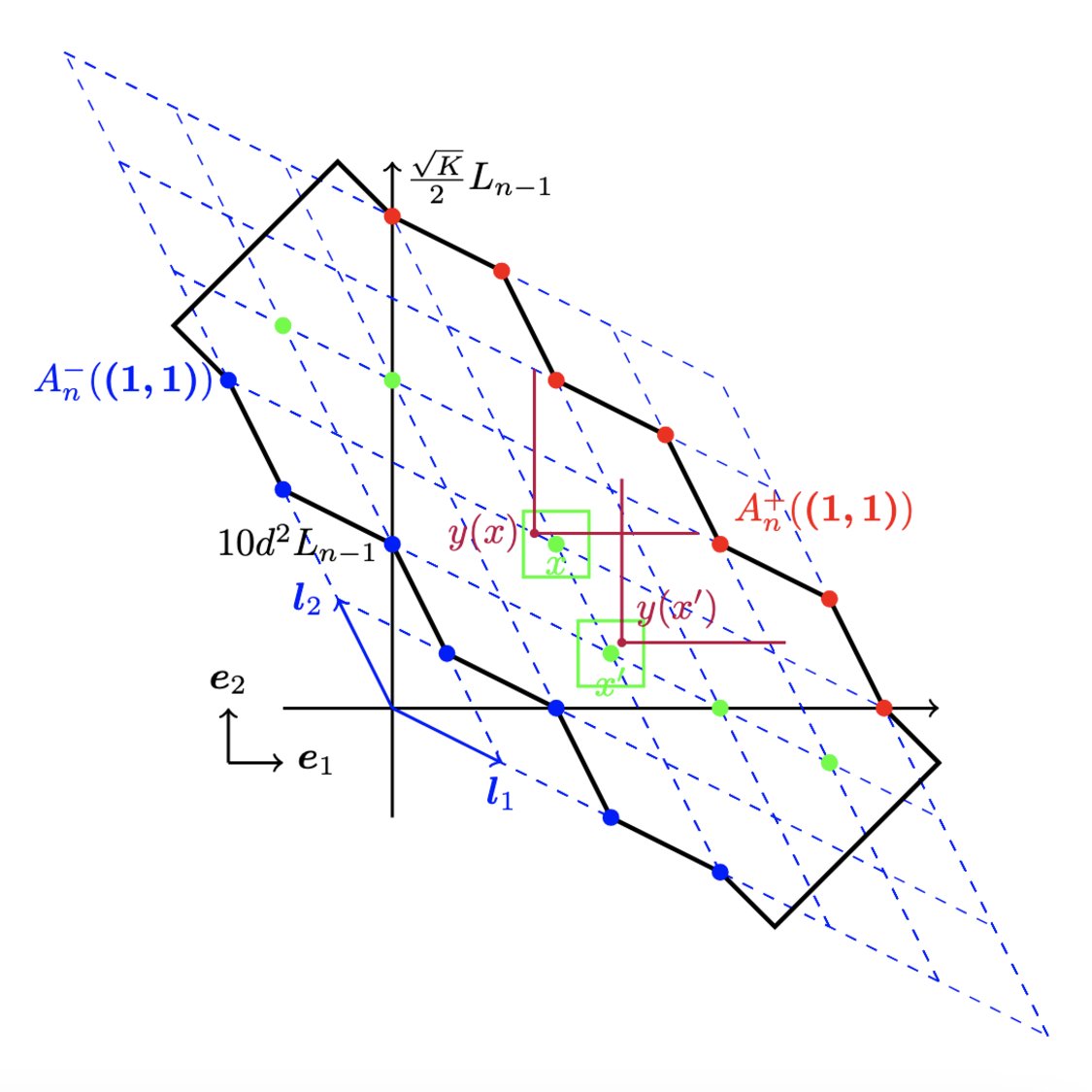}
    \caption{An illustration for sublattice $ \Pi_{(1,1)} $ (the dashed blue lattice). Vertices in $ A_n^{+}((1,1)) $ are colored in red and vertices in $ A_n^{-}((1,1)) $ are colored in blue. The boundary of $ A_n((1,1)) $ is colored in black. The oriented surface $ S_n((1,1)) $ is colored in green. Two nice boxes in $ S_n((1,1)) $ and their shields are drawn as well and colored in green and purple respectively.}
    \label{fig:oriented_sublattice}
\end{figure}
By~\eqref{eq:def-A^0_n(k)} we know that $ |x|_{\infty}\leq\sqrt{K} L_{n-1} $ for all $ x\in A^0_n(\boldsymbol{k}) $. Also note that for all $ z\in A^+_n(\boldsymbol{k}) $ there exists a vertex $ x(z)\in A^0_n(\boldsymbol{k}) $ (hence $ x(z)_i\boldsymbol{k}_i\geq0 $ for all $ 1\leq i\leq d $) such that $ x(z)=z+\frac{\sqrt{K}}{2d} L_{n-1}\boldsymbol{k} $. Therefore we get that each $ z\in A^+_n(\boldsymbol{k}) $ satisfies
\begin{equation*}
    |z|_{\infty} \leq\max_{1\leq i\leq d}\big|x(z)_i-\frac{\sqrt{K}}{2d}L_{n-1}\big| \leq \max\{\sqrt{K} L_{n-1}-\frac{\sqrt{K}}{2d}L_{n-1}, \frac{\sqrt{K}}{2d}L_{n-1} \} \leq \sqrt{K} L_{n-1},
\end{equation*}
and similarly we get that each $ w\in A^-_n(\boldsymbol{k}) $ satisfies
\begin{equation}
    \label{eq:A^-_n(k)}
    |w|_{\infty}\geq\frac{1}{d}|w|_1\geq \frac{1}{d}\big(\sqrt{K} L_{n-1}-6d^2L-(\frac{\sqrt{K}}{d}-10d)L_{n-1}|\boldsymbol{k}|_1\big)\geq 10L_{n-1}.
\end{equation}
As a result, we have
\begin{equation}
    \label{eq:A_n(k)}
    B(\mathbf{0},3L_{n-1})\cap A_n(\boldsymbol{k}) = \varnothing \quad{\rm and}\quad A_n(\boldsymbol{k})\subset B(\mathbf{0},\sqrt{K} L_{n-1}).
\end{equation}
Then we define the oriented surfaces as follows. For $ A\subset\Pi_{\boldsymbol{k}} $, we define its $ \boldsymbol{k} $-boundary ($ \boldsymbol{k} $-neighbor when $ A $ is a vertex) as
\begin{equation}
    \label{eq:def-oriented-neighbor}
    \partial^+_{\boldsymbol{k}}A:=\{x\notin A:\exists y\in A{\rm\ and\ }1\leq i\leq d,\,s.t.\,x=y+L\cdot\boldsymbol{l}_i(\boldsymbol{k})\}.
\end{equation}
Note that if $ y\in\Pi_{\boldsymbol{k}} $ is a $ \boldsymbol{k} $-neighbor of $ x\in\Pi_{\boldsymbol{k}} $, then there exists an index $ 1\leq i_0\leq d $ such that $ \pi^{-1}_{\boldsymbol{k}}(y)_{i_0}=\pi^{-1}_{\boldsymbol{k}}(x)_{i_0}+L $, and for all $ i\in\{i_0\}^c $ we have $ \pi^{-1}_{\boldsymbol{k}}(y)_i=\pi^{-1}_{\boldsymbol{k}}(x)_i $.
\begin{definition}
    \label{def:oriented_surface}
    Let $ n\geq1 $ and $ \boldsymbol{k}\in\{-1,1\}^d $. We say that $ S_n(\boldsymbol{k})\subset \Pi_{\boldsymbol{k}} $ is a $ \boldsymbol{k} $-oriented surface (of label $ n $) if the following hold: (i) for any $ x\in S_n(\boldsymbol{k}) $, $ (B(x,L),y(x)) $ is a marked box; (ii) there exists $ D_{\boldsymbol{k}}\subset\Pi_{\boldsymbol{k}} $ such that (ii.a) for any $ x,y\in\Pi_{\boldsymbol{k}} $ with $ y\preceq^{\prime}_{\boldsymbol{k}}x $, $ x\in D_{\boldsymbol{k}} $ implies $ y\in D_{\boldsymbol{k}} $; (ii.b) $ S_n(\boldsymbol{k})=A_n(\boldsymbol{k})\cap\partial^+_{\boldsymbol{k}}D_{\boldsymbol{k}} $, $ A^-_n(\boldsymbol{k}) $ is contained in $ D_{\boldsymbol{k}} $ and $ A^+_n(\boldsymbol{k}) $ does not intersect $ D_{\boldsymbol{k}} $ (here we allow $ S_n(\boldsymbol{k}) $ to intersect $ A^+_n(\boldsymbol{k}) $).
    Furthermore, we call $ S_n(\boldsymbol{k}) $ a nice oriented surface (for some fixed initial configuration) if and only if $ (B(x,L),y(x)) $ is nice for all $ x\in S_n(\boldsymbol{k}) $.
\end{definition}
An immediate corollary (due to (ii.b)) is that $ S_n(\boldsymbol{k}) $ disconnects $ A^+_n(\boldsymbol{k}) $ and $ A^-_n(\boldsymbol{k}) $ in $ A_n(\boldsymbol{k})\subset\Pi_{\boldsymbol{k}} $. We then define
\begin{equation}
    \label{eq:def-S_n}
    \mathbf{S}_n:=\bigcup_{\boldsymbol{k}\in\{-1,1\}^d}\bigcup_{x\in S_n(\boldsymbol{k})}{\rm Sh}(x,\boldsymbol{k}),
\end{equation}
where $ S_n(\boldsymbol{k}) $ is a $ \boldsymbol{k} $-oriented surface for all $ \boldsymbol{k}\in\{-1,1\}^d $.
We will prove in Lemma~\ref{lem:oriented_surface_disconnect_origin_from_infinity} that $ \mathbf{S}_n $ disconnects $ B(\mathbf{0},3L_{n-1}) $ and $ \partial_{i} B(\mathbf{0},\sqrt{K} L_{n-1}) $ in $ \mathbb{Z}^d $. (Note that this is a geometrical statement and hence does not need any information of niceness of oriented surfaces.)
We begin with the following auxiliary geometrical lemma that will be used in the proof of Lemma~\ref{lem:oriented_surface_disconnect_origin_from_infinity}. For a subset $ A\subset\Pi_{\boldsymbol{k}} $, define its inner boundary $ \partial_{\mathbf{i}}A $ as the collection of vertices in $A$ that have a neighbor in $ \Pi_{\boldsymbol{k}} $ which is not contained in $ A $.
Note that in $ \partial_{\mathbf{i}}A $ we use the bold subscript to emphasize the neighboring relation in $ \Pi_{\boldsymbol{k}} $ (for some $ A\subset\Pi_{\boldsymbol{k}} $), while $ \partial_{i}B $ indicates the ordinary neighboring relation in $ \mathbb{Z}^d $ (for some $ B\subset\mathbb{Z}^d $).
\begin{lemma}
    \label{lem:oriented-surface-ancillary}
    Fix an orientation $ \boldsymbol{k}\in\{-1,1\}^d $. Let $ D_{\boldsymbol{k}} $ be a subset of $ \Pi_{\boldsymbol{k}} $ satisfying (ii.a) of Definition~\ref{def:oriented_surface}, i.e., for all $ x,y\in\Pi_{\boldsymbol{k}} $ with $ y\preceq^{\prime}_{\boldsymbol{k}}x $, $ x\in D_{\boldsymbol{k}} $ implies $ y\in D_{\boldsymbol{k}} $. Then for any non-empty subset $ I\subsetneq\{1,\dots,d\} $ and any $ x\in \partial^{+}_{\boldsymbol{k}} D_{\boldsymbol{k}} $, there exist a vertex $ w^{\prime}\in \partial_{\mathbf{i}}D_{\boldsymbol{k}} $
    and a $ \boldsymbol{k} $-neighbor $ x^{\prime} $ of $ w^{\prime} $ such that: (1). $ x^{\prime}\in \partial^{+}_{\boldsymbol{k}} D_{\boldsymbol{k}} $; (2). $ (x^{\prime}_i-x_i)\boldsymbol{k}_i\leq 5^{5d}L $ for all $ i\in I $; (3). $ (x^{\prime}_j-x_j)\boldsymbol{k}_j\leq -3L $ for all $ j\in I^c $.\
\end{lemma}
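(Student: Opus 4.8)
The plan is to transport everything to the coordinate chart in which $\Pi_{\boldsymbol{k}}$ becomes a standard copy of $\mathbb{Z}^d$. For $x\in\Pi_{\boldsymbol{k}}$ write $m(x):=\tfrac1L\pi^{-1}_{\boldsymbol{k}}(x)\in\mathbb{Z}^d$. Under $m(\cdot)$ the order $\preceq'_{\boldsymbol{k}}$ becomes the coordinatewise order, so $\widetilde D:=\{m(x):x\in D_{\boldsymbol{k}}\}$ is a down-set; a $\boldsymbol{k}$-neighbor of $x$ corresponds to incrementing one coordinate of $m(x)$ by $1$; the set $\partial^+_{\boldsymbol{k}}D_{\boldsymbol{k}}$ becomes $\{m\notin\widetilde D:\ m-\boldsymbol{e}_i\in\widetilde D\text{ for some }i\}$; and since $x_l\boldsymbol{k}_l=L\bigl(9d\,m(x)_l-3\sum_{l'}m(x)_{l'}\bigr)$, setting $\eta:=m(x')-m(x)$ and $\Sigma:=\sum_l\eta_l$ gives $(x'_l-x_l)\boldsymbol{k}_l=L(9d\eta_l-3\Sigma)$. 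Hence conclusions (2) and (3) translate into
\[
9d\eta_i-3\Sigma\le 5^{5d}\quad(i\in I),\qquad 9d\eta_j-3\Sigma\le -3\quad(j\in I^c).
\]
Moreover, once a point $x'\in\partial^+_{\boldsymbol{k}}D_{\boldsymbol{k}}$ is produced together with a witness direction $i_1$ (i.e.\ $m(x')-\boldsymbol{e}_{i_1}\in\widetilde D$), the vertex $w'$ with $m(w')=m(x')-\boldsymbol{e}_{i_1}$ automatically lies in $D_{\boldsymbol{k}}$ and has $x'$ as an ordinary $\Pi_{\boldsymbol{k}}$-neighbor outside $D_{\boldsymbol{k}}$, so $w'\in\partial_{\mathbf{i}}D_{\boldsymbol{k}}$; thus only the two displayed inequalities need to be arranged. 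Finally, it suffices to find $p\in\partial^+\widetilde D$ with $\eta:=p-m(x)$ satisfying $\eta_j=-1$ for all $j\in I^c$ and $-1\le\eta_i\le 5^{4d}$ for all $i\in I$: indeed then $\Sigma\ge -|I^c|-|I|=-d$, so for $j\in I^c$ we get $9d\eta_j-3\Sigma=-9d-3\Sigma\le -9d+3d=-6d\le -3$, and for $i\in I$ we get $9d\eta_i-3\Sigma\le 9d\,5^{4d}+3d\le 5^{5d}$ (using $d\ge 2$, which holds since $I$ is a nonempty proper subset).

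To build $p$, fix $i_0$ with $m(x)-\boldsymbol{e}_{i_0}\in\widetilde D$ and let $\mathcal S:=\{m:\ m_j=m(x)_j-1\ \forall j\in I^c\}\cong\mathbb{Z}^{I}$. Then $\widetilde D\cap\mathcal S$ is a down-set in $\mathcal S$, and it is nonempty since it contains a point $\le m(x)-\boldsymbol{e}_{i_0}$ lying in $\mathcal S$ (decrease the remaining $I^c$-coordinates of $m(x)-\boldsymbol{e}_{i_0}$ by $1$). Let $q^{0}\in\mathcal S$ be the point with $q^{0}_i=m(x)_i$ for all $i\in I$, i.e.\ $q^{0}=m(x)-\sum_{j\in I^c}\boldsymbol{e}_j$. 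If $q^{0}\notin\widetilde D$ then necessarily $i_0\in I$ (otherwise $q^{0}\le m(x)-\boldsymbol{e}_{i_0}\in\widetilde D$), whence $q^{0}-\boldsymbol{e}_{i_0}\le m(x)-\boldsymbol{e}_{i_0}\in\widetilde D$, so $q^{0}\in\partial^+\widetilde D$ with $\eta=-\sum_{j\in I^c}\boldsymbol{e}_j$ and we are done. If instead $q^{0}\in\widetilde D$ and $\mathcal S\not\subseteq\widetilde D$, then starting from $q^{0}$ one climbs upward inside $\mathcal S$ — increasing coordinates in $I$, choosing the directions carefully — until one first steps out of $\widetilde D$; the point so reached lies in $\mathcal S\cap\partial^+\widetilde D$, giving $\eta_j=-1$ for $j\in I^c$ and $\eta_i\ge 0$ for $i\in I$.

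The main obstacle is the quantitative control of this last climb. A general down-set can have arbitrarily steep ``cliffs'', so the number of steps needed in a fixed $I$-direction to leave $\widetilde D$ is not bounded; one must exploit the freedom to choose the climbing directions (and, when forced, to lower some $I^c$-coordinates below $-1$) to keep the accumulated drift of each $\eta_i$ below $5^{4d}$. What makes this feasible is the identity $\sum_l(x'_l-x_l)\boldsymbol{k}_l=6dL\Sigma$: the inequalities (2) and (3) constrain each $\eta_l$ only relative to the average $\Sigma$, so a large climb in one $I$-coordinate is absorbed by a correspondingly large $\Sigma$. In particular, in the remaining case $\mathcal S\subseteq\widetilde D$ one has $\widetilde D\supseteq\{m:\ m_j\le m(x)_j-1\ \forall j\in I^c\}$, so every forward-boundary point has some $I^c$-coordinate $\ge m(x)_j$; this forces $\Sigma$ to be large, which again renders (2) trivial and leaves only the relative form of (3). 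I expect the rigorous treatment to run an induction on $|I^c|$ — peeling off one $I^c$-coordinate at a time, relocating to the corresponding forward boundary, and bookkeeping how far the $I$-coordinates are pushed at each step — which is precisely the delicate geometric argument this subsection is built around.
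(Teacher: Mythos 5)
Your coordinate chart $m(\cdot)=\tfrac1L\pi^{-1}_{\boldsymbol{k}}(\cdot)$ and the translation of (1)--(3) into $9d\eta_i-3\Sigma\le 5^{5d}$, $9d\eta_j-3\Sigma\le-3$ are correct, as is the easy case $q^0\notin\widetilde D$. But the core of the lemma is exactly the part you leave open: the ``climb'' inside the slab $\mathcal S$ with quantitative control of the $I$-coordinates, and the case $\mathcal S\subseteq\widetilde D$. This is a genuine gap, and not only a bookkeeping one. Your stated target (a forward-boundary point with $\eta_j=-1$ for \emph{all} $j\in I^c$ and $\eta_i\le 5^{4d}$) can simply fail to exist: take $d=2$, $I=\{1\}$, and $\widetilde D=\{m: m_2\le c\}$ with $m(x)_2=c+1$; then every point of $\partial^+\widetilde D$ has $\eta_2=0$, so the slab strategy cannot produce the required point and one is forced into the regime where some $\eta_j\ge 0$ is compensated by $\Sigma$ --- precisely the regime for which you only offer the remark that you ``expect'' an induction on $|I^c|$ to work. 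Also note that a large climb in a single $I$-coordinate is \emph{not} absorbed by $\Sigma$: with $\eta_j=-1$ on $I^c$ and only $\eta_{i_1}$ growing, $9d\eta_{i_1}-3\Sigma=(9d-3)\eta_{i_1}+3|I^c|$, so constraint (2) really does cap the climb, and a general down-set gives no a priori bound on how far you must climb before leaving $\widetilde D$.

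The paper's proof avoids the climb altogether, and this is the idea you are missing. Fix a $\Pi_{\boldsymbol{k}}$-neighbor $w\in\partial_{\mathbf{i}}D_{\boldsymbol{k}}$ of $x$ and work on the fixed-radius sphere $\partial_{\mathbf{i}}B_{\Pi_{\boldsymbol{k}}}(w,2^{5d})$: any $w'$ there is within $|\cdot|_\infty$-distance $2^{5d}\cdot 12dL$ of $w$, so (2) holds automatically for every $\boldsymbol{k}$-neighbor $x'$ of $w'$, and one restricts to the portion $\mathsf E(w,I)$ of the sphere where the $I^c$-coordinates are pushed down enough (relative to the sum) to force (3). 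On this portion one runs an intermediate-value/connectivity argument between $z_w=w-\pi_{\boldsymbol{k}}(2^{5d}L\sum_{j\in I^c}\boldsymbol{e}_j)$, which lies in $D_{\boldsymbol{k}}$ since $z_w\preceq'_{\boldsymbol{k}}w$, and $z'_w=w+\pi_{\boldsymbol{k}}(2^{5d}L\sum_{i\in I}\boldsymbol{e}_i)$, which \emph{cannot} lie in $D_{\boldsymbol{k}}\setminus\partial_{\mathbf{i}}D_{\boldsymbol{k}}$: otherwise all $\boldsymbol{k}$-neighbors of $z'_w$ would be in $D_{\boldsymbol{k}}$, and since $w\preceq'_{\boldsymbol{k}}z'_w$ the down-set property would put all $\boldsymbol{k}$-neighbors of $w$ in $D_{\boldsymbol{k}}$, contradicting $w\in\partial_{\mathbf{i}}D_{\boldsymbol{k}}$. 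Hence $\mathsf E(w,I)$ meets $\partial_{\mathbf{i}}D_{\boldsymbol{k}}$, which yields $w'$ and $x'$. The decisive point is that one never needs to exit $D_{\boldsymbol{k}}$ by moving upward; the exclusion of the interior of $D_{\boldsymbol{k}}$ above $w$ comes for free from $w$ being an inner-boundary vertex, so all distances stay bounded by the prescribed radius and the constants $5^{5d}$, $-3L$ follow. Without this (or an equally effective substitute for your unbounded climb and for the $\mathcal S\subseteq\widetilde D$ case), your argument does not prove the lemma.
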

\begin{proof}
    Let $ w\in\partial_{\mathbf{i}} D_{\boldsymbol{k}} $ be a $ \Pi_{\boldsymbol{k}} $-neighbor of $ x $.
    We will find a vertex $ w^{\prime} $ in $ \partial_{\mathbf{i}}B_{\Pi_{\boldsymbol{k}}}(w,2^{5d})\cap\partial_{\mathbf{i}}D_{\boldsymbol{k}} $ such that the desired property in the lemma-statement holds. First note that if $ w^{\prime}\in \partial_{\mathbf{i}}D_{\boldsymbol{k}} $, then the subset $ \partial^{+}_{\boldsymbol{k}} D_{\boldsymbol{k}}\cap\partial^{+}_{\boldsymbol{k}} w^{\prime} $ is non-empty, and thus (1) is satisfied by choosing $ x^{\prime}\in\partial^{+}_{\boldsymbol{k}} D_{\boldsymbol{k}}\cap\partial^{+}_{\boldsymbol{k}} w^{\prime} $.
    In addition, for any $ w^{\prime}\in\partial_{\mathbf{i}}B_{\Pi_{\boldsymbol{k}}}(w,2^{5d}) $, by~\eqref{eq:sublattice_prec} we have $ | w^{\prime} - w |_{\infty} \leq 2^{5d}\cdot12dL $. Therefore, if $ x^{\prime} $ is a $ \boldsymbol{k} $-neighbor of $ w^{\prime} $, then we have
    \begin{equation}
        \label{eq:lem4.14-a}
        (x^{\prime}_i - x_i)\boldsymbol{k}_i\leq| w^{\prime} - w |_{\infty}+2\cdot 12dL\leq 5^{5d}L
    \end{equation}
    for all $ 1\leq i\leq d $, which satisfies (2). Now we prove that (3) holds for some carefully chosen $ x^{\prime} $ and $ w^{\prime} $.
    Since $ x $ (resp., $ x^{\prime} $) is adjacent to $ w $ (resp., $ w^{\prime} $), by~\eqref{eq:sublattice_prec} and~\eqref{eq:lem4.14-a} it suffices to prove that there exists a vertex
    $ w^{\prime}\in\partial_{\mathbf{i}}D_{\boldsymbol{k}} \cap \mathsf{E}(w,I) $ where
    \begin{equation}
        \label{eq:lem4.14-000}
        \begin{aligned}
            \mathsf{E}(w,I) &:= \{ v\in\partial_{\mathbf{i}}B_{\Pi_{\boldsymbol{k}}}(w,2^{5d}): (v_j-w_j)\boldsymbol{k}_j\leq-30dL,\, \forall j\in I^c \} \\
            &= \{ w+\pi_{\boldsymbol{k}}(Ly): y\in\partial_{i}B(\mathbf{0},2^{5d}) \mbox{ and } 3dy_j-\sum^d_{i=1}y_i\leq-10d,\,\forall j\in I^c \},
        \end{aligned}
    \end{equation}
    where the equality follows from~\eqref{eq:def-pi_k}. In other words, we only need to prove that
    \begin{equation}
        \label{eq:lem4.14-b}
        \mathsf{E}(w,I)\cap \partial_{\mathbf{i}}D_{\boldsymbol{k}}\neq\varnothing.
    \end{equation}
    To this end, our idea is to find two vertices from a specific connected component of $ \mathsf{E}(w,I) $, and prove that one of these two vertices is contained in $ D_{\boldsymbol{k}} $ while the other one is contained in $ B_{\Pi_{\boldsymbol{k}}}(w,2^{5d})\setminus D_{\boldsymbol{k}} $.
    We next implement this proof idea. Note that
    \begin{equation*}
        z_w := w - \pi_{\boldsymbol{k}}(2^{5d}L\sum_{j\in I^c}\boldsymbol{e}_{j}) \in \mathsf{E}(w,I)
    \end{equation*}
    by a straightforward calculation.
    It is also easy to check that vertex
    \begin{equation}
        \label{eq:lem4.14-def-z_prime_w}
        z^{\prime}_w := w + \pi_{\boldsymbol{k}}(2^{5d}L\sum_{i\in I}\boldsymbol{e}_i)
    \end{equation}
    satisfies $ w\preceq^{\prime}_{\boldsymbol{k}}z^{\prime}_w $ and $ z^{\prime}_w\in\mathsf{E}(w,I) $.
    We claim the following: (i) $ z_w\in D_{\boldsymbol{k}} $, (ii) $ z_w $ and $ z^{\prime}_w $ are connected in $ \mathsf{E}(w,I) $, and (iii) $ z^{\prime}_w\notin D_{\boldsymbol{k}}\setminus\partial_{\mathbf{i}}D_{\boldsymbol{k}} $. Note that if Claim (iii) holds, then \textbf{either} $ z^{\prime}_w\in\partial_{\mathbf{i}}D_{\boldsymbol{k}} $ which already yields~\eqref{eq:lem4.14-b} since $ z^{\prime}_w\in\mathsf{E}(w,I) $, \textbf{or} $ z^{\prime}_w\in B_{\Pi_{\boldsymbol{k}}}(w,2^{5d})\setminus D_{\boldsymbol{k}} $ which combined with Claims (i) and (ii) implies~\eqref{eq:lem4.14-b}. Therefore, it remains to prove Claims (i), (ii) and (iii). See Figure~\ref{fig:oriented-surface-ancillary} for an illustration of the following argument in the case that $ d=3 $, $ \boldsymbol{k}=(1,1,1) $ and $ I=\{3\} $.
    \begin{figure}[h]
        \centering
        \includegraphics[width=0.5\textwidth]{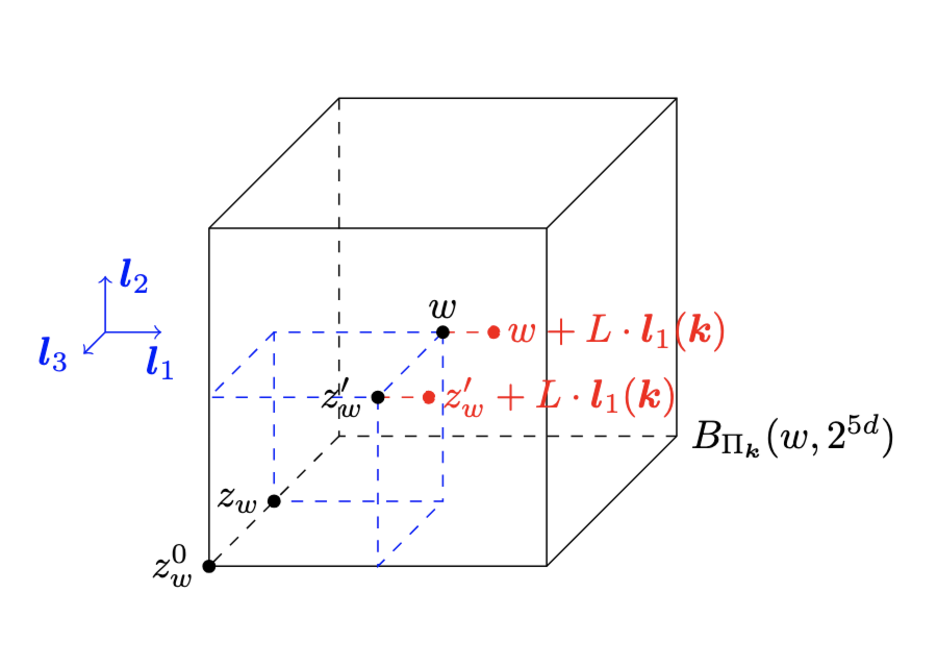}
        \caption{An illustration of Lemma~\ref{lem:oriented-surface-ancillary} for $ d=3 $, $ \boldsymbol{k}=(1,1,1) $ and $ I=\{3\} $. The sublattice $ \Pi_{\boldsymbol{k}} $ is drawn orthogonally for better clarity. The vertex $ \pi^{-1}_{\boldsymbol{k}}(z_w) $ has the same $ i $-th coordinate as $ \pi^{-1}_{\boldsymbol{k}}(w) $ for all $ i\in I=\{3\} $, and the vertex $ \pi^{-1}_{\boldsymbol{k}}(z^{\prime}_w) $ has the same $ j $-th coordinate as $ \pi^{-1}_{\boldsymbol{k}}(w) $ for all $ j\in I^c=\{1,2\} $. One of the neighbors of $ z^{\prime}_w $ is colored in red, and so is one of the neighbors of $ w $.}
        \label{fig:oriented-surface-ancillary}
    \end{figure}

    Claim (i) is easy to check since $ z_w\preceq^{\prime}_{\boldsymbol{k}}w $ and $ w\in D_{\boldsymbol{k}} $ (recall (ii.a) of Definition~\ref{def:oriented_surface}).
    As for Claim (ii), we first notice that
    \begin{equation*}
         \{ w+\pi_{\boldsymbol{k}}(Ly): |y|_{\infty}=2^{5d} \mbox{ and } y_j=-2^{5d},\,\forall j\in I^c \} \supset \{ z_w, z^0_w \}
    \end{equation*}
    is a connected subset of $ \mathsf{E}(w,I) $ where
    \begin{equation*}
        z^0_w:=w-\pi_{\boldsymbol{k}}(2^{5d}L\sum_{j\in I^c}\boldsymbol{e}_j)+\pi_{\boldsymbol{k}}(2^{5d}L\sum_{i\in I}\boldsymbol{e}_i).
    \end{equation*}
    Therefore it suffices to prove that $ z^0_w $ and $ z^{\prime}_w $ are connected in $ \mathsf{E}(w,I) $. To this end, let us consider the subset $ \{ w+\pi_{\boldsymbol{k}}(Ly): y\in \mathsf{E}^0 \} $ where
    \begin{equation*}
        \mathsf{E}^0:=\bigcup^{2^{5d}}_{t=0}\{ y\in\partial_{i}B(\mathbf{0},2^{5d}): \ |y_j+t|\leq1,\,\forall j\in I^c \mbox{ and } y_i=2^{5d},\,\forall i\in I \}.
    \end{equation*}
    By $ |I|\geq1 $, for all $ y\in \mathsf{E}^0 $ and all $ j\in I^c $, we have
    \begin{equation*}
        \response{\begin{aligned}
            3dy_j-\sum^d_{i=1}y_i
            &\leq 3d(-t+1) - \big(2^{5d}|I| + (-t-1)|I^c|\big) \\
            &\leq -2^{5d}|I| + 3d+|I^c| - t(3d-|I^c|) \leq -10d.
        \end{aligned}}
    \end{equation*}
    Thus, by~\eqref{eq:lem4.14-000}, $ \{ w+\pi_{\boldsymbol{k}}(Ly): y\in \mathsf{E}^0 \}\subset\mathsf{E}(w,I) $. Then Claim (ii) follows from observing that $ \{ w+\pi_{\boldsymbol{k}}(Ly): y\in \mathsf{E}^0 \} $ is a connected subset that contains both $ z^0_w $ and $ z^{\prime}_w $.
    Finally, we prove Claim (iii) by contradiction. If $ z^{\prime}_w\in D_{\boldsymbol{k}}\setminus\partial_{\mathbf{i}}D_{\boldsymbol{k}} $, then we have $ z^{\prime}_w+L\cdot\boldsymbol{l}_i(\boldsymbol{k})\in D_{\boldsymbol{k}} $ for all $ 1\leq i\leq d $. This then implies that $ w+L\cdot\boldsymbol{l}_i(\boldsymbol{k})\in D_{\boldsymbol{k}} $ for all $ 1\leq i\leq d $ (by $ w\preceq_{\boldsymbol{k}}z^{\prime}_w $ and (ii.a) of Definition~\ref{def:oriented_surface}), which contradicts $ w\in\partial_{\mathbf{i}}D_{\boldsymbol{k}} $, thus concludes the proof.
\end{proof}
\begin{lemma}
    \label{lem:oriented_surface_disconnect_origin_from_infinity}
    Define $ \mathbf{S}_n $ as in~\eqref{eq:def-S_n} where $ S_n(\boldsymbol{k}) $ is a $ \boldsymbol{k} $-oriented surface for all $ \boldsymbol{k}\in\{-1,1\}^d $.
    Then $ \mathbf{S}_n $ disconnects $ B(\mathbf{0},3L_{n-1}) $ from $ \partial_{i} B(\mathbf{0},\sqrt{K} L_{n-1}) $ on $\mathbb{Z}^d$.
\end{lemma}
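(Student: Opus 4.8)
The plan is to argue by contradiction. Suppose there is a nearest-neighbor path $\gamma=(\gamma_0,\dots,\gamma_m)$ in $\mathbb{Z}^d$ with $\gamma_0\in B(\mathbf{0},3L_{n-1})$, $\gamma_m\in\partial_i B(\mathbf{0},\sqrt{K}L_{n-1})$ and $\gamma\cap\mathbf{S}_n=\varnothing$; we will reach a contradiction. Since $\gamma_m$ lies on the inner $\ell^\infty$-boundary we have $|\gamma_m|_\infty=\sqrt{K}L_{n-1}$, hence $|\gamma_m|_1\ge\sqrt{K}L_{n-1}$, while $|\gamma_0|_1\le 3dL_{n-1}$. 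For an orientation $\boldsymbol{k}$ write $Q_{\boldsymbol{k}}:=\{z:\mathbf{0}\preceq_{\boldsymbol{k}}z\}$ for the associated closed orthant, $T_{\boldsymbol{k}}:=\bigcup_{x\in S_n(\boldsymbol{k})}{\rm Sh}(x,\boldsymbol{k})$ for the corresponding piece of $\mathbf{S}_n$, and $\langle z,\boldsymbol{k}\rangle:=\sum_{i=1}^{d}z_i\boldsymbol{k}_i$; note that $\langle z,\boldsymbol{k}\rangle\le|z|_1$ always, with equality whenever $z\in Q_{\boldsymbol{k}}$. A direct computation from~\eqref{eq:def-A^0_n(k)}--\eqref{eq:def-A^pm_n(k)} gives $\langle\cdot,\boldsymbol{k}\rangle\approx 10d^2L_{n-1}$ on $A^-_n(\boldsymbol{k})$ and $\langle\cdot,\boldsymbol{k}\rangle\approx\tfrac12\sqrt{K}L_{n-1}$ on $A^+_n(\boldsymbol{k})$ (so that $\langle\cdot,\boldsymbol{k}\rangle$ takes values between these two ``levels'' on $A_n(\boldsymbol{k})\supset S_n(\boldsymbol{k})$), and these levels are widely separated since $\sqrt{K}\gg d^2$. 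The upshot is that $\mathbf{S}_n$ concentrates in an $\ell^1$-annulus of inner radius of order $L_{n-1}$ and outer radius of order $\sqrt{K}L_{n-1}$, lying strictly between the sphere $\{|z|_1=3dL_{n-1}\}$ (where $\gamma$ starts, below every $A^-_n$-level) and the sphere $\{|z|_1=\sqrt{K}L_{n-1}\}$ (which $\gamma$ reaches, above every $A^+_n$-level). Throughout I will exploit the size hierarchy $1\le L\ll L_{n-1}\ll\sqrt{K}L_{n-1}$ (see~\eqref{eq:def-Ln}, \eqref{eq:def-L}) and the fact, from~\eqref{eq:def_of_shield} and~\eqref{eq:sublattice_prec}, that a shield is a union of $d$ axis-perpendicular plates of $\ell^\infty$-radius $10^{10d}L$, vastly larger than the $\Pi_{\boldsymbol{k}}$-lattice spacing $\le 12dL$.

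\textbf{Step 1: the single-orientation barrier.} Fix $\boldsymbol{k}$. Recall from Definition~\ref{def:oriented_surface} that $S_n(\boldsymbol{k})=A_n(\boldsymbol{k})\cap\partial^+_{\boldsymbol{k}}D_{\boldsymbol{k}}$ for a $\preceq^{\prime}_{\boldsymbol{k}}$-down-set $D_{\boldsymbol{k}}\subset\Pi_{\boldsymbol{k}}$ containing $A^-_n(\boldsymbol{k})$ and missing $A^+_n(\boldsymbol{k})$, so $S_n(\boldsymbol{k})$ already separates $A^-_n(\boldsymbol{k})$ from $A^+_n(\boldsymbol{k})$ inside $A_n(\boldsymbol{k})\subset\Pi_{\boldsymbol{k}}$. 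I would upgrade this to $\mathbb{Z}^d$: inside a slight enlargement of $Q_{\boldsymbol{k}}$, the thickened surface $T_{\boldsymbol{k}}$ separates every vertex whose $\langle\cdot,\boldsymbol{k}\rangle$ is below the $A^-_n(\boldsymbol{k})$-level from every vertex whose $\langle\cdot,\boldsymbol{k}\rangle$ is above the $A^+_n(\boldsymbol{k})$-level. The mechanism: if a path $\eta$ through that neighborhood crossed the annulus without meeting $T_{\boldsymbol{k}}$, then rounding each vertex of $\eta$ to the nearest point of $\Pi_{\boldsymbol{k}}$ and recording its membership in $D_{\boldsymbol{k}}$ would yield a $\Pi_{\boldsymbol{k}}$-path inside $A_n(\boldsymbol{k})$ from $D_{\boldsymbol{k}}$ to its complement that misses $\partial^+_{\boldsymbol{k}}D_{\boldsymbol{k}}$ -- impossible -- unless $\eta$ threaded a ``seam'' between two consecutive shields. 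Lemma~\ref{lem:oriented-surface-ancillary} is precisely the device that closes such seams: from any $x\in\partial^+_{\boldsymbol{k}}D_{\boldsymbol{k}}$ and any nonempty proper $I$, it produces a nearby $x'\in\partial^+_{\boldsymbol{k}}D_{\boldsymbol{k}}$ that decreases strictly (by at least $3L$) in the $I^c$-coordinates and increases by at most $5^{5d}L$ in the $I$-coordinates; applying it with the axis along which $\eta$ is trying to pass forces the $10^{10d}L$-plates of the two neighbouring shields to overlap there, so no width-one path can slip through.

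\textbf{Step 2: assembling the $2^d$ pieces.} Since $\langle\gamma_0,\boldsymbol{k}\rangle\le|\gamma_0|_1\le 3dL_{n-1}$ is below the $A^-_n(\boldsymbol{k})$-level for every $\boldsymbol{k}$, whereas for the orientation $\boldsymbol{k}$ with $\mathbf{0}\preceq_{\boldsymbol{k}}\gamma_m$ we have $\langle\gamma_m,\boldsymbol{k}\rangle=|\gamma_m|_1\ge\sqrt{K}L_{n-1}$ above the $A^+_n(\boldsymbol{k})$-level, the path $\gamma$ must cross the $\ell^1$-annulus separating the two levels. Decompose $\gamma$ into maximal segments each contained in a single orthant $Q_{\boldsymbol{k}}$, and track the $\ell^1$-radius (which on $Q_{\boldsymbol{k}}$ equals $\langle\cdot,\boldsymbol{k}\rangle$): either some one segment performs the full passage across the annulus within a single orthant, in which case Step 1 yields a crossing of $T_{\boldsymbol{k}}\subset\mathbf{S}_n$, or the passage is split across orthant transitions, each occurring on a coordinate hyperplane $\{z_i=0\}$ where the shields of the two meeting orientations overlap -- because the shells $\{A^0_n(\boldsymbol{k})\}_{\boldsymbol{k}}$ together tile the full $\ell^1$-annulus of radius $\approx\sqrt{K}L_{n-1}$ and, again, the plate size $10^{10d}L$ dwarfs the lattice spacing. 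In either case $\gamma$ meets $\mathbf{S}_n$, the desired contradiction; note that~\eqref{eq:A_n(k)} keeps the relevant part of $\mathbf{S}_n$ inside $B(\mathbf{0},\sqrt{K}L_{n-1})$ and outside $B(\mathbf{0},3L_{n-1})$, so no degenerate crossing at $\gamma$'s endpoints can occur.

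The hardest part is Step 2 -- the gluing across orthant boundaries and the attendant bookkeeping. One has to organize the orthant-decomposition so that a genuine annulus-crossing is unavoidable and simultaneously verify that the shields of adjacent orientations genuinely overlap along each coordinate hyperplane; this is where the precise linear map~\eqref{eq:def-pi_k}, the definitions~\eqref{eq:def-A^0_n(k)}--\eqref{eq:def-A^pm_n(k)}, and the divisibility and size conventions on $K$ come into play. A secondary difficulty is making the codimension-one separation of Step 1 fully rigorous for $d\ge 3$: because $S_n(\boldsymbol{k})$ is not a graph over a hyperplane one cannot slice, so the ``walk along $\partial^+_{\boldsymbol{k}}D_{\boldsymbol{k}}$'' argument powered by Lemma~\ref{lem:oriented-surface-ancillary} must be run carefully enough that the union of consecutive shields is never even one vertex short of blocking.
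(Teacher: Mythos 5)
Your setup (contradiction with a path $\gamma$ avoiding $\mathbf{S}_n$, the level computations for $A_n^{\pm}(\boldsymbol{k})$, and the observation that $\mathbf{S}_n$ sits in an annulus between the start and end of $\gamma$) is fine, but both of the steps you yourself flag as hard are, as written, genuine gaps rather than deferred bookkeeping. In Step 1, the passage from the $\Pi_{\boldsymbol{k}}$-statement ``$S_n(\boldsymbol{k})=A_n(\boldsymbol{k})\cap\partial^+_{\boldsymbol{k}}D_{\boldsymbol{k}}$ separates $A_n^-(\boldsymbol{k})$ from $A_n^+(\boldsymbol{k})$'' to a $\mathbb{Z}^d$-separation by the union of shields is exactly the delicate point: rounding a $\mathbb{Z}^d$-path to $\Pi_{\boldsymbol{k}}$ produces jumps of $\Pi_{\boldsymbol{k}}$-graph distance larger than one (consecutive rounded points differ by up to $\approx 12dL$ in $|\cdot|_\infty$), so a path avoiding all shields does not obviously induce a $\Pi_{\boldsymbol{k}}$-path that must hit $\partial^+_{\boldsymbol{k}}D_{\boldsymbol{k}}$, and invoking Lemma~\ref{lem:oriented-surface-ancillary} ``to close seams'' is not yet an argument: one has to choose the index set $I$ from the actual coordinates in which the escaping vertex overshoots the shield of the nearby corner, and then use the quantitative conclusions $(x'_i-x_i)\boldsymbol{k}_i\le 5^{5d}L$ and $(x'_j-x_j)\boldsymbol{k}_j\le-3L$ to exhibit a \emph{specific} $x'\in S_n(\boldsymbol{k})$ whose corner satisfies $y(x')\prec_{\boldsymbol{k}}z$, which is what actually traps the vertex. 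Your sketch never identifies this mechanism.

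The more serious gap is your Step 2. The surfaces $S_n(\boldsymbol{k})$ for different orientations are built from unrelated down-sets $D_{\boldsymbol{k}}$, so near a common coordinate hyperplane two adjacent orientations' surfaces can sit at completely different radial positions inside the annulus (one hugging the $A^-$ level, the other the $A^+$ level); the assertion that an orthant transition ``occurs on a coordinate hyperplane where the shields of the two meeting orientations overlap, because the shells $A^0_n(\boldsymbol{k})$ tile the annulus and the plate size dwarfs the lattice spacing'' does not address this, since the tiling statement concerns the outer shells, not the unknown positions of the $S_n(\boldsymbol{k})$'s, and nothing forces the two shield families to meet where $\gamma$ switches orthants. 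The paper avoids any such orientation-by-orientation gluing: it defines the counting functions $W(z)=\sum_{\boldsymbol{k}}\sum_{x\in S_n(\boldsymbol{k})}\mathbf{1}_{y(x)\preceq_{\boldsymbol{k}}z}$ and $W'(z)$ (with $\prec_{\boldsymbol{k}}$), sets $\mathbf{S}'_n=\{W\ge1, W'=0\}$, proves $W\ge 1$ for every $z$ with $|z|_\infty\ge dL_n$ (via the auxiliary vertices $\mathsf{k}^{(i)}$, the sets $\mathsf{K}_i,\mathsf{K}'_i$, and a connectivity/contradiction argument using (ii.a)--(ii.b) of Definition~\ref{def:oriented_surface}), takes along any path to $\mathbf{0}$ the \emph{last} vertex with $W\ge1$ (which then has $W'=0$), and finally shows $\mathbf{S}'_n\subset\mathbf{S}_n$ by the Lemma~\ref{lem:oriented-surface-ancillary} argument described above. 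This single global bookkeeping is what replaces both your single-orientation barrier and your cross-orientation gluing; without an analogue of it (or a genuine proof that adjacent oriented surfaces overlap where needed), your outline does not yet yield the lemma.
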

\begin{proof}
    Define
    \begin{equation}
        \label{eq:def-S^prime_n}
        \mathbf{S}^{\prime}_n:=\big\{ z\in\mathbb{Z}^d: W(z)\geq1, W^{\prime}(z)=0 \big\},
    \end{equation}
    where
    \begin{equation}
        \label{eq:def_W(z)}
        W(z)=\sum_{\boldsymbol{k}\in\{-1,1\}^d}\sum_{x\in S_n(\boldsymbol{k})}\mathbf{1}_{y(x)\preceq_{\boldsymbol{k}}z}\quad{\rm and}\quad W^{\prime}(z)=\sum_{\boldsymbol{k}\in\{-1,1\}^d}\sum_{x\in S_n(\boldsymbol{k})}\mathbf{1}_{y(x)\prec_{\boldsymbol{k}}z}.
    \end{equation}
    Note that for all $ z\in B(\mathbf{0}, 3L_{n-1}) $, $ \boldsymbol{k}\in\{-1,1\}^d $ and all $ x\in S_n(\boldsymbol{k}) $, by~\eqref{eq:A^-_n(k)} and $ |x|_{\infty}\geq\min_{w\in A^-_n(\boldsymbol{k})}|w|_{\infty} $ we have that \response{(recall that $n\geq1$ and $ L_0\geq L $)}
    \begin{equation*}
        |z|_{\infty} \leq 3L_{n-1} \response{\leq 10L_{n-1}-L\leq\min_{w\in A^-_n(\boldsymbol{k})}|w|_{\infty}-L\leq |x|_{\infty}-L} \leq |y|_{\infty}
    \end{equation*}
    for all $ y\in B(x,L) $.
    This then implies that $ y\not\preceq_{\boldsymbol{k}}z $ for any $ y\in B(x,L) $ (which in particular holds for $y$ being the marked point $ y(x) $) and hence $ W(z)=0 $ for all $ z\in B(\mathbf{0},3L_{n-1}) $.
    Therefore $ \mathbf{S}^{\prime}_n\cap B(\mathbf{0},3L_{n-1})=\varnothing $. Then by~\eqref{eq:A_n(k)} it remains to prove that $ \mathbf{S}^{\prime}_n $ disconnects $ \mathbf{0} $ from infinity and $ \mathbf{S}^{\prime}_n\subset\mathbf{S}_n $.

    \textbf{Step 1}. We will show that $ \mathbf{S}^{\prime}_n $ disconnects $ \mathbf{0} $ from infinity. First we prove that $ W(z)\geq1 $ for all $ z\in\mathbb{Z}^d $ with $ |z|_{\infty}\geq dL_{n} $.
    For each $ 1\leq i\leq d $, by the second inequality of~\eqref{eq:sublattice_prec} we can find a vertex $ \mathsf{k}^{(i)}\in\Pi_{\boldsymbol{k}} $ such that $ \boldsymbol{k}_j\mathsf{k}^{(i)}_j \in[\sqrt{K} L_{n-1}-48d^2L, \sqrt{K} L_{n-1}-24d^2L] $ if $ j=i $, and $ \boldsymbol{k}_j\mathsf{k}^{(i)}_j\in[-48d^2L,-24d^2L] $ if $j \in \{i\}^c$. \response{This is because the domain where we want to find such vertex $ \mathsf{k}^{(i)} $ is a box in $ \mathbb{Z}^d $ with $|\cdot|_{\infty}$-diameter at least $ 24d^2L $, hence must contain a vertex of the sublattice $ \Pi_{\boldsymbol{k}} $, whose cell size can be upper-bounded by~\eqref{eq:sublattice_prec}.} Then $ \mathsf{k}^{(i)} $ has $ |\cdot|_{\infty} $-distance at most $ 48d^2L $ to $ \sqrt{K} L_{n-1}\boldsymbol{k}_i\boldsymbol{e}_i \in A^0_n(\boldsymbol{k}) $.
    We claim that for each $ 1\leq i\leq d $ and $\boldsymbol{k}\in\{-1,1\}^d$, there exists a vertex $ x\in S_n(\boldsymbol{k}) $ satisfying $ x\preceq_{\boldsymbol{k}} \mathsf{k}^{(i)} $, which will be proved later. For each $ |z|_{\infty}\geq dL_n $, there exists an orientation $ \boldsymbol{k} $ such that $ \mathbf{0}\preceq_{\boldsymbol{k}}z $. Under this condition, we can further find $ 1\leq i\leq d $ such that $ \mathsf{k}^{(i)}+L\boldsymbol{k} \preceq_{\boldsymbol{k}}z $. This, combined with $ y(x) \preceq_{\boldsymbol{k}} x+L\boldsymbol{k} $, yields $ y(x)\preceq_{\boldsymbol{k}}\mathsf{k}^{(i)}+L\boldsymbol{k}\preceq_{\boldsymbol{k}}z $, which implies that $ W(z)\geq1 $.

    \begin{figure}[h]
        \centering
        \includegraphics[width=0.5\textwidth]{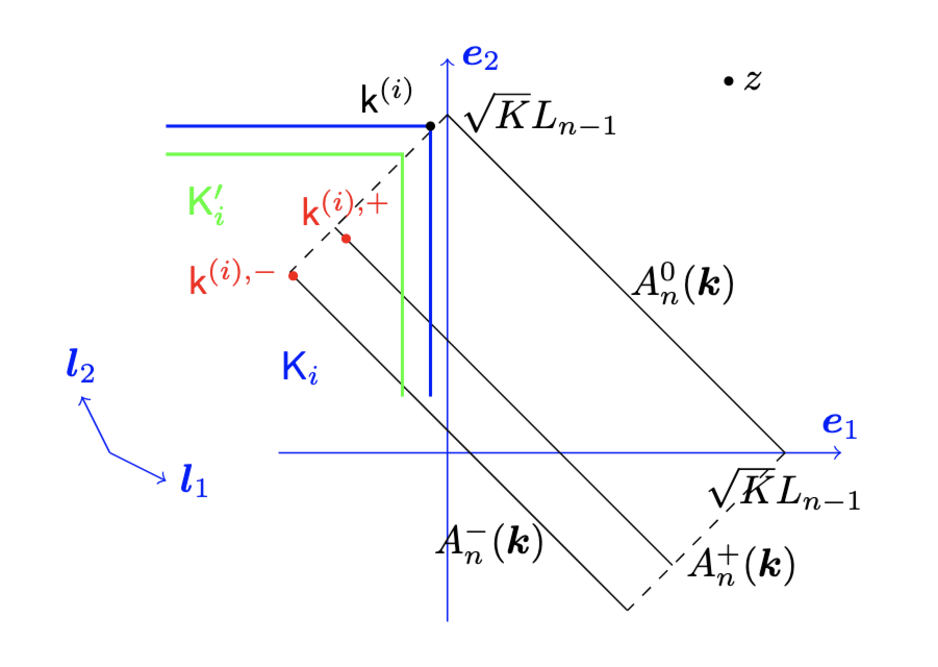}
        \caption{An illustration for $ d=2 $, $ \boldsymbol{k}=(1,1) $ and $ i=2 $. The boundary of $ \mathsf{K}_i $ is colored in blue, and the boundary of $ \mathsf{K}^{\prime}_i $ is colored in green. The vertices $ \mathsf{k}^{(i),\pm} $ are colored in red. Note that the vertex $ z\in\mathbb{Z}^d $ satisfies $ |z|_{\infty}\geq dL_{n} $ and $ \mathsf{k}^{(i)}\preceq_{\boldsymbol{k}} z $.}
        \label{fig:lem4.15-step1}
    \end{figure}
    We now prove the preceding claim via contradiction. See Figure~\ref{fig:lem4.15-step1} for an illustration of the following argument in the case that $ d=2 $, $ \boldsymbol{k}=(1,1) $ and $ i=2 $. Suppose there exist some $ \boldsymbol{k}\in\{-1,1\}^d $ and $ 1\leq i\leq d $ such that $ x\not\preceq_{\boldsymbol{k}}\mathsf{k}^{(i)} $ for all $ x\in S_n(\boldsymbol{k}) $. Then we have
    \begin{equation}
        \label{eq:lem4.15-1}
        \mathsf{K}_i \cap S_n(\boldsymbol{k})=\varnothing, \quad \mbox{ where $ \mathsf{K}_i := \{ w\in\Pi_{\boldsymbol{k}}: w\preceq_{\boldsymbol{k}}\mathsf{k}^{(i)} \} $.}
    \end{equation}
    We are going to find a vertex $ \mathsf{k}^{(i),+}\in\mathsf{K}_i \cap A^+_n(\boldsymbol{k}) $ and prove that $ \mathsf{k}^{(i),+}\in D_{\boldsymbol{k}} $ (here $ D_{\boldsymbol{k}} $ is some arbitrary subset that serves as in Definition~\ref{def:oriented_surface} for $ S_n(\boldsymbol{k}) $),
    which contradicts the assumption that $ A^+_n(\boldsymbol{k})\cap D_{\boldsymbol{k}}=\varnothing $ (recall (ii.b) of Definition~\ref{def:oriented_surface}). To this end, let us first consider two arbitrarily chosen vertices (by~\eqref{eq:sublattice_prec},~\eqref{eq:def-A^pm_n(k)} and the fact that $ \mathsf{k}^{(i)} $ has $ |\cdot|_{\infty} $-distance at most $ 48d^2L $ to $ A^0_n(\boldsymbol{k}) $ we can indeed find such vertices)
    \begin{equation*}
        \mathsf{k}^{(i),-}\in A^-_n(\boldsymbol{k}) \cap B(\mathsf{k}^{(i)}-\big( \frac{\sqrt{K}}{d} - 10d \big)L_{n-1}\boldsymbol{k}, 100d^2L)\subset A^{-}_n(\boldsymbol{k})\subset D_{\boldsymbol{k}},
    \end{equation*}
    and
    \begin{equation*}
        \mathsf{k}^{(i),+} \in A^+_n(\boldsymbol{k})\cap B(\mathsf{k}^{(i)}-\frac{\sqrt{K}}{2d}L_{n-1}\boldsymbol{k}, 100d^2L).
    \end{equation*}
    With a straightforward calculation we know that $ \mathsf{k}^{(i),\pm}\in\mathsf{K}_i $.
    Then consider the subset
    \begin{equation*}
        \mathsf{K}^{\prime}_i:=\{ w\in \Pi_{\boldsymbol{k}}: \mbox{the } \Pi_{\boldsymbol{k}} \mbox{-graph distance between }w\mbox{ and }\Pi_{\boldsymbol{k}}\setminus\mathsf{K}_i\mbox{ is at least }2 \}\subset\mathsf{K}_i.
    \end{equation*}
    Since the $ \Pi_{\boldsymbol{k}} $-graph distance between $ S_n(\boldsymbol{k}) $ and $ \partial_{\mathbf{i}} D_{\boldsymbol{k}} $ is 1 and $ S_n(\boldsymbol{k})\cap \mathsf{K}_i=\varnothing $ (recall~\eqref{eq:lem4.15-1}), we have $ \partial_{\mathbf{i}} D_{\boldsymbol{k}} \cap \mathsf{K}^{\prime}_i=\varnothing $.
    Again we can check through straightforward calculations that $ \mathsf{k}^{(i),-} $ and $ \mathsf{k}^{(i),+} $ are connected in $ \mathsf{K}^{\prime}_{i} $. Combining this with the facts that $ \mathsf{k}^{(i),-}\in\mathsf{K}^{\prime}_i\cap D_{\boldsymbol{k}} $ and $ \partial_{\mathbf{i}} D_{\boldsymbol{k}} \cap \mathsf{K}^{\prime}_i=\varnothing $, we see that $ \mathsf{k}^{(i),+}\in D_{\boldsymbol{k}} $. This contradicts $ A^+_n(\boldsymbol{k})\cap D_{\boldsymbol{k}}\neq\varnothing $, thereby proving the claim.
   
    Now we have $ W(z)\geq1 $ for all $ z\in\mathbb{Z}^d $ with $ |z|_{\infty}\geq dL_n $. Then by $ W(\mathbf{0})=0 $ we know that for any nearest neighbor path (abbreviated as n.n.path in the rest of this subsection) $ \eta $ that starts from $ z $ and ends at $\mathbf{0}$, it contains a vertex $ z^{\prime} $ which is the last (under the parameterization such that $\eta(0)=z$) vertex satisfying $ W(z^{\prime}) \geq 1 $. Now we claim that $ W^{\prime}(z^{\prime})=0 $, which implies $ z^{\prime}\in\mathbf{S}^{\prime}_n $ and hence concludes Step 1. In fact, if $ W^{\prime}(z^{\prime})\geq1 $, then there must exist some $ \boldsymbol{k} $ and $ x\in S_n(\boldsymbol{k}) $ such that $ y(x)\prec_{\boldsymbol{k}}z^{\prime} $. Let $ z^{\prime\prime} $ be the next vertex after $ z^{\prime} $ in $ \eta $, we have $ |z^{\prime\prime}-z^{\prime}|_1=1 $ and hence $ y(x)\preceq_{\boldsymbol{k}}z^{\prime\prime} $ and $ W(z^{\prime\prime})\geq 1 $, which contradicts the assumption of ``last'' for $ z^{\prime} $.

    \textbf{Step 2}. We now prove that $ \mathbf{S}^{\prime}_n\subset\mathbf{S}_n $.
    To this end, we only need to show that for any $ z\in\mathbf{S}^{\prime}_n $ with $ y(x)\preceq_{\boldsymbol{k}}z $ for some $ \boldsymbol{k}\in\{-1,1\}^d $ and $ x\in S_n(\boldsymbol{k}) $, we have $ z\in{\rm Sh}(x,\boldsymbol{k})\subset\mathbf{S}_n $. Our proof proceeds by showing that if $ z\notin{\rm Sh}(x,\boldsymbol{k}) $, then we can find $ x^{\prime}\in S_n(\boldsymbol{k}) $ such that $ y(x^{\prime})\preceq_{\boldsymbol{k}}x^{\prime}+L\boldsymbol{k}\prec_{\boldsymbol{k}}z $, leading to $ W^{\prime}(z)\geq1 $ and $ z\notin\mathbf{S}^{\prime}_n $ (a contradiction).
    See Figure~\ref{fig:oriented-surface-disconnects} for an illustration for $ d=2 $ of the following arguments.
    \begin{figure}[h]
        \centering
        \includegraphics[width=0.5\textwidth]{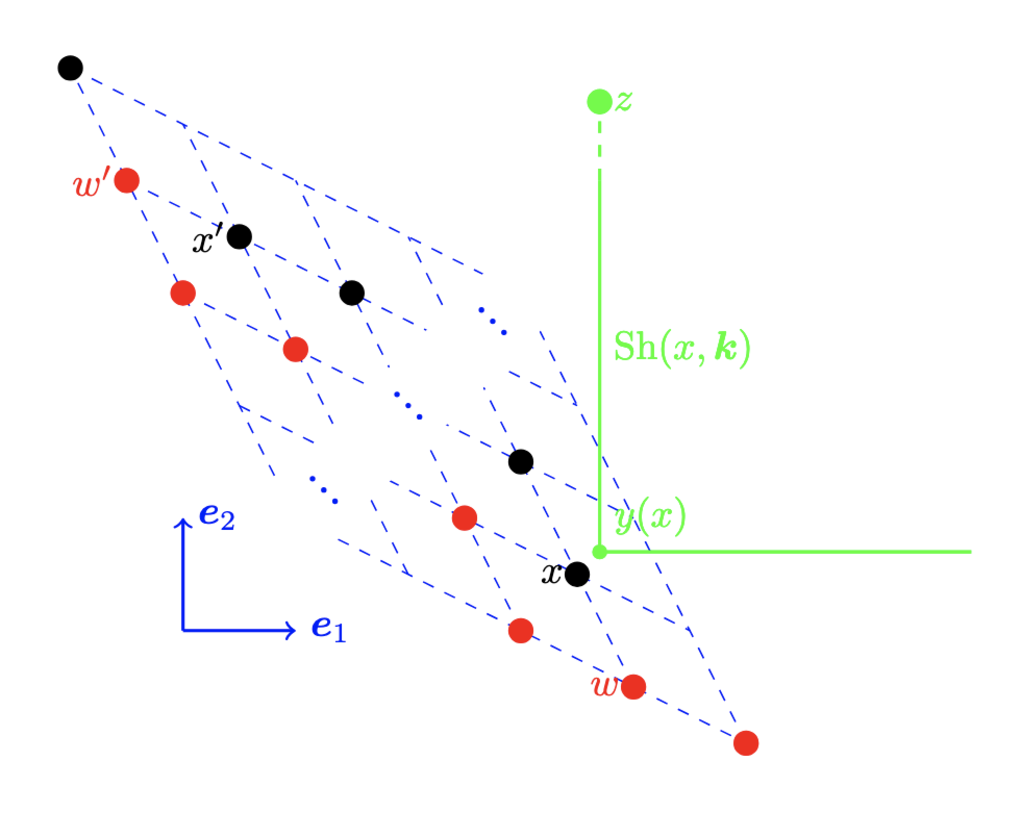}
        \caption{An illustration for proof of Lemma~\ref{lem:oriented_surface_disconnect_origin_from_infinity} for $d=2$ and $ \boldsymbol{k}=(1,1) $. The inner boundary of $D_{\boldsymbol{k}}$ is colored in red, and $ S_n(\boldsymbol{k}) $ is colored in black. The sublattice $ \Pi_{\boldsymbol{k}} $ is in dashed blue. The shield $ {\rm Sh}(x,\boldsymbol{k}) $ and the vertex $z$ are colored in green. In the shown case, $ I_z=\{2\} $.}
        \label{fig:oriented-surface-disconnects}
    \end{figure}

    Now let $ z $ (and the associated $ x $, $ \boldsymbol{k} $) be described as in the previous paragraph. Suppose $ z\notin{\rm Sh}(x,\boldsymbol{k}) $.
    Then there exists a non-empty subset $ I_z\subsetneq\{1,\dots,d\} $ such that
    \begin{equation}
        \label{eq:lem4.13-0}
        (z_i-y(x)_i)\cdot\boldsymbol{k}_i>10^{10d}L
    \end{equation}
    for all $ i\in I_z $. The subset $ I_z $ is non-empty due to the fact that $ z\notin{\rm Sh}(x,\boldsymbol{k}) $ and the fact that $ (z_i-y(x)_i)\boldsymbol{k}_i=0 $ for some $i$ (since $ W^{\prime}(z)=0 $); the fact $ I_z\neq\{1,\dots,d\} $ is due to the fact that $ y(x)\not\prec_{\boldsymbol{k}}z $.
    Recall $ D_{\boldsymbol{k}} $ from Step 1 as an arbitrary subset in $ \Pi_{\boldsymbol{k}} $ that serves as in Definition~\ref{def:oriented_surface}. Suppose $ w\in \partial_{\mathbf{i}} D_{\boldsymbol{k}} $ is adjacent (in $ \Pi_{\boldsymbol{k}} $) to $ x $.
    Then by Lemma~\ref{lem:oriented-surface-ancillary} (where we take $ I=I_z $) there exists a vertex $ w^{\prime}\in \partial_{\mathbf{i}}D_{\boldsymbol{k}} $ and a $ \boldsymbol{k} $-neighbor $ x^{\prime}\in S_n(\boldsymbol{k}) $ of $ w^{\prime} $ such that
    \begin{equation}
        \label{eq:lem4.13-1}
        (x^{\prime}_i-x_i)\boldsymbol{k}_i\leq 5^{5d}L,\forall i\in I_z\quad{\rm and}\quad(x^{\prime}_j-x_j)\boldsymbol{k}_j\leq-3L,\forall j\in I^c_z.
    \end{equation}
    Combining $ y(x)\preceq_{\boldsymbol{k}}z $ with~\eqref{eq:lem4.13-0} and~\eqref{eq:lem4.13-1}, we know that
    \begin{equation}
        \label{eq:lem4.14-0}
        z_i\boldsymbol{k}_i > y(x)_i\boldsymbol{k}_i+10^{10d}L \geq x_i\boldsymbol{k}_i+(10^{10d}-2)L \geq x^{\prime}_i\boldsymbol{k}_i+(10^{10d}-5^{5d}-2)L \geq x^{\prime}_i\boldsymbol{k}_i + 2L
    \end{equation}
    for all $ i\in I_z $ and
    \begin{equation}
        \label{eq:lem4.14-1}
        z_j\boldsymbol{k}_j \geq y(x)_j\boldsymbol{k}_j \geq x_j\boldsymbol{k}_j-L \geq x^{\prime}_j\boldsymbol{k}_j+2L
    \end{equation}
    for all $ j\in I^c_z $. Then~\eqref{eq:lem4.14-0} and~\eqref{eq:lem4.14-1} imply that $ y(x^{\prime}) \prec_{\boldsymbol{k}} x^{\prime}+2L\boldsymbol{k} \preceq_{\boldsymbol{k}} z $ which then concludes the proof of Step 2.
\end{proof}
Recall $ S_n(\boldsymbol{k}) $ from Definition~\ref{def:oriented_surface}, and let $ S_n(\boldsymbol{k}) $ be an arbitrary $ \boldsymbol{k} $-oriented surface for all $ \boldsymbol{k}\in\{-1,1\}^d $. Recall from~\eqref{eq:def-S^prime_n} and~\eqref{eq:def_W(z)} the definitions of $ \mathbf{S}^{\prime}_n $, $ W(z) $ and $ W^{\prime}(z) $ for $ z\in\mathbb{Z}^d $.
By Step 1 in the proof of Lemma~\ref{lem:oriented_surface_disconnect_origin_from_infinity}, we know that $ \mathbf{S}^{\prime}_n $ separates $ \mathbb{Z}^d $ into two components, and by $ W(\mathbf{0})=0 $ we have
\begin{equation}
    \label{eq:def-So_n}
     \mathbf{S}^{\mathrm o}_n:=\{z\in\mathbb{Z}^d: W(z)=0\}
\end{equation}
is the connected component of $\mathbf{0}$ in $ \mathbb{Z}^d\setminus\mathbf{S}^{\prime}_n $. By Step 2 in the proof of Lemma~\ref{lem:oriented_surface_disconnect_origin_from_infinity} we have $ \mathbf{S}^{\prime}_n\subset\mathbf{S}_n $, and by definition any vertex $ z\in\mathbf{S}_n $ satisfies $ W(z)\geq1 $ and hence is not contained in $ \mathbf{S}^{\mathrm o}_n $. Therefore $ \mathbf{S}^{\mathrm o}_n $ is also the connected component of $ \mathbf{0} $ in $ \mathbb{Z}^d\setminus\mathbf{S}_n $, or in other words, is the collection of vertices that are enclosed by $ \mathbf{S}_n $. The following lemma states that if $ \mathbf{S}_n $ is defined with nice oriented surfaces, then it is the desired contour (such that the evolution starting from $ B(\mathbf{0},3L_{n-1}) $ will stop before reaching it).
\begin{lemma}
    \label{lem:oriented_surface_prevents_growth}
    Define $ \mathbf{S}_n $ and $ \mathbf{S}^{\mathrm o}_n $ as in~\eqref{eq:def-S_n} and~\eqref{eq:def-So_n}.
    Fix an initial configuration and suppose all the oriented surfaces in defining $ \mathbf{S}_n $ are nice.
    Consider the evolution grown from $ \mathbf{S}_n $ (recall from the statement of Lemma~\ref{lem:control-growth} that we only allow the evolution grown from $B(\mathbf{0},3L_{n-1})$ which is enclosed by $ \mathbf{S}_n $) with the following initial configuration: all the vertices in $ \mathbf{S}^{\mathrm o}_n $ are open and the other vertices are not changed.
    Then in the final configuration the open cluster containing $ \mathbf{S}^{\mathrm o}_n $ is within $ |\cdot|_{\infty} $-distance $10d$ from $ \mathbf{S}^{\mathrm o}_n $.
\end{lemma}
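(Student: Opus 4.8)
The plan is to argue by contradiction, combining a ``first-escape'' analysis with the local shielding property of Lemma~\ref{lem:shield-local}. Since in the evolution of Lemma~\ref{lem:oriented_surface_prevents_growth} only vertices adjacent to the open cluster of $\mathbf{S}^{\mathrm o}_n$ are ever opened, it suffices to show that no vertex at $|\cdot|_\infty$-distance greater than $10d$ from $\mathbf{S}^{\mathrm o}_n$ is ever opened. Write $N$ for the set of vertices at $|\cdot|_\infty$-distance at most $10d$ from $\mathbf{S}^{\mathrm o}_n$, run the evolution one vertex at a time as in~\eqref{evolve-1}, and suppose that some vertex outside $N$ gets opened; let $v$ be the first such. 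Just before $v$ is opened, the open cluster of $\mathbf{S}^{\mathrm o}_n$ is contained in $N$, and $v$ has at least $d$ open neighbours, all of them in $N$; consequently $v$ lies at $|\cdot|_\infty$-distance exactly $10d+1$ from $\mathbf{S}^{\mathrm o}_n$. Since $v\notin\mathbf{S}^{\mathrm o}_n=\{z:W(z)=0\}$ we have $W(v)\ge1$, so there are an orientation $\boldsymbol k$ and a box $x\in S_n(\boldsymbol k)$ with $y(x)\preceq_{\boldsymbol k}v$; thus $v$ lies on the far side of the shield $\mathrm{Sh}(x,\boldsymbol k)$.

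Next I localize the picture to the box $B(x',5^{10d}L)$ attached to a suitably chosen nice box $B(x',L)$ with $x'\in S_n(\boldsymbol k)$. Using the sublattice $\Pi_{\boldsymbol k}$ --- in particular the interlocking of the shields of neighbouring boxes already exploited in Step~2 of the proof of Lemma~\ref{lem:oriented_surface_disconnect_origin_from_infinity}, together with the separation of scales $10d\ll 12dL\ll 5^{10d}L$ recorded in~\eqref{eq:sublattice_prec} --- one selects $x'$ so that $v\in B^+_{\boldsymbol k}(x',5^{10d}L)$, so that $v$ has $|\cdot|_\infty$-distance at least $\tfrac12\cdot 5^{10d}L$ from $\partial_iB(x',5^{10d}L)$, and so that the portion of $N$ relevant near $v$ is captured by $B^-_{\boldsymbol k}(x',5^{10d}L)$; note that in any case $\mathbf{S}^{\mathrm o}_n\cap B(x',5^{10d}L)\subseteq B^-_{\boldsymbol k}(x',5^{10d}L)$ directly from $W(\cdot)=0$ on $\mathbf{S}^{\mathrm o}_n$. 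By monotonicity, the global open set intersected with $B(x',5^{10d}L)$, up to the opening time of $v$, is contained in the final configuration of the polluted bootstrap percolation restricted to $B(x',5^{10d}L)$ in which $B^-_{\boldsymbol k}(x',5^{10d}L)$ is declared open while $B^+_{\boldsymbol k}(x',5^{10d}L)$ keeps its given initial states --- here one uses that at this time no open vertex lies on $\partial_iB(x',5^{10d}L)\cap B^+_{\boldsymbol k}(x',5^{10d}L)$, which holds because the open cluster is still inside $N$ while $v$ is deep inside $B^+_{\boldsymbol k}(x',5^{10d}L)$. Tracing a path from $v$ back to $\mathbf{S}^{\mathrm o}_n$ inside the current open cluster and cutting it at its first exit from $B(x',5^{10d}L)$ (or at its arrival in $\mathbf{S}^{\mathrm o}_n\subseteq B^-_{\boldsymbol k}(x',5^{10d}L)$) produces, inside this restricted evolution, an open path from $v$ to $B^-_{\boldsymbol k}(x',5^{10d}L)$. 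Hence Lemma~\ref{lem:shield-local} places $v$ within $|\cdot|_\infty$-distance $10d$ of $B^-_{\boldsymbol k}(x',5^{10d}L)$; as $v$ is deep inside $B^+_{\boldsymbol k}(x',5^{10d}L)$, this forces $v$ to be within $10d$ of $\mathrm{Sh}(x',\boldsymbol k)$, and then the inner-side neighbour of the foot of $v$ on $\mathrm{Sh}(x',\boldsymbol k)$ lies in $\mathbf{S}^{\mathrm o}_n$ by the local structure of $\mathbf{S}_n$ near that shield, so $v$ has $|\cdot|_\infty$-distance at most $10d$ from $\mathbf{S}^{\mathrm o}_n$ --- contradicting that it equals $10d+1$.

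The first-escape reduction and the monotone comparison are routine; the genuine obstacle is the geometric bookkeeping in the localization step --- producing, for a vertex $v$ lying just past $\mathbf{S}_n$, a single nice box $B(x',L)$ whose lone shield $\mathrm{Sh}(x',\boldsymbol k)$ serves as the local blocker, with $v$ deep inside $B^+_{\boldsymbol k}(x',5^{10d}L)$, with $B^-_{\boldsymbol k}(x',5^{10d}L)$ containing the adjacent slab of $\mathbf{S}^{\mathrm o}_n$, and with the inner collar of $\mathrm{Sh}(x',\boldsymbol k)$ actually sitting inside $\mathbf{S}^{\mathrm o}_n$ rather than merely near it. This is precisely where the carefully engineered sublattice $\Pi_{\boldsymbol k}$ and the scale hierarchy $L\ll 5^{10d}L\ll 10^{10d}L$ do their work: because neighbouring shields --- both within a fixed orientation and along the seams where surfaces of different orientations meet --- interlock rather than merely abut, the union over all shields in $\mathbf{S}_n$ of the sets $B^-_{\boldsymbol k}(x,5^{10d}L)$ contains a thick neighbourhood of $\mathbf{S}^{\mathrm o}_n$, so that every vertex immediately past $\mathbf{S}_n$ does lie well inside some $B^+_{\boldsymbol k}(x',5^{10d}L)$ with all the required clearances. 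Finally, as noted in Remark~\ref{rmk:polluted-bootstrap}, the argument applies verbatim to the variant in which a closed vertex needs $d+1$ open neighbours, since each shield corner $y(x')$ is closed and never acquires more than $d$ open neighbours --- exactly as in the proof of Lemma~\ref{lem:shield-local}.
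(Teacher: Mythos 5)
Your overall strategy (first-escape vertex $v$, localization to one nice box, monotone comparison with the restricted evolution of Lemma~\ref{lem:shield-local}) differs from the paper's, but it has genuine gaps at exactly the points you yourself flag as ``the genuine obstacle'', and they are not mere bookkeeping. First, the existence of $x'\in S_n(\boldsymbol{k})$ with all the required clearances ($v$ deep inside $B^{+}_{\boldsymbol{k}}(x',5^{10d}L)$, the nearby slab of the $10d$-collar of $\mathbf{S}^{\mathrm o}_n$ contained in $B^{-}_{\boldsymbol{k}}(x',5^{10d}L)$) is asserted, not proved: Step 2 of the proof of Lemma~\ref{lem:oriented_surface_disconnect_origin_from_infinity} only yields $\mathbf{S}^{\prime}_n\subset\mathbf{S}_n$, and near the seams where surfaces of different orientations meet (and near $A^{\pm}_n(\boldsymbol{k})$) nothing in the construction hands you such an $x'$. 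Second, the monotone comparison is broken by boundary influence: you claim that no open vertex lies on $\partial_i B(x',5^{10d}L)\cap B^{+}_{\boldsymbol{k}}(x',5^{10d}L)$ just before $v$ opens, but the arms of ${\rm Sh}(x',\boldsymbol{k})$ have length $10^{10d}L\gg 5^{10d}L$ and cross the entire box, so the $10d$-collar of $\mathbf{S}^{\mathrm o}_n$ on the plus side of the shield runs along these arms all the way to $\partial_i B(x',5^{10d}L)$; vertices there are allowed to be open (the lemma's own conclusion permits the cluster to spill $10d$ past the shield), and they can be fed from outside the box, so the final configuration of the restricted evolution in Lemma~\ref{lem:shield-local} need not dominate the global one inside $B^{+}_{\boldsymbol{k}}(x',5^{10d}L)$. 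Third, the last upgrade from ``$v$ is within $10d$ of $B^{-}_{\boldsymbol{k}}(x',5^{10d}L)$'' to ``$v$ is within $10d$ of $\mathbf{S}^{\mathrm o}_n$'' rests on the claim that the minus-side neighbour of the foot of $v$ on ${\rm Sh}(x',\boldsymbol{k})$ lies in $\mathbf{S}^{\mathrm o}_n$; since $\mathbf{S}^{\mathrm o}_n=\{z:W(z)=0\}$ requires being on the minus side of \emph{every} shield, other shields may cover that point and the claim fails in general (and even granting it, your constants give $10d+2$, not the $\le 10d$ needed to contradict distance exactly $10d+1$).

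For contrast, the paper's proof is a direct global argument that sidesteps any box-restricted comparison: it splits the vertices outside $\mathbf{S}^{\mathrm o}_n$ into type-1 (at least $d-1$ neighbours in $\mathbf{S}^{\mathrm o}_n$) and type-2, shows that each type-1 vertex agrees with some shield corner $y(x)$ in all but at most one coordinate and hence, by condition (2) of Definition~\ref{def:nice-box}, is $100d$-far from every initially open vertex; thus opening a type-1 vertex would force a cluster of diameter at least $100d$ grown through type-2 vertices only (effectively threshold-$2$ bootstrap), which Lemma~\ref{lem:initial-open-number-lower-bound} combined with condition (3) rules out. With type-1 vertices never opening, the growth off $\mathbf{S}^{\prime}_n$ is again confined to type-2 vertices and the same counting caps its diameter at $10d$. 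If you wish to keep your first-escape framing, you would have to import essentially this counting argument anyway in order to control the influence entering through $\partial_i B(x',5^{10d}L)$, at which point the detour through Lemma~\ref{lem:shield-local} gains nothing; I would recommend reworking the proof along the paper's global type-1/type-2 lines.
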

\begin{proof}
    From Definition~\ref{def:nice-box}, we see that the initial configuration in the lemma-statement satisfies the following: all the corner vertices of some shields remain closed; each $ z\in \mathsf{R}(y)\setminus\mathbf{S}^{\mathrm o}_n $ is not initially open for any corner vertex $ y\in\mathbf{S}_n $ of some shields; for all vertex
    $ w\in \cup_{\boldsymbol{k}}\cup_{x\in S_n(\boldsymbol{k})} B(x, 10^{10d}L)\setminus\mathbf{S}^{\mathrm o}_n $ there are at most $4d$ initially open vertices in $ B(w,10d)\setminus\mathbf{S}^{\mathrm o}_n $.
    
    Our proof is based on a similar argument as in Lemma~\ref{lem:shield-local}. For a vertex $ z\in\mathbb{Z}^d\setminus\mathbf{S}^{\mathrm o}_n $, we say that $ z $ is of type-1 if it has at least $ d-1 $ neighbors in $ \mathbf{S}^{\mathrm o}_n $, and we say $ z $ is of type-2 if it has at most $d-2$ neighbors in $ \mathbf{S}^{\mathrm o}_n $. Since we only allow the evolution to grow from $ \mathbf{S}_n $, it suffices to control the open clusters in $ \mathbb{Z}^d\setminus\mathbf{S}^{\mathrm o}_n $ that are grown from the outer boundary of $ \mathbf{S}^{\mathrm o}_n $, i.e. $ \mathbf{S}^{\prime}_n $. Note that if $ z $ is of type-1, then there exists some orientation $\boldsymbol{k}\in\{-1,1\}^d$ and $ x\in S_n(\boldsymbol{k}) $ such that there exists at most 1 index $ i\in\{1,\dots,d\} $ satisfying that $ z_i\neq y(x)_i $, where $ y(x) $ is the corner vertex of the shield $ {\rm Sh}(x,\boldsymbol{k}) $. Then by (2) of Definition~\ref{def:nice-box} all the type-1 vertices are not within $ |\cdot|_{\infty} $-distance $ 100d $ to any initially open vertex. Therefore if a type-1 vertex is open in the final configuration, there must exist an open cluster grown from $ \mathbb{Z}^d\setminus\big( \mathbf{S}^{\mathrm o}_n\cup (\cup_{\boldsymbol{k}}\cup_{x\in S_n(\boldsymbol{k})}\mathsf{R}(y(x))) \big) $ (since by the lemma-statement we only need to consider the open clusters that grown from $B(\mathbf{0},3L_{n-1})$, which is enclosed by $ \mathbf{S}^{\mathrm{o}} $) with $ |\cdot|_{\infty} $-diameter at least $100d$ even when the growth is restricted to type-2 vertices.
    This is not true by Lemma~\ref{lem:initial-open-number-lower-bound} (applied to the clusters of type-2 vertices) and (3) of Definition~\ref{def:nice-box}, and thus any type-1 vertex is not open in the final configuration. Then by Lemma~\ref{lem:initial-open-number-lower-bound} again we know that all the open clusters grown from $ \mathbf{S}^{\prime}_n $ (after removing $ \mathbf{S}^{\prime}_n $) have $ |\cdot|_{\infty} $-diameters at most $10d$, thereby concluding the proof.
\end{proof}

Now we only need to prove that with probability at least $ 1-e^{-K\cdot2^n}/(100K)^d $ there exists an $ \mathbf{S}_n $ defined as in~\eqref{eq:def-S_n} where each $ S_n(\boldsymbol{k}) $ is a nice oriented surface.
Recall the setting of Lemma~\ref{lem:control-growth} where we initially sample open vertices (and open cubes) and closed vertices on $\mathbb{Z}^d$ according to the law $\mathbb{Q}_{n-1}$.
Then by (2) of Definition~\ref{def:nice-box}, for all $ \boldsymbol{k}\in\{-1,1\}^d $, each $ x\in\Pi_{\boldsymbol{k}} $ within $ |\cdot|_{\infty} $-distance $ 10^{10d}L $ to an initially sampled open $ 3\sqrt{K} L_k $-box for $ 0\leq k\leq n-2 $ (as defined in \response{Definition~\ref{def:Qn}}) cannot be nice.
This motivates us to define a probability measure on $ \Pi_{\boldsymbol{k}} $ to facilitate the analysis of nice boxes.
\begin{definition}
    \label{def:overwhelming}
    Let $ \delta\in(0,1) $ and fix some $ \boldsymbol{k}\in\{-1,1\}^d $. We say a distribution $ \mathbb{Q}_{n-1}^{\prime} $ on $ \{0,1\}^{\Pi_{\boldsymbol{k}}} $ is $ (\delta,n) $-overwhelming, if there exist $ \{\tilde{p}_j\}_{-1\leq j\leq n-2}\subset[0,1] $ and $ \{a_j\}_{-1\leq j\leq n-2}\subset\mathbb{Z}^+ $ such that the following hold.
    \begin{enumerate}[(1)]
        \item $ \tilde{p}_{-1}=\delta $, $ a_{-1}=1 $ and $ \tilde{p}_j= p_j^{\alpha_d} \leq p^{2d\alpha_d}e^{-\alpha_d K\cdot2^j} $, $ a_j= K^{j+1.5} $ for all $ 0\leq j\leq n-2 $, where $ \alpha_d=10^{-10d} $ is a constant depending only on $d$.
        \item Under $ \mathbb{Q}_{n-1}^{\prime} $ there exist independent Bernoulli variables $ \{X_{x,j}\} $ with expectation $ \tilde{p}_j $ for all $ x\in\Pi_{\boldsymbol{k}} $ and $ -1\leq j\leq n-2 $.
    \end{enumerate}
    With respect to a realization of $ \mathbb{Q}^{\prime}_{n-1} $, we call a vertex $ x\in\Pi_{\boldsymbol{k}} $ poor, if there exist $ -1\leq j\leq n-2 $ and $ y\in B_{\Pi_{\boldsymbol{k}}}(x,a_j) $ such that $ X_{y,j}=1 $, and in this case we say that $x$ is poor due to $(y,j)$. Furthermore if $ x\in\Pi_{\boldsymbol{k}} $ is poor, let $ \mathsf{N}(x) $ denote the collection of all pairs due to which $ x $ is poor.
\end{definition}
\begin{lemma}
    \label{lem:overwhelming_dominance}
    For any fixed $ \delta\in(0,1) $, there exists a constant $ K_0=K_0(\delta,d) $ such that for all $ K\geq K_0 $, there exists a constant $ C=C(K,d) $ such that the following holds: for all $ p\leq C^{-1}, q\geq Cp^d, n\geq1 $ and any fixed orientation $ \boldsymbol{k}\in\{-1,1\}^d $, we can couple $ \mathbb{Q}_{n-1} $ with some $ (\delta,n) $-overwhelming measure $ \mathbb{Q}_{n-1}^{\prime} $ such that
    \begin{equation*}
        \{ x\in\Pi_{\boldsymbol{k}}: x{\rm\ is\ not\ nice} \}\subset\{ x\in\Pi_{\boldsymbol{k}}: x{\rm\ is\ poor} \}.
    \end{equation*}
\end{lemma}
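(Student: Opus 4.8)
The plan is to build the coupling \emph{level by level}, dominating the contribution of each scale of open cubes in $\mathbb{Q}_{n-1}$ by a separate independent Bernoulli field on $\Pi_{\boldsymbol{k}}$, and then to check that every non-nice $L$-box is "detected" by one of these fields within the prescribed radius $a_j$. Fix $\boldsymbol{k}\in\{-1,1\}^d$. By \eqref{eq:sublattice_prec} the lattice $\Pi_{\boldsymbol{k}}$ has spacing comparable to $L$, and by \eqref{eq:def-Ln} and \eqref{eq:def-L} we have $L_j\asymp K^{j+1}L$ for $p$ small, so for each $0\le j\le n-2$ the lattice $\Pi_{\boldsymbol{k}}$ is much finer than $L_j\mathbb{Z}^d$; choose an injection $\iota_j\colon L_j\mathbb{Z}^d\hookrightarrow\Pi_{\boldsymbol{k}}$ carrying $z$ to a point of $\Pi_{\boldsymbol{k}}$ within $|\cdot|_\infty$-distance $12dL$ of $z$. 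The scale-$j$ large open cubes $B(z,3\sqrt K L_j)$ of $\mathbb{Q}_{n-1}$ are i.i.d.\ Bernoulli$(p_j)$ indexed by $z\in L_j\mathbb{Z}^d$; set $X_{x,j}=1$ whenever $x=\iota_j(z)$ for some $z$ carrying such a cube, and then boost $X_{x,j}$ with a fresh independent coin up to Bernoulli$(\tilde p_j)$, which is legitimate since $\tilde p_j=p_j^{\alpha_d}\ge p_j$; for $x\notin\operatorname{im}\iota_j$ let $X_{x,j}$ be an independent Bernoulli$(\tilde p_j)$. Distinct scales use disjoint randomness and $\iota_j$ is injective, so $\{X_{x,j}\}_{x,\,0\le j\le n-2}$ are independent with the correct marginals. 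For the level $j=-1$, fix a tiling $\{C_x\}_{x\in\Pi_{\boldsymbol{k}}}$ of $\mathbb{Z}^d$ by fundamental domains of $\Pi_{\boldsymbol{k}}$ (so $x\in C_x\subset B(x,12dL)$, $\bigcup_x C_x=\mathbb{Z}^d$, cells of $\Pi_{\boldsymbol{k}}$-adjacent points are neighbouring), and let $X_{x,-1}=1$ exactly when the initial configuration restricted to $C_x$ has a \emph{local defect} — an open cube of positive radius, or (relevant when $q$ is small) no closed vertex available to serve as a valid corner for an $L$-box meeting $C_x$ — padded by an independent coin up to probability exactly $\delta$. Since the cells are disjoint, $\{X_{x,-1}\}_x$ are i.i.d.\ Bernoulli$(\delta)$ and independent of all higher levels, giving a $(\delta,n)$-overwhelming measure $\mathbb{Q}'_{n-1}$ coupled with $\mathbb{Q}_{n-1}$.

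It then remains to verify the inclusion $\{x\text{ not nice}\}\subset\{x\text{ poor}\}$. Suppose $B(x,L)$ is not nice. By Definition~\ref{def:nice-box} the failure is witnessed inside $B(x,20^{10d}L)$, and it is of one of two kinds. If some large open cube $B(z,3\sqrt K L_j)$ of $\mathbb{Q}_{n-1}$, $0\le j\le n-2$, meets $B(x,20^{10d}L)$, then $|z-x|_\infty\le 3\sqrt K L_j+20^{10d}L\le 4\sqrt K L_j$ for $p$ small, hence $|\iota_j(z)-x|_\infty\le 5\sqrt K L_j$, and using $|\pi_{\boldsymbol{k}}^{-1}(v)|_\infty\le\frac1{6d}|v|_\infty$ (which is the $\ell^\infty$-analogue of \eqref{eq:L_1-of-pi^-1}) together with $L_j\le 2K^{j+1}L$ one gets $\iota_j(z)\in B_{\Pi_{\boldsymbol{k}}}(x,K^{j+1.5})=B_{\Pi_{\boldsymbol{k}}}(x,a_j)$; since $X_{\iota_j(z),j}=1$ by construction, $x$ is poor. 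Otherwise no large cube meets $B(x,20^{10d}L)$, so the non-niceness of $B(x,L)$ is determined by the small cubes and closed vertices in $B(x,20^{10d}L)$; in the regime $K\ge K_0(\delta,d)$, $p\le C(K,d)^{-1}$, $q\ge C(K,d)p^d$ the only local obstructions that can actually occur are concentrated in the immediate $\Pi_{\boldsymbol{k}}$-neighbourhood of $x$ (small open cubes of positive radius, being polynomially rare in $p$ and hence essentially absent from the remainder of $B(x,10^{10d}L)$), so the failure forces a local defect in some cell $C_y$ with $y\in B_{\Pi_{\boldsymbol{k}}}(x,1)=B_{\Pi_{\boldsymbol{k}}}(x,a_{-1})$, whence $X_{y,-1}=1$ and $x$ is again poor. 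In both cases $x$ is poor, proving the lemma.

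The hard part is the level $j=-1$. One must simultaneously (a) keep $\{X_{x,-1}\}_x$ independent, which is why a genuine cell \emph{partition} of $\mathbb{Z}^d$ (rather than overlapping neighbourhoods) has to be used, and (b) show that \emph{every} mode of local non-niceness — failure of condition~(1) (no available closed corner), failure of condition~(2) ($\mathsf R(y)$ hit by a small cube for every candidate $y\in B(x,L)$), and failure of condition~(3) (a dense spot of initially open vertices) — is captured by a local defect in a cell within $\Pi_{\boldsymbol{k}}$-distance $a_{-1}=1$ of $x$, while the probability of that local-defect event stays below the fixed constant $\delta$. This is precisely where the quantitative hypotheses enter: the smallness of $p$ relative to $K$ (and of $q$ only through $q\ge C p^d$) makes all these obstruction probabilities small enough — via elementary union bounds over the $O(L^d)$ vertices of a cell, together with the tail bound $p_k\le p^{2d}e^{-K\cdot 2^k}$ from \eqref{eq:bound-bad-probability} for the large-scale cubes — so that the padding defining $X_{x,-1}$ is well defined. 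The remaining ingredients (injectivity of $\iota_j$, the metric comparison $\mathbb{Z}^d$ versus $\Pi_{\boldsymbol{k}}$, and the containment $B_{\Pi_{\boldsymbol{k}}}(x,a_0)\supset B(x,10^{10d}L)\cap\Pi_{\boldsymbol{k}}$) are routine consequences of \eqref{eq:sublattice_prec}, \eqref{eq:L_1-of-pi^-1} and the scale relation $L_j\asymp K^{j+1}L$.
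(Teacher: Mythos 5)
Your treatment of the scales $0\le j\le n-2$ is essentially sound: since the type-I cubes at scale $j$ are indexed by the much coarser lattice $L_j\mathbb{Z}^d$, an injection into $\Pi_{\boldsymbol{k}}$ followed by independent padding does produce independent Bernoulli fields, and your distance computation showing that a cube meeting $B(x,20^{10d}L)$ is detected within $B_{\Pi_{\boldsymbol{k}}}(x,a_j)$, $a_j=K^{j+1.5}$, is in line with the paper (which instead covers $\mathbb{Z}^d$ by overlapping $10dL$-boxes and pays for the resulting $1$-dependence with a Liggett--Schonmann--Stacey domination, whence the exponent $\alpha_d$). The genuine gap is at the level $j=-1$. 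Niceness of $B(x,L)$ is \emph{not} a cell-local event: by Definition~\ref{def:nice-box}, condition~(2) forbids initially open vertices anywhere in $\mathsf{R}(y)$, which reaches out to $|\cdot|_\infty$-distance $10^{10d}L$ along each axis, and condition~(3) constrains the configuration in all of $B(x,10^{10d}L)$; altogether non-niceness is measurable only with respect to $B(x,20^{10d}L)$. Meanwhile the points of $B_{\Pi_{\boldsymbol{k}}}(x,a_{-1})=B_{\Pi_{\boldsymbol{k}}}(x,1)$ lie within $|\cdot|_\infty$-distance $12dL$ of $x$ by~\eqref{eq:sublattice_prec}, so the union of your cells $C_y$ over $y\in B_{\Pi_{\boldsymbol{k}}}(x,1)$ sits inside $B(x,24dL)\ll B(x,10^{10d}L)$. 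A configuration in which, say, a handful of initially open vertices land in the far part of $\mathsf{R}(y)$ for every candidate corner $y\in B(x,L)$ (an event of probability of order $10^{10d}/K$ per box, small but positive) makes $x$ not nice while producing no defect in any cell indexed by $B_{\Pi_{\boldsymbol{k}}}(x,1)$ and no type-I cube; under your coupling such an $x$ is not poor, so the required inclusion $\{x\text{ not nice}\}\subset\{x\text{ poor}\}$ fails. Your appeal to small cubes being ``polynomially rare, hence essentially absent'' conflates a small-probability event with an impossible one: the lemma demands a pointwise inclusion on the coupled space, not an inclusion up to exceptional events.

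Nor can you repair this by enlarging the detection radius at level $-1$: Definition~\ref{def:overwhelming} fixes $a_{-1}=1$ (and the path-counting in the proof of Lemma~\ref{lem:control-growth} uses $a_{-1}=1$, $\tilde p_{-1}=\delta$ through the choice $\lambda_{-1}=\tfrac1{10}$), so the level-$(-1)$ indicator must sit essentially at $x$ itself. The only way out is to let $X_{x,-1}$ dominate the full event ``conditions (1)--(3) fail at $x$,'' which depends on $B(x,20^{10d}L)$; but then the indicators at nearby lattice points share randomness and your disjoint-cell independence argument collapses. This dependence is exactly what the paper's proof confronts: it bounds the failure probability of each of (1)--(3) separately --- condition~(1) is where $q\ge Cp^d$ enters, giving $(1-q)^{(2L)^d}\le\exp(-cC/K^d)$, conditions (2)--(3) give errors of order $1/K$ and $p^{\Theta(d)}$ --- and then invokes \cite[Theorem~1.3]{Liggett1997DominationBP} to dominate the resulting finite-range-dependent field (range $O_d(1)$ in $\Pi_{\boldsymbol{k}}$-steps) by a genuinely i.i.d.\ Bernoulli$(\delta)$ field. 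Without some such domination step (or an alternative decoupling of the long-range conditions (2) and (3)), your construction does not prove the lemma.
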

\begin{proof}
    By symmetry we fix the orientation $ \boldsymbol{k} $ in what follows. Also fix $ n\geq1 $ and $ \delta\in(0,1) $.
    We will deal with $ 0\leq j\leq n-2 $ and $ j=-1 $ in two steps.
    Recall that under $ \mathbb{Q}_{n-1} $ we independently sample two types of open boxes which we will call them type-I open boxes and type-II open boxes respectively. A type-I open box is of the form $ B(z,3\sqrt{K} L_{j}) $ for $ 0\leq j\leq n-2 $ and $ z\in L_j\mathbb{Z}^d $, sampled independently with probability $ p_j $; a type-II open box is of the form $ B(z,k) $ for $ 0\leq k\leq 100^d $, sampled independently with probability $ 3^dp^{1+k/3} $.
    Also recall from Definition~\ref{def:nice-box} the definition of nice $L$-boxes, and note that there are several circumstances (according to the initial configuration of open boxes) that make an $ L $-box not nice.
    In Step 1 we rule out the $ L $-boxes that are not nice because of type-I open boxes by the domination of poor vertices due to $ (y,j) $ for some $ y\in\Pi_{\boldsymbol{k}} $ and $ 0\leq j\leq n-2 $; in Step 2 we rule out the $L$-boxes that are not nice because of type-II open boxes by the domination of poor vertices due to $ (y,-1) $ for some $ y\in\Pi_{\boldsymbol{k}} $.

    \textbf{Step 1}: $ 0\leq j\leq n-2 $. For a vertex $ x\in\Pi_{\boldsymbol{k}} $, we say that $x$ is not type-I nice if there exists $ 0\leq j\leq n-2 $ and $ z\in L_j\mathbb{Z}^d $ satisfying that $ B(x,20^{10d}L) $ intersects $ B(z,3\sqrt{K} L_{j}) $ while the latter is a type-I open box sampled under $ \mathbb{Q}_{n-1} $. Observe that
    \begin{equation*}
        \begin{aligned}
            &\quad\max_{I,J\subset\{1,\dots,d\}}\big|\sum_{i\in I}\boldsymbol{l}_i(\boldsymbol{k})-\sum_{j\in J}\boldsymbol{l}_j(\boldsymbol{k})\big|_{\infty} \\
            &=\max_{\stackrel{I,J\subset\{1,\dots,d\}}{I\cap J=\varnothing}}\big|9d\big(\sum_{i\in I}\response{\boldsymbol{k}_i}\boldsymbol{e}_i-\sum_{j\in J}\response{\boldsymbol{k}_j}\boldsymbol{e}_j\big)-3(|I|-|J|)\boldsymbol{k}\big|_{\infty}\leq12d,
        \end{aligned}
    \end{equation*}
    which implies $ \cup_{x\in\Pi_{\boldsymbol{k}}}B(x,10dL)\supset\mathbb{Z}^d $. Also observe that $ \min_{w\in B_{\Pi_{\boldsymbol{k}}}(x,1)\setminus\{x\}}|w-x|_{\infty}\geq3dL $.
    The preceding two observations motivate us to choose $ a_j=K^{j+1.5} $, and hence by $ a_j\cdot3dL>L+20^{10d}L+3\sqrt{K}L_j+10dL $ we know that for any $ x, x^{\prime}\in\Pi_{\boldsymbol{k}} $ with $ x\notin B_{\Pi_{\boldsymbol{k}}}(x^{\prime},a_j) $ and any $ z\in B(x,10dL),y\in B(x^{\prime},L) $, we have that $ B(z,3\sqrt{K} L_{j})\cap\mathsf{R}(y)=\varnothing $. In light of this, we let $ \{Z_{z,j}\}_{z\in\mathbb{Z}^d,0\leq j\leq n-2} $ be independent Bernoulli variables with expectation $ p_j $ and define
    \begin{equation*}
        X^{\prime}_{x,j}:=\mathbf{1}\big[\exists z\in \cap B(x,10dL){\rm\ such\ that\ }Z_{z,j}=1\big]
    \end{equation*}
    for $ x\in\Pi_{\boldsymbol{k}} $, $ 0\leq j\leq n-2 $ where $ \mathbf{1}[\cdot] $ is the indicator function. Then by~\cite[~Theorem 1.3]{Liggett1997DominationBP} there exists a positive constant $ \alpha_d=10^{-10d} $ depending only on $ d $ such that for all $ 0\leq j\leq n-2 $, $ \{X^{\prime}_{x,j}\}_{x\in\Pi_{\boldsymbol{k}}} $ can be stochastically dominated (because they are 1-dependent for all $ 0\leq j\leq n-2 $ since the dependence comes from the overlapping of $ 10dL $-boxes) by some independent Bernoulli variables $ \{X_{x,j}\}_{x\in\mathbb{Z}^d} $ such that
    \begin{equation*}
        \mathbb{E}X_{x,j}=\tilde{p}_j:= p^{\alpha_d}_j\leq p^{2d\alpha_d}e^{-\alpha_d K\cdot2^j}
    \end{equation*}
    for sufficiently large $K$.
    Then the preceding discussion provides a coupling such that
    \begin{equation}
        \label{eq:coupling_type-I}
        \{ x\in\Pi_{\boldsymbol{k}}: x \mbox{ is not type-I nice} \} \subset \{ x\in\Pi_{\boldsymbol{k}}: x\mbox{ is type-I poor} \}\,,
    \end{equation}
    where $ x $ is type-I poor if $ X_{y,j}=1 $ for some $ y,j $ with $ 0\leq j\leq n-2 $ and $ y\in B_{\Pi_{\boldsymbol{k}}}(x,a_j) $.

    \textbf{Step 2}: $ j=-1 $. Suppose the coupling is already constructed for $ 0\leq j\leq n-2 $ such that~\eqref{eq:coupling_type-I} holds. Then we could fix some $ u\in\Pi_{\boldsymbol{k}} $ which is not type-I poor. In what follows, by Conditions (1), (2), (3) we mean Conditions (1), (2), (3) in Definition~\ref{def:nice-box} with $ x=u $. Since the niceness of $ u $ is measurable with statuses of vertices in $ B(u, 20^{10d}L) $ and since $ u $ is not type-I poor, in order to lower-bound the probability of $ u $ being nice, it then suffices to consider type-II open boxes and closed vertices sampled in $ \mathbb{Q}_{n-1} $ (recall \response{Definition~\ref{def:Qn}}). We will use this without further notice in what follows. Thus, we have
    \begin{equation}
        \label{eq:nice_condition-1}
        \begin{aligned}
            \mathbb{P}[{\rm Condition\ }(1)]&=\mathbb{P}[\exists{\rm\ closed\ }y\in B(u,L)]\geq1-(1-q)^{(2L)^d}\\
            &\geq1-\exp\big[-Cp^d\cdot(2L)^d\big]\geq1-\delta^{\prime}/3
        \end{aligned}
    \end{equation}
    if $ q\geq Cp^d $ when $ C/K^d $ is sufficiently large, where $ \delta^{\prime}\in(0,1) $ is a constant to be chosen. Now condition on this closed $ y\in B(u,L) $ (as the marked vertex).
    Recalling the definition of type-II open boxes (and recalling \response{Definition~\ref{def:Qn}}), we let $ \{U_{z,k}\}_{z\in\mathbb{Z}^d,0\leq k\leq100^d} $ be independent Bernoulli variables with expectation $ 3^dp^{1+k/3} $.
    Since for any $ y $ the volume of $ \mathsf{R}(y) $ is at most $ d\cdot20^{10d}L(200d+1)^{d-1} $, we get
    \begin{equation}
        \label{eq:nice_condition-2}
        \begin{aligned}
            \mathbb{P}[{\rm Condition\ }(2)]&\geq1-d\cdot20^{10d}L(200d+1)^{d-1}\cdot \sum^{100^d}_{k=0}(2k+1)^d\cdot3^dp^{1+k/3}\\
            &\geq1-\frac{C_3(d)}{K}\geq1-\delta^{\prime}/3.
        \end{aligned}
    \end{equation}
    In addition, we get that
    \begin{equation}
        \label{eq:nice_condition-3}
        \begin{aligned}
            &\quad\mathbb{P}[{\rm Condition\ }(3)]\\
            &\geq1-\sum_{z\in B(u,10^{10d}L)}\mathbb{Q}_{n-1}\big[\sum_{x\in B(z,10d)}\sum^{100^d}_{k=0}\sum_{w\in B(x,k)}U_{w,k}\geq4d\big]\\
            &\geq1-(20^{10d}L)^d\cdot e^{-4dt}\cdot\prod^{100^d}_{k=0}\big[ 1+(e^{(20d+1)^d t}-1)\cdot3^dp^{1+k/3} \big]^{(3k+30d)^d}\\
            &\geq 1-\frac{C_4(d)}{p^dK^d}\cdot e^{-4dt}\cdot \exp\big( C_4(d)\cdot e^{C_4(d)t}\cdot p \big)\\
            &\geq1-\frac{C_4(d)}{p^dK^d}e^{-\frac{2d\log(1/p)}{C_4(d)}+C_4(d)\sqrt{p}}\geq1-\delta^{\prime}/3
        \end{aligned}
    \end{equation}
    for sufficiently small $ p $ and some constants $ C_3(d), C_4(d) $ depending only on $d$, where in the second inequality we applied Markov's inequality with exponential moments with the choice of $ t=\log(1/p)/(2C_4(d)) $ (note that $ U_{w,k} $ is counted at most $ (3k+30d)^d $ times for each fixed $ w\in\mathbb{Z}^d $ and $ 0\leq k\leq 100^d $, and there are at most $ (20d+1)^d $ choices for $ w $).
    Therefore, combining~\eqref{eq:nice_condition-1} with \eqref{eq:nice_condition-2} and~\eqref{eq:nice_condition-3} yields
    \begin{equation*}
        \mathbb{P}[u{\rm\ is\ nice}]\geq1-\exp\big[-Cp^d\cdot(2L)^d\big]-\frac{C_3(d)}{K}-\frac{C_4(d)}{p^dK^d}e^{-\frac{2d\log(1/p)}{C_4(d)}+C_4(d)\sqrt{p}}\geq1-\delta^{\prime}
    \end{equation*}
    for sufficiently large $ K_0=K_0(\delta,d) $, $ C=C(K,d) $ and any $ K\geq K_0 $.
    Now choosing $ \delta^{\prime} $ sufficiently small and using the stochastic dominance in~\cite{Liggett1997DominationBP} again, we derive that there exist independent Bernoulli variables $ \{ X_{x,-1} \}_{x\in\Pi_{\boldsymbol{k}}} $ with $ \mathbb{E}X_{x,-1}=\delta $ such that
    \begin{align*}
        &\quad\ \{ x\in\Pi_{\boldsymbol{k}}: x\mbox{ is not type-I poor} \} \cap \{ x\in\Pi_{\boldsymbol{k}}: \mbox{conditions (1), (2), (3) hold} \}^c \\
        &\subset \{ x\in\Pi_{\boldsymbol{k}}: X_{x,-1}=1 \}.
    \end{align*}
    This combined with~\eqref{eq:coupling_type-I} proves the lemma.
\end{proof}

Now we are ready to prove Lemma~\ref{lem:control-growth}.

\begin{proof}[Proof of Lemma~\ref{lem:control-growth}]
    Recall the $ \boldsymbol{k} $-boundary from~\eqref{eq:def-oriented-neighbor} and Definition~\ref{def:oriented_surface} of the oriented surface. By Lemmas~\ref{lem:oriented_surface_disconnect_origin_from_infinity} and~\ref{lem:oriented_surface_prevents_growth} and a union bound over all orientations, it suffices to prove that
    \begin{equation}
        \label{eq:4.9-0}
        \mathbb{Q}_{n-1}[\exists\,{\rm nice\ oriented\ surface\ }S_n(\boldsymbol{k})]\geq1-e^{-K\cdot2^{n}}/(200K)^d
    \end{equation}
    for any $ n\geq1 $ and $ \boldsymbol{k}\in\{-1,1\}^d $. In what follows we fix such $ n $ and $ \boldsymbol{k} $.

    By Lemma~\ref{lem:overwhelming_dominance}, for any $ \delta $ we have
    \begin{equation}
        \label{eq:count-path-00}
        \begin{aligned}
            &\quad\ \mathbb{Q}_{n-1}[\exists\,{\rm nice\ } S_n(\boldsymbol{k})]\\
            &\geq\mathbb{Q}_{n-1}^{\prime}[\exists\,\mbox{oriented surface } S_n(\boldsymbol{k}) \mbox{ such that each } x\in S_n(\boldsymbol{k}) \mbox{ is not poor}]
        \end{aligned}
    \end{equation}
    for sufficiently large $ K $ and sufficiently small $ p $, where $ \mathbb{Q}_{n-1}^{\prime} $ is a $ (\delta,n) $-overwhelming measure on $ \Pi_{\boldsymbol{k}} $.
    We will give a strategy (under $ \mathbb{Q}_{n-1}^{\prime} $ with some $ \delta\in(0,1) $) of finding $ S_n(\boldsymbol{k}) $ consisting of not-poor vertices and prove that with probability at least $ 1-e^{-K\cdot2^{n}}/(200K)^d $ such strategy succeeds by a duality argument.
    Recall $ \alpha_d $, $a_j$ and $ \tilde{p}_j $ for $ -1\leq j\leq n-2 $ from Definition~\ref{def:overwhelming}.

    \textbf{The design of the strategy.} The strategy is as follows.
    We define inductively a sequence of subsets $ \{D_i\subset A_n(\boldsymbol{k})\}_{i\geq0} $ starting from $ D_0:=\big\{x\in\Pi_{\boldsymbol{k}}:\exists y\in A_n^-(\boldsymbol{k}),x\preceq^{\prime}_{\boldsymbol{k}}y\big\} $, and aim to stop at some $ i\geq0 $ when the oriented boundary $ \partial^+_{\boldsymbol{k}}D_i $ is contained in $ A_n(\boldsymbol{k}) $ and does not contain any poor vertex. In this case we say that the strategy succeeds at $ S:=\partial^+_{\boldsymbol{k}}D_i $. Otherwise suppose $ E_i $ is the collection of poor vertices in $ \partial^+_{\boldsymbol{k}}D_i $, and we define
    \begin{equation}
        \label{eq:oriented-surface-2}
        D_{i+1}:=D_i\cup\big(\bigcup_{(y,j)\in I_i}\mathsf{D}_{n,\boldsymbol{k}}(y,j)\big)
    \end{equation}
    where (recall $ \mathsf{N}(x) $ from Definition~\ref{def:overwhelming} as the collection of pairs due to which $x$ is poor)
    \begin{equation*}
        I_i:=\cup_{x\in E_i}\mathsf{N}(x)\quad{\rm and}\quad\mathsf{D}_{n,\boldsymbol{k}}(y,j):=\{z\in\Pi_{\boldsymbol{k}}:z\preceq^{\prime}_{\boldsymbol{k}} w(y,j):=y+a_j\cdot6dL\boldsymbol{k}\}\cap A_n(\boldsymbol{k}).
    \end{equation*}
    Note that $ w(y,j):=y+a_j\cdot6dL\boldsymbol{k}=y+\pi_{\boldsymbol{k}}(La_j\cdot\sum^{d}_{i=1}\boldsymbol{e}_i)\in B_{\Pi_{\boldsymbol{k}}}(y,a_j) $ satisfies that $ z\preceq^{\prime}_{\boldsymbol{k}}w(y,j) $ for all vertices $ z\in B_{\Pi_{\boldsymbol{k}}}(y,a_j) $.
    If there does not exist any $ 0\leq i\leq\tau-1 $ such that the strategy succeeds at $ \partial^+_{\boldsymbol{k}}D_i $ before $ D_{\tau}\cap A^+_n(\boldsymbol{k})\neq\varnothing $ for some $ \tau\geq1 $, we say that the strategy fails.
    \begin{figure}[h]
        \centering
        \includegraphics[width=0.5\textwidth]{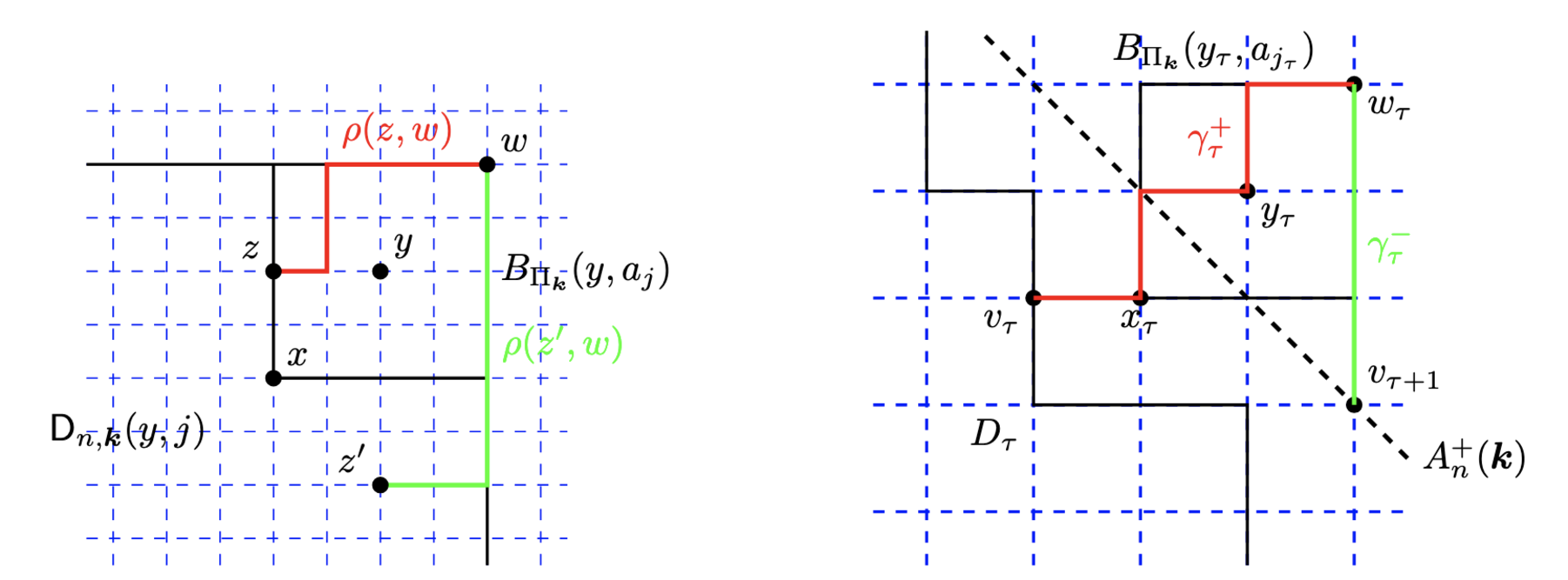}
        \caption{Illustrations for the duality analysis in the proof of Lemma~\ref{lem:control-growth} for $ d=2 $ and $ \boldsymbol{k}=(1,1) $. In both subfigures, the sublattice $ \Pi_{\boldsymbol{k}} $ (the dashed blue) is drawn orthogonally for better clarity. \emph{Left:} An illustration for the n.n.paths $ \rho(z, w) $ and $ \rho(z^{\prime}, w) $ where $ z\in B_{\Pi_{\boldsymbol{k}}}(y,a_j) $, $ z^{\prime}\in \mathsf{D}_{n,\boldsymbol{k}}(y,j) $ and $ w=w(y,j) $ for some poor vertex $ x\in\Pi_{\boldsymbol{k}} $ and some $ (y,j)\in \mathsf{N}(x) $. The path $ \rho(z,w) $ is colored in red and the path $ \rho(z^{\prime},w) $ is colored in green. \emph{Right:} An illustration for the construction of $ \gamma_{\tau} $. In the case shown here, $ a_{j_{\tau}}=1 $. Note that function $ |\pi^{-1}_{\boldsymbol{k}}(\cdot)|_1 $ is  increasing along $ \gamma^+_{\tau} $ (colored in red) from $ v_{\tau} $ to $ w_{\tau} $ and decreasing along $ \gamma^-_{\tau} $ (colored in green) from $ w_{\tau} $ to $ v_{\tau+1} $. The metric box $ B_{\Pi_{\boldsymbol{k}}}(y_{\tau},a_{j_{\tau}}) $ and the subset $ D_{\tau} $ are colored in black.}
        \label{fig:dual_path}
    \end{figure}

    \textbf{The duality analysis.} We next carry out a duality analysis on our search strategy above.
    See Figure~\ref{fig:dual_path} for an illustration of the following argument. An observation is that by definition of $ \preceq^{\prime}_{\boldsymbol{k}} $, for any $ z\in\mathsf{D}_{n,\boldsymbol{k}}(y,j) $ and for some poor $ x $ and $ (y,j)\in\mathsf{N}(x) $, there exists a n.n.path $ \rho(z,w(y,j))\subset\mathsf{D}_{n,\boldsymbol{k}}(y,j) $ from $ w(y,j) $ to $ z $ such that the function $ |\pi^{-1}_{\boldsymbol{k}}(\cdot)|_1 $ is decreasing along this path. In addition, if $ z\in B_{\Pi_{\boldsymbol{k}}}(y,a_j) $ we can (and will always) choose $ \rho(z,w(y,j)) $ to be contained in $ B_{\Pi_{\boldsymbol{k}}}(y,a_j) $, and in this case we have
    \begin{equation}
        \label{eq:count-path_-9}
        |\rho(z,w(y,j))|=L^{-1}(|\pi^{-1}_{\boldsymbol{k}}(w(y,j))|_1-|\pi^{-1}_{\boldsymbol{k}}(z)|_1)\leq 2da_j.
    \end{equation}
    See Figure~\ref{fig:dual_path} (left) for an illustration.
    If the strategy fails, then there exists $ \tau\geq0 $ such that $ I_i\neq\varnothing $ for all $ 0\leq i\leq\tau $ and $ D_{\tau+1} $ intersects $ A_n(\boldsymbol{k}) $.
    Choose some $ v_{\tau+1}\in D_{\tau+1}\cap A_n(\boldsymbol{k}) $.
    By~\eqref{eq:oriented-surface-2} there exists $ x_{\tau}\in E_{\tau} $ and $ (y_{\tau},j_{\tau})\in\mathsf{N}(x_{\tau}) $ such that $ v_{\tau+1}\in\mathsf{D}_{n,\boldsymbol{k}}(y_{\tau},j_{\tau}) $. Since $ x_{\tau} \in \partial^+_{\boldsymbol{k}}D_{\tau} $, we know that there exists $ v_{\tau}\in D_{\tau} $ adjacent to $ x_{\tau} $.
    Let $ w_{\tau}:= w(y_{\tau},j_{\tau}) $ and let $ \gamma^{-}_{\tau} $ denote the aforementioned n.n.path $ \rho(v_{\tau+1},w_{\tau}) $ connecting $ w_{\tau} $ to $ v_{\tau+1} $, and let $ \gamma^+_{\tau} $ denote the concatenated path of the edge $ (v_{\tau},x_{\tau}) $ and the aforementioned n.n.path $ \rho(x_{\tau}, w_{\tau}) $ from $ x_{\tau} $ to $ w_{\tau} $. Note that since $ x_{\tau}\in B_{\Pi_{\boldsymbol{k}}}(y_{\tau},a_{j_{\tau}}) $ and by definition of $ \rho(x_{\tau}, w_{\tau}) $ we know that all vertices on $ \rho(x_{\tau}, w_{\tau}) $ except $ v_{\tau} $ is contained in $ B_{\Pi_{\boldsymbol{k}}}(y_{\tau},a_{j_{\tau}}) $.
    Then the function $ |\pi^{-1}_{\boldsymbol{k}}(\cdot)|_1 $ is  increasing along $ \gamma^+_{\tau} $ (in which each edge has an end contained in the metric box $ B_{\Pi_{\boldsymbol{k}}}(y_{\tau},a_{j_{\tau}}) $) from $ v_{\tau} $ to $ w_{\tau} $ and decreasing along $ \gamma^-_{\tau} $ from $ w_{\tau} $ to $ v_{\tau+1} $. Concatenating $ \gamma^{+}_{\tau} $ and $ \gamma^{-}_{\tau} $ at $ w_{\tau} $, we get a new n.n.path $ \gamma_{\tau} $.
    See Figure~\ref{fig:dual_path} (right) for an illustration.
    Now inductively for any $ 0\leq i\leq\tau $, we can define $ \gamma_{i} $ similarly. Concatenate these n.n.paths into a single path (note that this is possible since the ending point $ v_{i} $ of $ \gamma_{i-1} $ is the same as the starting point of $ \gamma_{i} $ for $0\leq i\leq\tau$) denoted as $ \gamma $ and parameterize it in the sense that $ \gamma(0)=x_0\in D_0 $ and $ \gamma(|\gamma|)=v_{\tau+1} $ (and we write $ \mathcal{E}_{\gamma} $ as the event that such $ \gamma $ exists). The aforementioned observation implies that for each edge $ (\gamma(k-1), \gamma(k)) $ with $ 1\leq k\leq |\gamma| $, we have $ |\pi^{-1}_{\boldsymbol{k}}(\gamma(k))|_1>|\pi^{-1}_{\boldsymbol{k}}(\gamma(k-1))|_1 $ if and only if $ \gamma(k)\in\cup_{0\leq i\leq\tau}\gamma^{+}_{i} $. Define $ \mathrm{Poor}(\gamma) $ as the collection of vertices $ \gamma(k) $ satisfying $ |\pi^{-1}_{\boldsymbol{k}}(\gamma(k))|_1>|\pi^{-1}_{\boldsymbol{k}}(\gamma(k-1))|_1 $. Then we have
    \begin{equation}
        \label{eq:count-path_-4}
        \mathrm{Poor}(\gamma)\subset\cup_{0\leq i\leq\tau}B_{\Pi_{\boldsymbol{k}}}(y_i,a_{j_i}),
    \end{equation}
    and for each $ 0\leq i\leq\tau $, by~\eqref{eq:count-path_-9} and the definition of $ \gamma^+_i $ we get that
    \begin{equation}
        \label{eq:count-path_-7}
        |{\rm Poor}(\gamma)\cap B_{\Pi_{\boldsymbol{k}}}(y_i, a_{j_i})|=1+|\gamma^+_i|\leq 1+2da_{j_i}\leq 3da_{j_i}.
    \end{equation}
    Since
    \begin{equation*}
        |v_{\tau+1}|_1-|\gamma(0)|_1\geq(\frac{\sqrt{K}}{2}-10d^2)L_{n-1}\geq\sqrt{K}L_{n-1}/5>0,
    \end{equation*}
    by~\eqref{eq:L_1-of-pi^-1} we know that $ \gamma $ has length at least $ \frac{1}{18dL}\cdot\sqrt{K}L_{n-1}/5\geq K^{n+0.5}/(100d) $.
    By the definition of $ \pi_{\boldsymbol{k}} $ and the definition of $ {\rm Poor}(\gamma) $, the preceding display also implies
    \begin{equation}
        \label{eq:count-path_-5}
        \begin{aligned}
            0&<|\pi^{-1}_{\boldsymbol{k}}(v_{\tau+1})|_1 - |\pi^{-1}_{\boldsymbol{k}}(\gamma(0))|_1 = \sum_{k=1}^{|\gamma|}\big( |\pi^{-1}_{\boldsymbol{k}}(\gamma(k))|_1 - |\pi^{-1}_{\boldsymbol{k}}(\gamma(k-1))|_1 \big) \\
            &= L \big(|{\rm Poor}(\gamma)| - (|\gamma|-|{\rm Poor}(\gamma)|) \big) = L(2|{\rm Poor}(\gamma)|-|\gamma|).
        \end{aligned}
    \end{equation}
    Combining~\eqref{eq:count-path_-4},~\eqref{eq:count-path_-7} and~\eqref{eq:count-path_-5} we have
    \begin{equation}
        \label{eq:count-path_-2}
        \sum_{0\leq i\leq \tau}3da_{j_i} \geq |\mathrm{Poor}(\gamma)| \geq |\gamma|/2.
    \end{equation}
    This then calls for a union bound over all these boxes intersecting with $ \gamma $.

    Let $ \lambda_{-1}=\frac{1}{10} $ and $ \lambda_j:=\frac{10^{10^{2d}}}{\alpha_d}\cdot\max\{\frac{1}{K\cdot2^{j-1}},\frac{2^{n-j}}{K^{n-j-1}}\} $ for $ 0\leq j\leq n-2 $. Then we have
    \begin{equation}
        \label{eq:count-path-0}
        \sum_{j=-1}^{n-2}3da_j\cdot\frac{\lambda_j}{3da_j}\leq\frac{1}{10}+\frac{10^{10^{2d}}}{\alpha_d}\cdot\sum^{n-2}_{j=0}\big(\frac{1}{K\cdot2^{j-1}}+\frac{2^{n-j}}{K^{n-j-1}}\big)\leq\frac{1}{2}
    \end{equation}
    for sufficiently large $ K $. Recall that on $ \mathcal{E}_{\gamma} $ we have $ \mathrm{Poor}(\gamma)\subset\cup_{0\leq i\leq\tau-1}B_{\Pi_{\boldsymbol{k}}}(y_i,a_{j_i}) $. Let $ G_j $ be the event that $ |\{ 0\leq i\leq\tau-1: j_i=j \}| \geq \lceil\frac{\lambda_{j}}{3da_j}|\gamma| \rceil $. Then by~\eqref{eq:count-path_-2} and~\eqref{eq:count-path-0} we get that $ \cap^{n-2}_{j=-1}G^{\mathrm{c}}_j\subset\mathcal{E}^{\mathrm{c}}_{\gamma} $ and hence
    \begin{equation}
        \label{eq:count-path-1}
        \mathbb{Q}_{n-1}^{\prime}[\mathcal{E}_{\gamma}]\leq\sum_{j=-1}^{n-2}\mathbb{Q}_{n-1}^{\prime}[G_j].
    \end{equation}

    \textbf{The probability analysis.}
    Let $ \mathcal{P} $ be the collection of all possible realizations of $ \gamma $ (which is a random n.n.path on $\Pi_{\boldsymbol{k}}$), and we will next count $ P\in \mathcal{P} $ with some coarse-graining. Recall that for each $ P\in \mathcal{P} $ we have $ |P|\geq K^{n+0.5}/(100d) $.
    For each $ -1\leq j\leq n-2 $ and for $ P\in\mathcal{P} $, let $ P^j $ denote the collection of $ x\in3da_j\Pi_{\boldsymbol{k}} $ such that metric box $ B_{\Pi_{\boldsymbol{k}}}(x,3da_j) $ intersects $ P $. For a fixed $ j $ and $ x $, the probability that $ B_{\Pi_{\boldsymbol{k}}}(x,3da_j) $ contains a poor vertex due to some $ (y,j) $ for some $ y\in B_{\Pi_{\boldsymbol{k}}}(x, 6da_j) $ is at most $ (6da_j)^d\tilde{p}_j $. Let $ \{Y_{j,i}\}_{j\geq-1,i\geq1} $ be independent Bernoulli variables with expectation $ (6da_j)^d\tilde{p}_j $. Note that the cardinality of $ P^j $ is at most $ 10^d\cdot|P|/(3da_j) $ and there are at most $ K^{(n+2)(d-1)} $ choices of $ P(0) $ (among $ A^-_n(\boldsymbol{k}) $).
    In addition $ \frac{P^j}{3da_j} $ is a connected subset on $ \Pi_{\boldsymbol{k}} $, \response{and} thus can be encoded by a depth-first search, i.e., a nearest neighbor contour on $ \Pi_{\boldsymbol{k}} $ with length at most $ 2\cdot 10^d |P|/(3da_j) $. Therefore, the enumeration of $ P^j $ can be upper-bounded by the enumeration of such depth-first search contours, which in turn is bounded by $ (2d)^{2\cdot 10^d |P|/(3da_j)} $ (recall Lemma~\ref{lem:count_surrounding_contour} for an analogous result in the regular lattice). This implies that
    \begin{equation}
        \label{eq:lemma4.9-probability-bound}
        \begin{split}
            &\quad\ \mathbb{Q}_{n-1}^{\prime}[G_j]=\sum_{P\in\mathcal{P}}\mathbb{Q}_{n-1}^{\prime}[\gamma=P]\\
            &\leq\sum_{|P|\geq \frac{K^{n+0.5}}{100d}}K^{(n+2)(d-1)}(2d)^{2\cdot10^d\frac{|P|}{3da_j}}\mathbb{Q}_{n-1}^{\prime}\big[\sum_{i=1}^{2\cdot10^d\frac{|P|}{3da_j}}Y_{j,i}\geq \frac{\lambda_j}{3da_j}|P|\big]\\
            &\leq K^{(n+2)(d-1)}\sum_{|P|\geq \frac{K^{n+0.5}}{100d}}(2d)^{2\cdot10^d\frac{|P|}{3da_j}}e^{-t_j\lambda_j\cdot\frac{|P|}{3da_j}}\big(\mathbb{E}[\exp(t_jY_{j,1})]\big)^{2\cdot10^d\frac{|P|}{3da_j}}\\
            &=K^{(n+2)(d-1)}\sum_{|P|\geq \frac{K^{n+0.5}}{100d}}\exp\Big[ \frac{|P|}{3da_j}\big(2\cdot10^d\log(2d)-t_j\lambda_j+2\cdot10^d\log(1+(e^{t_j}-1)\cdot(6da_j)^d\tilde{p}_j)\big) \Big]\\
            &\leq K^{(n+2)(d-1)}\sum_{|P|\geq \frac{K^{n+0.5}}{100d}}\exp\Big[-\frac{|P|}{K^{n+0.5}}\cdot\frac{K^{n+0.5}}{3da_j}\cdot\big( t_j\lambda_j-10^{10^d}-10^{10^d}e^{t_j}\cdot a^d_j\tilde{p}_j \big)\Big].
        \end{split}
    \end{equation}
    Recall from Definition~\ref{def:overwhelming} that $ a_{-1}=1 $, $ \tilde{p}_{-1}=\delta $ and \response{$ a_j = K^{j+1.5} $}, $ \tilde{p}_j\leq p^{2d\alpha_d}e^{-\alpha_d K\cdot2^j} $ for $ 0\leq j\leq n-2 $. Let $ t_{-1}=10^{10^{2d}} $, we have
    \begin{equation*}
        t_{-1}\lambda_{-1}-10^{10^d} -10^{10^d}e^{t_{-1}}\cdot a^d_{-1}\tilde{p}_{-1}=10^{10^{2d}-1}-10^{10^d}-10^{10^d}e^{10^{10^{2d}}}\cdot\delta\geq10^{2d}
    \end{equation*}
    and hence
    \begin{equation}
        \label{eq:count_path_j=-1}
        \begin{aligned}
            \mathbb{Q}_{n-1}^{\prime}[G_{-1}]&\leq K^{(n+2)(d-1)}\sum_{|P|\geq K^{n+0.5}/(100d)}\exp\Big[-\frac{|P|}{K^{n+0.5}}\cdot\frac{K^{n+0.5}}{3d}\cdot10^{2d}\Big]\\
            &\leq \exp(-100K\cdot2^n)
        \end{aligned}
    \end{equation}
    for sufficiently small $ \delta $ and sufficiently large $ K $. As for $ 0\leq j\leq n-2 $, taking $ t_j=\alpha_d K\cdot2^{j-1} $ yields
    \begin{align*}
        &\quad\ t_j\lambda_j-10^{10^d}-10^{10^d}e^{t_j}\cdot a^d_j\tilde{p}_j\\
        &=\max\big\{\frac{10^{10^{2d}}K2^{j-1}}{K\cdot2^{j-1}},\frac{10^{10^{2d}}K2^{j-1}\cdot2^{n-j}}{K^{n-j-1}}\big\}-10^{10^d}-10^{10^d}e^{t_j}a^d_j\tilde{p}_j\\
        &\geq\frac{10^{10^{2d}}}{2}+\frac{10^{10^{2d}}}{2}\cdot\frac{K2^{n-1}}{K^{n-j-1}} -10^{10^d} -10^{10^d}K^{d(j+1.5)}p^{2d\alpha_d}e^{\alpha_d(K\cdot2^{j-1}-K\cdot2^j)}\\
        &\geq 10^{10^d} + 10^{10^d}\cdot\Big(\frac{2^{n}}{K^{n-j-2}}-K^{jd}p^{2d\alpha_d}e^{-\alpha_d K\cdot2^{j-1}}\Big)
            \geq10^{2d}\frac{2^n}{K^{n-j-2}}.
    \end{align*}
    Combined with~\eqref{eq:lemma4.9-probability-bound}, this implies that
    \begin{equation}
        \label{eq:count_path_j>=0}
        \begin{aligned}
            \mathbb{Q}_{n-1}^{\prime}[G_j]
            &\leq K^{(n+2)(d-1)}\sum_{|P|\geq K^{n+0.5}/(100d)}\exp\Big[-\frac{|P|}{K^{n+0.5}}\cdot\frac{K^{n+0.5}}{3da_j}\cdot10^{2d}\frac{2^n}{K^{n-j-2}}\Big]\\
            &\leq \exp(-100K\cdot2^n)
        \end{aligned}
    \end{equation}
    for sufficiently large $ K $ as well (where in the last step we used that $ |P|\geq K^{n+0.5}/(100d) $). Combing~\eqref{eq:count-path-1},~\eqref{eq:count_path_j=-1} and~\eqref{eq:count_path_j>=0} yields
    \begin{equation*}
        \begin{aligned}
            &\quad\,\mathbb{Q}_{n-1}^{\prime}[\exists\, S_n(\boldsymbol{k})\mbox{ such that each }x\in S_n(\boldsymbol{k}){\rm\ is\ not\ poor}]\\
            &\geq1-\mathbb{Q}_{n-1}^{\prime}[{\rm the\ aforementioned\ strategy\ fails}]\\
            &\geq1-\mathbb{Q}_{n-1}^{\prime}[\mathcal{E}_{\gamma}]\geq1-\sum^{n-2}_{j=-1}\mathbb{Q}_{n-1}^{\prime}[G_j]\\
            &\geq1-\sum^{n-2}_{j=-1}p^{2d}\exp(-100K\cdot2^n)\geq1-e^{-K\cdot2^{n}}/(200K)^d.
        \end{aligned}
    \end{equation*}
    Combined with~\eqref{eq:count-path-00} and the duality analysis, this yields~\eqref{eq:4.9-0} and hence the lemma.
\end{proof}


\section{Glauber evolution}
\label{sec:glauber}

In this section, we study the Glauber evolution of the RFIM in dimensions $d \geq 2$. In Sections~\ref{subsec:claim1} and~\ref{subsec:claim2}, we consider the zero-temperature case and prove Theorem~\ref{thm:1.3}. In Section~\ref{subsec:positive-glauber}, we introduce the positive-temperature version and show how to incorporate both the ground state evolution and the zero-temperature Glauber evolution into this model. In Section~\ref{subsec:open-prob}, we list some open problems.

Recall~\eqref{eq:def-hamiltonian} and recall that $M \in (-\infty, \infty)$ is the mean of the external field, $\epsilon > 0$ is the noise intensity, and $(h_v)_{v \in \mathbb{Z}^d}$ are i.i.d.\ standard Gaussian variables. Recall the zero-temperature Glauber evolution defined in Section~\ref{subsec:glauber-evolution}. An observation is that the Glauber evolution exhibits a nesting property stated as follows. For each time $ M\in\mathbb{R} $, define $ \Sigma(M) $ as the collection of spin configurations $ (\sigma_v(M))_{v\in\mathbb{Z}^d} $ such that for every $ v\in\mathbb{Z}^d $, $ \sigma_v(M)\cdot(M+\epsilon h_v+\sum_{u\sim v}\sigma_u(M))\geq0 $, \response{and moreover, when $M+\epsilon h_v+\sum_{u\sim v}\sigma_u(M) = 0$, we require that $\sigma_v(M) = 1$.} We will say that $ \Sigma(M) $ is a collection of local minimizers.
Then we can define the Glauber evolution starting from some $ \sigma(M):=(\sigma_v(M))_{v\in\mathbb{Z}^d}\in\Sigma(M) $ for all $ M\in\mathbb{R} $, and denote the corresponding increasing process by $ \sigma^{\rm Gl}_v(M^{\prime}; \sigma(M))_{v\in\mathbb{Z}^d, M^{\prime}\geq M} $. Note that by definition we have $ \sigma^{\rm Gl}_v(M^{\prime}; \sigma(M))_{v\in\mathbb{Z}^d}\in\Sigma(M^{\prime}) $ for all $ M^{\prime}\geq M $.
\begin{lemma}
    \label{lem:Glauber-nesting}
    For any $ M\leq M_1\leq M_2 $ and any $ \sigma(M):=(\sigma_v(M))_{v\in\mathbb{Z}^d}\in\Sigma(M) $, we have
    \begin{equation*}
        \sigma^{\rm Gl}_v(M_2; \sigma(M))=\sigma^{\rm Gl}_v(M_2; \sigma^{\rm Gl}(M_1; \sigma(M))) \quad \mbox{for all $ v\in\mathbb{Z}^d $},
    \end{equation*}
    where $ \sigma^{\rm Gl}(M_1; \sigma(M)) $ is the Glauber evolution configuration at $ M_1 $ with the starting configuration $ \sigma(M) $.
\end{lemma}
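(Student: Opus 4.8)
The plan is to deduce the nesting identity from two structural features of the zero-temperature Glauber evolution, after which it becomes essentially a ``restart'' argument. Fix the disorder $(h_v)$ and recall that at field $M'$ a minus spin $v$ of a configuration $\sigma$ is eligible to be flipped to $+1$ (i.e.\ such a flip weakly decreases the Hamiltonian \eqref{eq:def-hamiltonian}) exactly when $M' + \epsilon h_v + \sum_{u\sim v}\sigma_u \geq 0$. The elementary observation underlying the whole argument is that this quantity is non-decreasing in $M'$ and non-decreasing when any other minus spin of $\sigma$ is switched to $+1$; hence \emph{once a minus spin becomes eligible it remains eligible until it is actually flipped}. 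This monotonicity is what makes the flipping procedure well behaved.

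\textbf{Step 1 (intermediate states are local minimizers).} I would first show that $\sigma^{\rm Gl}(M';\sigma(M)) \in \Sigma(M')$ for every $\sigma(M)\in\Sigma(M)$ and $M'\geq M$, and more generally that $\sigma^{\rm Gl}(M';\sigma_1)\in\Sigma(M')$ whenever $\sigma_1\in\Sigma(M_1)$ and $M'\geq M_1$. For a minus spin this is automatic: the procedure defining the evolution terminates only when no minus spin is eligible at field $M'$, which is precisely the $\Sigma(M')$-condition at minus spins. For a plus spin $v$ of $\sigma^{\rm Gl}(M';\sigma(M))$, either $v$ was already $+1$ in $\sigma(M)$, in which case $M+\epsilon h_v+\sum_{u\sim v}\sigma_u(M)\geq 0$ by $\sigma(M)\in\Sigma(M)$, or $v$ was flipped at some field $t\leq M'$, in which case the eligibility inequality held at that moment. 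In both cases the left-hand side only increases as the field rises to $M'$ and the remaining spins flip from $-1$ to $+1$, so $M'+\epsilon h_v+\sum_{u\sim v}\sigma^{\rm Gl}_u(M';\sigma(M))\geq 0$, giving the $\Sigma(M')$-condition at $v$ as well.

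\textbf{Step 2 (abelian property and the minimal-set characterization).} Next I would argue that the plus-set of $\sigma^{\rm Gl}(M';\sigma)$ is the \emph{minimal} subset $S$ of $\mathbb{Z}^d$ containing $\{v:\sigma_v=+1\}$ and satisfying $M' + \epsilon h_v + |\{u\sim v:u\in S\}| - |\{u\sim v:u\notin S\}| < 0$ for every $v\notin S$; in particular this plus-set depends only on $\sigma$ and $M'$, not on the order of updates, and the process on fields $\geq M_1$ is a deterministic function of the configuration at $M_1$. For this one checks, by induction on a flipping order, that any sequence of eligible flips produces a subset of every such $S$ — using the monotonicity recorded above, so that an eligible spin stays eligible — and that the procedure terminates exactly at such an $S$. (In infinite volume one verifies the desired equalities one vertex at a time, so questions of termination do not arise.) I expect this abelian step to be the main obstacle, since it is the only place where the informal prescription ``repeating the operation of flipping a minus spin'' must be made precise.

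\textbf{Step 3 (conclusion).} Write $A$, $B_1$, $B_2$ for the plus-sets of $\sigma^{\rm Gl}(M_2;\sigma(M))$, $\sigma^{\rm Gl}(M_1;\sigma(M))$, and $\sigma^{\rm Gl}(M_2;\sigma^{\rm Gl}(M_1;\sigma(M)))$ respectively; we must show $A=B_2$. Since the evolution is increasing in the field, $B_1\subseteq A$, while $\{v:\sigma_v(M)=+1\}\subseteq B_1$. By Step 1, $A$ is field-$M_2$-stable in the sense of Step 2 (it is the plus-set of a configuration in $\Sigma(M_2)$), and likewise $B_2$ is field-$M_2$-stable; moreover $\sigma^{\rm Gl}(M_1;\sigma(M))\in\Sigma(M_1)$, so $B_1$ is a legitimate starting plus-set. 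By Step 2, $A$ is the minimal field-$M_2$-stable set containing $\{v:\sigma_v(M)=+1\}$, and $B_2$ is the minimal field-$M_2$-stable set containing $B_1$. Since $A$ is field-$M_2$-stable and $A\supseteq B_1$, minimality of $B_2$ gives $B_2\subseteq A$; since $B_2$ is field-$M_2$-stable and $B_2\supseteq B_1\supseteq\{v:\sigma_v(M)=+1\}$, minimality of $A$ gives $A\subseteq B_2$. Hence $A=B_2$, i.e.\ $\sigma^{\rm Gl}_v(M_2;\sigma(M))=\sigma^{\rm Gl}_v(M_2;\sigma^{\rm Gl}(M_1;\sigma(M)))$ for all $v\in\mathbb{Z}^d$, as claimed. (Intuitively this is just the statement that, by Step 1, the evolution may be legitimately ``restarted'' at field $M_1$ from the configuration it already occupies there.)
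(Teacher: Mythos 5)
Your argument is correct, but it is built on different machinery than the paper's. The paper's proof is a short sandwich: it observes that both $\sigma^{\rm Gl}(M_2;\sigma(M))$ and $\sigma^{\rm Gl}(M_2;\sigma^{\rm Gl}(M_1;\sigma(M)))$ lie in $\Sigma(M_2)$, and then obtains the two inequalities directly from monotonicity of the Glauber evolution in time (for a fixed starting configuration) and monotonicity in the starting configuration, together with the fact that a configuration already in $\Sigma(M_2)$ does not evolve further at field $M_2$; these monotonicity properties are invoked rather than proved. Your Step 1 coincides with the paper's first observation, but your key input is different: you prove an explicit abelian/closure characterization, namely that the plus-set of $\sigma^{\rm Gl}(M';\sigma)$ is the \emph{minimal} $M'$-stable superset of the initial plus-set, and then derive both inclusions in Step 3 from minimality rather than from monotonicity in the starting configuration. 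This buys you a self-contained justification of the order-independence and well-definedness of the flipping procedure (which the paper's informal definition and its appeal to ``monotonicity'' leave implicit, including at intermediate field values), at the cost of more setup; the paper's route is shorter but leans on unproved structural facts about the evolution. Two small points you could tighten: the claim $B_1\subseteq A$ in Step 3, which you attribute to ``the evolution is increasing in the field,'' is itself an immediate consequence of your Step 2 characterization (an $M_2$-stable set is $M_1$-stable since the eligibility quantity is nondecreasing in the field, so minimality of $B_1$ gives the inclusion), and it would be worth saying explicitly that flips performed at intermediate fields $M''\le M_2$ are covered by the same induction, since eligibility at $(M'',P)$ with $P\subseteq S$ implies eligibility at $(M_2,S)$, which is exactly how your minimal-set characterization applies to the continuously increasing-field process.
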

\begin{proof}
    Our proof is based on monotonicity of the evolution.
    Note that the configurations $ \sigma^{\rm Gl}_v(M_2; \sigma(M)) $ and $ \sigma^{\rm Gl}_v(M_2; \sigma^{\rm Gl}(M_1; \sigma(M))) $ are both local minimizers contained in $ \Sigma(M_2) $. Then on the one hand, by monotonicity of the Glauber evolution with respect to time (with the same starting configuration) we have $ \sigma^{\rm Gl}(M_1; \sigma(M))\geq \sigma(M) $, and hence by monotonicity with respect to the starting configuration we get that
    \begin{equation*}
        \sigma^{\rm Gl}_v(M_2; \sigma^{\rm Gl}(M_1; \sigma(M)))\geq\sigma^{\rm Gl}_v(M_2; \sigma(M)) \quad \mbox{for all $ v\in\mathbb{Z}^d $}.
    \end{equation*}
    On the other hand, by monotonicity of the Glauber evolution with respect to time (with the same starting configuration) we have $ \sigma^{\rm Gl}(M_1; \sigma(M))\leq\sigma^{\rm Gl}(M_2; \sigma(M)) $, and hence by monotonicity with respect to the starting configuration we get that $ \sigma^{\rm Gl}_v(M_2; \sigma^{\rm Gl}(M_1; \sigma(M)))\leq\sigma^{\rm Gl}_v(M_2; \sigma(M)) $ for all $ v\in\mathbb{Z}^d $, which proves the lemma.
\end{proof}
In light of Lemma~\ref{lem:Glauber-nesting}, for a fixed time $M$, its spin configuration can also be given as follows: set $\sigma_v = -1$ for all $v \in \mathbb{Z}^d$, and check the status of each minus spin $\sigma_v$ to see whether flipping it from minus to plus will (weakly) decrease the Hamiltonian with external field $M$. If the answer is yes (equivalently, if $\sum_{u \sim v} \sigma_u + M + \epsilon h_v \geq 0$), then we flip the spin and continue to check other spins until no more spins can be flipped. By comparing the Glauber evolution with polluted bootstrap percolation (they are very similar but still slightly different), we will prove the two claims in Theorem~\ref{thm:1.3} separately in Sections~\ref{subsec:claim1} and \ref{subsec:claim2}.

\subsection{Proof of Claim~\eqref{thm1.3-claim1} in Theorem~\ref{thm:1.3}}\label{subsec:claim1}

This part essentially follows from Theorem~\ref{thm:1.4}. Fix $d \geq 2$ and $M < c_d$. Recall the setup of polluted bootstrap percolation. We say a vertex $v \in \mathbb{Z}^d$ is closed if $\epsilon h_v + M < 0 $, open if $\epsilon h_v + M - 2 \geq 0$, and empty otherwise. In particular, at time $M$, a closed vertex becomes plus only if it has at least $d + 1$ plus neighbors, and an empty vertex becomes plus only if it has at least $d$ plus neighbors.
\response{We consider a variant of polluted bootstrap percolation where the rule is adjusted such that a closed vertex becomes open if and only if it has at least $d+1$ open neighbors, and an empty vertex becomes open if and only if it has at least $d$ open neighbors.}
Then we see that a vertex is plus in the Glauber evolution only if it becomes open in this variant of polluted bootstrap percolation. Let $q = \mathbb{P}[\epsilon h_v + M < 0]$ and $p = \mathbb{P}[\epsilon h_v + M - 2 \geq 0]$. Then, \response{by $ M<c_d $ and a standard Gaussian tail estimate}, we have $\lim_{\epsilon \rightarrow 0} \frac{q}{p^d} = +\infty$ and $\lim_{\epsilon \rightarrow 0} p = 0$. Applying Theorem~\ref{thm:1.4} (for this variant, see Remark~\ref{rmk:polluted-bootstrap}) yields Claim~\eqref{thm1.3-claim1} in Theorem~\ref{thm:1.3}.

\subsection{Proof of Claim~\eqref{thm1.3-claim2} in Theorem~\ref{thm:1.3}}
\label{subsec:claim2}

Fix $d \in \{2,3\}$ and $M > c_d$. Let $p = \mathbb{P}[\epsilon h_v + M - 2 \geq 0]$, $q = \mathbb{P}[\epsilon h_v + M < 0]$, and $L = \lfloor \frac{- d \log p}{p } \rfloor$. We say a vertex $v$ is bad if $\epsilon h_v + M < 0$ and good if $\epsilon h_v + M - 2 \geq 0$. In particular, a non-bad vertex becomes plus if $d$ of its neighbors are plus, and a good vertex becomes plus if $(d-1)$ of its neighbors are plus. Consider the cubes $B_x := x + [0,L)^d \cap \mathbb{Z}^d$ for $x \in L \mathbb{Z}^d$. Now we define a modified polluted bootstrap percolation\footnote{\response{Recall that in polluted bootstrap percolation with threshold $r$, an empty vertex becomes open if and only if it has at least $r$ open neighbors. In modified polluted bootstrap percolation, the threshold satisfies $1 \leq r \leq d$, and an empty vertex becomes open if and only if it has open neighbors in at least $r$ different directions. In particular, it is more difficult for an empty vertex to become open than in the non-modified version with the same threshold.}} with threshold $r = d - 1$ (as discussed in~\cite{GH-polluted-all}) on these boxes. We say such a box $B_x$ is empty if it does not contain any bad vertices and there is at least one good vertex in each row or column (namely, a one-dimensional axis-aligned sector of $B_x$ containing $L$ vertices). Then, we have
\begin{align*}
    \mathbb{P}[\mbox{$B_x$ is empty}] &\geq \mathbb{P}[\mbox{$B_x$ has no bad vertices}] \times \prod_{J \subset B_x} \mathbb{P}[\mbox{at least one good vertex in $J$}] \\
    &= (1 - q)^{L^d} \times (1 - (1-p)^L)^{d L^{d-1}}
\end{align*}
where $J$ ranges over all the rows and columns of $B_x$, and the first inequality follows from the fact that all these events are increasing. Since $d(2-M)^2 < M^2$, we have $\lim_{\epsilon \rightarrow 0} qL^d = 0$. By simple calculations, we obtain that $ \lim_{\epsilon \rightarrow 0} \mathbb{P}[B_x\mbox{ is empty}] = 1 $. We say a box is open if each vertex $ v $ in this box satisfies $\epsilon h_v + M - 2d \geq 0$. The probability of a box being open is small but positive. By definition, at time $M$, all the vertices in an open box are plus. Moreover, if an empty box $B_x$ has $(d-1)$ neighboring boxes that are all plus and reside in different directions with respect to $B_x$, then all the vertices in $B_x$ will become plus. The reason is that there will be a one-dimensional sector of $B_x$ such that each vertex in this sector is neighboring to all these $(d-1)$ boxes, and thus, neighboring to $(d-1)$ plus spins. Since there is one good vertex in this sector, it will become plus and trigger other vertices in this sector to become plus (recall that a vertex in an empty box becomes plus if $d$ of its neighbors are plus). By the same mechanism, all the vertices in this box will become plus. If a box is neither open nor empty (in fact an open box must be empty), we say it is closed.

Consider the evolution of boxes that have all plus signs. This is reminiscent of the modified polluted bootstrap percolation. Namely, we have open and closed vertices (corresponding to open and closed boxes) on $\mathbb{Z}^d$ initially. The open and closed vertices remain the same, and an empty vertex (corresponding to a box that is empty but not open) becomes open if it has at least $(d-1)$ neighbors in different directions 
that are open. From the above argument, we see that the spins are all plus in the boxes corresponding to open vertices in the evolution of modified polluted bootstrap percolation. When $d \in \{2,3\}$, the origin will become open with a probability close to 1 as the closed probability tends to 0 in the modified polluted bootstrap percolation (no matter how small the open probability is). Specifically, the two-dimensional case is trivial as there would be an infinite connected component of empty or open vertices containing the origin with probability close to 1, and this infinite connected component must be open since it almost surely contains an open vertex. The three-dimensional case follows from~\cite[Theorem 1]{GH-polluted-all}. This proves Claim~\eqref{thm1.3-claim2} in Theorem~\ref{thm:1.3}.

\subsection{Glauber evolution at positive temperature}\label{subsec:positive-glauber}

In this subsection, we define the Glauber evolution at positive temperature and explain how the ground state evolution and the zero-temperature Glauber evolution discussed in Sections~\ref{subsec:gs-evolution} and \ref{subsec:glauber-evolution} can be incorporated into this single model. Consider the finite graph $G = \TT_N^d$, and let $T \geq 0$ be the temperature, $\alpha > 0$ be the update rate, and $\epsilon \geq 0$ be the noise intensity.

We first define the model. Let $L > 0$ be a constant that will tend to infinity. Fix a realization of the noise $(h_v)_{v \in \TT_N^d}$. At time $-L$, set $\sigma_v = -1$ for all $v \in \TT_N^d$. Define a random process evolving from $-L$ to $L$ as follows: at each vertex $v$, there is a time clock ringing at rate $\alpha$. When it rings, the new spin configuration agrees with the old one except at $v$, and the spin at $v$ is chosen to be\footnote{We choose this rate function such that the invariant distribution with respect to it at time $M$ is the RFIM measure with external field $M$. One can also consider other rates, such as the Metropolis transition rate.}
\begin{equation}\label{eq:rate-positive-glauber}
    \mbox{plus with probability $\frac{\exp(-\frac{1}{T} H_{G, M, \epsilon h}(\sigma^+))}{\exp(-\frac{1}{T} H_{G, M, \epsilon h}(\sigma^+)) + \exp(-\frac{1}{T} H_{G, M, \epsilon h}(\sigma^-))}$, and otherwise minus.}
\end{equation}Here, $\sigma^+$ (resp.\ $\sigma^-$) denotes the spin configuration obtained from $ \sigma $ by putting $ + $ (resp. $ - $) at $v$. Let $(\sigma_M^L)_{-L \leq M \leq L}$ denote the resulting random process on spin configurations. Then, for any fixed time $R>0$, the law of $(\sigma_M^L)_{-R \leq M \leq R}$ converges in distribution as $L$ tends to infinity.\footnote{The convergence in distribution holds because for each realization $(h_v)$, $\sigma_{-M}^L$ is equal to the all-minus spin configuration with high probability as $M$ and $L$ both tend to infinity. Therefore, for different $L' > L$, we can find a coupling such that $\sigma_{-L}^{L'} = \sigma_{-L}^L$ with probability $1 - o_L(1)$ and the evolutions are the same afterward. This shows that for any fixed $R$, the law of $(\sigma_M^L)_{-R \leq M \leq R}$ converges as $L$ tends to infinity.} Moreover, it is easy to see that the limiting random process evolves according to the previous rule and is consistent for different $R$. Therefore, we can define a random process indexed by $ \mathbb{R}$, denoted by $(\sigma_M)_{M \in \mathbb{R}}$, which evolves according to~\eqref{eq:rate-positive-glauber}. This random process depends on the realization of $(h_v)$ as well as $T, \epsilon$, and $\alpha$, and will be referred to as the Glauber evolution at positive temperature.

For each fixed $M$, as $\alpha$ tends to infinity, the law of $\sigma_M$ converges in distribution to the invariant distribution for the rate function~\eqref{eq:rate-positive-glauber} which is the RFIM measure at time $M$. Notice that in general, the law of $\sigma_M$ is not the same as the RFIM measure at $M$, and is strictly stochastically dominated by the latter. Next we show that the ground state evolution defined in Section~\ref{subsec:gs-evolution} and the zero-temperature Glauber evolution defined in Section~\ref{subsec:glauber-evolution} (while we defined the case of $\mathbb{Z}^d$ there, the case of $\TT_N^d$ can be defined analogously) can be obtained from this model by appropriately sending $\alpha$ and $\frac{1}{T}$ to infinity. 

\begin{lemma}\label{lem:positive-glauber}
    For any fixed $M_1 < M_2 < \ldots < M_k$, the following hold for almost every realization of $(h_v)$:
    \begin{itemize}
        \item By first sending $\alpha$ to infinity and then sending $T$ to 0, the law of $(\sigma_{M_1}, \ldots, \sigma_{M_k})$ converges in distribution to that of the ground state evolution.

        \item By first sending $T$ to 0 and then sending $\alpha$ to $\infty$, the law of $(\sigma_{M_1}, \ldots, \sigma_{M_k})$ converges in distribution to that of the zero-temperature Glauber evolution.
    \end{itemize}
\end{lemma}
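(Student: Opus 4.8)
The statement is Lemma~\ref{lem:positive-glauber}, which characterizes the two iterated limits of the positive-temperature Glauber evolution $(\sigma_M)_{M\in\mathbb{R}}$ (depending on $T,\alpha$) as $\alpha,\tfrac1T\to\infty$ in the two possible orders. I will fix a realization of $(h_v)$ that is ``generic'' in the sense that all the relevant Hamiltonian values are distinct — more precisely, for every finite $A\subset\TT_N^d$, every $\sigma\neq\sigma'$, and every $M\in\{M_1,\dots,M_k\}$ one has $H_{G,M,\epsilon h}(\sigma)\neq H_{G,M,\epsilon h}(\sigma')$, and moreover no single spin flip leaves the Hamiltonian exactly unchanged at any $M_i$. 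This holds for a.e.\ $(h_v)$ because the exceptional set is a finite union of measure-zero hyperplanes in the $(h_v)$'s (here $N$ is fixed, so everything is finite-dimensional). On this full-measure event I will prove the two convergences separately by elementary Markov-chain arguments; since $\TT_N^d$ is finite the process $(\sigma_M)_{M\in\mathbb{R}}$ is a piecewise-constant pure-jump process and joint convergence at finitely many times $M_1<\dots<M_k$ reduces, via the Markov property at $M_1,\dots,M_{k-1}$, to understanding the transition kernel between consecutive times together with the convergence at the first time.

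First, the $\alpha\to\infty$-then-$T\to0$ direction. For fixed $T$, as $\alpha\to\infty$ the evolution performs infinitely many single-site updates in any window $[M_i,M_{i+1}]$, and since the external field is essentially constant on this window in the $\alpha$-scaling, standard mixing of the single-site Glauber dynamics on a finite graph shows $\sigma_{M_{i+1}}$ converges in law to the Gibbs measure $\mu_{G,M_{i+1},\epsilon h}$ at temperature $T$, \emph{independently} of $\sigma_{M_i}$ (the chain forgets its start). So the $\alpha\to\infty$ limit of $(\sigma_{M_1},\dots,\sigma_{M_k})$ is a product of Gibbs measures $\bigotimes_i \mu_{G,M_i,\epsilon h}$. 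Then sending $T\to0$, each $\mu_{G,M_i,\epsilon h}$ concentrates on the Hamiltonian minimizer, which by our genericity assumption is unique; I must check this unique zero-temperature limit is exactly $\tau_{G,M_i,\epsilon h}$, the ground state with the maximal-plus tie-breaking rule. Genericity makes the minimizer unique so the tie-breaking rule is vacuous, and the limit coincides with $\tau_{G,M_i,\epsilon h}$; thus the limit law is the product of point masses $\bigotimes_i\delta_{\tau_{G,M_i,\epsilon h}}$. Finally I note that the ground-state evolution, viewed at fixed times $M_1<\dots<M_k$, is the deterministic tuple $(\tau_{G,M_1,\epsilon h},\dots,\tau_{G,M_k,\epsilon h})$ for a.e.\ $(h_v)$, so the two match. (A small subtlety: I should take the $\alpha\to\infty$ limit \emph{before} fixing the times, using the footnote's observation that the process is well-defined on all of $\mathbb{R}$; but since we only look at finitely many fixed times and mixing is uniform away from $M=\pm\infty$, a diagonal/monotonicity argument handles this cleanly.)

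Second, the $T\to0$-then-$\alpha\to\infty$ direction. Fix $\alpha$. As $T\to0$, the single-site update rule~\eqref{eq:rate-positive-glauber} converges: at a site $v$ with local field $M+\epsilon h_v+\sum_{u\sim v}\sigma_u\neq 0$ (which holds by genericity whenever $M\in\{M_i\}$ and, more to the point, at all times in a neighborhood of each $M_i$ once the configuration is one of the finitely many reachable ones), the probability of choosing the energy-lowering spin tends to $1$. Hence the $T\to0$ limit of the process (for fixed $\alpha$) is the \emph{zero-temperature} Glauber dynamics: a continuous-time Markov process where each site rings at rate $\alpha$ and, when it does, adopts the spin minimizing the local energy — equivalently, since we start all-minus and the drift of $M$ only ever makes flips $-\to+$ favorable, a minus spin flips to plus exactly when $\sum_{u\sim v}\sigma_u+M+\epsilon h_v\geq0$ and never flips back. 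This is precisely the dynamics whose configuration at time $M$ defines the zero-temperature Glauber evolution $\sigma^{\rm Gl}$ of Section~\ref{subsec:glauber-evolution}. Then sending $\alpha\to\infty$: in each window $[M_i,M_{i+1}]$ the zero-temperature dynamics almost surely reaches its unique absorbing configuration in $\Sigma(M')$ reachable from $\sigma_{M_i}$ before the field moves appreciably — and by Lemma~\ref{lem:Glauber-nesting} (the nesting property) this absorbing configuration, taken as $M'\nearrow M_{i+1}$, is exactly $\sigma^{\rm Gl}(M_{i+1};\sigma^{\rm Gl}(M_i;\cdot))=\sigma^{\rm Gl}(M_{i+1};\text{all-minus})$, independent of the path. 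Here I should be careful about the order of limits ``reach absorption'' versus ``$M$ advances to $M_{i+1}$'': as $\alpha\to\infty$ the number of updates before $M$ moves by any fixed amount diverges, so the process is absorbed with probability $1-o_\alpha(1)$ before $M$ changes, and between the finitely many special thresholds of~\eqref{eq:def-hamiltonian} the absorbing set is locally constant in $M$; taking $M'\to M_{i+1}$ and invoking càdlàg-ness identifies the limit with the Glauber-evolution configuration at $M_{i+1}$.

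**Main obstacle.** The genuine routine content — finite-state Markov chain mixing and the $T\to0$ limit of the update rule — is standard; the delicate point, and where I expect to spend most care, is the \emph{interchange of the two limiting procedures with the ``time-$M$ evolution''}. Concretely: the process $(\sigma_M)$ is only defined as an $L\to\infty$ limit of processes started at $-L$, and we then wish to take $\alpha\to\infty$ (or $T\to0$) and evaluate at finitely many times; making the three limits ($L\to\infty$, then $\alpha$ or $T$, then the other, then reading off at $M_1,\dots,M_k$) commute rigorously requires a uniform coupling argument. The cleanest route is to couple all these processes monotonically (the evolution is monotone in the starting configuration and monotone in $M$, as used repeatedly in the paper, e.g.\ Lemma~\ref{lem:Glauber-nesting}), observe that started from the all-minus configuration the process at any fixed $M_1$ is sandwiched and stabilizes, and then push the coupling forward through $[M_1,M_2],\dots,[M_{k-1},M_k]$. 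I would carry this out once, carefully, and then both bullets follow by specializing the order in which $\alpha$ and $T$ are sent to their limits. The second subtlety is checking that along the $T\to0$-first route the intermediate dynamics really is the ``flip-only-once, threshold $\sum_{u\sim v}\sigma_u+M+\epsilon h_v\ge0$'' dynamics rather than something that could also flip $+\to-$; this is where I use that we start all-minus, that $M$ is increasing, and genericity to rule out the boundary case of exact energy ties — all of which are already implicitly invoked in Section~\ref{subsec:glauber-evolution}.
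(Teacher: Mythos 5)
Your proposal is correct and follows essentially the same route as the paper: first bullet via mixing to the Gibbs measures (yielding independent RFIM samples as $\alpha\to\infty$) and then concentration on the a.s.\ unique ground states as $T\to 0$; second bullet via degenerating the update rule to the fixed-$\alpha$ zero-temperature dynamics as $T\to 0$ (using a coupling/genericity to rule out ties and handle the definition of the process from $-\infty$) and then instantaneous relaxation as $\alpha\to\infty$. The extra care you devote to the interchange with the $L\to\infty$ construction is handled in the paper by the same kind of coupling from the all-minus configuration at a large negative time, so the arguments match in substance.
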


\begin{proof}
    We first send $\alpha$ to infinity. Then, the law of $(\sigma_{M_1}, \ldots, \sigma_{M_k})$ converges to the law of independent samples from the RFIM measures at times $M_1,\ldots, M_k$. If we further send $T$ to 0, they become the ground state configurations at these times.
    
    Instead, if we first send $T$ to 0, then the rate function in~\eqref{eq:rate-positive-glauber} degenerates to choosing the spin that minimizes the corresponding Hamiltonian. We can define a new process using this transition rule via the same approximation procedure as that of $(\sigma_M)_{M \in \mathbb{R}}$, denoted by $(\widetilde \sigma_M)_{M \in \mathbb{R}}$. Since there are only countably many rings, there is no tie in the transitions of $(\widetilde \sigma_M)$ for almost every realization of $(h_v)$. We claim that as $T$ tends to 0, the law of $(\sigma_{M_1}, \ldots, \sigma_{M_k})$ converges to the law of $(\widetilde \sigma_{M_1}, \ldots, \widetilde \sigma_{M_k})$. This is because for any fixed realization of $(h_v)$ and sufficiently large $M$, $\sigma_{-M}$ and $\widetilde \sigma_{-M}$ equal the all-minus spin configuration with probability $1 - o_M(1)$ (this probability is independent of $T$ for small $T$). Starting from the all-minus spin configuration at $-M$, we can find a coupling such that these two processes are the same before time $M_k$ with probability $1 - o_T(1)$ (this may depend on $M$) as $T$ tends to 0. By first sending $T$ to 0, and then sending $M$ to infinity, we get the claim. If we further send $\alpha$ to infinity, the distribution of $(\widetilde \sigma_{M_1}, \ldots, \widetilde \sigma_{M_k})$ converges to that of the zero-temperature Glauber evolution.
\end{proof}

\subsection{Open problems}\label{subsec:open-prob}

We conclude with some open problems related to the Glauber evolution. 

\begin{prob}\label{prob:threshold}
Prove Claim~\eqref{thm1.3-claim2} in Theorem~\ref{thm:1.3} for $d \geq 4$.
\end{prob}

From the proof in Section~\ref{subsec:claim2}, we see that the condition $d \in \{2,3\}$ is only used in the last step, where we need the fact that the final density of modified polluted bootstrap percolation with threshold $r = d-1$ tends to 1 as the closed probability tends to 0 (no matter how small the open probability is). This claim was first stated in~\cite[Conjecture 4.6]{Morris-bootstrap} although it is for the non-modified version (where a vertex becomes open if and only if it has at least $d-1$ open neighbors). Claim~\eqref{thm1.3-claim2} for $d \geq 4$ would follow from this conjecture for the modified version.

Recall that in the ground state evolution, we proved that there is no global avalanche for $d = 2$, while for $d \geq 3$, a global avalanche occurs (on the finite box) when the noise intensity $\epsilon$ is small. For the zero-temperature Glauber evolution, similar to Proposition~\ref{prop:2d-large-m}, we know that there is no global avalanche for sufficiently large $\epsilon$. However, the phenomenon for small $\epsilon$ remains an open question, and we feel that this is a challenging problem.

\begin{prob}
    Prove or disprove the existence of a global avalanche in the zero-temperature Glauber evolution for small $\epsilon$. Is there a transition in dimension?
\end{prob}

In this paper, we studied the zero-temperature Glauber evolution, but we also proposed a positive-temperature version, which would be interesting to investigate further. Compared to the zero-temperature version, one main difficulty is that spins will flip between plus and minus, making the analysis more subtle.

\begin{prob}
    Study the properties of the positive-temperature Glauber evolution, including the existence of a global avalanche, spin correlation with respect to times, mixing properties, etc.
\end{prob}

\bibliographystyle{alpha}
\bibliography{ref_arXiv}

\end{document}